\let\oldtocsection=\tocsection
\let\oldtocsubsection=\tocsubsection
\renewcommand{\tocsection}[2]{\hspace{0em}\oldtocsection{#1}{#2}}
\renewcommand{\tocsubsection}[2]{\hspace{1em}\oldtocsubsection{#1}{#2}}
\tikzset{node distance=3cm, auto}
\def\@secnumfont{\bfseries}
\def\section{\@startsection{section}{1}%
  \z@{.7\linespacing\@plus\linespacing}{.5\linespacing}%
  {\normalfont\Large\bfseries}}
\def\subsection{\@startsection{subsection}{2}%
  \z@{.75\linespacing\@plus.7\linespacing}{-.5em}%
  {\normalfont\large\bfseries}}
  \def\subsubsection{\@startsection{subsubsection}{3}%
  \z@{.75\linespacing\@plus.7\linespacing}{-.5em}%
  {\normalfont\bfseries}}
\newtheorem{thm}{Theorem}[subsection]
\newtheorem{lemma}[thm]{Lemma}
\newtheorem{prop}[thm]{Proposition}
\newtheorem{cor}[thm]{Corollary}
\newtheorem{definition}[thm]{Definition}
\theoremstyle{remark}
\numberwithin{equation}{subsection} 
\newtheoremstyle{customremark}% <name>
{3pt}% <Space above>
{3pt}% <Space below>
{}% <Body font>
{}% <Indent amount>
{\bfseries}% <Theorem head font>
{.}% <Punctuation after theorem head>
{.5em}% <Space after theorem headi>
{}% <Theorem head spec (can be left empty, meaning `normal')>
\theoremstyle{customremark}
\newtheorem{rmk_no_diamond}[thm]{Remark}
\newenvironment{rmk}{\begin{rmk_no_diamond} } {\hfill$\er$ \end{rmk_no_diamond}}
\newtheorem{example_no_diamond}[thm]{Example}
\newenvironment{example}{\begin{example_no_diamond} } {\hfill$\er$ \end{example_no_diamond}}
\newcommand{\ccirc}{{\circledcirc}}
\newcommand{\notccirc}{{\,\,{|\mskip-9.75mu\circledcirc}\,}}
\newcommand{\calX}{\mathcal{X}}
\newenvironment{itemlist}
   { \begin{list} {$\bullet$}
         { \setlength{\topsep}{.5ex}  \setlength{\itemsep}{.5ex} \setlength{\leftmargin}{2.5ex} } }
   { \end{list} }
\newcommand{\NI}{{\noindent}}
\newcommand{\Om}{{\Omega}}
\newcommand{\de}{{\delta}}
\newcommand{\ga}{{\gamma}}
\newcommand{\Ga}{{\Gamma}}
\newcommand{\io}{{\iota}}
\newcommand{\la}{{\lambda}}
\newcommand{\less} {{\smallsetminus}}
\newcommand{\p}{{\partial}}
\newcommand{\MS}{{\medskip}}
\newcommand{\er}{{\Diamond}}
\newcommand{\Z}{\mathbb{Z}}
\newcommand{\R}{\mathbb{R}}
\newcommand{\Q}{\mathbb{Q}}
\newcommand{\C}{\mathbb{C}}
\newcommand{\K}{\mathbb{K}}
\newcommand{\eps}{\varepsilon}
\newcommand{\calA}{\mathcal{A}}
\newcommand{\calL}{\mathcal{L}}
\newcommand{\calW}{\mathcal{W}}
\renewcommand{\bar}{\mathcal{B}}
\newcommand{\A}{\mathcal{A}}
\newcommand{\bdy}{\partial}
\newcommand{\Li}{\mathcal{L}_\infty}
\newcommand{\calM}{\mathcal{M}}
\newcommand{\wh}{\widehat}
\newcommand{\wt}{\widetilde}
\newcommand{\ovl}{\overline}
\newcommand{\ovll}[1]{\overline{\overline{#1}}}
\newcommand{\op}[1]{{\operatorname{#1}}}
\newcommand{\e}{\eps}
\newcommand{\std}{{\op{std}}}
\newcommand{\chlin}{\op{CH}_{\op{lin}}}
\newcommand{\lin}{{\op{lin}}}
\newcommand{\cz}{{\op{CZ}}}
\newcommand{\ind}{\op{ind}}
\newcommand{\Op}{\mathcal{O}p}
\newcommand{\nil}{\varnothing}
\newcommand{\sss}{\vspace{2.5 mm}}
\newcommand{\auglin}{\e_\lin}
\newcommand{\gapac}{\mathfrak{g}}
\newcommand{\bb}{\frak{b}}
\renewcommand{\lll}{\Langle}
\newcommand{\rrr}{\Rangle}
\newcommand{\sk}{{\op{sk}}}
\newcommand{\T}{\mathcal{T}}
\newcommand{\calC}{\mathcal{C}}
\newcommand{\calJ}{\mathcal{J}}
\newcommand{\gt}{\tilde{\gapac}}
\newcommand{\hooksymp}{\overset{s}\hookrightarrow}
\newcommand{\wind}{\op{wind}}
\newcommand{\formal}{\EuScript{F}}
\newcommand{\Jadm}{\mathcal{J}}
\newcommand{\fr}{\op{FR}}
\newcommand{\tX}{\wt{X}}
\newcommand{\tcalA}{\wt{\calA}}
\newcommand{\smx}{\oset[-0.5ex]{\frown}{\times}}
\newcommand{\tauhut}{{\tau_{\op{Hut}}}}
\newcommand{\Int}{\op{Int}\,}
\newcommand{\mb}{\op{MB}}
\newcommand{\calG}{\mathcal{G}}
\newcommand{\wmin}{w_{\op{min}}}
\newcommand{\delbar}{{\ovl{\partial}}}
\newcommand{\tu}{{\wt{u}}}
\newcommand{\tnabla}{{\wt{\nabla}}}
\newcommand{\tD}{{\wt{D}}}
\newcommand{\ssst}{\scriptscriptstyle}
\newcommand{\ver}{{\op{ver}}}
\newcommand{\hor}{{\op{hor}}}
\newcommand{\calB}{{\mathcal{B}}}
\newcommand{\calE}{{\mathcal{E}}}
\newcommand{\calT}{{\mathcal{T}}}
\renewcommand{\hom}{\op{Hom}}
\newcommand{\dashover}[2][\mathop]{#1{\mathpalette\df@over{{\dashfill}{#2}}}}
\newcommand{\fillover}[2][\mathop]{#1{\mathpalette\df@over{{\solidfill}{#2}}}}
\newcommand{\df@over}[2]{\df@@over#1#2}
\newcommand\df@@over[3]{%
  \vbox{
    \offinterlineskip
    \ialign{##\cr
      #2{#1}\cr
      \noalign{\kern1pt}
      $\m@th#1#3$\cr
    }
  }%
}
\newcommand{\dashfill}[1]{%
  \kern-.5pt
  \xleaders\hbox{\kern.5pt\vrule height.4pt width \dash@width{#1}\kern.5pt}\hfill
  \kern-.5pt
}
\newcommand{\dash@width}[1]{%
  \ifx#1\displaystyle
    2pt
  \else
    \ifx#1\textstyle
      1.5pt
    \else
      \ifx#1\scriptstyle
        1.25pt
      \else
        \ifx#1\scriptscriptstyle
          1pt
        \fi
      \fi
    \fi
  \fi
}
\newcommand{\solidfill}[1]{\leaders\hrule\hfill}
\newcommand{\oset}[3][0ex]{%
  \mathrel{\mathop{#3}\limits^{
    \vbox to#1{\kern-2\ex@
    \hbox{$\scriptstyle#2$}\vss}}}}
\date{\today}
\begin{document}

\title{Symplectic capacities, unperturbed curves, and convex toric domains}
\date{\today}

\begin{abstract}
We use explicit pseudoholomorphic curve techniques (without virtual perturbations) to define a sequence of symplectic capacities analogous to those defined recently by the second named author using symplectic field theory. We then compute these capacities for all four-dimensional convex toric domains. This gives various new obstructions to stabilized symplectic embedding problems which are sometimes sharp.
\end{abstract}

\author{Dusa McDuff}
\author{Kyler Siegel}\thanks{K.S. is partially supported by NSF grant DMS-2105578 and a visiting membership at the Institute for Advanced Study.}

\maketitle

\tableofcontents

\pagestyle{plain}

\section{Introduction}

\subsection{Overview}

Symplectic capacities have long played an important role in symplectic geometry, providing a systematic tool for studying nonsqueezing phenomena.
Let us mention here just two prominent sequences of symplectic capacities: the Ekeland--Hofer capacities \cite{EH1,EH2} and the embedded contact homology (ECH) capacities \cite{Hutchings_quantitative_ECH}. The former are defined in any dimension and they provide obstructions which can be viewed as refinements of Gromov's celebrated nonsqueezing theorem \cite{gromov1985pseudo}. The latter are defined only in dimension four, but they often give very strong obstructions, e.g. they give sharp obstructions for symplectic embeddings between four-dimensional ellipsoids.

Higher dimensional symplectic embeddings remain rather poorly understood, but there has been considerable recent interest \cite{HK,CGH,Ghost,Mint,HSC,chscI,irvine2019stabilized} in so-called ``stabilized symplectic embedding problems'', in which one studies symplectic embeddings of the form $X \times \C^N \hooksymp X' \times \C^N$ for four-dimensional Liouville domains $X,X'$ and $N \in \Z_{\geq 1}$.
In order to systematize and generalize these results, the second named author introduced in \cite{HSC} a sequence of symplectic capacities $\gapac_1,\gapac_2,\gapac_3,\dots$ which are ``stable'' in the sense that $\gapac_k(X \times \C^N) = \gapac_k(X)$ for any Liouville domain $X$ and $k,N \in \Z_{\geq 1}$.
These capacities are defined using symplectic field theory (SFT), more specifically the (chain level) filtered $\Li$ structure on linearized contact homology, and their definition also involves curves satisfying local tangency constraints.
As a proof of concept, \cite{HSC} shows that these capacities perform quite well in toy problems, for instance they recover the sharp obstructions from \cite{Mint} and they often outperform the Ekeland--Hofer capacities.
In fact, the capacities $\gapac_1,\gapac_2,\gapac_3,\dots$ are a specialization of a more general family of capacities $\{\gapac_{\bb}\}$ which are expected to give sharp obstructions to the stabilized ellipsoid embedding problem.

However, two broad questions naturally become apparent:
\begin{enumerate}
\item What is the role of symplectic field theory? Namely, it is known that SFT typically requires virtually perturbing moduli spaces of pseudoholomorphic curves, and yet ultimately all of the data of $\gapac_k(X)$ should be carried by honest pseudoholomorphic curves in $\wh{X}$ and $\R \times \bdy X$, so does one really need the full SFT package?\footnote{As outlined in \cite[\S1]{HSC}, we also expect an alternative definition of $\gapac_k$ using ($S^1$-equivariant) Floer theory instead of symplectic field theory. Since this involves Hamiltonian perturbations and many associated choices, it is also quite difficult to compute directly from the definition.}
\item How does one actually compute $\gapac_1,\gapac_2,\gapac_3,\dots$ for Liouville domains of interest? Note that even computing $\gapac_k$ for a four-dimensional ellipsoid is a nontrivial problem.
\end{enumerate}
\NI Note that these questions are coupled, since a concrete answer to (1) could open up new direct avenues for computations as in (2).

The primary purpose of this paper is to address both of these questions. In short:
\begin{enumerate}
\item We give an ersatz definition of $\gapac_k$, denoted by $\gt_k$, which is simple and explicit and does not require any virtual perturbations.
\item We compute (or at least reduce to elementary combinatorics) $\gt_k$ for all four-dimensional convex toric domains. This gives a large family of examples which includes ellipsoids and polydisks as special cases.
\end{enumerate}
Combining these, one can directly extract many new symplectic embedding obstructions. As an illustration, the recent work \cite{cristofaro2021higher} applies our computations for ellipsoids and polydisks in order to obstruct various stabilized symplectic embeddings between these. Remarkably, these obstructions are often sharp, at least when certain aspect ratios are integral;  see Example~\ref{ex:concrete} and Remark~\ref{rmk:poly_formula}.

\subsection{Statement of main results}

We now describe our results in more detail. 
In \S\ref{sec:capacity}, we define the capacity $\gt_k(M)$ for all symplectic manifolds $M$ and $k \in \Z_{\geq 1}$.
Roughly, if $X$ is a Liouville domain with nondegenerate contact boundary, then $\gt_k(X)$ is the maximum over all suitable almost complex structures $J$ of the minimum energy of any asymptotically cylindrical rational $J$-holomorphic curve in $\wh{X}$ which satisfies a local tangency constraint $\lll \T^{(k)}p\rrr$.
The latter means that the curve has contact order $k$ (or equivalently tangency order $k-1$) to a chosen local divisor $D$ defined near a point $p \in X$.
Note that we do not require the curves entering into the definition of $\gt_k(X)$ to be regular or even index zero.
This definition of $\gt_k(X)$ is extended to $\gt_k(M)$ for $M$ an arbitrary symplectic manifold by taking a supremum over all Liouville domains which symplectically embed into $M$.\footnote{After a first draft of this paper was completed, the authors learned from G. Mikhalkin about independent work defining a similar capacity directly for all symplectic manifolds using an even broader class of almost complex structures and pseudoholomorphic curves. It seems likely that these two definitions are equivalent, but they may have slightly different realms of utility.}

\begin{rmk} 
In the special case of the first capacity $\gt_1$, our definition essentially coincides with Gromov's original definition of  ``symplectic width'' 
via a maxi-min procedure - see \cite[\S4.1]{soft_and_hard}.
\end{rmk}

The following summarizes some of the key properties of $\gt_k$: 
{}
\begin{thm}\label{thm:gt}
For each $k \in \Z_{\geq 1}$, $\gt_k$ is independent of the choice of local divisor and is a symplectomorphism invariant. It satisfies the following properties:
\begin{itemize}
\item
{\bf Scaling:} it scales like area, i.e. $\gt_k(M,\mu \omega) = \mu\gt_k(M,\omega)$ for any symplectic manifold $(M,\omega)$ and $\mu \in \R_{> 0}$.
\item
{\bf Nondecreasing:} we have $\gt_1(M) \leq \gt_2(M) \leq \gt_3(M) \leq \cdots$ for any symplectic manifold $M$.

\item
{\bf Subadditivity:} we have $\gt_{i+j}(M) \leq \gt_i(M) + \gt_j(M)$ for any $i,j \in \Z_{\geq 1}$.

\item 
{\bf Symplectic embedding monotonicity:} It is monotone under equidimensional symplectic embeddings, i.e. $M \hooksymp M'$ implies $\gt_k(M) \leq \gt_k(M')$ for any symplectic manifolds $M,M'$.
\item 
{\bf Closed curve upper bound:} If $(M,\omega)$ is a closed semipositive symplectic manifold satisfying $N_{M,A}\lll \T^{(k)}p\rrr \neq 0$ for some $A \in H_2(M)$, 
then we have $\gt_k(M) \leq [\omega] \cdot A$.
\item
{\bf Stabilization:} For any Liouville domain $X$ we have $\gt_k(X \times B^{2}(c)) = \gt_k(X)$ for any $c \geq \gt_k(X)$, provided that the hypotheses of Proposition~\ref{prop:stab_ub} are satisfied (this holds e.g. for $X$ any four-dimensional convex toric domain).
\end{itemize}
\end{thm}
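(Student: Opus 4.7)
The plan is to establish each listed item by a mixture of formal properties of the maxmin/sup definition and more substantial geometric arguments, with the closed-curve and stabilization upper bounds requiring the bulk of the work.

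First I would dispatch the \emph{formal} items. Scaling is immediate: a compatible almost complex structure for $\om$ is also compatible with $\mu\om$, while the $\om$-energy of any curve (its symplectic area) scales linearly in $\mu$. The nondecreasing property follows because any curve satisfying $\lll \T^{(k+1)} p \rrr$ automatically satisfies $\lll \T^{(k)} p \rrr$, so the infimum of energies over the smaller moduli space bounds the infimum over the larger from above; this inequality survives the supremum over $J$. Symplectic embedding monotonicity for general $M$ is built into the definition via the sup over Liouville subdomains; for two Liouville domains $X \hooksymp X'$ one checks that the direct maxmin value agrees with the sup over Liouville subdomains, so the same inequality holds. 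Independence of the chosen local divisor at $p$ reduces to connectedness of the space of such divisors together with an openness/stability argument: along any smooth deformation of the divisor, the moduli space of curves with contact order $k$ deforms continuously and minimum energies are locally constant, hence constant. Symplectomorphism invariance then follows by pushing forward both $J$ and the local divisor, which yields an energy-preserving bijection of constrained moduli spaces.

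Next, for the closed-curve upper bound, I would run a neck-stretching argument. Suppose $N_{M,A}\lll \T^{(k)} p \rrr \neq 0$ witnessed by a generic almost complex structure $J_M$ on $M$. Given any Liouville domain $X \se M$ with $p \in \intt X$, take an admissible $J_X$ on $\wh X$ and extend it to $M$ with an increasingly long neck along $\bdy X$. The SFT compactness theorem (in the semipositive setting, without virtual perturbations) produces a broken building as a limit; because $p \in \intt X$, some top-level component is an asymptotically cylindrical rational $J_X$-holomorphic curve in $\wh X$ still carrying the constraint $\lll \T^{(k)} p \rrr$, and its $\om$-energy is at most the total symplectic area $[\om] \cdot A$ of the original closed curve. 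This furnishes an admissible curve for $\gt_k$ of energy $\le [\om]\cdot A$; taking the infimum over such curves and then the supremum over $J_X$ and over Liouville subdomains yields $\gt_k(M) \le [\om]\cdot A$.

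Finally, the stabilization identity splits into two inequalities. The lower bound $\gt_k(X) \le \gt_k(X \times B^{2}(c))$ is immediate from symplectic embedding monotonicity applied to $X \hooksymp X \times B^{2}(c)$. The reverse inequality $\gt_k(X \times B^{2}(c)) \le \gt_k(X)$ is exactly what Proposition~\ref{prop:stab_ub} is set up to deliver under the stated hypotheses and for $c \ge \gt_k(X)$, and is invoked here as a black box; its proof is where the real stabilization work lives, presumably via a split almost complex structure on $X \times B^{2}(c)$ forcing constrained curves to project to constrained curves in $X$ with controlled energy.

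The main obstacle is the closed-curve upper bound: without virtual perturbations, one must rule out degenerate neck-stretching limits in which no single piece of the broken configuration retains both the tangency constraint and the rational, asymptotically cylindrical structure needed to feed $\gt_k$. A careful choice of $J_M$ near $p$ and along the stretched neck, combined with an index/energy analysis of the possible SFT buildings, is what guarantees a usable limit component. The analogous technical difficulties for the stabilization upper bound are absorbed into Proposition~\ref{prop:stab_ub} under the hypotheses that are satisfied, e.g., for all four-dimensional convex toric domains.
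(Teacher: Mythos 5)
Your treatment of the stabilization lower bound contains a genuine error: $\gt_k(X) \leq \gt_k(X \times B^{2}(c))$ is \emph{not} immediate from symplectic embedding monotonicity, because the monotonicity property only applies to equidimensional embeddings, and $X \hooksymp X \times B^{2}(c)$ has codimension two. The hypothesis $c \geq \gt_k(X)$ in the theorem is already a signal that something nonformal is happening: for small $c$ the inequality can fail, since a constrained curve in the stabilized domain could have positive ends of action roughly $c$ on orbits coming from the $B^{2}(c)$ factor. The paper's proof (Proposition~\ref{prop:stab_lb}) first smooths the corner of $X \times B^{2}(c)$ so that all Reeb orbits of action below $c-\eps$ lie in $\bdy X \times \{0\}$ with normal Conley--Zehnder index $1$ (Lemma~\ref{lem:smoothing}), and then uses Siefring-type intersection theory with the holomorphic hypersurface $\wh{X}\times\{0\}$ (Lemma~\ref{lem:lie_in_slice}) to show that any curve carrying $\lll \T^{(k)}\wt{p}\rrr$ either has a positive end of action at least $c-\eps$ or is entirely contained in the slice, where it reduces to a constrained curve in $\wh{X}$. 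You have omitted this entire mechanism.

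Two further points. First, you assert Liouville-to-Liouville monotonicity with ``one checks that the direct maxmin value agrees with the sup over Liouville subdomains,'' but that check is precisely the content of Proposition~\ref{prop:mtone} and requires the same neck-stretching/SFT-compactness argument you describe only for the closed-curve bound: one stretches along $\bdy\iota(X)$, uses nondegeneracy of $\bdy X'$ to extract a subsequence with a fixed tuple $\Gamma$ of bounded action, and observes that the bottom level of the limit building retains a component with the constraint $\lll\T^{(k)}p\rrr$ and energy at most $\gt_k(X')$. (In your closed-curve argument the constrained component likewise sits in the \emph{bottom} level, i.e.\ in $\wh{X}$, not the top; and nonemptiness of the moduli spaces along the stretching follows because an empty moduli space is vacuously regular, contradicting $N_{M,A}\lll\T^{(k)}p\rrr\neq 0$.) Second, your divisor-independence argument via ``continuous deformation of the moduli space and local constancy of minimum energies'' is not justified, since the moduli spaces here are not assumed regular and the infimum could a priori jump; the paper instead uses Moser's trick to produce a symplectomorphism carrying the germ of $D$ at $p$ to that of $D'$ at $p'$, which induces an energy-preserving bijection of constrained moduli spaces and settles the matter without any stability analysis.
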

\NI In the penultimate point, $N_{M,A}\lll \T^{(k)}p\rrr$ denotes the Gromov--Witten type invariant which counts closed rational pseudoholomorphic curves in $M$ in homology class $A$ satisfying the local tangency constraint $\lll \T^{(k)}p\rrr$, as defined in \cite{McDuffSiegel_counting}.
Also, $B^{2}(c)$ denotes the closed two-ball of area $c$ (i.e. radius $\sqrt{c/\pi}$), equipped with its standard symplectic form. 
For more detailed explanations and proofs, see \S\ref{sec:prelim} and \S\ref{sec:capacity}.

\begin{rmk}[Stabilization hypotheses]
The hypotheses of Proposition~\ref{prop:stab_ub} roughly amount to the assumption that $\gt_k(X)$ is represented by a moduli space of curves which is sufficiently robust that it cannot degenerate in generic $1$-parameter families.
When this holds, we can iteratively stabilize to obtain $\gt_k(X \times B^2(c) \times \cdots \times B^2(c)) = \gt_k(X)$ for $c \geq \gt_k(X)$, and in particular we have $\gt_k(X \times \C^N) = \gt_k(X)$ for $N \in \Z_{\geq 1}$.
Compared with $\gapac_k$, the extra hypotheses in the stabilization property is one place where we ``pay the price'' for such a simple definition of $\gt_k$, although we do not know whether the extra hypotheses is truly essential.
\end{rmk}

\begin{rmk}[Relationship with $\gapac_k$]
As we explain in \S\ref{subsec:comp_with_SFT}, we must have $\gt_k(X) = \gapac_k(X)$ whenever $X$ is a Liouville domain satisfying the hypotheses of Proposition~\ref{prop:stab_ub}.
In particular, this is the case for all four-dimensional convex toric domains, and we are not aware of any examples with $\gt_k(X) \neq \gapac_k(X)$.
\end{rmk}

\begin{rmk}[Relationship with Gutt--Hutchings capacities]\label{rmk:reln_GH}
In \S\ref{subsec:basic_props}, we define (following \cite{HSC}) a refined family of capacities $\gt_k^{\leq \ell}$ for $k,\ell \in \Z_{\geq 1}$, using the same prescription as for $\gt_k$ except that we now only allow curves having at most $\ell$ positive ends.
Note that the case $\ell = \infty$ recovers $\gt_k = \gt_k^{\leq \infty}$.
The capacities $\{\gt_k^{\leq \ell}\}$ satisfy most of the properties in Theorem~\ref{thm:gt}, except that the closed curve upper bound no longer holds, and monotonicity for $\gt_k^{\leq \ell}$ only holds for \textbf{generalized Liouville embeddings}, i.e. smooth embeddings $\iota: (X,\la) \hookrightarrow (X',\la')$ of equidimensional Liouville domains such that the closed $1$-form $(\iota^*(\la') - \la)|_{\bdy X}$ is exact (c.f. \cite[\S1.4]{Gutt-Hu}).
In \S\ref{subsec:GH}, we show that, at least for four-dimensional convex toric domains, the $\ell = 1$ specialization $\gt_k^{\leq 1}$ coincides with the $k$th Gutt--Hutchings capacity $c_k^{\op{GH}}$ \cite{Gutt-Hu}. The latter is in turn known to agree with the $k$th Ekeland--Hofer capacity  $c_k^{\op{EH}}$ in all examples where both are computed, e.g. ellipsoids and polydisks.
\end{rmk}

\begin{rmk} [Nondecreasing property]
Curiously, for the analogous SFT capacities the nondecreasing property $\gapac_1 \leq \gapac_2 \leq \gapac_3 \leq \cdots$ is not at all obvious from the definition. 
\end{rmk}
\begin{rmk}[Generalizations]
The approach taken in this paper to define $\{\gt_k\}$ naturally generalizes to define various other families of capacities, e.g. by replacing the local tangency constraint $\lll \T^{(k)}p\rrr$ with $k$ generic point constraints, and/or by allowing curves of higher genus.
In this spirit, the very recent preprint \cite{hutchings2022} adapts our approach to define (without relying on Seiberg--Witten theory) a sequence of four-dimensional capacities which agree in many cases with the ECH capacities.
\end{rmk}

\sss

With the capacities $\gt_1,\gt_2,\gt_3,\dots$ at hand, we turn to computations.
Given a compact convex domain $\Omega \subset \R^n$, 
put $X_{\Omega} := \mu^{-1}(\Omega)$, where $\mu: \C^n \rightarrow \R^n_{\geq 0}$ is given by
$$
\mu(z_1,\dots,z_n) = (\pi |z_1|^2,\dots,\pi|z_n|^2).
$$
Define $||-||_{\Omega}^*: \R^n \rightarrow \R$ by $||\vec{v}||_{\Omega}^* := \max\limits_{\vec{w} \in \Omega}\langle \vec{v},\vec{w}\rangle$, where $\langle -,-\rangle$ denotes the standard dot product. 
Note that if $\bdy\Omega$ is smooth, then the maximizer $\vec{w}$ lies in $\bdy \Omega$ and is such that the hyperplane through $\vec{w}$ normal to $\vec{v}$ is tangent to $\bdy\Omega$. 
If $\Omega$ contains the origin in its interior, then $||-||_{\Omega}^*$ is a (non-symmetric) norm, dual to the norm having $\Omega$ as its unit ball. Otherwise, $||-||_{\Omega}^*$ is not generally nondegenerate or even nonnegative, although it is still convenient to treat it like a norm.
Recall that $X_{\Omega}$ is a ``convex toric domain'' if the symmetrization of $\Omega$ about the axes is itself convex (see \S\ref{subsec:fr} for more details).

\begin{thm}\label{thm:main_comp}
Let $X_\Omega$ be a four-dimensional convex toric domain.
For $k \in \Z_{\geq 1}$, we have
\begin{align}\label{eq:gt_orig}
\gt_k(X_\Omega) = \min\sum_{s=1}^q ||(i_s,j_s)||_{\Omega}^*,
\end{align}
where the minimization is over all $(i_1,j_1),\dots,(i_q,j_q) \in \Z_{\geq 0}^2 \setminus \{(0,0)\}$
such that
\begin{itemize}
\item
$\sum_{s=1}^q(i_s + j_s) + q-1 = k$
\item
if $q \geq 2$, then $(i_1,\dots,i_q) \neq (0,\dots,0)$ and $(j_1,\dots,j_q) \neq (0,\dots,0)$. 
\end{itemize}
\end{thm}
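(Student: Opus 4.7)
The plan is to establish matching upper and lower bounds for $\gt_k(X_\Omega)$, with the main technical work concentrated in the lower bound.

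\textbf{Upper bound.} For each admissible tuple $(i_1,j_1),\dots,(i_q,j_q)$, I would symplectically embed $X_\Omega$ into a closed toric symplectic $4$-manifold $M$ (e.g.\ a weighted blow-up of $\CP^2$ or an appropriate Hirzebruch-type surface) whose moment polytope is obtained from $\Omega$ by appending caps in each $(i_s,j_s)$ direction of total area just larger than $\sum_s ||(i_s,j_s)||_\Omega^*$. The heart of the argument is to identify a homology class $A \in H_2(M)$ with symplectic area exactly $\sum_s ||(i_s,j_s)||_\Omega^*$, built from toric divisor classes labeled by the tuple, for which the Gromov--Witten type invariant $N_{M,A}\lll \T^{(k)}p\rrr$ is nonzero. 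Heuristically a representative of $A$ is a rational curve that, after stretching the neck along $\bdy X_\Omega$, decomposes into a cap piece in $\wh{X_\Omega}$ carrying the tangency constraint and a chain of toric curves in $M \setminus X_\Omega$ with asymptotics in the $(i_s,j_s)$ directions. Given such a nonvanishing count, the closed curve upper bound from Theorem~\ref{thm:gt} yields $\gt_k(X_\Omega) \leq \gt_k(M) \leq [\omega]\cdot A = \sum_s ||(i_s,j_s)||_\Omega^*$, and taking the minimum over admissible tuples gives the claim.

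\textbf{Lower bound.} Fix a cylindrical almost complex structure $J$ on $\wh{X_\Omega}$ adapted to the toric structure near $\bdy X_\Omega$, and consider any rational $J$-holomorphic curve through the tangency constraint $\lll \T^{(k)}p\rrr$. Performing SFT neck stretching along $\bdy X_\Omega$, with the degenerate torus-invariant Reeb flow Morse-perturbed, the curve limits to a pseudoholomorphic building whose positive ends are nondegenerate Reeb orbits on $\bdy X_\Omega$. For a convex toric contact boundary the perturbed orbits are indexed by lattice vectors $(i,j) \in \Z_{\geq 0}^2 \setminus \{(0,0)\}$ with action $||(i,j)||_\Omega^*$ and Conley--Zehnder index determined by $i+j$. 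Reading off the tuple $(i_s,j_s)$ at the top of the building, I would then derive the two combinatorial constraints: the equality $\sum_s(i_s+j_s) + q - 1 = k$ from the index-zero condition on a rational curve satisfying an order-$k$ tangency, and the second bullet from intersection-theoretic positivity with a complex coordinate axis of a toric compactification, which rules out multi-ended configurations whose asymptotics are purely horizontal or purely vertical. Summing actions over the positive ends and minimizing over tuples gives the bound.

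\textbf{Main obstacle.} The principal difficulty is the lower bound, since without virtual perturbations one must classify every possible limit building explicitly and in full generality. Key ingredients will be (a) an essentially complete description of $J$-holomorphic curves in $\R \times \bdy X_\Omega$ for toric contact boundaries, exploiting the residual torus action and automatic transversality in dimension four, and (b) a direct geometric or intersection-theoretic exclusion of low-action bubbling through the constraint point $p$ that could potentially absorb action and spoil the counting. A further subtlety is compatibility between action accounting before and after the Morse perturbation of the degenerate Reeb flow: one must ensure that the combinatorial data extracted from the limit building (including the parity/length conventions entering the Conley--Zehnder indices) is stable under perturbation and matches the two bullet points in the theorem statement precisely, with no off-by-one discrepancies and no spurious configurations surviving the action bookkeeping.
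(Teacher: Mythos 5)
Your overall architecture (upper bound via curve existence, lower bound via neck stretching plus index/action accounting) matches the paper's, but both halves have genuine gaps at exactly the points where the real work lies.

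For the upper bound, everything hinges on the unproven assertion that $N_{M,A}\lll \T^{(k)}p\rrr \neq 0$ for a toric compactification $M$ and a class $A$ of area $\sum_s ||(i_s,j_s)||_{\Omega}^*$. This is not a routine verification: it is essentially the content of the closed-toric-surface approach of Chaidez--Hutchings cited in the paper, which (as the paper notes) only succeeds under the extra hypothesis that the lengths of $\Omega$ along the two axes agree, and you would also need to check semipositivity of the (possibly singular/weighted) compactification for the closed curve upper bound of Theorem~\ref{thm:gt} to apply. The paper avoids this entirely: it works in the completion of the fully rounded domain $\tX_\Omega$, trades the tangency constraint for a skinny ellipsoidal constraint (Proposition~\ref{prop:T=E}), produces base-case cylinders and pairs of pants from the ECH cobordism map, and then assembles the general minimizer $\wmin$ by iterated Hutchings--Taubes obstruction bundle gluing, with automatic transversality guaranteeing the glued curves count positively. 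Without either that construction or an actual computation of the closed GW-type invariants, your upper bound is incomplete.

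For the lower bound there are two concrete problems. First, the curves entering the definition of $\gt_k$ are not required to be regular or index zero, so from a limiting building you only get the inequality $\ind(w) \geq 2k$ for the word $w$ of top asymptotics (and even this requires controlling multiply covered components of negative index, which the paper does via Lemma~\ref{lem:C_index} and Lemma~\ref{lem:symp_configs}); converting $\geq 2k$ into the equality $\sum_s(i_s+j_s)+q-1=k$ in the theorem requires the separate combinatorial reduction of Corollary~\ref{cor:gt_refined_lb}, which shows one can always drop the index by $2$ while strictly decreasing action. Second, your proposed exclusion of purely horizontal or purely vertical multi-ended words by ``intersection positivity with a coordinate axis of a toric compactification'' does not obviously work in the completed, non-compact setting, where the asymptotic winding around the divisor is what matters; the paper instead derives this (Lemma~\ref{lem:no_nonperm_curves}) from the relative adjunction formula combined with the asymptotic writhe bound, computing $Q_\tauhut$, $c_\tauhut$ and $\chi$ explicitly for the fully rounded domain. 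You would need to supply an argument at this level of precision, including the $\delta(C) \geq 0$ and $w_\tauhut(C) \leq 0$ inputs, for the second bullet of the theorem to be justified.
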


Using results from \S\ref{sec:rounding}, we have the following appealing reformulation, which we prove at the end of \S\ref{subsec:min_words}. 
If $P \subset \R^2$ is a convex lattice polygon, i.e. a convex polygon such that each vertex lies at an integer lattice point, let $\ell_{\Omega}(\bdy P)$ denote the length of its boundary as measured by $||-||_{\Omega}^*$, and let $|\bdy P \cap \Z^2|$ denote the number of lattice points along the boundary.
Here we allow the degenerate case where $P$ is a line segment, in which case by definition $\bdy P = P$.
Note that $\ell_{\Omega}(\bdy P)$ is unaffected if we translate $\Omega$ so that it contains the origin in its interior, after which $||-||_{\Omega}^*$ becomes nondegenerate.

\begin{cor}\label{cor:latt_form}
For $X_{\Omega}$ a four-dimensional convex toric domain and $k \in \Z_{\geq 1}$, we have:
\begin{align}\label{eq:latt_form}
\gt_k(X_{\Omega}) = \min\left\{\ell_{\Omega}(\bdy P)  \;\middle\vert\;\begin{array}{l} P \subset \R^2 \text{ convex lattice polygon},\\ |\bdy P \cap \Z^2| = k+1\end{array}\right\}.
\end{align}
\end{cor}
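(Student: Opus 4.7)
The plan is to establish \eqref{eq:latt_form} via a two-way correspondence between the valid theorem configurations $(i_1,j_1),\ldots,(i_q,j_q)$ from Theorem~\ref{thm:main_comp} and convex lattice polygons with $k+1$ boundary lattice points. For a convex toric domain $X_\Omega$, the region $\Omega \subset \R_{\geq 0}^2$ contains the origin, so $\|\cdot\|_\Omega^*$ vanishes identically on the closed SW quadrant---in particular on the primitive W, S, and strictly SW lattice directions---while remaining nonnegative on all of $\R^2$. Consequently, any convex polygon whose non-NE edges lie entirely in the W/S/SW wedge satisfies $\ell_\Omega(\bdy P) = \sum_s \|e_s\|_\Omega^*$ with the sum taken only over its NE edges.

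For the inequality $\gt_k(X_\Omega) \geq $ the right-hand side of \eqref{eq:latt_form}, I start with a theorem configuration attaining the minimum in \eqref{eq:gt_orig}. Sorting the vectors $(i_s,j_s)$ by angle and concatenating them yields a convex NE arc from $(0,0)$ to $P := (\sum_s i_s, \sum_s j_s)$. The polygon is then closed by a convex lattice path from $P$ back to $(0,0)$ consisting solely of primitive W, SW, and S edges. The rounding results of \S\ref{sec:rounding} guarantee that this closure can be arranged to have precisely $r = k+1 - \sum_s \gcd(i_s,j_s)$ primitive pieces, so that $|\bdy P \cap \Z^2| = k+1$; the extreme case $q=1$ with $(i_1,j_1)$ axis-aligned is handled via a degenerate line segment from $(0,0)$ to $(i_1,j_1)$. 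In each case the closing edges contribute zero to $\ell_\Omega(\bdy P)$, giving $\ell_\Omega(\bdy P) = \sum_s \|(i_s,j_s)\|_\Omega^*$ and matching the theorem minimum.

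For the reverse inequality, given any convex lattice polygon $P$ with $|\bdy P \cap \Z^2| = k+1$, decompose its boundary into primitive lattice pieces and let $w_1,\ldots,w_p$ denote those in the closed first quadrant. After a suitable regrouping (appealing again to \S\ref{sec:rounding} to address axis-aligned cases and to enforce the non-all-zero constraint of Theorem~\ref{thm:main_comp}), these form a valid theorem configuration for some parameter $k' \geq k$, with norm sum $\sum_s \|w_s\|_\Omega^* \leq \ell_\Omega(\bdy P)$ since any NW or SE edges of $P$ contribute only nonnegative amounts to the boundary length. Theorem~\ref{thm:main_comp} then gives $\gt_{k'}(X_\Omega) \leq \ell_\Omega(\bdy P)$, and the monotonicity $\gt_k \leq \gt_{k'}$ from Theorem~\ref{thm:gt} completes the argument. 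The principal obstacle is the closure feasibility in the forward direction: for arbitrary (possibly highly non-primitive) theorem data one must realize a convex lattice path in the W/S/SW wedge with exactly the required number of primitive pieces, and this is precisely where the rounding lemmas of \S\ref{sec:rounding} become essential.
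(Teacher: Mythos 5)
Your forward direction is, up to a translation, the paper's own construction: concatenate the minimizing vectors into a convex NE arc and close it with edges pointing into the closed third quadrant, which contribute nothing to $\ell_\Omega$ since $||\vec v||_{\Omega}^* = 0$ for $\vec v \in \R_{\leq 0}^2$. The one thing to repair is your appeal to ``rounding lemmas'' for closure feasibility: those results say nothing about lattice paths. What you should invoke instead is Corollary~\ref{cor:main_thm_4_cases} (i.e.\ Corollary~\ref{cor:weak_perm}), which lets you assume the minimizer is in one of four canonical forms; in forms (i)--(iii) every vector is primitive, so $r = k+1-q = \sum_s(i_s+j_s)$ and the closure is simply $\sum_s i_s$ westward and $\sum_s j_s$ southward unit edges, while form (iv) is your degenerate segment. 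With that substitution the forward inequality is fine, and the ``principal obstacle'' you flag disappears.

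The reverse direction has a genuine gap. You take the primitive boundary pieces of $P$ lying in the closed first quadrant and assert they form a configuration with parameter $k' \geq k$. This is false: the non-NE part of $\bdy P$ can carry almost all of the boundary lattice points while contributing no NE pieces. For instance $P = \op{conv}\bigl((0,0),(1,0),(1,1),(0,N)\bigr)$ has $|\bdy P \cap \Z^2| = N+3$, so $k = N+2$, but its only NE pieces are $(1,0)$ and $(0,1)$, giving $k' = 3$; one can even build polygons with no NE edges at all. Your chain $\ell_{\Omega}(\bdy P) \geq \sum_s ||w_s||_{\Omega}^* \geq \gt_{k'} \geq \gt_k$ then fails at the last step, and the length $||(0,N-1)||_{\Omega}^*$ hidden in the NW edge of $P$ is never accounted for. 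The paper's remedy is to first pass to $P' := \op{conv}\bigl(P \cup \{(A,D),(B,D),(A,C)\}\bigr)$, where $[A,B]\times[C,D]$ is the bounding box of $P$: this preserves $\ell_{\Omega}$ (the added edges point W and S, and the positive parts of the discarded NW/SE edges are absorbed into the new NE arc), does not decrease the number of boundary lattice points, and forces every non-NE edge to be axis-parallel, so that the NE pieces of $P'$ realize all of $\ell_{\Omega}(\bdy P') = \ell_{\Omega}(\bdy P)$ with $k'+1 = |\bdy P' \cap \Z^2| \geq k+1$ (and, for nondegenerate $P$, automatically satisfy the non-all-zero constraint). Without this enlargement, or an equivalent bookkeeping of the lattice points and $||-||_{\Omega}^*$-length carried by the NW/SW/SE edges, the inequality $\gt_k(X_\Omega) \leq \ell_{\Omega}(\bdy P)$ is not established.
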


\begin{rmk}

\NI (i)  
The $k$th ECH capacity $c_k^{\op{ECH}}(X_{\Omega})$ is given by the exact same formula except that we replace $|\bdy P \cap \Z^2|$ with $|P \cap \Z^2|$, i.e. the number of lattice points in both the interior and boundary of $P$ (see \cite{Hutchings_quantitative_ECH}).
Under the correspondence between lattice polygons and generators, $|\bdy P \cap \Z^2|$ corresponds to the (half) Fredholm index, whereas $|P \cap \Z^2|$ corresponds to the (half) ECH index. 
It is interesting to ask whether Corollary~\ref{cor:latt_form} holds for more general domains $\Omega \subset \R^2$. One can also ask about extensions to higher dimensions, with lattice polygons in $\R^2$ replaced by lattice paths in $\R^n$.

\MS

\NI (ii)
Note that Corollary~\ref{cor:latt_form} involves arbitrary lattice points, whereas Theorem~\ref{thm:main_comp} involves only nonnegative ones.
Conceptually this mirrors the fact that $X_\Omega$ has the same values for $\gt_k$ as its associated ``free toric domain'' $\mathbb{T}^2 \times \Omega$, thanks to the ``Traynor trick'' (see e.g. \cite{landry2015symplectic}).
\MS

\NI (iii)
Closely related formulas appear in the recent work \cite{chaidez2021lattice}. In particular, \cite[Cor. 1]{chaidez2021lattice} computes $\gapac_k(X_\Omega)$ under the additional assumption that the lengths of $\Omega$ along the $x$ and $y$ axes agree, which holds e.g. if $X$ is the round ball $B^4(c)$ or the cube $B^2(c) \times B^2(c)$.
Whereas our upper bounds come from curves constructed via the ECH cobordism map and iterated obstruction bundle gluing (see \S\ref{sec:constructing_curves}), the upper bounds in \cite{chaidez2021lattice} come from cocharacter curves in (possibly singular) closed toric surfaces. 
\MS

\NI (iv)
The work \cite{chscI} offers another combinatorial computation of $\gapac_k(X_\Omega)$ for any four-dimensional convex toric domain $X_\Omega$, and in fact it also computes the full family of capacities $\{\gapac_\bb(X_\Omega)\}$. However, since that framework involves a nontrivial recursive algorithm, it is not clear how to use it to extract the above formulas.
\end{rmk}

\subsection{Examples and applications}

 In \S\ref{subsec:min_words} we significantly simplify the combinatorial optimization problem involved in Theorem~\ref{thm:main_comp} by showing that there are only a few possibilities for the minimizers. 
 Indeed, Corollary~\ref{cor:weak_perm} implies the following simplification of Theorem~\ref{thm:main_comp}:
 
 \begin{cor}\label{cor:main_thm_4_cases}
Let $X_\Om$ be a four-dimensional convex toric domain as in Theorem~\ref{thm:main_comp}, and assume that $\Om$ has sides of length $a,b$ along the $x$ and $y$ axes respectively, with $a \geq b$.
Then there is a minimizer $(i_1,j_1),\dots,(i_q,j_q)$ taking one of the following forms:
 \begin{enumerate}
 \item[{\rm (i)}]
 $(0,1)^{\times i} \times (1,1)^{\times j}$ for $i \geq 0$, $j \geq 1$
 \item[{\rm (ii)}]
 $(0,1)^{\times i} \times (1,s)$ for $i \geq 0$ and $s \geq 2$
 \item[{\rm (iii)}]
 $(0,1)^{\times i} \times (1,0)$ for $i \geq 1$
 \item[{\rm (iv)}]
 $(0,s)$ for $s \geq 1$.
 \end{enumerate}
 \end{cor}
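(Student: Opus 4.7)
The plan is to deduce this corollary as a short consequence of Corollary~\ref{cor:weak_perm}, which will be established in \S\ref{subsec:min_words}. That result classifies the possible shapes of minimizing tuples for the combinatorial optimization in Theorem~\ref{thm:main_comp} via a sequence of norm-non-increasing ``weak permutation'' moves, each of which preserves both the constraint $\sum_s(i_s+j_s)+q-1=k$ and the total norm $\sum_s \|(i_s,j_s)\|_\Omega^*$.

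Granting Corollary~\ref{cor:weak_perm}, the present statement follows by a direct inspection. The canonical shapes produced by the weak permutation procedure, after reordering the tuple entries and possibly swapping the two coordinate roles (a symmetry arising from the automorphism of $\C^2$ interchanging the two factors, which preserves the class of convex toric domains), match exactly the forms (i)--(iv). A short case split by $q$ and by which entries of the tuple have a zero coordinate suffices: the singleton case $q=1$ with a pure second-coordinate vector gives case (iv); a canonical tuple consisting of a number of $(0,1)$'s together with a single ``special'' vector $(1,s)$ gives case (iii) when $s=0$, case (ii) when $s\geq 2$, and (combined with the possibility of multiple copies) case (i) when $s=1$.

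The real work therefore sits in Corollary~\ref{cor:weak_perm}. The main technical input is sublinearity of $\|\cdot\|_\Omega^*$ (convexity plus positive homogeneity), which is immediate from convexity of $\Omega$. This produces weak permutation moves such as: for two entries $(1,j_s)$ and $(1,j_t)$ with a common first coordinate, the pair can be ``rebalanced'' toward a preferred split using convexity of the one-variable restriction $t\mapsto\|(1,t)\|_\Omega^*$ without increasing the norm sum. These rebalancing moves are combined with paired split--merge operations on two entries simultaneously (preserving $q$ and the total $L^1$-norm $M=\sum_s(i_s+j_s)$, hence $k$), which allow concentration of all non-$(0,1)$ content into as few entries as possible.

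The main obstacle is the termination and exhaustiveness argument for these moves. One must track a suitable lexicographic complexity on the tuple to ensure that iteration halts after finitely many steps, and carefully handle degenerate cases where $\|\cdot\|_\Omega^*$ fails to be strictly convex (e.g.\ when $\Omega$ is a polygon, the support function is piecewise linear with ``flat'' directions where the basic moves are only norm-preserving and further symmetry-breaking choices must be made to land in one of the listed canonical forms rather than a merely equivalent one).
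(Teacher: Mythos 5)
Your deduction of the corollary itself is exactly the paper's: the forms (i)--(iv) are a verbatim transcription of the four forms in Corollary~\ref{cor:weak_perm} under the dictionary between tuples $(i_s,j_s)$ satisfying the constraints of Theorem~\ref{thm:main_comp} and weakly permissible elliptic words of index $2k$ (with the coordinate swap absorbed by the paper's standing convention $a>b$, arranged by reflecting $\Omega$ about the diagonal). So for the statement at hand the approach is the same, and the translation is correct.

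Since most of your proposal sketches how Corollary~\ref{cor:weak_perm} would be proved, one comparison is worth recording. Your anticipated moves are $q$-preserving rebalancings justified by sublinearity, with termination controlled by a lexicographic complexity and special care in ``flat'' directions of $\|\cdot\|_\Omega^*$. The paper's engine is different: the central reduction (Lemma~\ref{lem:remove_0,1}) replaces $e_{i+1,j+1}$ by $e_{0,1}\times e_{i,j}$, which \emph{increases} $q$ by one while decreasing $\sum_s(i_s+j_s)$ by one, so the index $\sum_s(i_s+j_s)+q-1$ is preserved but the word length is not; the inequality $\tcalA(e_{0,1}\times e_{i,j})<\tcalA(e_{i+1,j+1})$ uses convexity of $\Omega^\fr$ together with $a>b$. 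Moreover the termination and degeneracy issues you flag do not arise in the paper's setup: every move strictly decreases the perturbed action $\tcalA$, and Lemma~\ref{lem:fr_action_conds}(d) guarantees (by genericity of the perturbation) that distinct acceptable words have distinct actions, so there are no norm-preserving ``flat'' moves and only finitely many words below any action bound. The residual work is then the finite case analysis of Proposition~\ref{prop:strong_perm} on the single entry with nonzero first coordinate. Your scheme could likely be made to work, but you would indeed have to supply the termination and tie-breaking arguments that the paper avoids by design.
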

 \NI This formulation is particularly useful for extracting closed-form expressions for $\gt_k$ in various families of examples, as in the following results.
 
Let $$
E(a_1,a_2) := \{(z_1,z_2) \in \C^2\;|\; \tfrac{1}{a_1}\pi|z_1|^2 + \tfrac{1}{a_2}\pi|z_2|^2 \leq 1\}
$$
 denote the ellipsoid with area factors $a_1,a_2$.
Up to scaling and symplectomorphism, we can assume that $a_2 = 1$ and $a_1 \geq 1$.
 \begin{thm}\label{thm:ell_comp}
 \hfill
 \begin{enumerate}
 \item[{\rm (i)}] For $1 \leq a \leq 3/2$, we have 
\begin{align}
\gt_k(E(a,1)) = 
\begin{cases}
1 + ia&\text{ for } k = 1+3i\text{ with } i \geq 0\\
a+ia&\text{ for } k = 2+3i\text{ with } i \geq 0\\
2 + ia& \text{ for } k = 3+3i\text{ with } i \geq 0.
\end{cases} \label{eq:ellip_comp3}
\end{align}

\item[{\rm (ii)}]  For $a > 3/2$, we have
\begin{align}
\gt_k(E(a,1)) = 
\begin{cases}
k&\text{ for } 1 \leq k \leq \lfloor a \rfloor\\
a + i&\text{ for } k = \lceil a \rceil + 2i \text{ with } i \geq 0\\
\lceil a \rceil + i&\text{ for } k = \lceil a \rceil + 2i+1 \text{ with } i \geq 0.
\end{cases} \label{eq:ellip_comp2}
\end{align}
\end{enumerate}
\end{thm}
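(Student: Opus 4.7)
The plan is to specialize Theorem~\ref{thm:main_comp} (with its simplification Corollary~\ref{cor:main_thm_4_cases}) to $E(a,1) = X_\Omega$, where $\Omega$ is the triangle with vertices $(0,0), (a,0), (0,1)$. For $(i,j) \in \Z_{\geq 0}^2$, the dual norm is
\begin{equation*}
\|(i,j)\|_\Omega^* = \max_{(x,y) \in \Omega}(ix + jy) = \max(ia,\ j),
\end{equation*}
since the linear functional attains its maximum at a vertex. Using this I would tabulate, for each of the four candidate families from Corollary~\ref{cor:main_thm_4_cases}, the pair (value of $k$, length): Form (i) $(0,1)^{\times i}\times(1,1)^{\times j}$ with $i\geq 0, j\geq 1$ gives $k = 2i+3j-1$ and length $= i + ja$; Form (ii) $(0,1)^{\times i}\times(1,s)$ with $s \geq 2$ gives $k = 2i+s+1$ and length $= i+\max(a,s)$; Form (iii) $(0,1)^{\times i}\times(1,0)$ with $i\geq 1$ gives $k = 2i+1$ and length $= i+a$; Form (iv) $(0,s)$ gives $k=s$ and length $=s$. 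The problem reduces to minimizing length over all forms producing a given $k$.

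For part (i), when $1 \leq a \leq 3/2$, I would rewrite Form (i)'s length as $i\bigl(1 - \tfrac{2a}{3}\bigr) + \tfrac{(k+1)a}{3}$. Since $1 - 2a/3 \geq 0$, this is minimized by taking $i$ as small as possible subject to $2i+3j = k+1$ and $j \geq 1$. Splitting on the residue of $k$ modulo $3$ reproduces the three subcases of \eqref{eq:ellip_comp3}, with the proviso that the tiny cases $k = 1, 3$ must instead be realized by Form (iv) with $s=1$ and Form (ii) with $s = 2$ respectively. Optimality is then reduced to checking, in each residue class, that none of Forms (ii)--(iv) produces a smaller length. These are short inequalities, e.g.\ for $k \equiv 0 \pmod 3$ writing $k = 3+3i$, Form (iii) yields $(i+1)+a \geq 2+ia$ iff $a \geq 1$, and Form (iv) gives $k = 3+3i \geq 2 + ia$ since $a \leq 3/2$.

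For part (ii), when $a > 3/2$, the coefficient $1 - 2a/3$ in Form (i) is negative, so Form (i) is no longer competitive, and the minimum shifts to Forms (ii) and (iv). I would split according to the size of $k$ relative to $\lfloor a\rfloor$ and the parity of $k - \lceil a\rceil$. For $1 \leq k \leq \lfloor a \rfloor$, Form (iv) with $s = k$ gives length $k$, and one checks that Forms (i)--(iii) all exceed $k$ in this range using $a > 3/2$. For $k = \lceil a\rceil + 2i$, Form (ii) with $s = \lceil a\rceil$ and the appropriate number of $(0,1)$-factors yields length $i + a$; for $k = \lceil a\rceil + 2i + 1$, either Form (ii) with $s = \lceil a\rceil$ (one more $(0,1)$-factor) or Form (iii) yields length $\lceil a\rceil + i$. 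Optimality against the other forms is again a routine comparison.

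The main obstacle is the bookkeeping: verifying pairwise that no other form beats the declared minimizer in each residue class and parameter regime. The comparisons are tight precisely at $a = 3/2$, which is why the formula changes there, and I would also handle integer $a$ carefully in part (ii) since $\lfloor a\rfloor = \lceil a\rceil$ collapses two subfamilies. I expect the cleanest write-up is a small table giving, for each residue class of $k$, the minimum length coming from each form, and then reading off the global minimum; this makes the structural break at $a = 3/2$ transparent.
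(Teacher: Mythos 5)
Your approach is the same as the paper's: specialize Theorem~\ref{thm:main_comp} via Corollary~\ref{cor:main_thm_4_cases} to the triangle $\Omega$ with vertices $(0,0),(a,0),(0,1)$, compute $\|(i,j)\|_{\Omega}^*=\max(ia,j)$, and compare the four candidate families case by case, with the break at $a=3/2$. The outline is correct, but a few of your illustrative comparisons are off and would need repair in the write-up. In part (i), for $k=3+3i$ Form (iii) requires $2i''+1=3+3i$, so it is only available for $i$ even with $i''=1+3i/2$ and length $1+\tfrac{3i}{2}+a$ (your ``$(i+1)+a$'' corresponds to $k=2i+3$, not $3+3i$); the paper avoids this bookkeeping entirely by eliminating Form (iii) once and for all via $\|(1,2)\|_{\Omega}^*=\max(a,2)<1+a=\|(0,1)\|_{\Omega}^*+\|(1,0)\|_{\Omega}^*$, which replaces $(0,1)\times(1,0)$ by $(1,2)$ at equal index and strictly smaller length. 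In part (ii), Form (i) does \emph{not} drop out: for $3/2<a<2$ the minimizer for $k=2i+2$ is $(0,1)^{\times i}\times(1,1)$, i.e.\ Form (i) with $j=1$ (and ``Form (ii) with $s=\lfloor a\rfloor$'' is not well-formed there since $s\geq 2$ is required); what the paper actually shows is that Form (i) survives only with $j=1$. Finally, you have the parities swapped: $s=\lfloor a\rfloor$ realizes $k=\lceil a\rceil+2i$ with length $i+a$, while $s=\lceil a\rceil$ realizes $k=\lceil a\rceil+2i+1$ with length $i+\lceil a\rceil$.
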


\begin{example}\label{ex:concrete}

We illustrate Theorem~\ref{thm:ell_comp} with a simple embedding example which is a special case of \cite[Thm. 1.1]{cristofaro2021higher}.
The first few $\gt_k$ capacities are:
\[
\begin{tabular}{l|l|l|l|l|l|l|l|l|l|l|l|l}
$k$ & 1 & 2 & 3 & 4 & 5 & 6 & 7 & 8 & 9 & 10 & 11 & 12 \\ \hline
$\gt_k(E(1,7))$ & 1 & 2 & 3 & 4 & 5 & 6 & 7 & 7 & 8 & 8 & 9 & 9 \\ \hline
$\gt_k(E(1,2))$ & 1 & 2 & 2 & 3 & 3 & 4 & 4 & 5 & 5 & 6 & 6 & 7
\end{tabular}
\]
This gives a lower bound for stabilized embeddings $E(1,7) \times \C^N \hooksymp \mu \cdot E(1,2) \times \C^N$ (with $N \geq 1$) of $\mu \geq 7/4$.
By \cite[Cor. 3.4]{cristofaro2021higher} this is optimal, i.e. there exists a stabilized symplectic embedding realizing this lower bound.
In particular, this outperforms the Gutt--Hutchings (or Ekeland--Hofer) capacities, the first few of which are: 
\[
\begin{tabular}{l|l|l|l|l|l|l|l|l|l|l|l|l}
$k$ & 1 & 2 & 3 & 4 & 5 & 6 & 7 & 8 & 9 & 10 & 11 & 12 \\ \hline
$c_k^{\op{GH}}(E(1,7))$ & 1 & 2 & 3 & 4 & 5 & 6 & 7 & 7 & 8 & 9 & 10 & 11\\ \hline
$c_k^{\op{GH}}(E(1,2))$ & 1 & 2 & 2 & 3 & 4 & 4 & 5 & 6 & 6 & 7 & 8 & 8
\end{tabular},
\]
and in fact the best bound obtained by the full infinite sequence is $\mu \geq 3/2$.
By contrast, the ECH capacities give a stronger lower bound, which evidently cannot stabilize. Indeed, we have:
\[
\begin{tabular}{l|l|l|l|l|l|l|l|l|l|l|l|l}
$k$ & 1 & 2 & 3 & 4 & 5 & 6 & 7 & 8 & 9 & 10 & 11 & 12 \\ \hline
$c_k^{\op{ECH}}(E(1,7))$ & 1 & 2 & 3 & 4 & 5 & 6 & 7 & 7 & 8 & 8 & 9 & 9\\ \hline
$c_k^{\op{ECH}}(E(1,2))$ & 1 & 2 & 2 & 3 & 3 & 4 & 4 & 4 & 5 & 5 & 5 & 6
\end{tabular},
\]
giving the lower bound $\mu \geq 9/5 > 7/4$ for the unstabilized problem $E(1,7) \hooksymp \mu \cdot E(1,2)$.
Note that the volume bound is $\mu \geq \sqrt{7/2} \approx 1.87 > 9/5$, and this is necessarily recovered by the full sequence of ECH capacities since these are known to give sharp obstructions for four-dimensional ellipsoid embeddings (and also their asymptotics recover the volume). 
\end{example}

Now let $P(a_1,a_2) := B^2(a_1) \times B^2(a_2)$ denote the polydisk with area factors $a_1,a_2$.
Again, without loss of generality we can assume $a_2 = 1$ and $a_1 \geq 1$.
\begin{thm}\label{thm:poly_comp}
For $k \in \Z_{\geq 1}$ and $a\geq 1$ we have
\begin{align}\label{eq:poly_formula}
\gt_k(P(a,1)) = \min(k,a+\lceil \tfrac{k-1}{2} \rceil).
\end{align}
\end{thm}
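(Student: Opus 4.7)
The plan is to specialize Theorem~\ref{thm:main_comp} to the polydisk $P(a,1)$, reduce to the four candidate minimizers furnished by Corollary~\ref{cor:main_thm_4_cases}, and carry out the resulting elementary optimization.

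First I would observe that $P(a,1) = X_\Omega$ for the rectangle $\Omega = [0,a] \times [0,1] \subset \R^2$, and compute the dual norm on nonnegative vectors. Since $\Omega$ lies in the closed first quadrant, the linear functional $\langle (i,j), - \rangle$ with $i,j \geq 0$ is maximized on $\Omega$ at the corner $(a,1)$, giving
\[
\|(i,j)\|_\Omega^* \;=\; a\,i + j \qquad \text{for all } (i,j) \in \Z_{\geq 0}^2.
\]

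Next I would plug this into the formula of Theorem~\ref{thm:main_comp}, using Corollary~\ref{cor:main_thm_4_cases} to restrict attention to the four families (i)--(iv). For each family I would write down the constraint $\sum_s(i_s+j_s) + q - 1 = k$ as a relation between the free parameters and $k$, and then express the cost $\sum_s(a\,i_s + j_s)$ in terms of those parameters, obtaining:
\begin{itemize}
\item Case (i), $(0,1)^{\times i}\times(1,1)^{\times j}$: constraint $k = 2i + 3j - 1$ (so $j$ and $k$ have opposite parity), cost $i + (a+1)j = \tfrac{k+1}{2} + (a - \tfrac{1}{2})j$;
\item Case (ii), $(0,1)^{\times i}\times(1,s)$ with $s \geq 2$: constraint $k = 2i + s + 1$, cost $a + k - i - 1$;
\item Case (iii), $(0,1)^{\times i}\times(1,0)$ with $i \geq 1$: constraint $k = 2i+1$ (so $k$ is odd), cost $a + (k-1)/2$;
\item Case (iv), $(0,s)$: constraint $k = s$, cost $k$.
\end{itemize}

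Then I would minimize within each family. Since $a \geq 1 > 1/2$, the cost in (i) is minimized by taking $j$ as small as possible subject to the parity constraint: $j=1$ when $k$ is even (giving cost $a + k/2$) and $j=2$ when $k$ is odd (giving cost $2a + (k-1)/2$). The cost in (ii) is minimized by taking $i$ as large as possible, producing cost $a + \lceil (k-1)/2 \rceil + 1$. Comparing all four optima shows that the infimum is achieved in (iii) when $k$ is odd and in (i) with $j=1$ when $k$ is even, so the minimum among (i)--(iii) is exactly $a + \lceil (k-1)/2 \rceil$. Taking the further minimum with case (iv) yields $\min\bigl(k,\, a + \lceil (k-1)/2\rceil\bigr)$, as claimed.

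There is no real obstacle here; the only subtlety is the parity bookkeeping in case (i), which is what forces the ceiling function to appear in the final formula.
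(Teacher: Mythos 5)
Your proposal is correct and follows essentially the same route as the paper: specialize Theorem~\ref{thm:main_comp} to the rectangle, compute $\|(i,j)\|_\Omega^* = ai+j$, and use Corollary~\ref{cor:main_thm_4_cases} to reduce to the same short list of candidate minimizers (the paper rules out family (ii) and forces $j=1$ in family (i) by the same inequalities your parametric optimization encodes). The only nitpick is the degenerate case $k=1$, where families (i)--(iii) are empty and the minimum comes from $(0,1)$ alone; since $a\geq 1$ the formula $\min(k,a+\lceil(k-1)/2\rceil)=1$ still holds, so this does not affect the result.
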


\begin{rmk}[Sharp obstructions] \label{rmk:poly_formula}
    Example~\ref{ex:concrete} generalizes as follows.
By complementing Theorem~\ref{thm:ell_comp} with explicit embedding constructions, 
 \cite[Thm. 1.1]{cristofaro2021higher} shows that the capacities $\{\gt_k\}$ are sharp for embeddings of the form
 $E(a,1) \times \C^N \hooksymp \mu \cdot E(b,1) \times \C^N$ with $a \geq b+1 \geq 3$ integers of the opposite parity, and $\mu \in \R_{> 0}, N \in \Z_{\geq 1}$, and such an embedding exists if and only if $\mu \geq \tfrac{2a}{a+b-1}$.
 Similarly, \cite[Thm. 1.3]{cristofaro2021higher} shows that the capacities $\{\gt_k\}$ are sharp for embeddings of the form $$
 E(a,1) \times \C^N \hooksymp \mu \cdot P(b,1) \times \C^N
 $$
  with $b \in \R_{\geq 1}$ (not necessarily an integer), $a \geq 2b-1$ any odd integer, and $\mu \in \R_{> 0}, N \in \Z_{\geq 1}$, and such an embedding exists if and only if $\mu \geq \tfrac{2a}{a+2b-1}$.

For embeddings of the form $E(a,1)\times \C^N \hooksymp \mu \cdot B^4(b) \times \C^N$ with $N \in \Z_{\geq 1}$, it was observed in \cite{HSC} that the capacities $\{\gapac_k\}$ (and hence also $\{\gt_k\}$ by the results of this paper) give sharp obstructions when $a \in 3\Z_{\geq 1} - 1$.
On the other hand, for all other $a \in \R_{> 1}$ we do not expect optimal obstructions from the capacities $\{\gapac_k\}$, but rather from the full family $\{\gapac_\bb\}$ (see the discussion at the end of \cite[\S6.3]{HSC}). It is natural to ask whether a ``naive'' analogue $\{\wt{\gapac}_\bb\}$ could be defined and computed in the spirit of this paper.
\end{rmk}

\begin{rmk}
The formulas \eqref{eq:poly_formula} also appeared for $\gapac_k$ in \cite{HSC} in the case of odd $k$, based on a slightly different computational framework.
 \end{rmk}

\sss

Next, we consider a more complicated family of examples.
Given $c \geq 1$ and $(a,b) \in \R_{> 0}^2$, let $Q(a,b,c) \subset \R_{\geq 0}^2$ denote the quadrilateral with vertices $(0,0),(0,1),(c,0),(a,b)$.
We note that $X_{Q(a,b,c)} \subset \C^2$ is a convex toric domain if and only if we have $a \leq c$, $b \leq 1$, and $a+bc \geq c$. 
The next result gives the formula for $\gt_k$ when $\max(a+b,c)\leq 2$;  the case $\max(a+b,c)>2$ is similar to case (ii) below (see Remark~\ref{rmk:abc}).

\begin{thm}\label{thm:quad}
Let $X: = X_{Q(a,b,c)}$ be a convex toric domain for some $c \geq 1$ and $(a,b) \in \R_{> 0}^2$, and put $M := \max(a+b,c)$.

\begin{enumerate}
\item[{\rm (i)}] For $M \leq3/2$, we have:  

\begin{align}\label{eq:quad_1}
\begin{split}
&\gt_1(X) = 1, \quad   \gt_2(X) = M, \quad 
\gt_3(X) = \min\bigl(\max(2,a+2b),\ 1+c\bigr),\\ 
& \gt_4 = 1 + M, \quad \gt_5 = 2M, \quad \gt_6 = 2+M,\\
& \gt_{k+3}(X) = \gt_k(X) + M, \quad k\ge 4.
\end{split}
\end{align}

\item[{\rm (ii)}]  For $3/2\leq M\leq 2$, then $\gt_k(X)$ is as above for $k\le 4$, and
\begin{align}\label{eq:quad_2}
\begin{split}
&\gt_5(X) = \min\bigl(\max(3,1+ a+2b),\ 2 M, \ 2+c\bigr),\\
& \gt_{k+2}(X) = 1 + \gt_{k}(X) , \quad k\ge 4.
\end{split}
\end{align}

\end{enumerate}
\end{thm}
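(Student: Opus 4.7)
The plan is to apply Corollary~\ref{cor:main_thm_4_cases} to the quadrilateral $\Omega = Q(a,b,c)$ and carry out the resulting combinatorial optimization explicitly. Since $\Omega$ has vertices $(0,0)$, $(0,1)$, $(c,0)$, $(a,b)$, the support function at a nonnegative vector is attained at a vertex, giving
\[
\|(i,j)\|^*_\Omega \;=\; \max(j,\, ci,\, ai+bj).
\]
In particular $\|(0,1)\|^* = 1$, $\|(1,1)\|^* = M$, $\|(0,s)\|^* = s$, and $\|(1,s)\|^* = \max(s,\, c,\, a+sb)$. Substituting into Corollary~\ref{cor:main_thm_4_cases} exhibits $\gt_k(X)$ as the minimum over four candidate families: type~(i) $i+jM$ with $2i+3j=k+1$, $j \geq 1$, $i\geq 0$; type~(ii) $i+\max(s,c,a+sb)$ with $2i+s=k-1$, $s \geq 2$, $i\geq 0$; type~(iii) $(k-1)/2+c$ (valid only for odd $k \geq 3$); and type~(iv) $k$.

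Next I would handle the base cases by directly enumerating the short list of feasible $(i,j)$ and $(i,s)$ for each $k \in \{1,\ldots,6\}$, using $M \leq 3/2$ (or $M \leq 2$ in case~(ii)) to eliminate dominated candidates. For example, $\gt_2 = M$ comes from type~(i) with $(i,j)=(0,1)$; $\gt_3 = \min(\max(2,a+2b),\, 1+c)$ arises as a duel between type~(ii) with $(i,s)=(0,2)$ and type~(iii) with $i=1$, with all other candidates bounded below by $3$; in case~(i) the value $\gt_5 = 2M$ comes from type~(i) with $(i,j)=(0,2)$, using $2M \leq 3$ to dominate the rest. The extra slack allowed by $M \leq 2$ in case~(ii) is what produces the three-way minimum \eqref{eq:quad_2} for $\gt_5$.

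Third, I would establish the recursions $\gt_{k+3} = \gt_k + M$ (case~(i)) and $\gt_{k+2} = \gt_k + 1$ (case~(ii)) for $k \geq 4$ by matching upper and lower bounds. The upper bounds are immediate: appending $(1,1)$ (respectively $(0,1)$) to any minimizer for $\gt_k$ yields a valid configuration of the same type for $k+3$ (resp.\ $k+2$) whose total norm increases by exactly $M$ (resp.\ $1$). For the lower bound, I would enumerate the four candidates at $k' = k+3$ (resp.\ $k' = k+2$). In type~(i), the cost $i + jM = (k'+1)/2 + j(M - 3/2)$ is linear in $j$, so for $M \leq 3/2$ it is minimized at the largest feasible $j$, and removing one $(1,1)$ from that extremal configuration recovers the type-(i) minimizer of $\gt_k$, producing exactly $\gt_k + M$. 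The remaining types~(ii), (iii), (iv) are controlled using the convex-toric inequalities $a \leq c$, $b \leq 1$, $c \leq M$, together with $a + sb \leq (s-1)M + (a+b) \leq sM$, which combined with the inductive formula for $\gt_k$ shows that none of these alternatives beats $\gt_k + M$. The case~(ii) regime $3/2 \leq M \leq 2$ is handled by the same template, with the role of $(1,1)$ now taken by $(0,1)$ since $M \geq 3/2$ makes the $(0,1)$-increment rate $1/2$ cheaper per unit $k$ than the $(1,1)$-increment rate $M/3$.

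The principal obstacle is this lower-bound step near the transition thresholds. At $M = 3/2$ the per-$k$ costs of the $(1,1)$ and $(0,1)$ increments exactly coincide, so the route from $\gt_k$ to $\gt_{k+3}$ admits several extremal realizations, and one must check that every admissible mixture still returns the same value. A parallel delicacy occurs near $M = 2$, where the type-(iii) candidate $(k-1)/2 + c$ grows at rate $1/2$ in $k$, matching the $(0,1)$-based route of case~(ii). Carrying out the parity-sensitive enumeration of type-(ii) configurations $(0,1)^i (1,s)$ with $s \geq 2$, and aligning it with the inductive hypothesis, is therefore the main combinatorial bookkeeping required.
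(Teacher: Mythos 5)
Your proposal is correct and follows essentially the same route as the paper: reduce via Corollary~\ref{cor:main_thm_4_cases} to the four candidate families with the support function $\|(i,j)\|^*_\Omega=\max(j,ci,ai+bj)$, enumerate the base cases $k\le 6$, and prove the period-$3$ (resp.\ period-$2$) recursion by comparing the per-index cost of the $(1,1)$ and $(0,1)$ increments against the remaining candidate types. The one piece you defer as ``bookkeeping''---eliminating $(1,s)$ for $s\ge 3$---is exactly what the paper does via the inequality $\|(1,s)\|^*_\Omega>\|(0,1)\|^*_\Omega+\|(1,s-2)\|^*_\Omega$, checked separately for $b>1/2$ and $b<1/2$.
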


% \begin{rmk}
% It is interesting to note that in both Theorem~\ref{thm:ell_comp} and Theorem~\ref{thm:quad} there is a ``phase transition'' from $3$-periodicity to $2$-periodicity.
% A similar transition occurs in Theorem~\ref{thm:Lp} below.
% \end{rmk}

\sss

For our last family of examples, take $p \in \R_{\geq 1} \cup \{\infty\}$, and consider the $L^p$ norm $||-||_p$ defined by $||(x,y)||_{p} := 
(x^p + y^p)^{1/p}$,
and put $$\Omega_p := \{(x,y) \in \R^2_{\geq 0}\;|\; ||(x,y)||_p \leq 1\}.$$
Note that $\Omega_1$ is the right triangle with vertices $(0,0),(1,0),(0,1)$ and $\Omega_{\infty}$ is the square with vertices $(0,0),(1,0),(0,1),(1,1)$, i.e. the corresponding family of convex toric domains $\{X_{\Omega_p}\}$ interpolates between the round ball and the cube.
Also, note that for $(x,y) \in \R_{\geq 0}^2$, we have $$||(x,y)||_{\Omega_p}^* = ||(x,y)||_q,$$ where $q \in \R_{\geq 1} \cup \{\infty\}$ is such that $\tfrac{1}{p} + \tfrac{1}{q} = 1$.
\begin{thm}\label{thm:Lp}
\hfill

\begin{enumerate}
\item[{\rm (i)}] For $p \leq \frac{\ln(2)}{\ln(4/3)}$ we have 
\begin{align}\label{eq:Lp_1}
\gt_k(X_{\Omega_p}) = 
\begin{cases}
1 + i\sqrt[q]{2}&\text{ for } k = 1+3i\text{ with } i \geq 0\\
(i+1)\sqrt[q]{2}&\text{ for } k = 2+3i\text{ with } i \geq 0\\
2 + i\sqrt[q]{2}& \text{ for } k = 3+3i\text{ with } i \geq 0.
\end{cases} 
\end{align}
\item[{\rm (ii)}] For $p > \frac{\ln(2)}{\ln(4/3)}$ we have
\begin{align}\label{eq:Lp_2}
\gt_k(X_{\Omega_p}) = 
\begin{cases}
1 + i &\text{ for } k = 1+2i\text{ with } i \geq 0\\
\sqrt[q]{2} + i&\text{ for } k = 2+ 2i\text{ with } i \geq 0.\\
\end{cases} 
\end{align}

\end{enumerate}
\end{thm}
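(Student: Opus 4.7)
The plan is to combine Corollary~\ref{cor:main_thm_4_cases} with the identification $\|(x,y)\|_{\Omega_p}^* = \|(x,y)\|_q$, where $\frac{1}{p}+\frac{1}{q}=1$, which reduces Theorem~\ref{thm:main_comp} to a finite-parameter minimization. A direct computation of the constraint $\sum_s(i_s+j_s)+q-1=k$ and the sum of dual norms for each of the four tuple types yields: case (i), $k=2i+3j-1$, value $i+j\sqrt[q]{2}$; case (ii), $k=2i+s+1$, value $i+(1+s^q)^{1/q}$; case (iii), $k=2i+1$, value $i+1$; case (iv), $k=s$, value $s$.

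The first task is to eliminate cases (ii) and (iv). Using the elementary bound $(1+s^q)^{1/q}\geq s$, the case (ii) output is dominated by case (iii) when $s$ is even (both produce odd $k$, and case (iii) with parameter $i+s/2$ gives value $i+s/2+1\leq i+(1+s^q)^{1/q}$), and by case (i) with $j=1$ when $s\geq 3$ is odd (using $\sqrt[q]{2}\leq 2\leq(s+1)/2$). Case (iv) is similarly dominated whenever $k\geq 2$, and for $k=1$ it gives the boundary value $\gt_1=1$ consistent with both \eqref{eq:Lp_1} and \eqref{eq:Lp_2}.

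For $k\geq 2$ the minimum then comes from cases (i) and (iii). Rewriting the case (i) value as $\tfrac{k+1}{2}+j(\sqrt[q]{2}-\tfrac{3}{2})$ isolates the critical threshold $\sqrt[q]{2}=3/2$, equivalent to $p=\ln 2/\ln(4/3)$. The constraint $2i+3j=k+1$ with $i\geq 0,\ j\geq 1$ restricts $j$ to positive integers of the same parity as $k+1$ and bounded above by $(k+1)/3$. In the regime $\sqrt[q]{2}\leq 3/2$ of part~(i), the case (i) value is nonincreasing in $j$, so the minimum is at the \emph{largest} admissible $j$; checking this for each residue class $k\in\{1+3i,\,2+3i,\,3+3i\}$ reproduces \eqref{eq:Lp_1}, with the corner case $k=3$ handled by case (iii) (whose value $2$ agrees with the formula). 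In the regime $\sqrt[q]{2}>3/2$ of part~(ii), the minimum over case (i) is at the \emph{smallest} admissible $j$: for even $k$ this is $j=1$ giving $(k-2)/2+\sqrt[q]{2}$, and for odd $k$ one must take $j\geq 2$, in which case $(k-5)/2+2\sqrt[q]{2}$ is beaten by case (iii)'s $(k+1)/2$ precisely because $\sqrt[q]{2}>3/2$; these outputs combine to give \eqref{eq:Lp_2}. A symmetric check confirms that case (iii) does not beat case (i) in the first regime.

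The principal obstacle is the parity and boundary bookkeeping, namely confirming that every alternative distribution of components---including the case (ii) tuples and the larger-$j$ subfamilies of case (i)---reduces to the single comparison $\sqrt[q]{2}\lessgtr 3/2$, and that the small-$k$ edge cases (where some of the four tuple types are unavailable) interpolate correctly between the formulas. I expect this bookkeeping to be routine but must be done carefully to avoid missing an admissible configuration.
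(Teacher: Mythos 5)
Your proposal is correct and follows essentially the same route as the paper: reduce via Corollary~\ref{cor:main_thm_4_cases} and $\|\cdot\|_{\Omega_p}^*=\|\cdot\|_q$, discard the $(1,s)$ and $(0,s)$ families by domination, and resolve the remaining competition between $(0,1)^{\times i}\times(1,1)^{\times j}$ and $(0,1)^{\times i}\times(1,0)$ through the single threshold $\sqrt[q]{2}\lessgtr 3/2$. The only cosmetic difference is that the paper absorbs case (iii) into case (i) by allowing $j=0$ (using $\|(0,1)\|_q=\|(1,0)\|_q$) and eliminates case (ii) by the replacement $(1,s)\mapsto(1,0)\times(0,s-1)$, whereas you compare case (ii) directly against cases (i) and (iii); both versions of the bookkeeping check out.
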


\begin{rmk}
Incidentally, note that we have $\gt_k(X_{\Om_p}) = \gt_k(E(1,\sqrt[q]{2}))$. 
Moreover, one can show using Corollary~\ref{cor:main_thm_4_cases} that 
the capacities $\gt_k(X_\Om)$ of any four-dimensional convex toric domain 
%D added 
normalized as in Corollary~\ref{cor:main_thm_4_cases}
are 
%D added
eventually either  $2$-periodic or $3$-periodic in $k$, depending on which of 
%$||(0,1)||_\Om^*,||(1,1)||_\Om^*$ 
$3 ||(0,1)||_\Om^*,\ 2 ||(1,1)||_\Om^*$ 
is 
%larger. 
smaller.  Intuitively, domains which are ``rounder'' have $3$-periodic capacities while domains which are ``skinnier'' have $2$-periodic ones.
\end{rmk}

\begin{example}
For concreteness let us flesh out a simple implication of Theorem~\ref{thm:Lp} for e.g. the symplectic embedding problem $E(1,5) \times \C^N \hooksymp \mu \cdot X_{\Om_2} \times \C^N$ with $N \in \Z_{\geq 0}$.
Using \cite[Thm. 1.6]{Gutt-Hu} (see also \cite{kerman2021symplectic}), it is easy to check that we have
\begin{align*}
c_k^{\op{GH}}(X_{\Om_2}) = 
\begin{cases}
  \sqrt{\tfrac{1}{2}k^2} & \text{k even}\\
    \sqrt{\tfrac{1}{2}(k^2+1)} & \text{k odd},
\end{cases}
\end{align*}
i.e. 
\[
\begin{tabular}{l|l|l|l|l|l|l|l|l|l|l|l|l}
$k$ & $1$ & $2$ & $3$ & $4$ & $5$ & $6$ & $7$ & $8$ & $9$ & $10$ & $11$ & $12$ \\ \hline
$c_k^{\op{GH}}(E(1,5))$ & $1$ & $2$ & $3$ & $4$ & $5$ & $5$ & $6$ & $7$ & $8$ & $9$ & $10$ & $10$ \\ \hline
$c_k^{\op{GH}}(X_{\Om_2})$ & $1$ & $\sqrt{2}$ & $\sqrt{5}$ & $2\sqrt{2}$ & $\sqrt{13}$ & $3\sqrt{2}$ & $5$ & $4\sqrt{2}$ & $\sqrt{41}$ & $5\sqrt{2}$ & $\sqrt{61}$ & $6\sqrt{2}$
\end{tabular},
\]
and the capacities $\{c_k^\op{GH}\}$ give the lower bound 
$\mu \geq 2/\sqrt{2} \approx 1.414$.
Meanwhile, we have:
% \[
% \begin{tabular}{l|l|l|l|l|l|l|l|l|l|l|l|l}
% $k$ & $1$ & $2$ & $3$ & $4$ & $5$ & $6$ & $7$ & $8$ & $9$ & $10$ & $11$ & $12$ \\ \hline
% $\gt_k(E(1,5))$ & $1$ & $2$ & $3$ & $4$ & $5$ & $5$ & $6$ & $6$ & $7$ & $7$ & $8$ & $8$ \\ \hline
% $\gt_k(X_{\Om_2})$ & $1$ & $\sqrt{2}$ & $2$ & $1+\sqrt{2}$ & $2\sqrt{2}$ & $2+\sqrt{2}$ & $1+2\sqrt{2}$ & $3\sqrt{2}$ & $2+2\sqrt{2}$ & $1+3\sqrt{2}$ & $4\sqrt{2}$ & $2+3\sqrt{2}$
% \end{tabular}
% \]
\[
\begin{tabular}{l|l|l|l|l|l|l|l|l|l|l|l}
$k$ & $1$ & $2$ & $3$ & $4$ & $5$ & $6$ & $7$ & $8$ & $9$ & $10$ & $11$ \\ \hline
$\gt_k(E(1,5))$ & $1$ & $2$ & $3$ & $4$ & $5$ & $5$ & $6$ & $6$ & $7$ & $7$ & $8$ \\ \hline
$\gt_k(X_{\Om_2})$ & $1$ & $\sqrt{2}$ & $2$ & $1+\sqrt{2}$ & $2\sqrt{2}$ & $2+\sqrt{2}$ & $1+2\sqrt{2}$ & $3\sqrt{2}$ & $2+2\sqrt{2}$ & $1+3\sqrt{2}$ & $4\sqrt{2}$ 
\end{tabular}
\]
and the $\{\gt_k\}$ capacities give the lower bound $\mu \geq 5/(2\sqrt{2}) \approx 1.768$.
\end{example}

\sss

We end this introduction with a brief outline of the proof of Theorem~\ref{thm:main_comp}, deferring the reader to the body of the paper for the details.
Firstly, as in \cite{chscI,chscII}, we ``fully round'' our convex toric domain.
This is a small perturbation and so leaves $\gt_k$ essentially unaffected, while it standardizes the Reeb dynamics on the boundary. Next, we obtain a lower bound on $\gt_k$ by mostly action and index considerations, with the second condition in Theorem~\ref{thm:main_comp} coming from the relative adjunction formula and writhe bounds. To obtain a corresponding upper bound, we first study the combinatorial optimization problem in Theorem~\ref{thm:main_comp} more carefully and arrive at the simplifications described in \S\ref{subsec:min_words}. We then inductively construct a curve for each minimizer. The base cases are cylinders or pairs of pants which we produce using the ECH cobordism map, while the inductive step is based on an iterated application of obstruction bundle gluing based on the work of Hutchings--Taubes.

\section{Preliminaries on pseudoholomorphic curves}\label{sec:prelim}

The main purpose of this section is to briefly recall some requisite background on pseudoholomorphic curves and to establish notation, conventions, and terminology for the rest of the paper. In \S\ref{subsec:punc_curves} we discuss moduli spaces of punctured pseudoholomorphic curves in symplectic cobordisms. In \S\ref{subsec:loc_tang_and_E_sk} we recall the notion of local tangency constraints and the equivalence with skinny ellipsoidal constraints as in \cite{McDuffSiegel_counting}.
In \S\ref{subsec:formal_curves} we introduce the notion of formal curves, which provides a convenient language and bookkeeping tool in SFT compactness arguments.
Lastly, in \S\ref{subsec:pert_inv} we discuss the extent to which our moduli spaces persist in $1$-parameter families, and we introduce the notion of ``formal perturbation invariance'' which will be particularly relevant for us.
 
\subsection{Asymptotically cylindrical curves and their moduli}\label{subsec:punc_curves}

Our exposition in this subsection will be somewhat brief; we refer the reader to e.g. \cite{wendl_SFT_notes,BEHWZ} for more details.

\subsubsection{Symplectic and contact manifolds}
Recall that a {\bf Liouville cobordism} $(X,\la)$ is a compact manifold-with-boundary $X$, equipped with a one-form $\la$ whose exterior derivative $\omega := d\la$ is symplectic, and whose restriction to $\bdy X$ is a contact form. 
We have a natural decomposition $\bdy X = \bdy^+ X \sqcup \bdy^-X$, where $\la|_{\bdy^+X}$ is a positive contact form and $\la|_{\bdy^-X}$ is a negative contact form.
When no confusion should arise, we will typically suppress $\la$ from the notation and denote such a Liouville cobordism simply by $X$ (a similar convention will apply to most other mathematical objects). 
We view $\bdy^+X$ and $\bdy^-X$ as strict (i.e. equipped with a preferred contact form) contact manifolds.

Quite often we will have $\bdy^-X = \nil$, in which case $X$ is a {\bf Liouville domain}. 
We say that a Liouville domain $X$ has {\bf nondegenerate contact boundary} if the contact form $\alpha := \la|_{\bdy X}$ has nondegenerate Reeb orbits. 
The {\bf action} of a Reeb orbit $\gamma$ in $\bdy X$ is its period, i.e. the integral $\calA_{\bdy X}(\gamma) := \int_{\gamma}\alpha$, assuming $\gamma$ is parametrized so that its velocity always agrees with the Reeb vector field $R_\alpha$ on $\bdy X$.

More generally, a {\bf compact symplectic cobordism} is a compact manifold-with-boundary $X$ equipped with a symplectic form $\omega$ and a primitive one-form $\la$ defined on $\Op(\bdy X)$ whose restriction to $\bdy X$ is a contact form.
As before we have a natural decomposition $\bdy X = \bdy^+X \sqcup \bdy^- X$. We will refer to the case $\bdy^- X = \nil$ as a {\bf symplectic filling} and the case $\bdy^+ X = \nil$ as a {\bf symplectic cap}.
Note that the case with $\bdy X = \nil$ is simply a closed symplectic manifold.

\MS

\NI {\it Convention}: if $X$ and $X'$ are Liouville domains
and $\iota: X \hooksymp X'$ is a symplectic embedding, we will by slight abuse of notation write $X' \setminus X$ to denote the compact symplectic cobordism $X' \setminus \Int \iota(X)$, after attaching a small collar $[0,\delta) \times \bdy X'$ to $X'$ if necessary (i.e. if $\iota (X) \cap \bdy X' \neq \nil$).
\MS

\subsubsection{Admissible almost complex structures}\label{sss:admJ}
Let $Y$ be a strict contact manifold with contact form $\alpha$.
Recall that the {\bf symplectization} of $Y$ is the symplectic manifold $\R_r \times Y$, with symplectic form given by $d(e^r\alpha)$.
We denote by $\Jadm(Y)$ the space of {\bf admissible almost complex structures} on the symplectization $\R \times Y$.
That is, $J_Y \in \Jadm(Y)$ is a compatible almost complex structure on $\R \times Y$ which is $r$-translation invariant, maps $\bdy_r$ to the Reeb vector field $R_\alpha$, and restricts to a compatible almost complex structure on each contact hyperplane.

Given a compact symplectic cobordism $X$ with $Y^{\pm} := \bdy^{\pm}X$, its {\bf symplectic completion} $\wh{X}$ is given attaching a positive half-symplectization $\R_{\geq 0} \times Y^+$ to its positive boundary and a negative half-symplectization $\R_{\leq 0} \times Y^-$ to its negative boundary.
There is a natural symplectic form on $\wh{X}$ which extends that of $X$ and looks like the restriction of the symplectic form on a symplectization on the cylindrical ends.
We denote by $\Jadm(X)$ the space of {\bf admissible almost complex structures} on the symplectic completion of $X$. That is, $J_X \in \Jadm(X)$ is a compatible almost complex structure on $\wh{X}$ which is symplectization-admissible on the cylindrical ends, i.e. 
we have $J_X|_{\R_{\geq 0} \times Y^+} = J_{Y^+}|_{\R_{\geq 0} \times Y^+}$ for some $J_{Y^+} \in \Jadm(Y^+)$ and $J_X|_{\R_{\leq 0} \times Y^-} = J_{Y^-}|_{\R_{\leq 0} \times Y^-}$ for some $J_{Y^-} \in \Jadm(Y^-)$.
In particular, $J_X$ is translation-invariant on each cylindrical end.
Given fixed $J_{Y^+} \in \Jadm(Y^+)$ and $J_{Y^-} \in \Jadm(Y^-)$ as above, we denote by $$
\Jadm^{J_{Y^+}}_{J_{Y^-}}(X) \subset \Jadm(X)
$$
 the subspace consisting of almost complex structures $J$ which satisfy
$J |_{\R_{\geq 0} \times Y^+} = J_{Y^+}|_{\R_{\geq 0} \times Y^+}$ and 
$J |_{\R_{\leq 0} \times Y^-} = J_{Y^-}|_{\R_{\leq 0} \times Y^-}$.
By slight abuse of notation, for $J \in \Jadm^{J_{Y^+}}_{J_{Y^-}}$ we also use the notation $J|_{Y^\pm} := J_{Y^\pm}$ to denote the ``restriction'' of $J$ to $Y^{\pm}$.

\subsubsection{Moduli spaces of pseudoholomorphic curves}

Let $X$ be a compact symplectic cobordism, and consider $J \in \Jadm(X)$.
A {\bf $J$-holomorphic curve} $C$ in $\wh{X}$ consists of a Riemann surface $\Sigma$, with almost complex structure $j$, and a map $u: \Sigma \rightarrow \wh{X}$ satisfying $du \circ j = J \circ du$.
We will often refer to $C$ as a ``pseudoholomorphic curve'' (or simply ``curve'') if $J$ is implicit or unspecified.
Such a curve $C$ is {\bf asymptotically cylindrical} if $\Sigma$ is a closed Riemann surface minus a finite set of puncture points, such that $u$ is positively or negatively asymptotic to a Reeb orbit in the ideal boundary at each puncture (see e.g. \cite[\S 6.4]{wendl_SFT_notes} for a more precise formulation).
All pseudoholomorphic curves considered in this paper will be asymptotically cylindrical in either the symplectic completion of a compact symplectic cobordisms (closed symplectic manifolds being a special case), or in the symplectization of a contact manifold. Strictly speaking the latter is a special case of the former, but it is helpful to distinguish between these two cases since in the latter case we work with almost complex structures having an additional translation symmetry.
\MS

\NI \textit{Convention:} All pseudoholomorphic curves in this paper are asymptotically cylindrical, and for brevity we often refer to curves in $\wh{X}$ as simply ``curves in $X$'', with the process of symplectically completing tacitly understood.

\MS 

Consider tuples of nondegenerate Reeb orbits $\Gamma^+ = (\gamma_1^+,\dots,\gamma_a^+)$ in $\bdy^+X$ and $\Gamma^- = (\gamma_1^-,\dots,\gamma_b^-)$ in $\bdy^-X$.
Given $J \in \Jadm(X)$, we denote by $\calM^J_X(\Gamma^+;\Gamma^-)$ the moduli space of asymptotically cylindrical rational $J$-holomorphic curves in $\wh{X}$ with positive asymptotics $\Gamma^+$ and negative asymptotics $\Gamma^-$, equipped with the Gromov topology. 
Here the conformal structure on the domain varies over the moduli space of genus zero Riemann surfaces with $a$ (resp. $b$) ordered positive (resp. negative) punctures.
If $\bdy^-X = \nil$ we write $\calM^J_X(\Gamma^+)$ as a shorthand for $\calM^J_X(\Gamma^+;\nil)$, and similarly in the case $\bdy^+ X = \nil$ we write $\calM^J_X(\Gamma^-)$ in place of $\calM^J_X(\nil;\Gamma^-)$.
We will sometimes suppress $J$ from the notation and write simply $\calM(\Gamma^+;\Gamma^-)$ if the almost complex structure is implicit or unspecified.
\MS

\NI
\textit{Convention: by default all curves in this paper have genus zero unless otherwise stated.}
\MS

Similarly, given a strict contact manifold $Y$, $J \in \Jadm(Y)$, and Reeb orbits $\Gamma^+ = (\gamma_1^+,\dots,\gamma_a^+)$ and $\Gamma^- = (\gamma_1^-,\dots,\gamma_b^-)$ in $Y$, we denote by $\calM^J_Y(\Gamma^+;\Gamma^-)$ the moduli space of asymptotically cylindrical curves in $\R \times Y$ with positive asymptotics $\Gamma^+$ and negative asymptotics $\Gamma^-$. There is a natural $\R$-action on $\calM^J_Y(\Gamma^+;\Gamma^-)$ induced by translations in the first factor of $\R \times Y$, and this is free away from the {\bf trivial cylinders}, i.e. cylinders of the form $\R \times \gamma$ with $\gamma$ a Reeb orbit in $Y$.
We denote the quotient by $\calM^J_Y(\Gamma^+;\Gamma^-)/\R$.

We will consider moduli spaces associated to $1$-parameter families of almost complex structures. For instance, given a $1$-parameter family $\{J_t\}_{t \in [0,1]}$ in $\Jadm(X)$, we denote by $\calM_{X}^{\{J_t\}}(\Gamma^+;\Gamma^-)$ the corresponding {\bf parametrized moduli space} consisting of pairs $(t,C)$ with $t \in [0,1]$ and $C \in \calM_{X}^{J_t}(\Gamma^+;\Gamma^-)$. 

We will assume throughout that suitable choices have been made so that every regular moduli space of curves is oriented. In particular, any curve $C$ which is regular and isolated in $\calM_X(\Gamma^+;\Gamma^-)$ or $\calM_Y(\Gamma^+;\Gamma^-)/\R$ has an associated sign $\eps(C) \in \{-1,1\}$.
We briefly recall the procedure for orienting moduli spaces in \S\ref{subsec:aut_trans}.

\subsubsection{SFT compactifications}\label{subsubsec:SFT_cpct}

The above moduli spaces admit SFT compactifications as in \cite{BEHWZ}, which we denote by replacing $\calM$ with $\ovl{\calM}$. 
For example, let $X$ be a compact symplectic cobordism with $J_{\pm} \in \Jadm(\bdy^{\pm}X)$ and $J_X \in \Jadm^{J_{+}}_{J_{-}}(X)$.
Elements of $\ovl{\calM}_{X}^{J_X}(\Gamma^+;\Gamma^-)$ are {\bf stable pseudoholomorphic buildings} in $\wh{X}$,
which consist of the following data:
\begin{itemize}
\item
some number (possibly zero) of $J_{+}$-holomorphic levels in the symplectization $\R \times \bdy^+X$
\item 
a ``main'' $J_X$-holomorphic level in $\wh{X}$
\item some number (possibly zero) of $J_{-}$-holomorphic levels in the symplectization $\R \times \bdy^- X$
\end{itemize}
such that for each pair of adjacent levels the positive asymptotic Reeb orbits of the lower level are paired with the negative asymptotic Reeb orbits of the upper level. 
The symplectization levels are always defined modulo target translations.
Note that each level consists of one or more connected components, each of which is a nodal punctured Riemann surface. 
The stability condition states that each 
component of the domain on which the map is constant
must have negative Euler characteristic after removing all special points; also there are no symplectization levels consisting entirely of trivial cylinders.
See \cite{BEHWZ} for details.

We will use the following language in this paper.  (Note that the slightly different notion of {\em matched component} employed in \cite{McDuffSiegel_counting} serves a similar purpose.)

\begin{definition}\label{def:curvecomp}
We  say that a (rational) curve in a given level is ``connected'' if its domain is connected but possibly nodal, ``smooth'' if its domain is without nodes, and ``irreducible'' if it is both connected and smooth. 
By {\bf curve component} we mean a (rational) curve which is irreducible. 
\end{definition}
\NI Note that each level of a pseudoholomorphic building can be decomposed into its constituent (irreducible) components.

We will also frequently make use of {\bf neck stretching}. 
If $X^+$ and $X^-$ are compact symplectic cobordisms with a common contact boundary $\bdy^-X^+ = \bdy^+X^- =Y$, we denote the glued compact symplectic cobordism by $X^- \ccirc X^+$. 
Given almost complex structures $J_Y \in \Jadm(Y)$, $J_{X^+} \in \Jadm_{J_Y}(X^+)$ and $J_{X^-} \in \Jadm^{J_Y}(X^-)$, we can consider the corresponding neck-stretching family of almost complex structures 
$J_t \in \Jadm(X)$, $t \in [0,1)$.
The limit $t \rightarrow 1$ corresponds to the broken cobordism which we denote by $X^- \notccirc X^+$. 
The compactification $\ovl{\calM}_X^{\{J_t\}}(\Gamma^+;\Gamma^-)$ consists of pairs $(t,C)$ for $t \in [0,1)$ and $C \in \ovl{\calM}^{J_t}_X(\Gamma^+;\Gamma^-)$, as well as limiting configurations for $t=1$, which are pseudoholomorphic buildings with:
\begin{itemize}
\item some number (possibly zero) of $J_{\bdy^+ X^+}$-holomorphic levels in the symplectization $\R \times \bdy^+X^+$
\item a $J_{X^+}$-holomorphic level in $\wh{X^+}$
\item some number (possibly zero) of $J_Y$-holomorphic levels in the symplectization $\R \times Y$
\item a $J_{X^-}$-holomorphic level in $\wh{X^-}$
\item some number (possibly zero) of $J_{\bdy^-X^-}$-holomorphic levels in the symplectization $\R \times \bdy^-X^-$,
\end{itemize}
subject to suitable matching and stability conditions. Here we put
$J_{\bdy^+ X^+} := J_{X^+}|_{\bdy^+ X^+}$ and $J_{\bdy^- X^-} := J_{X^-}|_{\bdy^- X^-}$.

\subsubsection{Homology classes and energy}

Given a compact symplectic cobordism $X$ and Reeb orbits $\Gamma^+ = (\gamma^+_1,\dots,\gamma^+_a)$ in $\bdy^+X$ and $\Gamma^- = (\gamma_1^-,\dots,\gamma_b^-)$ in $\bdy^-X$,
we let $H_2(X,\Gamma^+\cup\Gamma^-)$ denote the group of potential homology classes of curves in $\calM_X(\Gamma^+;\Gamma^-)$. Namely, $H_2(X,\Gamma^+\cup\Gamma^-)$ is the abelian group freely generated by $2$-chains $\Sigma$ in $X$ with $\bdy \Sigma = \sum_{i=1}^a \gamma_i^+ - \sum_{j=1}^b\gamma_j^-$, modulo boundaries of $3$-chains in $X$ (see also \cite[\S6.4]{wendl_SFT_notes} for a slightly more homological perspective).
Given $A \in H_2(X,\Gamma^+\cup\Gamma^-)$, we denote by $\calM_{X,A}(\Gamma^+;\Gamma^-) \subset \calM_{X}(\Gamma^+;\Gamma^-)$ the subspace of curves lying in homology class $A$.

Similarly, given a strict contact manifold $Y$ and Reeb orbits $\Gamma^+ = (\gamma^+_1,\dots,\gamma^+_a)$ and $\Gamma^- = (\gamma_1^-,\dots,\gamma_b^-)$ in $Y$, we denote by $H_2(Y,\Gamma^+\cup\Gamma^-)$ the homology group of $2$-chains $\Sigma$ in $Y$ with $\bdy \Sigma = \sum_{i=1}^a \gamma_i^+ - \sum_{j=1}^b\gamma_j^-$, modulo boundaries of $3$ chains in $Y$. 
Given $A \in H_2(Y,\Gamma^+\cup\Gamma^-)$, 
we denote by $\calM_{Y,A}(\Gamma^+;\Gamma^-) \subset \calM_{Y}(\Gamma^+;\Gamma^-)$ the subspace of curves in $\R\times Y$ lying in homology class $A$.

There are also natural subspaces $\ovl{\calM}_{X,A}(\Gamma^+;\Gamma^-) \subset \ovl{\calM}_{X}(\Gamma^+;\Gamma^-)$ and $\ovl{\calM}_{Y,A}(\Gamma^+;\Gamma^-) \subset \calM_{Y}(\Gamma^+;\Gamma^-)$ and so on.
These are defined by required the total homology class of a building, which is defined in a natural way by concatenating the levels, to be $A$.

\sss

If $(Y,\alpha)$ is strict contact manifold, we define the {\bf energy} of a curve $C \in \calM_{Y,A}(\Gamma^+;\Gamma^-)$ to be
$E_Y(C) := \int_C d\alpha.$\footnote{Note that this is called the {\bf $\omega$-energy in \cite{BEHWZ}}, their full energy having this as one of its two summands.} 
By Stokes' theorem, we have
\begin{align*}
E_Y(C) = \sum_{i=1}^a \calA_{Y}(\gamma^+_i) - \sum_{j=1}^b \calA_Y(\gamma^-_j).
\end{align*}
Note that this depends only the homology class $A \in H_2(Y,\Gamma^+\cup\Gamma^-)$, so we can also put $E_Y(C) = E_Y(A) := \int_A d\alpha$.
Similarly, if $X$ is a compact symplectic cobordism with symplectic form $\omega$ and locally defined Liouville one-form $\la$, 
the energy $E_X(C)$ of a curve $C \in \calM_{Y,A}^J(\Gamma^+;\Gamma^-)$ is defined to be the integral over $C$ of the piecewise smooth two-form which agrees with $\omega$ on $X$ and with $d\la$ on the cylindrical ends $\wh{X} \less X$. 
If $X$ is further a Liouville cobordism (i.e. $\la$ is globally defined), then Stokes' theorem gives
\begin{align*}
E_X(C) = \sum_{i=1}^a \calA_{\bdy^+X}(\gamma^+_i) - \sum_{j=1}^b \calA_{\bdy^-X}(\gamma^-_j).
\end{align*}
This again depends only on $A \in H_2(X,\Gamma^+\cup\Gamma^-)$, and we have
$E_X(C) = E_X(A) := \int_A \omega$.

\subsection{Local tangency and skinny ellipsoidal constraints}\label{subsec:loc_tang_and_E_sk}

Let $X$ be a compact symplectic cobordism.
Recall that the {\bf local tangency constraint} $\lll \T^{(m)}p\rrr$ with $m \in \Z_{\geq 1}$ is imposed by choosing a point $p \in \Int X$ and a smooth symplectic divisor $D \subset \Op(p)$ and considering curves with an additional marked point required to pass through $p$ with contact order (at least) $m$ to $D$ (see e.g. \cite{CM1,CM2,McDuffSiegel_counting}). 
We will also denote this constraint by $\lll \T^{m-1}p\rrr$, with $m-1$ representing the tangency order (in particular $\lll p\rrr$ corresponds simply to a marked point passing through $p$).

Let $\Jadm(X;D) \subset \Jadm(X)$ denote the space of admissible almost complex structures on $\wh{X}$ which are integrable near $p$ and preserve the germ of $D$ near $p$. 
Given tuples of Reeb orbits $\Gamma^+$ and $\Gamma^-$ in $\bdy^+X$ and $\bdy^-X$ respectively and $J \in \Jadm(X;D)$, we define the moduli space $\calM_{X}^J(\Gamma^+;\Gamma^-)\lll \T^{(m)}p\rrr$ as before, but now the local tangency constraint $\lll \T^{(m)}p\rrr$ is imposed on each curve.

Some care is needed when compactifying $\calM_{X}^J(\Gamma^+;\Gamma^-)\lll \T^{(m)}p\rrr$, due to the possibility of a ghost (i.e. constant) component inheriting the marked point. Indeed, strictly speaking a constant component is tangent to $D$ to infinite order, and hence ghost configurations always appear with much higher than expected dimension. 
To get around this, first note, as in the proof of Proposition 2.2.2 in \cite{McDuffSiegel_counting}, that there is a natural inclusion
$$\calM_{X}^J(\Gamma^+;\Gamma^-)\lll \T^{(m)}p\rrr \subset \ovl{\calM}_{X}^J(\Gamma^+;\Gamma^-)\lll p\rrr,$$
where the codomain is the usual SFT compactification of $\calM_{X}^J(\Gamma^+;\Gamma^-)\lll p\rrr$ by stable pseudoholomorphic buildings.
Let $\ovl{\calM}_{X}^J(\Gamma^+;\Gamma^-)\lll T^{(m)}p\rrr$
denote the closure of $\calM_{X}^J(\Gamma^+;\Gamma^-)\lll \T^{(m)}p\rrr$
in this compact ambient space.
To understand what this amounts to, consider a pseudoholomorphic building $C$ in 
$\ovl{\calM}_{X}^J(\Gamma^+;\Gamma^-)\lll T^{(m)}p\rrr$ such that the marked point $z_0$ mapping to $p$ lies on a ghost component $C_0$. 
Let $N_1,\dots,N_a$ denote those nodes connecting a nonconstant component of $C$ to $C_0$, or more generally connecting a nonconstant component of $C$ to some ghost component which is nodally connected through ghost components to $C_0$.
Let $z_1,\dots,z_a$ denote the corresponding special points in the domain of $C$ which are ``near $z_0$'', i.e.
participate in the nodes $N_1,\dots,N_a$ and lie on nonconstant components of $C$.
Let $C_1,\dots,C_a$ denote the respective nonconstant components of $C$ on which $z_1,\dots,z_a$ lie.
According to \cite[Lem. 7.2]{CM1}, in this situation the marked points 
$z_1,\dots,z_a$ satisfy local tangency constraints $\lll \T^{(m_1)}p\rrr,\dots,\lll \T^{(m_a)}p\rrr$ respectively such that we have
$$ m_1 + \dots m_a \geq m.$$
In this way, elements of $\ovl{\calM}_{X}^J(\Gamma^+;\Gamma^-)\lll \T^{(m)}p\rrr$ ``remember'' the constraint $\lll\T^{(m)}p\rrr$. 

We will also need to consider a potentially larger compactification of $\calM_{X}^J(\Gamma^+;\Gamma^-)\lll T^{(m)}p\rrr$ which allows all ghost configurations as described above, even if they do not arise as a limit of smooth curves:
\begin{definition}\label{def:doublebar_cpctification}
Let $\ovll{\calM}_X^J(\Gamma^+;\Gamma^-)\lll \T^{(m)}p\rrr$ denote the subset of $\ovl{\calM}_X^J(\Gamma^+;\Gamma^-)\lll p\rrr$ given by the union of $\calM_X^J\lll \T^{(m)}p\rrr$ with the set of all buildings $C$ such that the marked point $z_0$ mapping to $p$ lies on a ghost component and the special points $z_1,\dots,z_a$ near $z_0$ (as above) satisfy respective constraints $\lll \T^{(m_1)}p\rrr,\dots,\lll \T^{(m_a)}p\rrr$ such that $m_1 + \dots + m_a \geq m$. See Figure~\ref{fig:ghost}.
\end{definition}
\begin{figure}
  \includegraphics[scale=1.5]{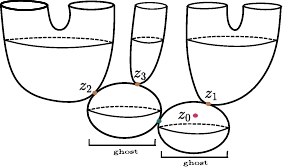}
  \caption{A configuration which could potentially arise in $\ovl{\calM}_X^J(\Gamma^+;\Gamma^-)\lll \T^{(m)}p\rrr$. 
Here the marked point $z_0$ mapping to $p$ lies on a ghost component, and $z_1,z_2,z_3$ are the special points near $z_0$ lying on nonconstant components. These satisfy respective constraints $\lll \T^{(m_1)}p\rrr, \lll \T^{(m_2)}p\rrr,\lll \T^{(m_3)}p\rrr$ such that $m_1+m_2+m_3 \geq m$.
Such a configuration is also included in $\ovll{\calM}_X^J(\Gamma^+;\Gamma^-)\lll \T^{(m)}p\rrr$ even if it does not arise as a limit of curves in $\calM_X^J(\Gamma^+;\Gamma^-)\lll \T^{(m)}p\rrr$.
}
  \label{fig:ghost}
\end{figure}
\begin{rmk}
It is worth emphasizing that the extra buildings $C$ involving ghost components which appear in Definition~\ref{def:doublebar_cpctification} have virtual codimension at least two (c.f. the proof of \cite[Prop. 2.2.2]{McDuffSiegel_counting}), and hence are not expected to appear whenever sufficient transversality holds.
This is essentially why such configurations do not contribute to the local tangency constraint counts $N_{M,A}\lll \T^{(m)}p\rrr$ defined in \cite{McDuffSiegel_counting} for semipositive closed symplectic manifolds $M$. 
\end{rmk}

\sss

For $m \in \Z_{\geq 1}$, let $\lll (m) \rrr_E$ denote the {\bf skinny ellipsoidal constraint} of order $m$, defined as follows.
Let $E_\sk$ denote a {\bf skinny ellipsoid}, i.e. a symplectic ellipsoid whose first area factor is sufficiently small compared to the others. 
After possibly shrinking (i.e.  
replacing 
 $E_\sk$ by $\eps E_\sk$ for $0<\eps <<1$) we can assume that $E_\sk$ symplectically embeds into $X$ in an essentially unique way, and we typically denote this embedding by an inclusion $E_\sk \subset X$. 
Let $\eta_m$ denote the $m$-fold cover of the simple Reeb orbit of least action in $\bdy E_\sk$. 
For curves in $X$, the constraint $\lll (m) \rrr$ is imposed by replacing $\wh{X}$ with $\wh{X \setminus E_\sk}$, and considering curves with one additional negative puncture which is asymptotic to $\eta_m$. 
We define the moduli space $\calM_X^J(\Gamma^+;\Gamma^-)\lll {(m)}\rrr_E$ 
by analogy with $\calM_X^J(\Gamma^+;\Gamma^-)\lll \T^{(m)}p\rrr$, replacing the local tangency constraint $\lll \T^{(m)}p\rrr$ with the skinny ellipsoidal constraint $\lll (m)\rrr_E$.
Note that both of these moduli spaces have the same index, namely 
\begin{align}\label{eq:fred_ind}
\ind = (n-3)(2-a-b) + 2c_1^\tau(A)  + \sum_{i=1}^a \cz_\tau(\gamma^+_i) - \sum_{j=1}^b \cz_\tau(\gamma^-_j) -2n - 2m + 4,
\end{align}
where $2n = \dim_\R(X)$.
Here $\tau$ is a choice of trivialization (up to homotopy) of the symplectic vector bundle over each Reeb orbit, $c_1^\tau(A)$ is the corresponding relative first Chern class evaluated on $A$, and $\cz_\tau$ denotes the Conley--Zehnder index measured with respect to $\tau$.
Recall that the index does not depend on the choice of $\tau$, even though the individual terms do.

If $M$ is a closed symplectic four-manifold with homology class $A \in H_2(M)$, \cite[\S4.1]{McDuffSiegel_counting} establishes an equivalence of signed counts
$$ \#\calM_{M,A} \lll \T^{(m)}p\rrr = \#\calM_{M,A}\lll(m)\rrr_E.$$
The basic idea is to place the tangency constraint in $E_\sk$ and stretch the neck along $\bdy E_\sk$, and then to argue that only degenerations of the expected type can arise.
Although \cite{McDuffSiegel_counting} only proves this in dimension four in order to invoke an argument which sidesteps any technicalities about gluing curves with tangency constraints, this is expected to hold for closed manifolds of all dimensions.  In
 the context of a symplectic cobordism $X$, it is not quite reasonable to expect in general an equality of signed curve counts
\begin{align*}
\#\calM_{X,A}^J(\Gamma^+;\Gamma^-)\lll {(m)}\rrr_E = \#\calM_{X,A}^J(\Gamma^+;\Gamma^-)\lll \T^{(m)}p\rrr.
\end{align*}
Indeed, these counts might not be particularly robust, e.g. they could depend on $J$ and the embedding $E_\sk \hooksymp X$.
However, the argument in \cite[Thm.4.1.1]{McDuffSiegel_counting} does extend to this setting to prove:

\begin{prop}\label{prop:T=E} If $\dim X = 4$,
we have $$
\#\calM_{X,A}^J(\Gamma^+;\Gamma^-)\lll {(m)}\rrr_E = \#\calM_{X,A}^J(\Gamma^+;\Gamma^-)\lll \T^{(m)}p\rrr,
$$
 provided that the following conditions hold:
\begin{enumerate}
\item[{\rm (i)}] the moduli space $\#\calM_{X,A}^J(\Gamma^+;\Gamma^-)\lll \T^{(m)}p\rrr$ is formally perturbation invariant (see \S\ref{subsec:pert_inv} below) 
\item[{\rm (ii)}] each Reeb orbit in $\Gamma^+ \cup \Gamma^-$ is nondegenerate and either elliptic or negative hyperbolic.
\end{enumerate}
\end{prop}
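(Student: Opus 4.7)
The plan is to adapt the neck-stretching argument of \cite[Thm. 4.1.1]{McDuffSiegel_counting} to the cobordism setting. Concretely, I would fix a symplectic embedding $E_\sk \subset X$ with the point $p$ and divisor $D$ lying in $\Int E_\sk$, and consider a neck-stretching family $\{J_t\}_{t \in [0,1)}$ in $\Jadm(X;D)$ along $\bdy E_\sk$ whose endpoints $J_0$ and $J_1$ are compatible with the two moduli spaces in question. At $t=0$ we see the tangency count $\#\calM_X^J(\Gamma^+;\Gamma^-)\lll \T^{(m)}p\rrr$, while as $t \to 1$ curves break into SFT buildings with a main level in $\wh{X \less E_\sk}$, an extra level in $\wh{E_\sk}$ inheriting the tangency constraint, and possibly cylindrical levels in the symplectizations $\R\times\bdy E_\sk$, $\R \times \bdy^+X$, and $\R \times \bdy^- X$. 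The formal perturbation invariance hypothesis (i) is precisely what makes the count $\#\calM_X^{J_t}(\Gamma^+;\Gamma^-)\lll \T^{(m)}p\rrr$ well-defined and constant in $t$, so equality of the two signed counts reduces to identifying the ``expected'' limiting configuration and verifying that it is the only one that contributes.

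Next, I would single out the expected configuration: one curve in the top level $\wh{X \less E_\sk}$ with asymptotics $\Gamma^+$, $\Gamma^-$ and an additional negative end at $\eta_m$ (i.e. a curve carrying $\lll (m)\rrr_E$), capped off in $\wh{E_\sk}$ by the unique rigid tangency curve positively asymptotic to $\eta_m$, which is already known from \cite{McDuffSiegel_counting} to be counted with sign $+1$. To show all other configurations have zero net contribution, I would combine (a) action-filtration and index bookkeeping, which forces any cascading of negative ends along $\bdy E_\sk$ to be index zero, with (b) the writhe and relative adjunction estimates used in dimension four in \cite{McDuffSiegel_counting}, which prevent the tangency constraint from being absorbed by a multiply-covered cylinder over $\eta_m$ in the ellipsoid. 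The hypothesis (ii) enters here: because every orbit in $\Gamma^+ \cup \Gamma^-$ is nondegenerate and either elliptic or negative hyperbolic, no index-zero multiple-cover breakings can form at the external ends $\bdy^\pm X$, so the only place further breaking could occur is along $\bdy E_\sk$, which is exactly the situation handled in the closed case.

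The main obstacle is the last step, namely ruling out ``sneaky'' limit buildings in which the tangency constraint travels along a chain of cylinders or branched covers inside $\wh{E_\sk}$, or in which a component of the top level develops extra negative ends at $\eta_j$ for $j < m$ that are then spliced onto a more complicated curve in $\wh{E_\sk}$. In the closed case this is handled by a careful writhe and ECH-partition-conditions argument in dimension four, together with the fact that, up to action, $\eta_m$ is the unique orbit of its type in $\bdy E_\sk$. The same argument should transport to the cobordism setting once one verifies that the external asymptotics $\Gamma^{\pm}$ cannot contribute to the adjunction defect or the writhe count, which is exactly guaranteed by (ii). With these inputs in place, the signed cobordism given by the parametric moduli space over $t \in [0,1]$ identifies the tangency count at $t=0$ with the product of the ellipsoidal count and the $(\pm 1)$-contribution of the rigid capping curve, yielding the stated equality.
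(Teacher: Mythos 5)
Your proposal follows essentially the same route as the paper, which itself only sketches this result by asserting that the neck-stretching argument of \cite[Thm.\ 4.1.1]{McDuffSiegel_counting} (place the constraint in $E_\sk$, stretch along $\bdy E_\sk$, rule out unexpected degenerations) carries over to the cobordism setting, with hypothesis (i) making the tangency count constant in a generic $1$-parameter family via Proposition~\ref{prop:count_J_indep}. The one point where your reading diverges slightly is the role of hypothesis (ii): the paper invokes it to guarantee (via automatic transversality and Remark~\ref{rmk:aut_trans}(ii)) that all relevant curves count with sign $+1$, which is what lets the four-dimensional argument sidestep gluing with tangency constraints, rather than to forbid breakings at the external ends $\bdy^\pm X$.
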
 
\NI Indeed, the first condition  guarantees that curve counts remain constant
over a generic $1$-parameter family of almost complex structures (c.f. Proposition~\ref{prop:count_J_indep}), and the second condition ensures that the relevant curves count positively (c.f. Theorem~\ref{subsec:aut_trans} and Remark~\ref{rmk:aut_trans}~(ii)).

\subsection{Formal curves}\label{subsec:formal_curves}

In this subsection we introduce the notion of a ``formal curve'', which is a convenient device for storing combinatorial curve data, but without requiring that this data be represented by any actual solution to the pseudoholomorphic curve equation. 
We also define ``formal buildings'', which are analogous to pseudoholomorphic buildings but with each pseudoholomorphic curve component replaced by a formal curve component.
This will allow us to discuss ``formal perturbation invariance'' of moduli spaces in the next subsection.

\subsubsection{Formal curve components}

To begin, we define:
\begin{definition}
A {\bf formal curve component} $C$ in a compact symplectic cobordism $(X,\omega)$ is a triple $(\Gamma^+,\Gamma^-,A)$, where
\begin{itemize}
\item
$\Gamma^+ = (\gamma^+_1,\dots,\gamma^+_a)$ is a tuple of Reeb orbits in $\bdy^+X$ 
\item $\Gamma^- = (\gamma^-_1,\dots,\gamma^-_b)$ is a tuple of Reeb orbits in $\bdy^-X$
\item $A \in H_2(X,\Gamma^+\cup \Gamma^-)$ is a homology class
\item we require the energy $E_X(C) := E_X(A) = \int_A \omega$ to be nonnegative. 
\end{itemize}

Similarly, a formal curve component $C$ in a strict contact manifold $(Y,\alpha)$ 
is a triple $(\Gamma^+,\Gamma^-,A)$, where $\Gamma^+,\Gamma^-$ are tuples of Reeb orbits in $Y$ and $A \in H_2(Y,\Gamma^+\cup\Gamma^-)$ is a homology class, and we require the energy
$E_Y(C) := E(A) = \int_A d\alpha$ to be nonnegative.
\end{definition}

We view $C$ as representing a hypothetical genus zero\footnote{One could of course extend the definition to allow for higher genus curves, but we will not need this.} irreducible asymptotically cylindrical curve in $\wh{X}$ or $\R \times Y$.
Note that a formal curve component also has a well-defined index $\ind(C)$, defined by the same formula ~\eqref{eq:fred_ind}.
We will say that a formal curve component in $Y$ is a ``trivial cylinder'' (or just ``trivial'') if $a = b = 1$ and $\Gamma^+ = \Gamma^- = (\gamma)$ for some Reeb orbit $\gamma$ in $Y$. 
A formal curve component $C$ is ``closed'' if $\Gamma^+ = \Gamma^- = \nil$, and it is moreover ``constant'' if 
$E_X(C) = 0$.

It will also be convenient to speak about formal curve components in $X$ carrying a constraint $\lll \T^{(m)}p\rrr$ for some $m \in \Z_{\geq 1}$. 
Here the constraint $\lll \T^{(m)}p\rrr$ is an extra piece of formal data which has the effect of 
decreasing the index by $2n-4 + 2m$ (here $2n = \dim_{\R}(X)$).

Given a formal curve $C = (\Gamma^+,\Gamma^-,A)$ in $X$ and $J_X \in \Jadm(X)$, we introduce the shorthand notation $\calM_X^{J_X}(C) := \calM_{X,A}^{J_X}(\Gamma^+;\Gamma^-)$ 
 for the corresponding space of $J_X$-holomorphic curves representing $C$.
 As before, we will often omit the almost complex structure from the notation.
 Similarly, if $C = (\Gamma^+,\Gamma^-,A)$ is a formal curve in $Y$ and $J_Y \in \Jadm(Y)$, we put $\calM_Y^{J_Y}(C) := \calM_{Y,A}^J(\Gamma^+;\Gamma^-)$.
This shorthand also applies when $C$ carries a local tangency constraint, which is then implicit e.g. in the notation 
$\calM_X(C)$.

\subsubsection{Formal nodal curves and buildings}
We now extend the above definition in order to model elements of the SFT compactification.
Firstly, a \textit{connected formal nodal curve} $C$ in $X$ or $Y$ is roughly the same as a pseudoholomorphic nodal curve, but with each pseudoholomorphic curve component replaced by a formal curve component. 
More precisely:

\begin{definition}\label{def:formcurve}
A {\bf connected formal nodal curve} $C$ in $X$ (resp. $Y$) consists of:
\begin{itemize}
\item
a tree $T$
\item for each vertex $v$ of $T$, a formal curve component $C_v$ in $X$ 
(resp. $Y$).
\end{itemize}

More generally, we drop the ``connected'' condition by allowing $T$ to be a forest (i.e. disjoint union of trees). 
\end{definition}
\NI We view the edges as representing nodes. We will say that $C$ is {\bf stable} if, for each nonconstant component $C_v$, the number of punctures plus the number of edges connected to $v$ is at least three. 

\begin{definition}\label{def:formbuild}
A {\bf formal building} in $X$ consists of:
\begin{itemize}
\item
formal nodal curves $C_1,\dots,C_a$ in $\bdy^+X$ for some $a \in \Z_{\geq 0}$
\item 
a formal nodal curve $C_0$ in $X$
\item
formal nodal curves $C_{-1},\dots,C_{-b}$ in $\bdy^-X$ for some $b \in \Z_{\geq 0}$,
\end{itemize}
such that the tuple of positive Reeb orbits for $C_i$ coincides with the tuple of negative Reeb orbits for $C_{i+1}$ for $i = -b,\dots,a-1$.
We also assume that the graph given naturally by concatenating the forest of each level is acyclic. 

Similarly, a formal building in $Y$ consists of formal nodal curves $C_1,\dots,C_a$ in $Y$ for some $a \in \Z_{\geq 1}$, that the tuple of positive Reeb orbits for $C_i$ coincides with the tuple of negative Reeb orbits for $C_{i+1}$ for $i = 1,\dots,a-1$, and such that the underlying graph is acyclic.
\end{definition}

We view a formal building as modeling a rational pseudoholomorphic building in $X$ or 
$\R\times Y$, with each constituent formal nodal curve representing a level.
Note that the acyclicity condition ensures total genus zero and could be relaxed, but for our purposes we will keep it.
Such a building has a {\bf total homology class} in $H_2(X;\Gamma^+\cup \Gamma^-)$ or $H_2(Y;\Gamma^+\cup\Gamma^-)$, where $\Gamma^+$ (resp. $\Gamma^-$) is the tuple of positive Reeb orbits of the top (resp. bottom) level. 
We will say that a formal building is {\bf stable} if each constituent formal nodal curve is stable, and no level is a union of trivial cylinders.
We denote the set of stable formal buildings in $X$ whose top (resp. bottom) level has positive (resp. negative) Reeb orbits $\Gamma^+$ (resp. $\Gamma^-$) by $\ovl{\formal}_{X,A}(\Gamma^+;\Gamma^-)$.
The set $\ovl{\formal}_{Y,A}(\Gamma^+;\Gamma^-)$ of stable formal buildings in $Y$ is defined similarly.

\sss

We denote the formal analogue of $\ovll{\calM}_{X,A}(\Gamma^+;\Gamma^-)\lll \T^{(m)}p\rrr$ by $\ovll{\formal}_{X,A}(\Gamma^+;\Gamma^-)\lll \T^{(m)}p\rrr$. This consists of two types of stable formal buildings, modeling curves where the marked point $z_0$ mapping to $p$ lies on a nonconstant component or constant component respectively.
In the first case, we have all stable formal buildings such that one of the components in $X$ is formally endowed with a constraint $\lll \T^{(m)}p\rrr$.
In the second case, we have all stable formal buildings such that some constant component $C_0$ in $X$ is formally endowed with a constraint $\lll p\rrr$, and the nearby nonconstant components $C_1,\dots,C_a$ (i.e. those nonconstant components which are nodally connected through constant components to $C_0$) are formally endowed with constraints $\lll \T^{(m_1)}p\rrr,\dots,\lll \T^{(m_a)}p\rrr$ respectively such that $m_1 + \dots + m_a \geq m$ (c.f. \S\ref{subsec:loc_tang_and_E_sk}).
Note that the extra constraint $\lll p \rrr$ is taken into account as a marked point when formulating stability, whereas the constraints $\lll \T^{(m_1)}p\rrr,\dots,\lll \T^{(m_a)}p\rrr$ do not affect stability since they lie on nonconstant components.

\subsubsection{Formal covers}

Next, we define the formal analogue of multiple covers of pseudoholomorphic curves.
Let $X$ be a symplectic filling, and let $\Gamma = (\gamma_1,\dots,\gamma_a)$ and $\ovl{\Gamma} = (\ovl{\gamma}_1,\dots,\ovl{\gamma}_{\ovl{a}})$ be tuples of Reeb orbits in $Y := \bdy X$.
Let $C = (\Gamma,\nil,A)$ and $\ovl{C} = (\ovl{\Gamma},\nil,\ovl{A})$ be formal curve components in $X$, satisfying constraints $\lll \T^{(m)}p\rrr$ and $\lll \T^{(\ovl{m})}p\rrr$ respectively.
We say that $C$ is a $\kappa$-fold {\bf formal cover} of $\ovl{C}$ if there exists
\begin{itemize}
\item
a sphere $\Sigma$ with marked points $(z_0,\dots,z_a)$
\item 
a sphere $\ovl{\Sigma}$ with marked points $(\ovl{z}_0,\dots,\ovl{z}_{\ovl{a}})$
\item a $\kappa$-fold branched cover $\pi: \Sigma \rightarrow \ovl{\Sigma}$
\end{itemize}
such that
\begin{itemize}
\item $\pi^{-1}(\{ \ovl{z}_1,\dots,\ovl{z}_{\ovl{a}}\}) = \{z_1,\dots,z_a\}$
\item $\pi(z_0) = \ovl{z}_0$
\item for each $i = 1,\dots,a$, $\gamma_i$ is the $\kappa_i$-fold cover of $\ovl{\gamma}_j$, where $j$ is such that $\pi(z_i) = \ovl{z}_j$ and $\kappa_i$ is the ramification order of $\pi$ at $z_i$
 \item we have  $\kappa\ovl{m} \geq m$, 
 where $\kappa$ is the ramification order of $\pi$ at $z_0$.
  \end{itemize}
A formal curve component is {\bf simple} if it cannot be written as a nontrivial (i.e. with $\kappa \geq 2$) formal cover of any other formal curve component.

\subsection{Formal perturbation invariance}\label{subsec:pert_inv}

The following is our main criterion for establishing upper bounds and proving stabilization for the capacities defined in \S\ref{sec:capacity}.

\begin{definition}\label{def:formal_pert_inv}
Let $X$ be a Liouville domain with nondegenerate contact boundary $Y$, and let $C$ be an index zero simple formal curve component in $X$ with positive asymptotics $\Gamma = (\gamma_1,\dots,\gamma_a)$, homology class $A \in H_2(X,\Gamma)$, and carrying a constraint $\lll \T^{(m)}p\rrr$ for some $m \in \Z_{\geq 1}$. 
We say that $C$ is {\bf formally perturbation invariant} if there exists a generic $J_Y \in \Jadm(Y)$ such that the following holds. Suppose that $C' \in \ovll{\formal}_{X,A}(\Gamma)\lll \T^{(m)}p\rrr$
is any stable formal building satisfying:
\begin{itemize}
\item[{\rm (A1)}] Each nonconstant
component of $C'$ in $X$ is a formal cover of some formal curve component $\ovl{C}'$ with $\ind(\ovl{C}') \geq -1$.
\item [{\rm (A2)}] Each nonconstant component of $C'$ in $Y$ is a formal cover of some 
formal curve component $\ovl{C}'$ which is either trivial or else satisfies $\ind(\ovl{C}') \geq 1$.
\end{itemize}
Then either:
\begin{itemize}
\item [{\rm (B1)}]
$C'$ consists of a single component, i.e. $C' = C$, or else
\item [{\rm (B2)}]
$C'$ is a two-level building, with bottom level in $X$ consisting of a single component $C_X$ which is simple
with index $-1$, 
and with top level in $Y$ 
represented by a union of some  trivial cylinders with a simple index $1$  component 
$C_Y$ in $\R\times Y$; moreover we require that $\calM_Y^{J_Y}(C_Y)$ is regular and satisfies 
 $\# \calM^{J_{Y}}_Y(C_Y)/\R = 0$.
\end{itemize}

More generally, if $C$ is any formal curve component in $X$, we say that it is formally perturbation invariant if it is a formal cover of an index zero simple formal curve component $\ovl{C}$ which is formally perturbation invariant as above.
\end{definition}

We will also say that the associated moduli space $\calM_X(C)$ is formally perturbation invariant if the formal curve component $C$ is.
Roughly, this means that for ``purely formal reasons'' the moduli space $\calM_X(C)$ cannot degenerate in a generic $1$-parameter family. 
More precisely, the condition is ``formal in $X$ but not in $Y$'', i.e. it takes into account pseudoholomorphic curves in $\R \times Y$ (via the last condition about $\calM_Y^{J_Y}(C_Y)$) but only formal curves in $X$.
\footnote{In our application, $C_Y$ will occur as a low energy cylinder between an elliptic orbit $e_{i,j}$ and the corresponding hyperbolic orbit $h_{i,j}$ in $\bdy\tX_\Omega$ (c.f. Lemma~\ref{lem:low_energy_cyls} below).}
We will also say that $C$ is ``formally perturbation invariant with respect to $J_Y$'' when we wish to emphasize the role of $J_Y$ in Definition~\ref{def:formal_pert_inv}.

The following is a consequence of structure transversality and gluing techniques for simple curves:

\begin{prop}\label{prop:count_J_indep}
Let $X$ be a Liouville domain with nondegenerate contact boundary $Y$, and let $C$ be a simple index zero formal curve component $X$ which carries a local tangency constraint $\lll \T^{(m)} p\rrr$. Assume that $C$ is formally perturbation invariant with respect to some generic $J_{Y} \in \Jadm(Y)$.
Then the associated moduli space $\calM_X^{J_X}(C)$ is regular and finite for generic $J_X \in \Jadm^{J_Y}(X;D)$, and moreover the signed count $\# \calM_X^{J_X}(C)$ is independent of $J_X$ 
provided that $\calM_X^{J_X}(C)$ is regular.
\end{prop}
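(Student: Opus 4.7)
The plan is first to establish regularity and finiteness for a fixed generic $J_X$, and then to deduce independence of the signed count via a parametric $1$-dimensional cobordism whose only potentially nontrivial interior boundary strata are ruled out by the formal perturbation invariance hypothesis.

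Since $C$ is simple as a formal curve, any $u \in \calM_X^{J_X}(C)$ is somewhere injective (otherwise its underlying simple representative would exhibit $C$ as a nontrivial formal cover, contrary to hypothesis). Standard transversality for simple asymptotically cylindrical curves carrying a local tangency constraint (extending Dragnev's theorem in the spirit of \cite{CM1,CM2}) then yields that for generic $J_X \in \Jadm^{J_Y}(X;D)$, the moduli space $\calM_X^{J_X}(C)$ is cut out transversely, hence is an oriented manifold of dimension $\ind(C) = 0$. Finiteness follows from SFT compactness together with formal perturbation invariance: for any sequence in $\calM_X^{J_X}(C)$, a subsequence converges to a stable formal building $C' \in \ovl{\formal}_{X,A}(\Gamma)\lll \T^{(m)}p\rrr$, and for generic $J_X$ every simple $X$-level component has index $\geq 0$ while for generic $J_Y$ every non-trivial simple $Y$-level component has index $\geq 1$, so conditions (A1) and (A2) hold. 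The dichotomy (B1)/(B2) then forces $C' = C$, since option (B2) would require a simple $X$-level component of index $-1$, which does not occur for generic (non-parametric) $J_X$. Hence no actual degeneration takes place and $\calM_X^{J_X}(C)$ is compact and finite.

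For $J_X$-independence, let $J_X^0, J_X^1 \in \Jadm^{J_Y}(X;D)$ be two regular choices and take a generic path $\{J_X^t\}_{t \in [0,1]}$ between them. Parametric transversality for simple curves makes $\calM_X^{\{J_X^t\}}(C)$ an oriented $1$-manifold with $\bdy = \calM_X^{J_X^1}(C) - \calM_X^{J_X^0}(C)$. Its SFT compactification introduces interior boundary strata indexed by stable formal buildings $C'$ which we must rule out. In a generic $1$-parameter family a simple $X$-level component may have index $-1$ at isolated parameter values (but not lower), so (A1) holds; (A2) continues to hold by genericity of $J_Y$. Formal perturbation invariance then restricts $C'$ to either (B1) (an interior point of $\calM_X^{\{J_X^t\}}(C)$, not a boundary stratum) or (B2) (an index $-1$ simple component $C_X$ in $X$ capped by trivial cylinders together with a single simple index $1$ component $C_Y$ in $\R\times Y$).

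The main obstacle is the obstruction bundle gluing step showing that the signed contribution of each (B2) stratum to $\bdy\calM_X^{\{J_X^t\}}(C)$ is proportional to $\#\calM_Y^{J_Y}(C_Y)/\R$. Granted this, the hypothesis $\#\calM_Y^{J_Y}(C_Y)/\R = 0$, which is built into the definition of formal perturbation invariance, forces the interior boundary contribution to vanish, and $\calM_X^{\{J_X^t\}}(C)$ becomes a compact oriented $1$-cobordism between $\calM_X^{J_X^0}(C)$ and $\calM_X^{J_X^1}(C)$, so the signed counts agree. The gluing itself uses simpleness of $C_X$ and $C_Y$ to invoke Hutchings--Taubes style obstruction bundle gluing, locally identifying the ends of the parametric moduli near a (B2) building with a product of counts whose $Y$-factor is $\#\calM_Y^{J_Y}(C_Y)/\R$, and hence zero.
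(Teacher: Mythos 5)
Your proposal is correct and follows essentially the same route as the paper: transversality for simple curves plus SFT compactness and formal perturbation invariance give regularity and finiteness, and the parametric cobordism's only extra boundary strata are (B2) configurations whose signed contributions cancel because $\#\calM_Y^{J_Y}(C_Y)/\R = 0$. One small correction: since $C_X$ and $C_Y$ are simple and the configuration is a transverse index $(-1,+1)$ pair, the paper handles the gluing with standard SFT gluing (Pardon's formulation, analogous to Morse-theoretic gluing) rather than Hutchings--Taubes obstruction bundle gluing, which is reserved elsewhere in the paper for configurations with intermediate branched covers of trivial cylinders; the paper is also more careful than your sketch about the fact that each glued end may open up on only one side of $t_b$, which is why it runs through a four-case sign analysis before concluding the counts agree.
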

\begin{proof}
If $J_Y \in \Jadm(Y)$ and $J_X \in \Jadm^{J_Y}(X;D)$ are generic, 
it follows by standard transversality techniques (c.f. \cite[\S 8]{wendl_SFT_notes}) that:
\begin{itemize}
\item
every simple $J_Y$-holomorphic curve component in $\R \times Y$ is either trivial or else has index at least one
\item
every simple $J_X$-holomorphic curve component in $\wh{X}$ has nonnegative index.
\end{itemize}
In particular, since $C$ is simple, $\calM^{J_X}_X(C)$ is regular and hence a zero-dimensional smooth oriented manifold.
It also follows by formal perturbation invariance of $C$ and the SFT compactness theorem (plus the discussion in \S\ref{subsec:loc_tang_and_E_sk})
that we must have $\ovll{\calM}_X^{J_X}(C) = \calM_X^{J_X}(C)$, whence $\calM_X^{J_X}(C)$ is finite.
Indeed, any element $C'$ of $\ovll{\calM}_X^{J_X}(C)$
defines a stable formal building in $\ovll{\formal}_{X,A}(\Gamma)\lll \T^{(m)}p\rrr$ satisfying (A1) and (A2), and 
since (B2) is impossible when $J_X$ is regular we must have $C' \in \calM_X^{J_X}(C)$.

Now assume that $J_0,J_1 \in \calJ^{J_Y}(X;D)$ are chosen such that $\calM_X^{J_i}(C)$ is regular for $i = 0,1$, and let $\{J_t \}_{t \in [0,1]}$ be a generic $1$-parameter family in $\calJ^{J_Y}(X;D)$ interpolating between them.
Standard transversality techniques imply that $\calM_X^{\{J_t\}}(C)$ is regular and hence a smooth oriented $1$-dimensional manifold. 
By formal perturbation invariance and the SFT compactness theorem,
the compactification 
$\ovll{\calM}_X^{\{J_t\}}(C)$ (defined similarly to Definition~\ref{def:doublebar_cpctification}) is a smooth cobordism between $\calM_X^{J_0}(C)$ and $\calM_X^{J_1}(C)$, with possibly some additional boundary configurations as in (B2). 
Our goal is to prove $\#\calM_X^{J_0}(C) = \#\calM_X^{J_1}(C)$. 
Note that this would be immediate if there were none of these additional boundary configurations.

Each of these additional boundary configurations occurs at some time $t_b \in (0,1)$ and consists of a two-level building, with:
\begin{itemize}
\item a top level $J_Y$-holomorphic curve in $\R \times Y$ having a single nontrivial component $C_Y$ which satisfies $\ind(C_Y) = 1$ and such that $\calM_Y^{J_Y}(C_Y)$ is regular with $\# \calM_Y^{J_Y}(C_Y)/\R = 0$
\item bottom level having a single component $C_X$ which has index $-1$ and is simple.
\end{itemize}
By standard transversality techniques we can assume that $C_X$ is regular in the parametrized sense.

We now invoke SFT gluing, using e.g. the general formulation given in \cite[Thm. 2.54]{Pardcnct} (see also \cite[\S2.5.3]{morse_homology_schwarz} for the simpler Morse homology analogue of our setting).
For ease of discussion let us make the following simplifying assumptions:
\begin{itemize}
\item
all of the additional boundary configurations occur at the same time $t_b \in (0,1)$
\item 
all of these configurations involve the same $-1$ component $C_X$
\item $\calM_Y^{J_Y}(C_Y)/\R$ consists of just two elements $C_{Y,1}, C_{Y,2}$ that have opposite signs.
\end{itemize}

For $i = 1,2$, gluing realizes the configuration $(C_{Y,i}, C_X)$ as an end of the moduli space $\calM_X^{\{J_t\}}(C)$, with gluing applying for $|t - t_b|$ sufficiently small and either $t < t_b$ or $t > t_b$ (but not both).
That is, an end of the moduli space $\calM_X^{\{J_t\}}(C)$ with $\pm (t - t_b) > 0$ is compactified by the point $(C_{Y,i}, C_X)$ at $t = t_b$, and it does not extend to $\pm (t - t_b) < 0$.

We assume orientation choices have been made as in \S\ref{subsec:aut_trans}. Together with the canonical orientation on $[0,1]$ this induces an orientation on the $1$-dimensional manifold $\calM_X^{\{J_t\}}(C)$, 
and hence also its compactification $\ovll{\calM}_X^{\{J_t\}}(C)$,
such that $\calM_X^{J_0}(C)$ appears as a negative boundary component (i.e. its sign as a boundary point is the opposite of its sign coming from the orientation on $\calM_X^{J_0}(C)$) and similarly $\calM_X^{J_1}(C)$ appears as a positive boundary component. 
The curves $C_{Y,i},C_X$ also inherit signs $\epsilon(C_{Y,i}),\epsilon(C_X) \in \{-1,1\}$, and by gluing compatibility the sign of each configuration $(C_{Y,i},C_X)$ as a boundary point of $\ovll{\calM}_X^{\{J_t\}}(C)$ matches the product sign $\epsilon(C_{Y,i})\epsilon(C_X)$.
Concretely, the sign associated with the boundary orientation of a boundary point on an oriented $1$-manifold is positive or negative according to whether the orientation points in the outgoing or incoming direction respectively.
Since $C_{Y,1}$ and $C_{Y,2}$ have opposite signs, we have also $\epsilon(C_{Y,1})\epsilon(C_X) \neq \epsilon(C_{Y,2})\epsilon(C_X)$, and hence as boundary points the configurations $(C_{Y,1},C_X)$ and $(C_{Y,2},C_X)$ have opposite orientations.
We then have four possibilities:
\begin{enumerate}[label=(\roman*)]
\item one gluing applies for $t < t_b$ with the corresponding boundary point outgoing, while the other gluing applies for $t > t_b$ with the corresponding boundary point incoming
\item one gluing applies for $t < t_b$ with the corresponding boundary point incoming, while the other gluing applies for $t > t_b$ with the corresponding boundary point outgoing
\item both gluings apply for $t < t_b$, with one corresponding boundary point incoming and the other outgoing
\item both gluings apply for $t > t_b$, with one corresponding boundary point incoming and the other outgoing.
\end{enumerate}
In case (i), by following the cobordism we get a sign-preserving identification of $\calM_X^{J_1}(C)$ with $\calM_X^{J_0}(C)$; case (ii) is similar.
In case (iii), we get a sign-preserving identification of $\calM_X^{J_0}(C)$ with $\calM_X^{J_1}(C)$, plus two extra points of opposite signs; case (iv) is similar.
In any case we have $\#\calM_X^{J_0}(C) = \#\calM_X^{J_1}(C)$.
\end{proof}

\begin{rmk}
One could imagine defining a weaker condition than Definition~\ref{def:formal_pert_inv} which is neither formal in $X$ nor in $Y$. However, this would not suffice for our proof of stabilization (see \S\ref{subsec:stab_ub}), since a priori there could be certain bad degenerations which are ruled out in dimension four for reasons which do not carry over to higher dimensions. 

One could also imagine defining a stronger condition which is formal in both $X$ and $Y$. However, this would be insufficient for our study of convex toric domains, since ``low energy cylinders'' joining an elliptic to a corresponding hyperbolic orbit always occur in the perturbed full rounding $\R \times \bdy\tX_\Omega$ (c.f. Lemma~\ref{lem:low_energy_cyls}).
\end{rmk}

\section{The capacity $\gt_k$}\label{sec:capacity}
In this section we define the main object of study in this paper and establish some of its fundamental properties, in particular proving Theorem~\ref{thm:gt}.
In \S\ref{subsec:basic_props} we give the precise definition of $\gt_k$ and point out its invariance properties. 
We then briefly compare $\gt_k$ with its SFT analogue in \S\ref{subsec:comp_with_SFT}.
\S\ref{subsec:mtone} and \S\ref{subsec:closed_curve_ub} cover the symplectic embedding monotonicity and closed curve upper bound properties, while the proof of the stabilization property occupies \S\ref{subsec:stab_lb} and \S\ref{subsec:stab_ub}.

\subsection{Definition and basic properties}\label{subsec:basic_props}

Given a Liouville domain $(X,\la)$ and a positive constant $c \in \R_{> 0}$, we use the shorthand $c \cdot X$ to denote the Liouville domain $(X, c\la)$.

\begin{definition}\label{def:gt}
Let $X$ be a Liouville domain with nondegenerate contact boundary, and let $D$ be a smooth local symplectic divisor passing through $p \in \Int X$. 
We put 
$$
\gt_k(X) := \sup\limits_{J \in \Jadm(X;D)}\inf\limits_{\Gamma} \calA_{\bdy X}(\Gamma),
$$
where the infimum is over all tuples $\Gamma = (\gamma_1,\dots,\gamma_b)$ of Reeb orbits such that 
\begin{align*}
\ovll{\calM}_X^J(\Gamma)\lll \T^{(k)}p\rrr \neq \nil.
\end{align*}
\end{definition}

\NI Here we put $\calA_{\bdy X}(\Gamma) := \sum_{i=1}^a \calA_{\bdy X}(\gamma_i)$, which is equivalently the energy of any curve with positive ends $\Gamma$.
Recall that $\ovll{\calM}_X^J(\Gamma)\lll \T^{(k)}p\rrr$ and $\Jadm(X;D)$ are defined in \S\ref{subsec:loc_tang_and_E_sk}. 
We emphasize that the moduli spaces 
$\calM_X^J(\Gamma_i)\lll \T^{(k_i)}p\rrr$
are not required to be regular or to have index zero.

\begin{rmk}\label{rmk:gt_reform}
In Definition~\ref{def:gt}, we could alternatively put
$$
\gt_k(X) := \sup\limits_{J \in \Jadm(X;D)}\inf\limits_{\Gamma_1,\dots,\Gamma_a} \left(\calA_{\bdy X}(\Gamma_1) + \dots + \calA_{\bdy X}(\Gamma_a)\right),  
$$
 where the infimum is over all tuples $\Gamma_1 = (\gamma^1_1,\dots,\gamma^1_{b_1}),\dots,\Gamma_a = (\gamma^a_1,\dots,\gamma^a_{b_a})$ of Reeb orbits in $\bdy X$ for which 
the moduli spaces $\calM^J_X(\Gamma_1)\lll \T^{(k_1)}p\rrr,\dots,\calM^J_X(\Gamma_a)\lll \T^{(k_a)}p\rrr$ are nonempty and $k_1,\dots,k_a \in \Z_{\geq 0}$ satisfy $k_1+\dots+k_a \geq k$.
This definition is equivalent and conceptually (if not notationally) cleaner.
Indeed, consider some $C \in \ovll{\calM}_X^J(\Gamma)\lll \T^{(k)}p\rrr$.
If the marked point $z_0$ mapping to $p$ lies on a nonconstant component $C_0$, then we simply note that $C_0$ lies in $\calM_X^J(\Gamma')\lll \T^{(k)}p\rrr$ for some tuple of Reeb orbits $\Gamma'$ satisfying 
$\calA_{\bdy X}(\Gamma') \leq \calA_{\bdy X}(\Gamma)$.
On the other hand, if $z_0$ lies on a ghost component $C_0$, then as in Definition~\ref{def:doublebar_cpctification} we can consider the nearby nonconstant components
$C_i \in \calM_X^J(\Gamma_i)\lll \T^{(k_i)}p\rrr$ for $i = 1,\dots, a$, 
and we necessarily have $\sum_{i=1}^a\calA_{\bdy X}(\Gamma_i) \leq \calA_{\bdy X}(\Gamma)$ and $\sum_{i=1}^a k_i \geq k$.

Conversely, any tuple of curves as above can viewed as an element of the compactified moduli space considered in Definition~\ref{def:gt}. 
\end{rmk}

The quantity $\gt_k(X)$ is manifestly independent of any choice of almost complex structure, and the scaling property $\gt_k(X,\mu \omega) = \mu \gt_k(X,\omega)$ is immediate from the corresponding property for symplectic action.
The nondecreasing property $\gt_1 \leq \gt_2 \leq \gt_3 \leq \cdots$ also follows directly, since by definition any curve satisfying the constraint $\lll \T^{(k)}p\rrr$ for $k \in \Z_{\geq 2}$ also satisfies the constraint $\lll \T^{(k-1)}p\rrr$.
Note that the subadditivity property in Theorem~\ref{thm:gt} is also immediate from Definition~\ref{def:gt}.

A priori $\gt_k$ does depend on the choice of local divisor $D$, but we have:
\begin{lemma}\label{lem:div_indep}
Let $X$ be a Liouville domain with nondegenerate contact boundary. Then $\gt_k(X)$ is independent of the choice of point $p \in \Int X$ and the local divisor $D$. 
\end{lemma}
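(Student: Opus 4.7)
The plan is to show that any two admissible triples $(p_0, D_0)$ and $(p_1, D_1)$ can be related by a compactly supported Hamiltonian diffeomorphism of $X$ whose support lies in $\Int X$, and then to transport the definition along this diffeomorphism. The freedom of choosing the ``local almost complex structure'' near $p$ is already absorbed by the supremum in Definition~\ref{def:gt}, so only $(p, D)$-independence needs real argument.

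First, I would construct a Hamiltonian isotopy $\{\phi_t\}_{t \in [0,1]}$ on $X$ such that $\phi_0 = \operatorname{id}$, each $\phi_t$ equals the identity near $\bdy X$ (and on the cylindrical end of $\wh X$), and $\phi_1$ carries the germ $(p_0, D_0)$ onto $(p_1, D_1)$. This is done in two stages: (a) pick a smooth embedded path in $\Int X$ from $p_0$ to $p_1$, and generate the motion $p_0 \mapsto p_1$ by a Hamiltonian supported in a small tubular neighborhood of the path; (b) after this step, the transported divisor and $D_1$ are two smooth symplectic submanifolds through $p_1$, which are locally symplectomorphic by the Weinstein symplectic neighborhood theorem, and a Moser trick in a Darboux ball about $p_1$ upgrades that local symplectomorphism to a Hamiltonian isotopy supported near $p_1$. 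Composition yields the desired $\phi := \phi_1$.

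Next, I would check that pushforward by $\phi$ defines a bijection $\Jadm(X; D_0) \to \Jadm(X; D_1)$: if $J \in \Jadm(X; D_0)$ then $\phi_* J$ is $\omega$-compatible (since $\phi$ is symplectic), integrable near $p_1 = \phi(p_0)$, preserves $D_1$ near $p_1$, and agrees with $J$ on the cylindrical end of $\wh X$ (since $\phi$ is the identity there), so $\phi_* J$ is admissible. Composition $u \mapsto \phi \circ u$ then gives a bijection of moduli spaces
\[
\calM_X^J(\Gamma)\lll \T^{(k)} p_0 \rrr \;\longleftrightarrow\; \calM_X^{\phi_* J}(\Gamma)\lll \T^{(k)} p_1 \rrr
\]
for every Reeb orbit tuple $\Gamma$ in $\bdy X$, preserving asymptotics and the tangency condition by construction of $\phi$. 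Since $\Gamma$ and the action $\calA_{\bdy X}(\Gamma)$ are untouched, the inner infimum in Definition~\ref{def:gt} for $(p_0,D_0,J)$ equals the one for $(p_1,D_1,\phi_* J)$. Taking the outer supremum and using that $\phi_*$ bijects $\Jadm(X;D_0)$ with $\Jadm(X;D_1)$ gives $\gt_k^{(p_0,D_0)}(X) = \gt_k^{(p_1,D_1)}(X)$.

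The main potential obstacle is verifying step (b): that one can genuinely implement the Moser--Weinstein germ-matching for two symplectic divisors through a common point by a \emph{compactly supported} Hamiltonian isotopy, rather than merely by a local symplectomorphism. This is standard but requires cutting off the generating Hamiltonian inside the Darboux ball while keeping the germ-level equality; once done, no interaction with $\bdy X$ or with exactness of $\la$ can arise because everything happens inside $\Int X$.
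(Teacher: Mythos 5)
Your proposal is correct and follows essentially the same route as the paper: the paper's proof likewise produces (via Moser's trick) a symplectomorphism of $X$ that is the identity near $\bdy X$ and carries the germ of $(p,D)$ to that of $(p',D')$, then transports admissible almost complex structures and moduli spaces by pushforward, preserving asymptotics and hence actions. Your write-up merely spells out the construction of that symplectomorphism (path isotopy plus a Weinstein/Moser germ-matching, cut off inside a Darboux ball) in more detail than the paper does.
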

\begin{proof}  If $p,D$ are fixed, then there is a contractible family of choices for $J_D$.  Further,
given two local symplectic divisors $D,D'$ near $p,p' \in \Int X$ respectively, using Moser's trick we can find a symplectomorphism $\Phi: X \rightarrow X$ which is the identity near $\bdy X$ and which maps the germ of $D$ near $p$ to the germ of $D'$ near $p'$. This induces a bijection $\Jadm(X;D) \overset{\approx}\rightarrow \Jadm(X;D')$
sending $J$ to $\Phi_*J := (d\Phi) \circ J \circ  (d\Phi)^{-1}$, and we get a corresponding bijection $\ovll{\calM}^J_{X}\lll \T^{(k)}p\rrr(\Gamma) \overset{\approx}\rightarrow \ovll{\calM}^{\Phi_*J}_{X}\lll \T^{(k)}p\rrr(\Gamma)$ sending $C$ to $\Phi \circ C$.
\end{proof}

In the next subsection we prove that $\gt_k(X) \leq \gt_k(X')$ whenever $X,X'$ are Liouville domains of the same dimension with nondegenerate contact boundaries for which there is a symplectic embedding $X \hooksymp X'$.
Taking this on faith for the moment, we extend the definition of $\gt_k$ to all symplectic manifolds:

\begin{definition}\label{def:gt_arb_symp_mfd}
If $M$ is any symplectic manifold, we put $$
\gt_k(M) := \sup\limits_{X} \gt_k(X),
$$
 where the supremum is over all Liouville domains $X$ with nondegenerate contact boundary for which there exists a symplectic embedding $X \hooksymp M$.
\end{definition}

\NI Evidently the above definition is consistent with Definition~\ref{def:gt} when $X$ is a Liouville domain with nondegenerate contact boundary (assuming Proposition~\ref{prop:mtone} below). It is also immediate that $\gt_k(M)$ is a symplectomorphism invariant (in particular, in the case of a Liouville domain $(X,\la)$, $\gt_k(X)$ depends on the symplectic form $d\la$ but not on its primitive $\la$).

\begin{rmk}[Local tangency versus skinny ellipsoidal constraints]
In light of \S\ref{subsec:loc_tang_and_E_sk}, to first approximation we can trade (at least in dimension four) the local tangency constraint $\lll \T^{(m)}p\rrr$ in Definition~\ref{def:gt} with a skinny ellipsoidal constraint $\lll (m)\rrr_E$. 
However, the resulting invariant is not immediately equivalent without additional assumptions, and in fact our proof of monotonicity in \S\ref{subsec:mtone} does not a priori apply to skinny ellipsoidal constraints due to the possibility of extra negative ends which bound pseudoholomorphic planes in lower levels.
Nevertheless, it will be fruitful to utilize skinny ellipsoidal constraints in \S\ref{sec:constructing_curves} when computing $\gt_k$ for convex toric domains, and in that setting the relevant moduli spaces are sufficiently nice so that Proposition~\ref{prop:T=E} applies.
\end{rmk}

\subsection{Monotonicity under symplectic embeddings}\label{subsec:mtone}

\begin{prop}\label{prop:mtone}
Let $X$ and $X'$ be Liouville domains of the same dimension with nondegenerate contact boundaries, and suppose there is a symplectic embedding $X \hooksymp \Int X'$. Then for $k \in \Z_{\geq 1}$ we have $\gt_k(X) \leq \gt_k(X')$.
\end{prop}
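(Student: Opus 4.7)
The plan is a standard neck-stretching argument along $\iota(\bdy X) \subset \Int X'$, where $\iota: X \hooksymp \Int X'$ is the given embedding. First I would choose $p \in \Int X$, a local divisor $D$ through $p$, and any $J_X \in \Jadm(X;D)$, and use $\iota(p)$ together with the image of $D$ as the tangency data in $X'$. Setting $J_Y := J_X|_{\bdy X}$ on $Y := \bdy X$ and picking any $J_{X' \less X} \in \Jadm^{J_Y}(X' \less X)$, gluing then produces a neck-stretching family $\{J_t\}_{t \in [0,1)} \subset \Jadm(X';D)$ that restricts to $J_X$ on $\iota(X)$ for all $t$ and whose limit as $t \to 1$ is an admissible structure on the broken cobordism $X \notccirc (X' \less X)$.

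If $\gt_k(X') = \infty$ there is nothing to prove, so assume otherwise. Applying Definition~\ref{def:gt} to each $J_t$ supplies a Reeb orbit tuple $\Gamma_t$ in $\bdy X'$ and some $C_t \in \calM_{X'}^{J_t}(\Gamma_t)\lll \T^{(k)} p\rrr$ with $\calA_{\bdy X'}(\Gamma_t) \leq \gt_k(X')$. Since $\bdy X'$ is nondegenerate only finitely many such tuples exist, so along any sequence $t_n \to 1$ one may pass to a subsequence with $\Gamma_{t_n} \equiv \Gamma^+$. I would then feed the uniform energy bound $E_{X'}(C_{t_n}) = \calA_{\bdy X'}(\Gamma^+) \leq \gt_k(X')$ into SFT compactness to extract a limiting pseudoholomorphic building whose main level in $\wh{X}$ is $J_X$-holomorphic. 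Because the constraint $\lll \T^{(k)} p \rrr$ is attached to the marked point at $p \in \Int X$, it must be carried by some irreducible component $C_\ast$ of that $\wh{X}$ level; writing $\Gamma_\ast$ for its positive asymptotics gives $C_\ast \in \calM_X^{J_X}(\Gamma_\ast)\lll \T^{(k)} p\rrr$, so in particular $\calM_X^{J_X}(\Gamma_\ast)\lll \T^{(k)} p\rrr \neq \nil$.

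To finish, Stokes' theorem on the Liouville domain $X$ yields $\calA_{\bdy X}(\Gamma_\ast) = E_X(C_\ast)$, while nonnegativity of the $\omega$-energy of every component of the limit building together with conservation of total energy under neck stretching gives $E_X(C_\ast) \leq \calA_{\bdy X'}(\Gamma^+) \leq \gt_k(X')$. Hence the infimum appearing in the definition of $\gt_k(X)$ for this $J_X$ is bounded by $\gt_k(X')$, and taking the supremum over $J_X \in \Jadm(X;D)$ yields the desired inequality. The main subtlety is that $X' \less X$ need only be a compact symplectic cobordism rather than a Liouville one, but since the crucial energy estimate relies only on the Liouville structure of $X$ together with $\omega$-compatibility of the almost complex structure elsewhere, this causes no real trouble.
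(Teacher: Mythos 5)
Your proposal is correct and follows essentially the same route as the paper's proof: neck-stretching along $\iota(\bdy X)$ with the almost complex structure fixed equal to $J_X$ on the inside, extracting a constant orbit tuple $\Gamma^+$ via nondegeneracy of $\bdy X'$ and the uniform action bound, applying SFT compactness, locating the component carrying $\lll \T^{(k)}p\rrr$ in the bottom level, and bounding its energy by nonnegativity of the $\omega$-energy of the remaining components. Your closing remark about $X' \less X$ being merely a compact symplectic cobordism is a fair observation, and the resolution you give matches the paper's setup.
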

\begin{proof}
Let $\iota: X \hooksymp \Int X'$ be a symplectic embedding, let $D$ be a local symplectic divisor near $p \in \Int X$, and put $p' := \iota(p)$ and $D' := \iota(D)$.
Given $J \in \Jadm(X;D)$, let $J' \in \Jadm(X',D')$ be an admissible almost complex structure on $\wh{X'}$ which restricts to $\iota_*J$ on $\iota(X)$.
Let $\{J'_t\}_{t \in [0,1)}$ be a family of almost complex structures in $\Jadm(X';D')$ which realizes neck stretching along $\bdy \iota(X)$, with $J'_0 = J'$.
By definition of $\gt_k(X')$, for each $t \in [0,1)$ there is some collection of Reeb orbits $\Gamma^t = (\gamma_1^t,\dots,\gamma_k^t)$ in $\bdy X'$ satisfying $\calA_{\bdy X'}(\Gamma^t) \leq \gt_k(X')$ and $\ovll\calM_{X'}^{J_t'}(\Gamma^t)\lll \T^{(k)}p'\rrr \neq \nil$.
Since $\bdy X'$ has nondegenerate Reeb orbits, there are only finitely many Reeb orbits of action less than any given value, and hence we can find an increasing sequence $t_1,t_2,t_3,\dots \in [0,1)$ with $\lim\limits_{t \rightarrow \infty} t_i = 1$ such that $\Gamma^{t_i} = \Gamma^{t_1}$ is independent of $i$.
By the SFT compactness theorem 
there is some element in the compactified moduli space $\ovll{\calM}_{X'}^{\{J'_t\}}(\Gamma^{t_1})\lll \T^{(k)}p'\rrr$ corresponding to $t = 1$.
This is a pseudoholomorphic building in the broken cobordism $X \notccirc (X' \setminus X)$, and in particular by looking at the components mapping to $X$ we get an element in $\ovll{\calM}_X^{J}(\Gamma^{t_1})\lll \T^{(k)}p\rrr$ with energy at most $\gt_k(X')$.
Since $J$ was arbitrary, we then have $\gt_k(X) \leq \gt_k(X')$.
\end{proof}

\begin{rmk}
Fix any $J_{\bdy X'} \in \Jadm(\bdy X')$, and put $$
\gt_k^{J_{\bdy X'}}(X') := \sup\limits_{J_{X'} \in \Jadm^{J_{\bdy X'}}(X')}\inf\limits_{\Gamma} \calA_{\bdy X'}(\Gamma),
$$
 where the infimum is over all tuples $\Gamma = (\gamma_1,\dots,\gamma_a)$ of Reeb orbits in $\bdy X'$ for which $\ovll\calM^{J_{X'}}_{X'}(\Gamma)\lll \T^{(k)}p\rrr \neq \nil$.
In other words, $\gt_k^{J_{\bdy X'}}(X)$ is defined just like $\gt_k(X')$ except that we take the supremum over almost complex structures having fixed form on the cylindrical end.
Then the above proof actually shows that we have
$\gt_k(X) \leq \gt_k^{J_{\bdy X'}}(X')$.
\end{rmk}
As a consequence of the above remark, by considering symplectic embeddings of $X$ into a slight enlargement of itself we have:
\begin{cor}\label{cor:gt_equals_gt_J_fixed}
For any Liouville domain $X$ with nondegenerate contact boundary, and any $J_{\bdy X} \in \Jadm(\bdy X)$, we have $\gt_k^{J_{\bdy X}}(X) = \gt_k(X)$.
\end{cor}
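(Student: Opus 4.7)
The trivial inequality $\gt_k^{J_{\bdy X}}(X) \leq \gt_k(X)$ is immediate from the definitions since $\Jadm^{J_{\bdy X}}(X;D) \subset \Jadm(X;D)$, so my plan is to focus on the reverse inequality and to extract it from the remark preceding the corollary by applying it to a symplectic embedding of $X$ into a slight Liouville enlargement of itself. The key observation is that the remark only fixes the almost complex structure on the cylindrical end of the \emph{target}, not the source, which gives us the freedom to prescribe any $J_{\bdy X'}$ we like on the target boundary.

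For each small $\epsilon > 0$ I would set $X' := \phi_\epsilon(X) \subset \wh{X}$, where $\phi_\epsilon$ denotes the time-$\epsilon$ flow of the Liouville vector field on $\wh{X}$. The inclusion $X \hookrightarrow X'$ is then a symplectic embedding, and the identity $\phi_\epsilon^* \la = e^\epsilon \la$ yields a Liouville isomorphism $(X', \la|_{X'}) \cong e^\epsilon \cdot X$. Crucially, the cylindrical ends of $\wh{X'}$ and $\wh{X}$ can both be canonically identified with the symplectization of $\bdy X$ (up to a trivial shift of the radial coordinate), so the given $J_{\bdy X}$ determines via translation invariance a canonical $J_{\bdy X'} \in \Jadm(\bdy X')$. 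Applying the remark to the embedding $X \hooksymp X'$ with this choice of $J_{\bdy X'}$ then yields $\gt_k(X) \leq \gt_k^{J_{\bdy X'}}(X')$.

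Finally, I would invoke the scaling identity $\gt_k^{J_{\bdy X'}}(X') = e^\epsilon \gt_k^{J_{\bdy X}}(X)$, which follows from exactly the same area-rescaling argument that proves the scaling property of $\gt_k$ in Theorem~\ref{thm:gt}, combined with the fact that admissible almost complex structures on the symplectization are independent of a positive rescaling of the contact form. Putting this together gives $\gt_k(X) \leq e^\epsilon \gt_k^{J_{\bdy X}}(X)$, and letting $\epsilon \to 0^+$ completes the proof. There is no substantive obstacle here; the only point requiring a bit of care is tracking the canonical identification of $J_{\bdy X}$ with $J_{\bdy X'}$ under the Liouville flow, so that the scaling identity really does apply to exactly this pair of admissible structures rather than to some unrelated pair.
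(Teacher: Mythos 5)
Your argument is correct and is exactly the paper's intended proof: the paper derives this corollary in one line from the preceding remark ``by considering symplectic embeddings of $X$ into a slight enlargement of itself,'' which is precisely your Liouville-flow enlargement $X' = \phi_\epsilon(X)$ combined with the scaling identity and the limit $\epsilon \to 0^+$. The details you supply (the canonical identification of the cylindrical ends and of $J_{\bdy X}$ with $J_{\bdy X'}$, and the trivial reverse inequality) are the ones the paper leaves implicit.
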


The symplectic embedding monotonicity property of Theorem~\ref{thm:gt} is now an immediate consequence of Proposition~\ref{prop:mtone} and Definition~\ref{def:gt_arb_symp_mfd}:
\begin{cor}
If $M$ and $M'$ are symplectic manifolds of the same dimension with a symplectic embedding $M \hooksymp M'$, then we have $\gt_k(M) \leq \gt_k(M')$ for any $k \in \Z_{\geq 1}$. 
\end{cor}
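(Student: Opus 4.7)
The plan is essentially immediate: this corollary is a purely formal consequence of the definition of $\gt_k$ for arbitrary symplectic manifolds combined with the monotonicity already established in Proposition~\ref{prop:mtone}. There is no new geometry to do.

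First I would unwind Definition~\ref{def:gt_arb_symp_mfd}, which says
\[
\gt_k(M) \;=\; \sup_{X} \gt_k(X),
\]
where the supremum ranges over Liouville domains $X$ with nondegenerate contact boundary admitting a symplectic embedding $X \hooksymp M$. Given a symplectic embedding $\iota\colon M \hooksymp M'$, any embedding $X \hooksymp M$ composes with $\iota$ to produce an embedding $X \hooksymp M'$. Hence every $X$ competing in the supremum defining $\gt_k(M)$ also competes in the supremum defining $\gt_k(M')$.

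Next I would observe that for any such $X$ we have the trivial bound $\gt_k(X) \leq \gt_k(M')$ directly from Definition~\ref{def:gt_arb_symp_mfd} applied to $M'$. Taking the supremum over all admissible $X$ on the left gives
\[
\gt_k(M) \;=\; \sup_{X \hooksymp M} \gt_k(X) \;\leq\; \gt_k(M'),
\]
as required.

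There is no real obstacle, since all the substantive content is already contained in Proposition~\ref{prop:mtone} (which handled monotonicity between Liouville domains with nondegenerate contact boundary via neck stretching and SFT compactness). One small thing to note is that Proposition~\ref{prop:mtone} is not even invoked directly here: the supremum definition of $\gt_k$ on general symplectic manifolds absorbs the Liouville-to-Liouville case automatically. The only reason Proposition~\ref{prop:mtone} is conceptually needed at all is to ensure that $\gt_k$, when restricted to Liouville domains with nondegenerate contact boundary, agrees with the supremum definition — i.e.\ that Definition~\ref{def:gt_arb_symp_mfd} is consistent with Definition~\ref{def:gt}.
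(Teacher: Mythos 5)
Your proof is correct and is essentially identical to the paper's: the authors also deduce this corollary as an immediate formal consequence of Definition~\ref{def:gt_arb_symp_mfd} together with Proposition~\ref{prop:mtone}, by composing embeddings and comparing suprema. Your side remark about where Proposition~\ref{prop:mtone} actually enters (only to reconcile the two definitions of $\gt_k$ on Liouville domains) is accurate.
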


\begin{rmk}\label{rmk:cont}
By a standard observation, it also follows that $\gt_k$ is continuous with respect to $C^0$ deformations of $X$ within $\wh{X}$.
\end{rmk}

\begin{rmk}
One could also in principle directly extend Definition~\ref{def:gt} to include all (not necessarily exact) symplectic fillings with nondegenerate contact boundary.
However, a priori our proof of Proposition~\ref{prop:mtone} does not extend,
since in principle there could be infinitely many homology classes with bounded energy.
\end{rmk}

\subsection{Word length filtration}

As in \cite{HSC}, we can also define a refinement $\gt_k^{\leq \ell}$ of $\gt_k$ for any $k,\ell \in \Z_{\geq 1}$ by restricting the allowed number of positive ends.
This gives a more general framework which includes, at least for four-dimensional convex toric domains, both $\{\gt_k\}$ and $\{c_k^\op{GH}\}$ as special cases (see \S\ref{subsec:GH} for more details).

\begin{definition}
Let $X$ be a Liouville domain with nondegenerate contact boundary, and let $D$ be a smooth local symplectic divisor passing through $p \in \Int X$. 
We put 
$$
\gt_k^{\leq \ell}(X) := \sup\limits_{J \in \Jadm(X;D)}\inf\limits_{\Gamma} \calA_{\bdy X}(\Gamma),
$$
 where the infimum is over all tuples $\Gamma = (\gamma_1,\dots,\gamma_a)$ of Reeb orbits in $\bdy X$ for which $\ovll{\calM}^J_{X}(\Gamma)\lll \T^{(k)}p\rrr \neq \nil$, and such that $a \leq \ell$.
\end{definition}
With only minor modifications, our proof Theorem~\ref{thm:gt} also gives the following:

\begin{thm}\label{thm:gt_l}
For each $k,\ell \in \Z_{\geq 1}$, $\gt_k^{\leq \ell}$ is independent of the choice of local divisor and is a symplectomorphism invariant. It satisfies the following properties:
\begin{itemize}
\item
{\bf Scaling:} it scales like area, i.e. $\gt_k^{\leq \ell}(M,\mu \omega) = \mu\gt_k^{\leq \ell}(M,\omega)$ for any symplectic manifold $(M,\omega)$ and $\mu \in \R_{> 0}$.
\item
{\bf Nondecreasing:} we have $\gt_1^{\leq \ell}(M) \leq \gt_2^{\leq \ell}(M) \leq \gt_3^{\leq \ell}(M) \leq \cdots$ for any symplectic manifold $M$.

\item 
{\bf Generalized Liouville embedding monotonicity:} Given equidimensional Liouville domains $X,X'$ and a generalized Liouville embedding of $X$ into $X'$ (see Remark~\ref{rmk:reln_GH}), we have $\gt_k^{\leq \ell}(X) \leq \gt_k^{\leq \ell}(X')$.

\item
{\bf Stabilization:} For any Liouville domain $X$ we have $\gt_k(X \times B^{2}(c)) = \gt_k(X)$ for any $c \geq \gt_k(X)$, provided that the hypotheses of Proposition~\ref{prop:stab_ub} (substituting $\gt_k$ with $\gt_k^{\leq \ell}$) are satisfied.
\end{itemize}
\end{thm}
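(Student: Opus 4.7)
The strategy is to follow the proof of Theorem~\ref{thm:gt} line by line, monitoring at each step the cap $|\Gamma|\le\ell$ on positive asymptotics. The divisor-independence, symplectomorphism invariance, and scaling laws transfer verbatim: Moser's trick as in Lemma~\ref{lem:div_indep} produces a curve-by-curve bijection of moduli spaces which does not alter the word length, and action scales linearly in $\omega$. The nondecreasing inequality holds because any curve witnessing $\lll\T^{(k)}p\rrr$ with $|\Gamma|\le\ell$ automatically witnesses $\lll\T^{(k-1)}p\rrr$ with the same $\Gamma$. The extension to arbitrary symplectic manifolds is defined analogously to Definition~\ref{def:gt_arb_symp_mfd}, taking the supremum over Liouville subdomains whose inclusion is a generalized Liouville embedding, which is what is needed for monotonicity to propagate.

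The new ingredient is the generalized Liouville monotonicity. Given a generalized Liouville embedding $\iota:(X,\la)\hookrightarrow(X',\la')$ and a near-optimal $J\in\Jadm(X;D)$, I extend $J$ to $J'\in\Jadm(X';D)$ via neck-stretching along $\iota(\bdy X)$, with $p\in\Int X$. Any $\Gamma'$ with $|\Gamma'|\le\ell$ and $\calM^{J'}_{X'}(\Gamma')\lll\T^{(k)}p\rrr\neq\nil$ produces, via SFT compactness, a limit building whose $\wh{X}$-level contains a component $C^*$ inheriting the constraint. Two claims must be verified: \emph{(i)} the total action of $C^*$'s positive asymptotics $\Gamma^*$ is at most $\calA_{\bdy X'}(\Gamma')$, and \emph{(ii)} $|\Gamma^*|\le|\Gamma'|\le\ell$. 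Claim \emph{(i)} follows from energy-nonnegativity at each level of the building, once one observes that the generalized Liouville hypothesis --- that $\iota^*\la'-\la$ is exact on $\bdy X$ --- reconciles the two ways of measuring Reeb action on $\bdy X$ (from above via $\iota^*\la'$ and from below via $\la$), making the telescoping action accounting between levels consistent. Claim \emph{(ii)} is a genus-zero tree argument: the dual graph $T$ of the building (with vertices for components and edges for both nodes and matched puncture pairs) is a tree, and cutting $T$ at $C^*$ produces $|\Gamma^*|$ disjoint subtrees $T_1,\dots,T_{|\Gamma^*|}$, one for each positive puncture. Each $T_i$ must contain a component in the top level at $\bdy X'$: its topmost component has all positive punctures unmatched (otherwise they would match to strictly higher components in $T_i$, contradicting topmost), and energy positivity in symplectization and cobordism levels forces this topmost component to have at least one positive puncture. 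Hence each $T_i$ contributes at least one element of $\Gamma'$, proving $|\Gamma^*|\le|\Gamma'|\le\ell$.

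The stabilization property is proven in two steps. The lower bound $\gt_k^{\le\ell}(X\times B^2(c))\ge\gt_k^{\le\ell}(X)$ follows immediately from the just-established monotonicity, applied to the Liouville (hence generalized Liouville) inclusion $X\hookrightarrow X\times B^2(c)$. The upper bound adapts Proposition~\ref{prop:stab_ub}: its construction builds curves in $X\times B^2(c)$ from curves in $X$ via iterated gluing with low-energy cylinders on the $B^2(c)$ factor, so the top positive asymptotics transport setwise and the cap $|\Gamma|\le\ell$ is preserved. The \textbf{main obstacle} is to verify that the underlying formal perturbation invariance machinery of Definition~\ref{def:formal_pert_inv} and Proposition~\ref{prop:count_J_indep} remains valid when word-length is monitored --- specifically, that the critical degenerations (B2) cannot produce a component with strictly more top positive ends than the original curve. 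This should again reduce to the genus-zero tree argument of the previous paragraph, since (B2) attaches a simple index-one symplectization curve $C_Y$ to the main index-$(-1)$ cobordism component $C_X$ along a single matching Reeb orbit, an operation whose effect on the top positive-end count can be controlled by the same subtree bookkeeping.
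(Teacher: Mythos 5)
Your treatment of the genuinely new item, generalized Liouville monotonicity, is essentially the paper's intended argument made explicit: the exactness of $(\iota^*\la'-\la)|_{\bdy X}$ is precisely what rules out, via Stokes' theorem, components of the limit building in $X'\setminus X$ with no positive punctures (it identifies $\int_{\gamma}\iota^*\la'$ with the positive quantity $\calA_{\bdy X}(\gamma)$, so such a component would have negative energy), and your subtree bookkeeping then yields $|\Gamma^*|\le|\Gamma'|\le\ell$. The paper only gestures at this in the paragraph following the theorem statement, so spelling out the tree argument is welcome; the scaling, nondecreasing, and invariance items indeed transfer verbatim.

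There is, however, a genuine gap in your stabilization argument. You assert that the lower bound $\gt_k^{\le\ell}(X\times B^{2}(c))\ge\gt_k^{\le\ell}(X)$ ``follows immediately from the just-established monotonicity, applied to the inclusion $X\hookrightarrow X\times B^{2}(c)$.'' Monotonicity --- both in Theorem~\ref{thm:gt} and in the version you proved --- is a statement about \emph{equidimensional} embeddings, and the inclusion $X\hookrightarrow X\times B^{2}(c)$ has codimension two, so it does not apply; no monotonicity under codimension-two embeddings is claimed anywhere in the paper, nor should one expect it for curve-counting capacities. The lower bound is the actual content of Proposition~\ref{prop:stab_lb} and cannot be bypassed: one must smooth $X\times B^{2}(c)$ to $X\smx B^{2}(c)$ as in Lemma~\ref{lem:smoothing}, so that every Reeb orbit of action less than $c-\eps$ lies in $\bdy X\times\{0\}$ with normal Conley--Zehnder index $1$, and then run the intersection-theoretic slice argument of Lemma~\ref{lem:lie_in_slice} (asymptotic winding bounds against the holomorphic divisor $\wh{X}\times\{0\}$) to force any low-energy constrained curve to lie entirely in the slice and hence to come from a curve in $\wh{X}$. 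Relatedly, your sketch of the upper bound via ``iterated gluing with low-energy cylinders on the $B^{2}(c)$ factor'' is not how Proposition~\ref{prop:stab_ub} proceeds: there is no gluing there; the same slice lemma identifies $\calM_{\wt{X}}(\wt{C})$ with $\calM_{X}(C)$, one checks that formal perturbation invariance persists upstairs, and Proposition~\ref{prop:count_J_indep} then transports the nonzero count to all admissible almost complex structures. The word-length cap does survive, but because the identification of moduli spaces preserves the positive asymptotics verbatim --- an argument you need to make through the slice lemma, not through a gluing construction.
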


\NI Compared with Theorem~\ref{thm:gt}, for a general symplectic embedding $X \hooksymp X'$ there may be curves in $X' \setminus X$ having no positive ends, and a curve with $\ell$ positive ends in $X'$ may produce a curve in $X$ with a greater number of 
positive 
ends after neck stretching 
since the top of the limiting building might contain a component with no positive ends.
Generalized Liouville embeddings carry an additional an exactness condition which precisely rules out curves in $X' \setminus X$ without 
 positive ends via Stokes' theorem. 

Note that if $X^{2n \geq 4}$ is a star-shaped domain then a symplectic embedding $X \hooksymp X'$ is automatically a generalized Liouville embedding, but this does not necessarily extend to cases with $H^1(\bdy X;\R)$ nontrivial.
Moreover, if $\bdy X$ has no contractible Reeb orbits then we have $\gt_k^{\leq 1}(X) = c_k^{\op{GH}}(X) = \infty$, 
and hence these capacities contain no quantitative information; $\gt_k^{\leq \ell}(X)$ is more often finite for $l$ sufficiently large.

\subsection{Comparison with SFT counterpart}\label{subsec:comp_with_SFT}
At first glance the definitions of $\gt_k$ and $\gapac_k$ look rather different, despite involving the same types of curves. 
Recall that $\gapac_k(X)$ is defined in \cite{HSC} using the $\calL_\infty$ algebra structure on the linearized contact homology chain complex $\chlin(X)$ of a Liouville domain $X$, along with the induced $\calL_\infty$ homomorphism $\auglin\lll \T^{(k)}p\rrr: \chlin(X) \rightarrow \K$ defined by counting rational curves with a local tangency constraint $\lll \T^{(k)}p\rrr$.
In brief, $\gapac_k(X)$ is the minimal action of an element of the bar complex $\bar\chlin(X)$ which is closed under the bar differential and whose 
image under the chain map $\bar\chlin(X) \rightarrow \K$ induced by $\auglin\lll \T^{(k)}p\rrr$ is nonzero.
Here $\bar\chlin(X)$ as a vector space is the (appropriately graded) symmetric tensor algebra on the vector space $\chlin(X)$ spanned by good Reeb orbits in $\bdy X$, and the bar differential is built out of the $\Li$ structure maps $\ell^1,\ell^2,\ell^3$ which count pseudoholomorphic buildings in $\R \times \bdy X$, anchored in $X$, with one negative and several positive ends.
In particular, this definition of $\gapac_k(X)$ typically requires virtual perturbations in order to set up the chain complex $\chlin(X)$ along with its $\Li$ structure, and its basic invariance and structural properties follow naturally from SFT functoriality. 

The precise virtual perturbation framework is not important for our present discussion, but we mention two important axioms: (a) a structure coefficient can only be nonzero if the corresponding SFT compactified moduli space is nonempty, and (b) if the naive pseudoholomorphic curve count for a given structure coefficient is already regular and there are other representatives in its corresponding SFT compactified moduli space, then this count remains valid after turning on virtual perturbations.
It is then easy to deduce that $\gt_k(X) \leq \gapac_k(X)$ for any Liouville domain $X$. Indeed, for any $J$, by (a) and the definition of $\gapac_k(X)$ there must be a pseudoholomorphic building $C \in \ovll{\calM}_X^J(\Gamma)\lll \T^{(k)}p\rrr$ having total energy at most $\gapac_k(X)$.
Since $J$ is arbitrary, we therefore have $\gt_k(X) \leq \gapac_k(X)$.

In principle we could have $\gt_k(X) < \gapac_k(X)$, if all curves in $\wh{X}$ with energy $\gt_k(X)$ are undetected by $\gapac_k(X)$. However, this cannot occur if $\gt_k(X)$ is carried by a suitably nice moduli space, e.g. as in Proposition~\ref{prop:stab_ub}.
In particular, it follows from the results of this paper that $\gt_k(X) = \gapac_k(X)$ whenever $X$ is a four-dimensional convex toric domain; we are not currently aware of any Liouville domain $X$ for which $\gt_k(X) \neq \gapac_k(X)$.

\subsection{Upper bounds from closed curves}\label{subsec:closed_curve_ub}

Here we prove the closed curve upper bound part of Theorem~\ref{thm:gt}. 
Recall from the introduction that $N_{M,A}\lll \T^{(k)}p\rrr \neq 0$ counted the number of curves in class $A$ that 
are tangent to the local divisor $D$ at $p$ to order $k$.

\begin{prop}
If $(M,\omega)$ is a closed semipositive symplectic manifold satisfying $N_{M,A}\lll \T^{(k)}p\rrr \neq 0$ for some $A \in H_2(M)$, then we have $\gt_k(M) \leq [\omega] \cdot A$.
\end{prop}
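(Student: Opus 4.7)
The plan is to reduce the statement to the Liouville domain case and then extract the desired curve in $\wh{X}$ by a neck-stretching argument applied to the closed curves guaranteed by the nonvanishing Gromov--Witten type count.

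By Definition~\ref{def:gt_arb_symp_mfd}, it suffices to show $\gt_k(X) \leq [\omega]\cdot A$ for every Liouville domain $X$ with nondegenerate contact boundary admitting a symplectic embedding $\iota\colon X \hooksymp M$. Fix such $\iota$ and a smooth local divisor $D$ through a point $p \in \Int X$; shrinking $D$ if necessary, we may assume $D \subset \Int X$, and we identify $\iota(D), \iota(p)$ with $D, p$ inside $M$. Given any $J \in \Jadm(X;D)$, extend $\iota_*J$ to an $\omega$-compatible almost complex structure $\wt{J}$ on all of $M$ that is still integrable near $p$ and preserves $D$ there. Then form a neck-stretching family $\{\wt{J}_t\}_{t\in[0,1)}$ along $\bdy\iota(X)$ (with $\wt{J}_0 = \wt{J}$), chosen so that each $\wt{J}_t$ agrees with $J$ on $\iota(X)$ near $D$ and is of the type admissible for the tangency constraint.

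Because $M$ is closed and semipositive, the Gromov--Witten type count $N_{M,A}\lll\T^{(k)}p\rrr$ is defined as an honest pseudocycle invariant (as in \cite{McDuffSiegel_counting}); in particular it is deformation-invariant in $\wt{J}$, so its nonvanishing forces the compactified moduli space of closed rational $\wt{J}_t$-curves in class $A$ satisfying $\lll\T^{(k)}p\rrr$ to be nonempty for every $t \in [0,1)$. Picking $C_i \in \ovl{\calM}^{\wt{J}_{t_i}}_{M,A}\lll\T^{(k)}p\rrr$ for a sequence $t_i \to 1$ and invoking the SFT compactness theorem, after passing to a subsequence $C_i$ converges to a stable pseudoholomorphic building $\bar C$ in the broken cobordism $X \notccirc (M\setminus X)$. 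By construction $\bar C$ has total homology class $A$, no positive or negative asymptotic orbits at infinity, and the tangency constraint $\lll\T^{(k)}p\rrr$ is carried by some component $C$ in the $\wh{X}$-level. Let $\Gamma = (\gamma_1,\dots,\gamma_a)$ be the tuple of positive Reeb asymptotics of $C$ in $\bdy X$; since $X$ is exact, Stokes' theorem gives
\[
\calA_{\bdy X}(\Gamma) \;=\; E_X(C) \;\leq\; \sum_{\text{all components of }\bar C} E(\bullet) \;=\; [\omega]\cdot A.
\]
Since $\calM^J_X(\Gamma)\lll\T^{(k)}p\rrr \neq \nil$ and $J \in \Jadm(X;D)$ was arbitrary, the definition of $\gt_k(X)$ yields $\gt_k(X) \leq [\omega]\cdot A$, and taking the supremum over $X$ gives $\gt_k(M) \leq [\omega]\cdot A$.

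The main point requiring care is the step asserting nonemptiness of the closed-curve moduli space in $M$ for \emph{every} $\wt{J}_t$, not just generic ones: this relies crucially on semipositivity, which ensures that $N_{M,A}\lll\T^{(k)}p\rrr$ is a well-defined deformation invariant (via pseudocycle or virtual representative counts that are robust to the non-generic $\wt{J}_t$ appearing in the neck-stretching family), so that nonvanishing of the signed count implies existence of honest curves throughout the family. A secondary but routine point is verifying that the tangency constraint descends to a single irreducible component of the limit building in $\wh{X}$; this is standard, using that $J = \wt{J}$ is integrable near $p$, $D$ is preserved, and $p$ is not swallowed by the stretched neck.
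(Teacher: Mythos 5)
Your proof is correct and follows essentially the same route as the paper's: reduce to a Liouville domain $X \hooksymp M$, extend $\iota_*J$ and neck-stretch along $\bdy\iota(X)$, use the deformation invariance of $N_{M,A}\lll \T^{(k)}p\rrr$ (guaranteed by semipositivity) to get nonemptiness of the closed-curve moduli space for every $t$, and then apply SFT compactness to extract a bottom-level component in $\wh{X}$ carrying the constraint with energy at most $[\omega]\cdot A$. The paper phrases the nonemptiness step slightly differently (an empty moduli space would be vacuously regular and would compute the invariant to be zero), but this is the same argument you flag as the point requiring care.
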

\begin{proof}
This is quite similar to the proof of Proposition~\ref{prop:mtone}.
It suffices to show that for any Liouville domain $X$ with nondegenerate contact boundary which admits a symplectic embedding $\iota: X \hooksymp M$, we have 
$\gt_k(X) \leq [\omega] \cdot A$.
Given $J \in \Jadm(X;D)$, we extend $\iota_*J$ to a compatible almost complex structure $J'$ on $M$.
 Let $\{J_t\}_{t \in [0,1)}$ be a family of compatible almost complex structures on $M$ realizing neck stretching along $\bdy \iota(X)$, with $J_0 = J'$.
Note that $\calM^{J_t}_{M,A}\lll \T^{(k)}p\rrr$ is nonempty for all $t \in [0,1)$, since otherwise this moduli space would be empty and in particular regular, contradicting the invariance of $N_{M,A}\lll \T^{(k)}p\rrr$ (see \cite[\S2.2]{McDuffSiegel_counting}).
Then, as in the proof of Proposition~\ref{prop:mtone}, the SFT compactness theorem implies that there must be a limiting building corresponding to $t = 1$, and in particular in the bottom level we can find $C \in \ovll{\calM}_X^J(\Gamma)\lll \T^{(k)}p\rrr$ for some tuple of Reeb orbits satisfying $\calA_{\bdy X}(\Gamma) \leq [\omega] \cdot A$.
\end{proof}

\subsection{Stabilization lower bounds}\label{subsec:stab_lb}

\begin{prop}\label{prop:stab_lb}
For any Liouville domain $X$, we have
$\gt_k(X \times B^{2}(c)) \geq \gt_k(X)$ for all $k\ge 1$ provided that $c \geq \gt_k(X)$.
\end{prop}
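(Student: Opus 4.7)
The plan is a projection-plus-neck-stretching argument: any minimal-action pseudoholomorphic curve in $\wh{X\times B^2(c)}$ satisfying the local tangency constraint at $(p,0)$ should, after suitable SFT degeneration, produce by projection to the $X$-factor an asymptotically cylindrical curve in $\wh X$ satisfying $\lll \T^{(k)}p\rrr$. The definition of $\gt_k(X)$ then provides the lower bound on the action of the projected curve, and the $\omega_{B^2}$-area of the original curve is nonnegative, so the same bound transfers to the action of the original curve in $\wh Z$.

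\emph{Reduction.} By Definition~\ref{def:gt_arb_symp_mfd} together with Proposition~\ref{prop:mtone}, it suffices to exhibit, for each $\epsilon>0$, an equidimensional Liouville subdomain $Z\hooksymp X\times B^2(c)$ with nondegenerate contact boundary and $\gt_k(Z)\geq \gt_k(X)-\epsilon$. Using Remark~\ref{rmk:cont} (continuity under $C^0$-deformations) I may replace $X$ by a Liouville subdomain with nondegenerate contact boundary without losing more than $\epsilon/2$, and then take $Z$ to be a smoothing and nondegenerate perturbation of $X\times B^2(c-\delta)$ for small $\delta>0$.

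\emph{Choice of data and projection.} Fix $J_X\in \Jadm(X;D)$ and take an admissible $J\in \Jadm(Z;\tilde D)$ on $\wh Z$ that agrees with the product $J_X\oplus J_0$ on the compact region $X\times B^2(c-\delta)$, where $J_0$ is the standard structure on $B^2$ and $\tilde D := D\times B^2(c-\delta)$ is the local divisor through $(p,0)$. By Corollary~\ref{cor:gt_equals_gt_J_fixed} we are free to fix the cylindrical-end behavior of $J$ at the outset. For any $J$-holomorphic $C$ in $\wh Z$ satisfying $\lll \T^{(k)}(p,0)\rrr$, the restriction of $\pi_X\circ C$ to the product region is $J_X$-holomorphic, and the contact order of $C$ with $\tilde D$ at $(p,0)$ agrees with the contact order of $\pi_X\circ C$ with $D$ at $p$.

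\emph{Neck-stretching and action.} To turn the partially-defined projection into a genuine asymptotically cylindrical $J_X$-holomorphic curve $C_X$ in $\wh X$, I apply an SFT neck-stretching family in $\wh Z$ along a contact hypersurface adjacent to $\bdy Z$ which elongates the non-product collar to infinity. For a sequence of such curves with bounded action, SFT compactness produces a limiting stable building whose tangency-constrained component lies in the product completion of $X\times B^2(c-\delta)$. Projecting this component to the $X$-factor and applying removable singularities at punctures that are asymptotic to orbits in the $\bdy B^2$-direction yields an honest asymptotically cylindrical $J_X$-holomorphic curve $C_X$ in $\wh X$ with constraint $\lll \T^{(k)}p\rrr$. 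By Definition~\ref{def:gt}, $E_X(C_X)\geq \gt_k(X)$. Since $\pi_{B^2}\circ C$ is $J_0$-holomorphic on the product region, its $\omega_{B^2}$-area is nonnegative, so the asymptotic action $\calA_{\bdy Z}(\Gamma^+_C)=E_Z(C)$ dominates $E_X(C_X)\geq\gt_k(X)$. Taking inf over $\Gamma^+$ with nonempty moduli space and sup over $J$ yields $\gt_k(Z)\geq\gt_k(X)$.

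\emph{Main obstacle.} The delicate step is the SFT compactness and projection in the non-product setting: one must ensure that the tangency-constrained limiting component actually lies in the product region (so that the $X$-projection is globally $J_X$-holomorphic) and that its asymptotics in the $\bdy B^2$-direction really are removable. This is where the hypothesis $c\geq \gt_k(X)$ enters—it guarantees enough room in the $B^2$-factor that the constrained curve cannot escape into the non-product collar without exceeding the target action $\gt_k(X)$, so the extracted projection $C_X$ does carry the constraint and the action estimate is valid.
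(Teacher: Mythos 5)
Your proposal takes a genuinely different route from the paper, and the key step has a real gap. The paper does not project to the $X$-factor at all. Instead it first builds a careful smoothing $X\smx B^2(c)$ of the corner (Lemma~\ref{lem:smoothing}) so that every Reeb orbit of action less than $c-\eps$ lies in $\bdy X\times\{0\}$ with normal Conley--Zehnder index $1$, chooses $\wt J$ making the slice $\wh X\times\{0\}$ holomorphic with restriction $J_X$, and then uses Siefring-type intersection theory (Lemma~\ref{lem:lie_in_slice}: winding bounds from the normal CZ index force $C\cdot(\wh X\times\{0\})\le 0$, hence zero) to conclude that any constrained curve whose positive ends all have small action is either disjoint from or \emph{entirely contained in} the slice. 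The tangency constraint at $(p,0)$ rules out disjointness, so the curve literally \emph{is} a curve in $\wh X$ and the bound $E(C)\ge\gt_k(X)$ is immediate; if instead some positive end is not in the slice, its action already exceeds $c-\eps>\gt_k(X)$.

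The gap in your argument is the passage from $C$ to a globally $J_X$-holomorphic curve $C_X$ in $\wh X$. The projection $\pi_X\circ C$ is $J_X$-holomorphic only over the locus where $C$ stays in the region where $J$ splits as $J_X\oplus J_0$, and nothing you say prevents the tangency-constrained component from passing through the non-product collar (the smoothed corner region) while keeping small total action: energy controls the asymptotics via Stokes' theorem, not where the image of a connected curve travels in the interior. Your appeal to $c\ge\gt_k(X)$ only rules out \emph{asymptotic ends} escaping in the $B^2$-direction (those cost action $\approx c$); it does not confine the interior of the curve. Likewise, the neck-stretching step is not set up correctly: $\bdy(X\times B^2(c-\delta))$ is not a smooth contact hypersurface, and once you smooth it the ``inner'' completion is no longer a product, so the limiting constrained component need not admit a globally holomorphic $X$-projection, and its punctures created by the stretching are asymptotic to Reeb orbits of the smoothed hypersurface rather than of $\bdy X$, so they are neither removable nor of the required type. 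To close the argument you need precisely the ingredient the paper supplies: a positivity-of-intersection statement with the slice $\wh X\times\{0\}$, which in turn requires the normal Conley--Zehnder index computation built into the smoothing.
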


As a preliminary step, the next lemma allows us to identify the Reeb orbits after stabilizing (and suitably smoothing the corners)
with those before stabilizing, plus additional orbits of large action.
We denote by $\la_\std = \tfrac{1}{2}(xdy - ydx)$ the standard Liouville form on $B^{2}(c)$.
Given a Liouville form $\la$, recall that the Liouville vector field $V_\la$ is characterized by $d\la(V_\la,-) = \la$.

Suppose that $(Y,\alpha)$ is a strict contact manifold and $Z \subset Y$ is a submanifold of codimension 
$2$ such that $\alpha|_Z$ is a contact form on $Z$ and the Reeb vector field $R_\alpha$ is tangent to $Z$.  
Let $\xi_Y := \ker \alpha $ and $\xi_Z := \ker \alpha|_Z$ denote the contact hyperplane distributions of $Y$ and $Z$ respectively.
Since $\xi_Z$ is a subbundle of $\xi_Y$, we can consider its orthogonal complement $\xi_Z^\perp$ with respect to the symplectic form $d\alpha|_{\xi_Y}$.
Let $\gamma$ be a nondegenerate Reeb orbit of $Y$ which lies in $Z$, and let $\tau$ be a trivialization of the symplectic vector bundle $\gamma^*\xi_Y$ which splits as $\tau = \tau_Z + \tau_Z^\perp$ with respect to the direct sum decomposition $\xi_Y = \xi_Z \oplus \xi_Z^\perp$.
Since the latter decomposition is also preserved by the linearized Reeb flow of $Y$ along $\gamma$, the trivialization $\tau_Z^\perp$ in the normal direction identifies the linearized Reeb flow along $\gamma^*\xi_Z^\perp$ with a loop of 
$2\times 2$ symplectic matrices which starts at the identity and ends at a matrix without $1$ as an eigenvalue. 
Such a loop has a well-defined Conley--Zehnder index which is called the {\bf normal Conley--Zehnder index} of $\gamma$, denoted by $\cz_{\tau_Z^\perp}^{\perp}(\gamma)$. 

In the following we show that Reeb orbits of $\bdy X$ can be viewed as Reeb orbits in a suitable smoothing of $\bdy (X \times B^{2}(c))$,
and we apply the above discussion with $Y$ given by the smoothing of $\bdy (X \times B^{2}(c))$ and $Z$ given by $\bdy X$.
In this situation, there is a canonical trivialization of $\xi_Z^\perp$, coming from its identification with 
the normal bundle of $Z \subset Y$, which in turn is naturally identified with the restriction to $Z$ of $\{0\} \times TB^{2}(c) \subset TX \times TB^{2}(c)$.
By default we will always measure normal Conley--Zehnder indices by working with a split trivialization $\tau = \tau_Z + \tau_Z^\perp$ of $\gamma^*\xi_Y$, where $\tau_Z^\perp$ comes from this canonical trivialization of $\xi_Z^\perp$.

\begin{lemma}\label{lem:smoothing}
Let $(X,\la)$ be a Liouville domain.
For any $c,\eps \in \R_{> 0}$, there is a subdomain with smooth boundary $\wt{X} \subset X \times B^{2}(c)$ such that
\begin{itemize}
\item the Liouville vector field 
$V_{\la} + V_{\la_\std}$ is outwardly transverse along $\bdy \wt{X}$ 
\item $X \times \{0\} \subset \wt{X}$ and the Reeb vector field of $\bdy \wt{X}$ is tangent to $\bdy X \times \{0\}$
\item
any Reeb orbit of the contact form $(\la + \la_\std)|_{\bdy \wt{X}}$ with action less than $c - \eps$ is entirely contained in $\bdy X \times \{0\}$ and has normal Conley--Zehnder index equal to $1$. 
\end{itemize}
\end{lemma}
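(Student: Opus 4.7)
The plan is to construct $\wt{X}$ by rounding the corner of $X \times B^{2}(c)$ along $\partial X \times \partial B^{2}(c)$, using a carefully chosen rounding profile so that the induced Reeb dynamics on the smoothed boundary has the desired properties. I work in a collar $(-\delta_0, 0] \times \partial X \hookrightarrow X$ on which $\la = e^s \alpha$ with $\alpha = \la|_{\partial X}$, and polar coordinates $(r,\theta)$ on $B^{2}(c)$ so that $\la_{\std} = \tfrac{1}{2} r^2 d\theta$. Setting $\rho := \pi r^2$, the total Liouville form near the corner is $e^s \alpha + \tfrac{1}{2\pi}\rho\, d\theta$. I choose a smooth function $\phi:[0,\rho^*] \to \R_{\leq 0}$ with $\phi(0)=0$, $\phi$ strictly decreasing, and satisfying $|\phi'(\rho)| \leq 1/(c-\eps-\rho)$ throughout, where $\rho^*$ is slightly less than $c-\eps$. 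The domain $\wt{X}$ is defined so that its boundary consists of the graph $\{s=\phi(\rho)\}$ for $\rho \in [0,\rho^*]$, a smoothed transition region, and an outer piece of the form $X' \times \partial B^{2}(c')$ with $c' \geq c-\eps$ and $X'$ a slightly shrunken copy of $X$.

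Outward transversality is a direct check: the outward conormal to $\{s=\phi(\rho)\}$ is proportional to $ds - \phi'(\rho)\, d\rho$, which pairs with $V_{\la} + V_{\la_{\std}} = \partial_s + \tfrac{r}{2}\partial_r$ to give $1 - \rho\phi'(\rho) > 0$. On the outer piece and the transition, each summand of $V_{\la}+V_{\la_{\std}}$ is already outwardly transverse to the corresponding face of $X \times B^{2}(c)$, and this property is preserved under convex smoothing of the corner. Restricting the total Liouville form to the graph gives the contact form $\beta = e^{\phi(\rho)}\alpha + \tfrac{\rho}{2\pi}\, d\theta$, and solving $\iota_R d\beta = 0$ together with $\beta(R)=1$ yields
\[
R \;=\; \frac{e^{-\phi(\rho)}}{1-\rho\phi'(\rho)}\, R_\alpha \;-\; \frac{2\pi\phi'(\rho)}{1-\rho\phi'(\rho)}\, \partial_\theta.
\]
At $\rho=0$ the vector field $\partial_\theta$ degenerates as a vector field on $B^{2}$ at the origin, so $R = R_\alpha$ there and is tangent to $\partial X \times \{0\}$.

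Reeb orbits at $\rho=0$ are precisely the lifts of closed Reeb orbits $\gamma$ of $\alpha$ on $\partial X$, with action $T_\gamma$. A closed Reeb orbit at some $\rho_0>0$ requires the rational resonance $-e^{\phi(\rho_0)}\phi'(\rho_0)\,nT_\gamma = k \in \Z_{\geq 1}$ for some $n \geq 1$, and a short computation shows its action equals $k\bigl(\rho_0 + 1/|\phi'(\rho_0)|\bigr)$. The estimate on $\phi'$ gives $\rho_0 + 1/|\phi'(\rho_0)| \geq c-\eps$, so all such orbits have action at least $c-\eps$. On the outer piece the contact form restricts to $e^s\alpha + \tfrac{c'}{2\pi}d\theta$ with Reeb field $(2\pi/c')\partial_\theta$; its closed orbits are the fibers of the $S^1$-action with action $c' \geq c-\eps$, and a generic choice of transition introduces no new short orbits. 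Therefore every Reeb orbit of action less than $c-\eps$ lies in $\partial X \times \{0\}$.

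For the normal Conley--Zehnder index at an orbit $\gamma \times \{0\}$, I linearize $R$ in the canonical Cartesian trivialization of $TB^{2}|_0$: the leading term at the origin is $-2\pi\phi'(0)(-x_2\partial_{x_1} + x_1\partial_{x_2})$, so the linearized flow in the normal plane is a constant-speed rotation through total angle $-2\pi\phi'(0) T_\gamma$ over one period. The bound $|\phi'(0)| \leq 1/(c-\eps)$ places this angle in $(0, 2\pi)$ for every Reeb orbit with $T_\gamma < c-\eps$, and such a rotation has Conley--Zehnder index equal to $1$. The main delicacy is the simultaneous satisfaction of the two constraints on $\phi'$ --- the action lower bound on resonance orbits and the pinned-down normal rotation rate --- which both reduce to the single estimate $|\phi'(\rho)| \leq 1/(c-\eps-\rho)$ and are met by a profile $\phi$ obtained by integrating this bound.
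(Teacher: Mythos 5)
Your construction is essentially correct but follows a genuinely different route from the paper's. The paper never writes down the Reeb vector field explicitly: it defines $\wt{X}=\{H_1(x_1)+H_2(x_2)\leq 1\}$ for Hamiltonians $H_i$ on the two factors that are functions of the Liouville coordinate near each boundary, and controls actions via soft estimates on $V_{\la_i}(H_i)$ and on $\int_{\gamma_i}\la_i$ for the two projections of an orbit; the normal Conley--Zehnder index comes from arranging $H_2=\tfrac{1}{2}\rho(x^2+y^2)$ with $\rho<\pi$ near the origin. You instead round the corner explicitly in collar-times-polar coordinates, derive the closed-form Reeb field on the graph $\{s=\phi(\rho)\}$, classify closed orbits by the resonance condition, and obtain the exact action formula $k\bigl(\rho_0+|\phi'(\rho_0)|^{-1}\bigr)$; this makes the single estimate $|\phi'(\rho)|\leq 1/(c-\eps-\rho)$ do the work for both the action lower bound and the normal rotation angle, which is a nice unification. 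Your Reeb field, action, and normal Conley--Zehnder computations all check out (in particular the linearized flow does decouple along $\partial X\times\{0\}$ because the coefficients of $R$ depend on the disk factor only through $\rho$, whose differential vanishes at the origin).

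The one place to tighten is the transition region between the graph piece and the outer wall $X'\times\partial B^{2}(c')$. ``A generic choice of transition introduces no new short orbits'' is not the right justification: whether short orbits appear there is a quantitative question, not a transversality one, and your own formula shows that a closed orbit sitting over radius $\rho_0$ has action $k\bigl(\rho_0+|\mathrm{slope}|^{-1}\bigr)\geq k\rho_0$, which can drop below $c-\eps$ if the profile is allowed to steepen while $\rho_0<c-\eps$ (and with $\rho^*$ strictly less than $c-\eps$, the transition does live over such radii). The fix stays entirely within your framework: impose the slope bound $|ds/d\rho|\leq 1/(c-\eps-\rho)$ on the whole profile for $\rho<c-\eps$ --- a small constant slope suffices there, which also keeps the curve inside the Liouville collar --- and only let the curve turn vertical for $\rho\geq c-\eps$, where the bound $\mathrm{action}\geq k\rho_0\geq c-\eps$ is automatic and the wall orbits have action $c'\geq c-\eps$. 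With that adjustment the argument is complete.
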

\begin{proof}
For notational convenience put $X_1 := X$ and $X_2 := B^{2}(c)$. We denote the associated Liouville forms by $\la_i$, the associated contact forms by $\alpha_i := \la_i|_{\bdy X_i}$,
and the associated Liouville vector fields by $V_{\la_i}$ for $i=1,2$.
Note that every closed Reeb orbit of $\bdy X_2$ has action at least $c$.
 
Recall that we can use the Liouville flow to identify a collar neighborhood $U_i$ of $\bdy X_i$ with $(-\eta,0] \times \bdy X_i$ for some small $\eta >0$, and under this identification we have $\la_i = e^{r_i}\alpha_i$, where $r_i$ denotes the coordinate on the first factor.
Given a smooth function $H_i: (-\eta,0] \times \bdy X_i \rightarrow \R$ of the form $H(r_i,y_i) = h(e^{r_i})$ for some $h_i: (e^{-\eta},1] \rightarrow \R$, the Hamiltonian vector field takes the form $\calX_{H_i} = h'_i(e^{r_i})R_{\alpha_i}$, where $R_{\alpha_i}$ is the Reeb vector field of $\alpha_i$. 
Note that for such a Hamiltonian we have $V_{\la_i}(H_i) = \la_i(\calX_{H_i}) = e^{r_i} h_i'(e^{r_i})$. 

By considering functions which depend only on the Liouville flow coordinate $r_i$ near the boundary and are otherwise sufficiently small, we can find smooth functions $H_i: X_i \rightarrow [0,1]$ for $i=1,2$ such that:
\begin{itemize} 
\item[(a)]
$\bdy X_i = H_i^{-1}(1)$ is a regular level set 
\item[(b)]
$H_i^{-1}(0) = \{p_i\}$ is a nondegenerate minimum, where we assume $p_2 = 0 \in B^{2}(c)$
\item[(c)] on $U_i \approx (-\eta,0] \times \bdy X_i$ we have $H_i(r_i,y_i) = h_i(e^{r_i})$ for some $h_i: (e^{-\eta},1] \rightarrow [0,1]$ with $h_i' > 0$
\item[(d)] on $X_i \setminus U_i$ we have $|V_{\la_i}(H_i)| < \eps/2$
\item[(e)] we have $H_i^{-1}([\delta,1]) \subset U_i$ for some small $\delta > 0$, 
and on $H_i^{-1}([\delta,1])$ we have $V_{\la_i}(H_i) > c + \eps$.
\end{itemize}
We can further arrange:
\begin{itemize}
\item[(f)]
$V_{\la_2}(H_2) > 0$ on $B^{2}(c) \setminus \{0\}$
\item[(g)]
for every $T$-periodic $\gamma_2$ orbit of $\calX_{H_2}$ with $T \leq 1$, we have $\int_{\gamma_2}\la_2 > c - \eps/2$
\item[(h)] using standard symplectic coordinates $x,y$, on a small neighborhood of $0 \in B^{2}(c)$ we have
$$H_2(x,y) = \tfrac{1}{2}\rho(x^2+y^2),$$ with $\rho < \pi$.
\end{itemize}

Put $\wt{X} := \{(x_1,x_2) \in X_1 \times X_2\;|\; H_1(x_1) + H_2(x_2) \leq 1\}$. 
It follows from the above properties that $\wt{X}$ has smooth boundary, and 
we have $$
(V_{\la_1} + V_{\la_2})(H_1 + H_2) > 0
$$
 along $\bdy \wt{X}$.
Indeed, consider $(x_1,x_2) \in \bdy \wt{X}$, and suppose first that $x_1 \in U_1$. Then we have $(V_{\la_1})_{x_1}(H_1) = e^{r_1}h_1'(e^{r_1}) > 0$ by (c) and $(V_{\la_2})_{x_2}(H_2) \geq 0$ by (f).
On the other hand, if $x_1 \in X_1 \setminus U_1$, then we must have $H_1(x_1) \in [0,\delta]$ by (e) and $|(V_{\la_1})_{x_1}(H_1)| < \eps/2$ by (d).
In this case we have $H_2(x_2) = 1-H_1(x_1) \in [1-\delta,1]$, whence $(V_{\la_2})_{x_2}(H_2) > c+\eps$ and therefore $(V_{\la_1})_{x_1}(H_1) + (V_{\la_2})_{x_2}(H_2) > 0$.

It follows from the above discussion that $\la + \la_\std$ is a Liouville form on $\wt{X}$, and in particular it restricts to a positive contact form on $\bdy \wt{X}$.
Observe that the corresponding Reeb vector field is at each point in $\bdy \wt{X}$ proportional to the Hamiltonian vector field of $H_1 + H_2$.
In particular, this is tangent to $\bdy (X \times \{0\})$, since along $\bdy(X \times \{0\})$ we have $\calX_{H_2} \equiv 0$.

We now prove the assertion about actions of Reeb orbits. Suppose that $\gamma$ is a $T$-periodic Reeb orbit of $\bdy \wt{X}$ for some $T \in \R_{> 0}$, let $\gamma_i$ denote its projection to $X_i$ for $i = 1,2$.
Note that we have $\gamma_i \subset H_i^{-1}(C_i)$ for some $C_i \in [0,1]$ with $C_1 + C_2 = 1$.
If $\gamma_2$ is constant, then $\gamma$ lies in $X_1 \times \{0\}$.
Otherwise, if $C_1 \in [0,\delta]$, then $C_2 \in [1-\delta,1]$, and we have
$|\int_{\la_1}\gamma_1| < T\eps/2$ by (d) and $\int_{\la_2}\gamma_2 > \max(c-\eps/2,T(c+\eps))$ by (g), and therefore we have
$$\int_\gamma \la = \int_{\gamma_1}\la_1 + \int_{\gamma_2}\la_2 > \max(c-\eps/2,T(c+\eps)) - T\eps/2 > c-\eps.$$
Lastly, if $C_1 \in [\delta,1]$ and $\gamma_2$ is not constant,
then if $T \geq 1$ we have $$\int_{\gamma}\la \geq \int_{\gamma_1}\la_1 > T(c+\eps) > c-\eps,$$ whereas if $T < 1$ then we have 
$$\int_{\gamma}\la \geq \int_{\gamma_2}\la_2 > c - \eps/2 > c-\eps.$$

As for the assertion about normal Conley--Zehnder indices, suppose that $\gamma$ is a Reeb orbit in $\bdy(X \times \{0\})$ with action $T \leq c$. 
Observe that Reeb vector field on $\bdy \wt{X}$ is given by $\tfrac{1}{\la_1(\calX_{H_1}) + \la_2(\calX_{H_2})}(\calX_{H_1} + \calX_{H_2})$, and 
along $\bdy(X \times \{0\})$ we have $\la_1(\calX_{H_1}) > c + \eps$ and $\la_2(\calX_{H_2}) = 0$.
We can therefore identify the linearized Reeb flow along $\gamma$ in the normal direction with the time-$T$ linearized Hamiltonian flow of $\tfrac{1}{\la_1(\calX_{H_1})}\calX_{H_2}$ at $0$.
By design, this is rotation by the angle $\tfrac{T\rho}{\la_1(\calX_{H_1})}$.
In particular, the Conley--Zehnder contribution for each factor is $1$ provided that we have $\tfrac{T\rho}{\la_1(\calX_{H_1})} < \pi$, for which $\rho < \pi$ suffices.
\end{proof}

In the sequel, we will denote any Liouville domain $\wt{X}$ satisfying the properties of Lemma~\ref{lem:smoothing} for some $\eps > 0$  sufficiently small by $X \smx B^{2}(c)$.

\begin{lemma}\label{lem:lie_in_slice}
Let $X$ be a Liouville domain, and let $X \smx B^{2}(c)$ be a smoothing of $X \times B^{2}(c)$ as in Lemma~\ref{lem:smoothing}. 

\begin{itemlist}

\item[{\rm (i)}] Let $J \in \Jadm(X \smx B^{2}(c))$ be an admissible almost complex structure on the symplectic completion of $X \smx B^{2}(c)$ for which $\wh{X} \times \{0\}$ is $J$-holomorphic. Let $C$ be an asymptotically cylindrical $J$-holomorphic curve in $\wh{X}$, all of whose asymptotic Reeb orbits are nondegenerate and lie in $\bdy X \times \{0\}$ with normal Conley--Zehnder index $1$.
Then $C$ is either disjoint from the slice $\wh{X} \times \{0\}$ or entirely contained in it. 

\item[{\rm (ii)}] Let $J \in \Jadm(\bdy(X \smx B^{2}(c)))$ be an admissible almost complex structure on the symplectization of $\bdy(X \smx B^{2}(c))$ for which $\R \times \bdy X \times \{0\}$ is $J$-holomorphic. Let $C$ be an asymptotically cylindrical $J$-holomorphic curve in $\R \times \bdy(X \smx B^{2}(c))$, all of whose asymptotic Reeb orbits are nondegenerate and lie in $\bdy X \times \{0\}$ with normal Conley--Zehnder index $1$. Then $C$ is either disjoint from the slice $\R \times \bdy X \times \{0\}$ or entirely contained in it. Moreover, only the latter is possible of $C$ has at least one negative puncture.
\end{itemlist}

\end{lemma}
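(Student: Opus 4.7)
The plan is to analyze the normal component of $C$ in a tubular neighborhood of the slice, combining positivity of intersection with the asymptotic winding bounds forced by the hypothesis that each asymptotic orbit satisfies $\cz^\perp(\gamma_i) = 1$. The first step is unique continuation: since the slice is a $J$-holomorphic submanifold of complex codimension one, the similarity principle applied to the normal component of $C$ (or equivalently the local positivity of intersection of McDuff) implies that $C$ either lies entirely in the slice or meets it at a discrete set of isolated points, each carrying a strictly positive local intersection multiplicity.

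The second step extracts asymptotic winding data. The normal line bundle to the slice is globally trivial, with canonical trivialization $\tau^\perp$ coming from the $\C$-factor of $X \times B^{2}(c)$. Standard asymptotic analysis of Hofer--Wysocki--Zehnder, in the punctured-surface formulation of Siefring and Wendl, then shows that the hypothesis $\cz^\perp_{\tau^\perp}(\gamma) = 1$ forces the extremal winding numbers of the normal asymptotic operator at $\gamma$ to be $\alpha^-(\gamma) = 0$ and $\alpha^+(\gamma) = 1$. Consequently the leading nonzero asymptotic term of the normal component of $C$ has $\tau^\perp$-winding at most $0$ at each positive puncture and at least $1$ at each negative puncture.

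Assuming $C$ is not contained in the slice, I would then combine the above via Siefring's relative intersection pairing --- equivalently, the argument principle applied to the normal coordinate on the truncation $\dot\Sigma_{|s| \leq s_0}$, for $s_0$ chosen so that each cylindrical end of $C$ has entered the tubular neighborhood of the slice. The algebraic count of geometric intersections equals $\sum_{\mathrm{pos}} w^+ - \sum_{\mathrm{neg}} w^-$, which by the winding bounds is at most $-\#\{\text{negative punctures}\}$. Positivity of intersection forces it to be non-negative. In (i), and in (ii) with only positive punctures, this pins the count to zero and therefore $C$ is disjoint from the slice. In (ii) with at least one negative puncture the strict inequality can only be avoided by $C$ being contained in the slice, which is the final assertion.

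The main technical care required is to ensure that excursions of $C$ out of the tubular neighborhood do not contribute spurious zeros or winding to the above count. This is handled by the fact that the normal coordinate is nonvanishing off the slice, together with a standard homotopy of the exit loops away from the zero set; in Siefring's language this is exactly the invariance of the $*$-pairing under asymptotic-preserving deformations, and in our setting it is further simplified by the global triviality of the normal line bundle.
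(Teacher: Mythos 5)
Your proposal is correct and follows essentially the same route as the paper: positivity of intersections with the $J$-holomorphic slice, the winding bounds $\wind \leq \lfloor \cz^\perp/2\rfloor = 0$ at positive punctures and $\wind \geq \lceil \cz^\perp/2\rceil = 1$ at negative punctures, and the pushoff/relative-intersection identity $0 = C\cdot Q - \sum_{\mathrm{pos}}\wind + \sum_{\mathrm{neg}}\wind$ using the global triviality of the normal bundle. The paper packages this via Siefring's intersection theory in exactly the way you describe, so there is nothing to add.
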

To prove Lemma~\ref{lem:lie_in_slice}, we invoke the higher dimensional extension of \cite{Sief} (c.f the exposition in \cite[\S2]{MorS}).
Namely, let $C$ be an asymptotically cylindrical curve in the symplectic completion of $X \smx B^2(c)$ or the symplectization of $\bdy(X \smx B^2(c))$,
and let $Q$ denote the divisor $\wh{X} \times \{0\}$ or $\R \times \bdy (X \times \{0\})$ respectively.
Assume that each puncture of $C$ is asymptotic to a nondegenerate Reeb orbit in $\bdy X \times \{0\}$, and that $C$ is not entirely contained in $Q$.
For each puncture $z$ of $C$, we can consider the corresponding asymptotic winding number $\wind_z$ around $Q$ as we approach the puncture, as measured by the canonical trivialization discussed in the leadup to Lemma~\ref{lem:smoothing}.

We will need the following facts:
\begin{enumerate}[label=(\alph*)]
\item the curve $C$ intersects $Q$ in only finitely many points, each of which has a positive local intersection number
\item if $z$ is a positive puncture and $\gamma_z$ is the corresponding asymptotic Reeb orbit, we have $\wind_z \leq \lfloor \cz^\perp(\gamma_z)/2 \rfloor$
\item if $z$ is a negative puncture and $\gamma_z$ is the corresponding asymptotic Reeb orbit, we have $\wind_z \geq \lceil \cz^\perp(\gamma_z)/2 \rceil$
\item we have 
\begin{align}\label{eq:pushoff}
\op{push}(C) \cdot Q = C \cdot Q - \sum_{z\text{ pos. punc.}} \wind_z + \sum_{z\text{ neg. punc.}} \wind_z,
\end{align}
where $\op{push}(C)$ is a pushoff of $C$ whose direction near each puncture is a nonzero constant with respect to the canonical trivialization of the normal bundle.
\end{enumerate}
Here $C \cdot Q$ and $\op{push}(C) \cdot Q$ denote homological intersection numbers, i.e. the sum of local homological intersection numbers over all (necessarily finitely many) intersection points.
In particular, we have $\op{push}(C) \cdot Q = 0$ since there is an obvious displacement of $C$ from $Q$ which takes the specified form near each of the punctures.

The last fact (d) is elementary topology. 
The proof of (a) follows from an asymptotic description of $C$ in the normal direction near each puncture, which is written in terms of an eigenfunction of the corresponding normal asymptotic operator. 
Properties (b) and (c) follow from a characterization of normal Conley--Zehnder indices in terms of the corresponding normal asymptotic operators, together with bounds on the winding numbers of their eigenfunctions.

\begin{proof}[Proof of Lemma~\ref{lem:lie_in_slice}]

To prove (i), suppose that $C$ is not contained in $Q := \wh{X} \times \{0\}$.
Since each puncture of $C$ is positively asymptotic to a Reeb orbit in $\bdy X \times \{0\}$ with normal Conley--Zehnder index $1$, using \eqref{eq:pushoff} and (b) we have
\begin{align*}
0 = \op{push}(C) \cdot Q &= C \cdot Q - \sum_{z\text{ pos. punc.}} \wind_z\\ & \geq C \cdot Q - \sum_{z\text{ pos. punc.}} \lfloor \tfrac{1}{2} \rfloor\\ &= C \cdot Q,
\end{align*}
and hence $C \cdot Q \leq 0$. Since each local intersection between $C$ and $Q$ counts positively, this is only possible if $C$ is disjoint from $Q$.

The proof of (ii) is similar. Assume that $C$ is not contained in $Q := \R \times \bdy X \times \{0\}$. Using \eqref{eq:pushoff} we have
\begin{align*}
0 \geq C \cdot Q - \sum_{z\text{ pos. punc.}} \lfloor \tfrac{1}{2} \rfloor + \sum_{z\text{ neg. punc.}} \lceil \tfrac{1}{2} \rceil =  C \cdot Q + \sum_{z\text{ neg. punc.}} 1.
\end{align*}
This is only possible if $C$ has no negative punctures and $C$ is disjoint from $Q$.
\end{proof}

\begin{proof}[Proof of Proposition~\ref{prop:stab_lb}]

We can assume $c > \gt_k(X)$ and that $\bdy X$ is nondegenerate, since then  the result follows by continuity (c.f. Remark~\ref{rmk:cont}).
Let $X \smx B^{2}(c)$ be a smoothing of $X \times B^{2}(c)$ as in Lemma~\ref{lem:smoothing}, with $\eps > 0$ chosen sufficiently small so that $c - \eps > \gt_k(X)$.
Let $D$ be a local divisor near $p \in \Int X$, and let us take the local divisor $\wt{D}$ in $X \smx B^2(c)$ near $\wt{p} := (p,0)$ to be of the form $D \times B^{2}(\delta) \subset X \smx B^{2}(c)$ for some small $\delta > 0$.

Let $J_X \in \Jadm(X;D)$ be such that for every tuple of Reeb orbits $\Gamma$ such that $\ovll{\calM}_X^{J_X}(\Gamma)\lll \T^{(k)}p\rrr \neq \nil$ we have $\calA_{\bdy X}(\Gamma) \geq \gt_k(X)$.
Pick $\wt{J} \in \Jadm(X \smx B^{2}(c);\wt{D})$ such that $\wh{X} \times \{0\}$ is $\wt{J}$-holomorphic with $\wt{J}|_{\wh{X} \times \{0\}} = J_X$. 
It suffices to show that for any tuple of Reeb orbits $\Gamma'$ for which $\ovll{\calM}^{\wt{J}}_{X \smx B^{2}(c)}(\Gamma')\lll \T^{(k)}p\rrr \neq \nil$, we have $\calA_{\bdy (X \smx B^{2}(c))}(\Gamma') \geq \gt_k(X)$, since then we have $\gt_k(X \times B^{2}(c)) \geq \gt_k(X \smx B^{2}(c)) \geq \gt_k(X)$. 

Consider $C \in \ovll{\calM}^{\wt{J}}_{X \smx B^{2}(c)}(\Gamma')\lll \T^{(k)}p\rrr$.
For some $a \in \Z_{\geq 1}$, let $C_i \in \calM_{X \smx B^{2}(c)}^{\wt{J}}(\Gamma_i)\lll \T^{(k_i)}p\rrr$ for $i = 1,\dots a$ be nonconstant components of $C$ with $\sum_{i=1}^a k_i \geq k$ and 
$\sum_{i=1}^a E(C_i) \leq E(C)$ as in Remark~\ref{rmk:gt_reform}.
We need to establish the bound $\sum_{i=1}^a E(C_i) \geq \gt_k(X)$.
If any positive end of some $C_i$ is not asymptotic to the slice $\wh{X} \times \{0\}$, then the corresponding Reeb orbit must have action at least $c - \eps$, and hence $E(C_i) \geq c - \eps > \gt_k(X)$.
Otherwise, by Lemma~\ref{lem:lie_in_slice}, each $C_i$ must be entirely contained in $\wh{X} \times \{0\}$ (note that it cannot be disjoint from the slice due to the local tangency constraint at $p\in \wh{X}\times \{0\}$).
By our choice of $\wt{D}$, each $C_i$ then corresponds to a $J_X$-holomorphic curve in $\wh{X}$ satisfying the constraint $\lll \T^{(k_i)}p\rrr$ with local divisor $D$, from which the desired bound readily follows.
\end{proof}

\subsection{Stabilization upper bounds}\label{subsec:stab_ub}
In order to prove the stabilization property in Theorem~\ref{thm:gt}, we need to complement Proposition~\ref{prop:stab_lb} by proving an upper bound.
Our proof will require some additional assumptions which amount to saying that the capacity $\gt_k(X)$ is represented by elements in a well-behaved moduli space of curves.
Indeed, without such an assumption, after stabilizing and perturbing the almost complex structure it is conceivable that all curves with energy equal to $\gt_k(X)$ disappear, resulting in $\gt_k(X \times B^2(c)) > \gt_k(X)$.

\begin{prop}\label{prop:stab_ub}
Let $X$ be a Liouville domain, put $Y := \bdy X$, and let $C$ be a simple index zero formal curve component in $X$ with constraint $\lll\T^{(k)}p\rrr$ for some $k \in \Z_{\geq 1}$, such that $E_X(C) = \gt_k(X)$. 
Assume further that the following conditions hold:
\begin{enumerate}[label=(\alph*)]
\item $C$ is formally perturbation invariant with respect to some generic $J_{Y} \in \Jadm(Y)$ (c.f. \S\ref{subsec:pert_inv})
\item the moduli space $\calM_X^{J_{X}}(C)$ is regular and finite with nonzero signed count $\#\calM_X^{J_{X}}(C)$ for some $J_X \in \Jadm^{J_{Y}}(X;D)$.
\end{enumerate}
Then we have $\gt_{k}(X \times B^{2}(c)) \leq \gt_k(X)$ for any 
$c \in \R_{> 0}$.
The same conclusion also holds if we instead assume that the hypotheses hold with $k$ replaced by some divisor $\ell$ of $k$ such that $\gt_k(X) = \tfrac{k}{\ell}\gt_\ell(X)$.
\end{prop}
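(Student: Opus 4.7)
By monotonicity of $\gt_k$ under symplectic embeddings, it suffices to prove the bound for $c$ arbitrarily large, so we may assume $c > \gt_k(X)$. Choose $\eps > 0$ in Lemma~\ref{lem:smoothing} small enough that $c - \eps > \gt_k(X)$, and pass to the smoothing $\wt{M} := X \smx B^2(c)$; by Remark~\ref{rmk:cont} it is enough to show $\gt_k(\wt{M}) \leq \gt_k(X)$. Set $\wt{p} := (p,0)$ and $\wt{D} := D \times B^2(\delta)$ for small $\delta > 0$. By Corollary~\ref{cor:gt_equals_gt_J_fixed}, one may fix once and for all an auxiliary $J_{\bdy\wt{M}} \in \Jadm(\bdy\wt{M})$ chosen to preserve $\R \times \bdy X \times \{0\}$, to restrict to $J_Y$ there, and to be generic otherwise. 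Let $\wt{C}$ denote the formal curve component in $\wt{M}$ obtained by viewing $C$ inside the slice $\wh{X} \times \{0\}$; a direct comparison of \eqref{eq:fred_ind} using normal Conley--Zehnder index $1$ and the triviality of the normal bundle along the slice shows that $\wt{C}$ is again a simple index zero formal curve component of energy $E_X(C) = \gt_k(X)$.

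The core step is to verify that $\wt{C}$ is formally perturbation invariant in $\wt{M}$ with respect to $J_{\bdy\wt{M}}$. Any building $C' \in \ovl{\formal}_{\wt{M}}(\Gamma)\lll\T^{(k)}\wt{p}\rrr$ satisfying the analogues of (A1), (A2) has total positive action $\gt_k(X) < c - \eps$, so by action monotonicity combined with Lemma~\ref{lem:smoothing} every Reeb orbit appearing anywhere in $C'$ lies in the slice $\bdy X \times \{0\}$ with normal Conley--Zehnder index $1$. The constraint $\lll\T^{(k)}\wt{p}\rrr$ is carried by some $\wt{M}$-component passing through $\wt{p}$, which by Lemma~\ref{lem:lie_in_slice}(i) must lie entirely in the slice. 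A term-by-term comparison of indices between $\wt{M}$ and $X$ (and between $\bdy\wt{M}$ and $\bdy X$), together with Lemma~\ref{lem:lie_in_slice}, is then used to show that any putative off-slice $\wt{M}$-component or any nontrivial off-slice symplectization component in $C'$ would force a violation of the index inequalities in (A1) or (A2). The residual on-slice building then corresponds to a stable formal building in $\ovl{\formal}_X(\Gamma)\lll\T^{(k)}p\rrr$ satisfying the $X$-versions of (A1), (A2), and the assumed formal perturbation invariance of $C$ exhibits $C'$ either as $\wt{C}$ itself or in the (B2) form.

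Once this is established, Proposition~\ref{prop:count_J_indep} applied in $\wt{M}$ yields $\#\calM_{\wt{M}}^{\wt{J}}(\wt{C}) = \#\calM_X^{J_X}(C) \neq 0$ for generic $\wt{J} \in \Jadm^{J_{\bdy\wt{M}}}(\wt{M};\wt{D})$, with the seed count supplied by a product-type $\wt{J}_0$ extending $J_X$ on the slice whose $\wt{C}$-moduli space (by Lemma~\ref{lem:lie_in_slice}(i)) consists precisely of on-slice lifts of curves in $\calM_X^{J_X}(C)$. Hence every admissible $\wt{J}$ admits a $\wt{C}$-curve of action $\gt_k(X)$, giving $\gt_k(\wt{M}) \leq \gt_k(X)$. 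The variant with $k$ replaced by a divisor $\ell$ is handled identically, applying the above to the simple $\ell$-tangency curve and then taking its $(k/\ell)$-fold formal cover to obtain a $k$-tangency configuration of action $(k/\ell)\gt_\ell(X) = \gt_k(X)$. The main obstacle is the formal perturbation invariance step in $\wt{M}$: one has to rigorously exclude off-slice $\wt{M}$-components of the main level (which are a priori permitted by Lemma~\ref{lem:lie_in_slice}(i)) and to keep careful track of how the normal index shifts affect (A1), (A2), so that the (B2) alternative upstairs descends cleanly to the (B2) alternative in $X$.
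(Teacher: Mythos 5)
Your skeleton is the same as the paper's: reduce to large $c$, pass to the smoothing $X \smx B^2(c)$, fix a slice-preserving $J_{\wt{Y}}$ restricting to $J_Y$ and invoke Corollary~\ref{cor:gt_equals_gt_J_fixed}, lift $C$ to a simple index zero $\wt{C}$, establish formal perturbation invariance of $\wt{C}$, seed the count via a product-type almost complex structure and Lemma~\ref{lem:lie_in_slice}(i), and finish with Proposition~\ref{prop:count_J_indep}; the divisor case via covers is also as in the paper. The problem is that your ``core step'' is not actually an argument, and the mechanism you propose for it is the wrong one.

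A formal building $\wt{C}' \in \ovl{\formal}_{\wt{X},\wt{A}}(\wt{\Gamma})\lll \T^{(k)}p\rrr$ has no geometric components that could be ``on-slice'' or ``off-slice'': each constituent is a triple (asymptotics, homology class), and once the action bound forces every asymptotic orbit into $Y \times \{0\}$, the building simply \emph{is} (canonically corresponds to) a stable formal building $C'$ in $X$, since $\wt{X}$ deformation retracts onto $X \times \{0\}$. This is the whole point of making the invariance condition ``formal in $X$'': one never has to exclude geometric degenerations in the stabilized cobordism, so your plan to rule out off-slice components of the main level via Lemma~\ref{lem:lie_in_slice}(i) is a category error --- that lemma is about honest $J$-holomorphic curves and plays no role in verifying (A1)/(A2)/(B1)/(B2) for formal data; its place in the proof is only in identifying $\calM_{\wt{X}}^{J_{\wt{X}}}(\wt{C})$ with $\calM_X^{J_X}(C)$ for the seed count, which you do have. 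What you are missing is the one genuinely geometric input in the invariance step: condition (B2) requires that the moduli space $\calM_{\wt{Y}}^{J_{\wt{Y}}}(\wt{C}_Y)$ of \emph{honest} curves in $\R \times \wt{Y}$ be regular with vanishing signed count, and this does not follow from the corresponding hypothesis on $\calM_Y^{J_Y}(C_Y)$ unless you know that every such curve lies in the slice $\R \times Y \times \{0\}$. That is exactly what Lemma~\ref{lem:lie_in_slice}(ii) provides --- $C_Y$ has a negative end, so the disjoint-from-the-slice alternative is excluded --- and it appears nowhere in your proposal. Finally, ``keep careful track of how the normal index shifts affect (A1), (A2)'' is left as a promissory note: the shift $\cz \mapsto \cz + 1$ at each puncture interacts with the dimension term in \eqref{eq:fred_ind}, and you need to actually check that the hypotheses (A1), (A2) for $\wt{C}'$ descend to $C'$ (the relevant computation shows the indices of the cylindrical components and of the constrained component are preserved); without this the application of the invariance hypothesis for $C$ is not justified.
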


The last part of Proposition~\ref{prop:stab_ub} follows easily from the existence of multiple covers, or as a special case of subadditivity.
\begin{proof}[Proof of Proposition~\ref{prop:stab_ub}]
By monotonicity of $\gt_k$ under symplectic embeddings,
it suffices to prove establish
$\gt_k(\wt{X}) \leq \gt_k(X)$
for $\wt{X} := X \smx B^2(c)$ with $c$ arbitrarily large.
In particular, we can assume that any Reeb orbit in $\wt{Y} := \bdy \wt{X}$ which is not contained in $Y \times \{0\}$ has action greater than $\gt_k(X)$.

Let $J_{\wt{Y}} \in \Jadm(\wt{Y})$ be an almost complex structure which agrees with $J_Y$ on $\R \times Y \times \{0\}$.
By Corollary~\ref{cor:gt_equals_gt_J_fixed} we have $\gt_k(\wt{X}) = \gt^{J_{\wt{Y}}}_k(\wt{X})$, so it suffices to prove $\gt^{J_{\wt{Y}}}_k(\wt{X}) \leq \gt_k(X)$.

Let $J_{\wt{X}} \in \Jadm^{J_{\wt{Y}}}(\wt{X};\wt{D})$ be an admissible almost complex structure which agrees with $J_X$ on $\wh{X} \times \{0\}$.
Here we put $\wt{D} := D \times B^2(\delta)$ with $\delta > 0$ small as in the proof of  Proposition~\ref{prop:stab_lb}.
Since Reeb orbits of $Y$ can also be viewed as Reeb orbits of $\wt{Y}$, $C$ naturally corresponds to a formal curve component $\wt{C}$ in $\wt{X}$.
Note that $\wt{C}$ is again simple and has index zero, the latter being a consequence of the index formula and the fact that the Reeb orbits of $C$ have normal Conley--Zehnder index $1$ by Lemma~\ref{lem:smoothing}.

Moreover, we claim that $\wt{C}$ is formally perturbation invariant with respect to $J_{\wt{Y}}$.
Indeed, let $\Gamma$ (resp. $\wt{\Gamma}$) denote the positive asymptotic orbits of $C$ (resp. $\wt{C}$), let $A$ (resp. $\wt{A}$) denote its homology class, and let $\wt{C}' \in \ovll{\formal}_{\wt{X},\wt{A}}(\wt{\Gamma})\lll \T^{(k)}p\rrr$ be a hypothetical stable formal building satisfying conditions (A1) and (A2) of Definition~\ref{def:formal_pert_inv}. 
By action considerations we can assume that each asymptotic Reeb orbit involved in $\wt{C}'$ lies in $Y \times \{0\}$, and hence $\wt{C}'$ naturally corresponds to a stable formal building $C' \in \ovll{\formal}_{X,A}(\Gamma)\lll \T^{(k)}p\rrr$.
In particular, by formal perturbation invariance of $C$, we have either $C' = C$ (whence $\wt{C}' = \wt{C}$) or else $C'$ is a two-level building as in Definition~\ref{def:formal_pert_inv}(B2), with top level consisting of a 
union of a simple index $1$ component $C_Y$ and possibly some trivial cylinders, and moreover $\calM_Y^{J_Y}(C_Y)$ is regular and satisfies $\# \calM_Y^{J_Y}(C_Y)/\R = 0$.
Let $\wt{C}_Y$ denote the analogue of $C_Y$ in $\wt{Y}$.
By Lemma~\ref{lem:lie_in_slice}(ii), every curve in $\calM_{\wt{Y}}^{J_{\wt{Y}}}(\wt{C}_Y)$ must be contained in the slice $\R \times Y \times \{0\}$ because it has a negative end. In particular, we have a natural identification $\calM_{\wt{Y}}^{J_{\wt{Y}}}(\wt{C}_Y) \approx \calM_{Y}^{J_{Y}}(C_Y)$, and since each curve in $\calM_{\wt{Y}}^{J_{\wt{Y}}}(\wt{C}_Y)$ is also regular by Proposition~\ref{prop:reg_after_stab_symp} we have $\#\calM_{\wt{Y}}^{J_{\wt{Y}}}(\wt{C}_Y) / \R = 0$.
This establishes the above claim that $\wt{C}$ is formally perturbation invariant with respect to $J_{\wt{Y}}$.

Invoking now Lemma~\ref{lem:lie_in_slice}(i), we have a natural identification
$\calM_{\wt{X}}^{J_{\wt{X}}}(\wt{C}) \approx \calM_X^{J_X}(C)$, and the former is also regular by Proposition~\ref{prop:reg_after_stab}.
In particular, we have $\#\calM_{\wt{X}}^{J_{\wt{X}}}(\wt{C}) \neq 0$,
so by Proposition~\ref{prop:count_J_indep} we conclude that $\calM^{\wt{J}}_{\wt{X}}(\wt{C}) \neq \nil$ for all $\wt{J} \in \Jadm^{J_{\wt{Y}}}(\wt{X};\wt{D})$.
In particular, it follows that we have
\begin{align*}
\gt_k^{J_{\wt{Y}}}(\wt{X}) \leq E_{\wt{X}}(\wt{C}) = E_X(C) = \gt_k(X),
\end{align*}
as needed.
\end{proof}

\section{Fully rounding, permissibility, and minimality}\label{sec:rounding}

In this section we develop our main tools for getting lower bounds on the capacities of convex toric domains. In \S\ref{subsec:fr} we explain the fully rounding procedure, which standardizes the Reeb dynamics. In \S\ref{subsec:permis} we discuss the extent to which curves are obstructed by the relative adjunction formula and writhe bounds.
Lastly, in \S\ref{subsec:min_words} we analyze those words of Reeb orbits having minimal action for a given index. 
The proof that these minimal action words can all be represented by curves is deferred to \S\ref{sec:constructing_curves}.

\subsection{The fully rounding procedure}\label{subsec:fr}

We consider a four-dimensional\footnote{We note that the discussion in this subsection generalizes very naturally to higher dimensions, but for concreteness we restrict our exposition to dimension four.}
{\bf convex toric domain}, i.e. a subdomain of $\C^2$ of the form $X_\Omega := \mu^{-1}(\Omega)$, where
\begin{itemize}
\item
$\mu: \C^2 \rightarrow \R_{\geq 0}^2$ is the standard moment map defined by $\mu(z_1,z_2) = (\pi|z_1|^2,\pi|z_2|^2)$
\item 
$\Omega \subset \R_{\geq 0}^2$ is a subdomain such that 
$$\wh{\Omega} := \{(x_1,x_2) \in \R^2\;|\; (|x_1|,|x_2|) \in \Omega\} \subset \R^2$$
is compact and convex.
\end{itemize}
We equip $X_\Omega$ with the restriction of the standard Liouville form $\la_\std = \tfrac{1}{2}(xdy - ydx)$ on $\C^2$.
For example, if $\Omega \subset \R^2$ is a rational triangle with vertices $(0,0),(a,0),(0,b)$, then $X_{\Omega}$ is the ellipsoid $E(a,b) \subset \C^2$.

The ``fully rounding procedure'' replaces $X_\Omega$ with a $C^0$-small perturbation whose Reeb orbits are indexed in a straightforward way which is essentially insensitive to the shape of $\Omega$. 
We proceed in two steps: 
\begin{enumerate}
\item
replace $X_\Omega$ with another convex toric domain $X_\Omega^\fr := X_{\Omega^\fr}$, where $\Omega^\fr \subset \R_{\geq 0}^2$ is a $C^0$-small perturbation of $\Omega$ with smooth boundary as in \cite[Fig. 5.1]{chscI} (see also \cite[\S2.2]{Gutt-Hu})
\item let $\wt{X}_\Omega$ denote the result after a further $C^0$-small smooth perturbation of $X_\Omega^\fr$ which replaces each Morse--Bott circle of Reeb orbits of action less than some large constant $K$ with two nondegenerate Reeb orbits, one elliptic and one 
positive 
hyperbolic (see also \cite{Bourgeois_MB} or \cite[\S5.3]{hutchings2016beyond}). 
\end{enumerate}

In more detail, we assume $\Omega^\fr$ is bounded by the axes and a smooth function $h: [0,a] \rightarrow [0,b]$ for some $a,b \in \R_{> 0}$ such that:
\begin{itemize}
\item $h$ is strictly decreasing and strictly concave down
\item $h(0) = b$ and $h(a) = 0$
\item $-\nu < h'(0) < 0$ and $h'(a) < -1/\nu$ for some $\nu > 0$ sufficiently small, and $h'(0),h'(a) \in \R \setminus \Q$.
\end{itemize}

The Reeb orbits after fully rounding are as follows.
For each $(i,j) \in \Z_{\geq 1}^2$ with $\nu < j/i < 1/\nu$,
there is an $S^1$-family of Reeb orbits lying in the two-torus $\mu^{-1}(p_{i,j}) \subset \bdy X_\Omega^\fr$, where $p_{i,j} \in \bdy \Omega^\fr$ is such that the outward normal to $\bdy \Omega^\fr$ at $p_{i,j}$ is parallel to $(i,j)$. 
The Reeb orbits in this family are $\gcd(i,j)$-fold covers of their underlying simple orbits.
In $\bdy \wt{X}_\Omega$, these $S^1$-families having action less than $K$ get replaced by a corresponding pair of nondegenerate elliptic and hyperbolic orbits, which we denote by $e_{i,j}$ and $h_{i,j}$ respectively.
There are also nondegenerate elliptic Reeb orbits of $\bdy X_\Omega^\fr$ which lie in $\mu^{-1}(a,0)$ and $\mu^{-1}(0,b)$. We denote these by $e_{i,0}$ and $e_{0,j}$ respectively for $j \in \Z_{\geq 1}$, and we use the same notation for their natural analogues in $\bdy \wt{X}_\Omega$.
We refer to the Reeb orbits of $\bdy \wt{X}_\Omega$ of the form $e_{i,j}$ or $h_{i,j}$ as above as {\bf acceptable}.
Note that each acceptable orbit has action less than $K$.

For the acceptable Reeb orbits in $\wt{X}_\Omega$ described above, we have 
\begin{align}\label{eq:CZ}
\cz(e_{i,j}) = 2i + 2j + 1, \qquad \cz(h_{i,j}) = 2i + 2j,
\end{align}
 where over each Reeb orbit $\ga$ we use by default the trivialization of the contact distribution that extends over a  disc in $\p X$ with boundary $\ga$.
 \footnote
 {
 This is the trivialization called $\tau_{\op{ex}}$ in \cite[S3.2]{McDuffSiegel_counting}.}
There are also three slightly different associated action filtrations. We denote by $||-||_{\Omega}^*$ the dual of the norm on $\R^2$ whose unit ball is $\Omega$. 
Viewing $e_{i,j}$ and $h_{i,j}$ as formal symbols, we put
\begin{itemize}
\item
$\calA_\Omega(e_{i,j}) = \calA_\Omega(h_{i,j}) = ||(i,j)||_{\Omega}^* = \max\limits_{\vec{v} \in \Omega} \langle \vec{v},(i,j)\rangle$, the ``idealized action''
\item
$\calA_\Omega^\fr(e_{i,j}) = \calA_\Omega^\fr(h_{i,j}) = ||(i,j)||_{\Omega^\fr}^* = \max\limits_{\vec{v} \in \Omega^\fr} \langle \vec{v},(i,j)\rangle$, the ``fully rounded action''
\item
$\tcalA_\Omega(e_{i,j})$ and $\tcalA_\Omega(h_{i,j})$ denote the actions of the corresponding Reeb orbits in the domain $\tX_\Omega$, the ``perturbed action''.
\end{itemize}
We will sometimes refer to any of these as simply ``the action'' if which one we are referring to is clear from the context or irrelevant, and we will often omit $\Omega$ from the notation if it is implicit.
Note that $\tcalA_\Omega$ is a small perturbation of $\calA_\Omega^\fr$, although its precise values are sensitive to the choices involved in constructing $\tX_\Omega$.

\sss

Let $w = \gamma_1 \times \cdots \times \gamma_k$ be an 
(unordered) tuple of acceptable Reeb orbits in $\bdy \tX_\Omega$.
We will refer to such a $w$ as a {\bf word}, and we often view it as simply a collection of formal symbols of the form $e_{i,j}$ or $h_{i,j}$. 
As a convenient shorthand we define the {\bf index} of $w$ to be the sum
\begin{align}\label{eq:indw}
\ind(w) := \sum_{i=1}^k \cz(\gamma_i) + k-2.
\end{align}
More generally, for any trivialization $\tau$, the Fredholm index of a curve $C$ with top ends on
the orbits  $\gamma_1,\dots,\gamma_k$ and negative ends on $\gamma_1',\dots,\gamma'_{k'}$ is given by
\begin{align}\label{eq:Frind}
\ind(u) = - \chi(C) + 2c_\tau(C) + \sum_{i=1}^k \cz_\tau(\gamma_i) -  \sum_{j=1}^{k'} \cz_\tau(\gamma'_j).
\end{align}
Note that the relative first Chern class term in \eqref{eq:Frind} vanishes if we use the trivialization $\tau_{\op{ex}}$,
so the formula in \eqref{eq:indw} is the contribution of the top end of a curve to its Fredholm index.  
In particular,
 $\ind(\gamma_1 \times \dots \times\gamma_k) = 2m$ is an even integer,  a 
 (rational) curve in $\tX_\Omega$ with top ends $\gamma_1,\dots,\gamma_k$ and satisfying the constraint $\lll \T^{(m)}p\rrr$ has Fredholm index zero. As we will see in \S\ref{sec:constructing_curves}, the strong permissibility condition introduced below ensures that every connected curve with strongly permissible top end is somewhere injective.

We note also that if $w$ is ``elliptic'', meaning that all of the constituent Reeb orbits are elliptic, then its half-index is given by
$$
\tfrac{1}{2}\ind(e_{i_1,j_1} \times \cdots \times e_{i_k,j_k}) = \sum_{s=1}^q (i_s + j_s) + k-1.
$$
We extend the definition of idealized action to words by putting
$$\calA(\gamma_1,\dots,\gamma_k) := \sum_{i=1}^k \calA(\gamma_i),$$
and similarly for the fully rounded action $\calA^\fr$ and perturbed action $\tcalA$.
We will say that a word $w$ is acceptable if each of its constituent orbits is.

\begin{lemma}\label{lem:fr_action_conds}
We can arrange the fully rounding procedure such that the following further conditions are satisfied:
\begin{enumerate}
\item[{\rm (a)}]
For each pair of acceptable orbits $e_{i,j},h_{i,j}$, we have 
$$0 < \tcalA(e_{i,j}) - \tcalA(h_{i,j}) < \tfrac{1}{2} | \tcalA(e_{i',j'}) - \tcalA(e_{i'',j''})|$$ for any pair of acceptable orbits $e_{i',j'},e_{i'',j''}$ with $(i',j') \neq (i'',j'')$.

\item[{\rm (b)}] 
Given any two acceptable words $w,w'$ such that $\calA(w) < \calA(w')$, we have also $\calA^\fr(w) < \calA^\fr(w')$.
\item[{\rm (c)}]  
Given any two acceptable words such that $\calA^\fr(w) < \calA^\fr(w')$, we have also $\tcalA(w) < \tcalA(w')$.
\item[{\rm (d)}]  For any two
 distinct 
 acceptable orbits $\gamma,\gamma'$ we have $\tcalA(\gamma) \neq \tcalA(\gamma')$, and moreover the set of $\tcalA$ values of acceptable orbits which are simple (i.e. have $\gcd(i,j)=1$) is linearly independent over $\Q$.
\end{enumerate}
\end{lemma}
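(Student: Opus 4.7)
The plan is to first fix an action cutoff $K > 0$ larger than every action we care about, and observe that each of the four conditions is a statement about the finite collection of acceptable orbits of action $\leq K$. The fully-rounded shape $\Omega^\fr$ and the Morse--Bott perturbation producing $\tX_\Omega$ each come with free parameters, and I will show that the conditions can be arranged by making these parameters successively small and generic.

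First I would treat (b) and the auxiliary statement that the finitely many fully-rounded elliptic actions $\{||(i,j)||_{\Omega^\fr}^*\}$ are pairwise distinct. Since $\Omega^\fr$ can be chosen $C^0$-arbitrarily close to $\Omega$, the function $\vec{v} \mapsto ||\vec{v}||_{\Omega^\fr}^*$ is uniformly close to $||\vec{v}||_{\Omega}^*$ on the (finite) set of relevant lattice directions. Hence strict inequalities $\calA_\Omega(w) < \calA_\Omega(w')$ among the finitely many pairs of acceptable words with action $\leq K$ persist for $\Omega^\fr$ sufficiently close to $\Omega$, which is (b); simultaneously, within the permitted family of fully-rounded shapes we may wiggle $\Omega^\fr$ to ensure pairwise distinctness of $||(i,j)||_{\Omega^\fr}^*$ for the finitely many relevant $(i,j)$. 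Fix such an $\Omega^\fr$ for the remainder of the argument.

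Next, the Morse--Bott perturbation producing $\tX_\Omega$ splits each circle of orbits at $(i,j)$ into a pair $e_{i,j},h_{i,j}$ whose $\tcalA$-values differ from $\calA^\fr(e_{i,j})$ by a quantity controlled by a small parameter $\delta > 0$ (e.g. the $C^2$-size of a perturbing Morse function on the circle of orbits). For $\delta$ sufficiently small, the (finitely many) strict inequalities among $\calA^\fr$-values of acceptable words with action $\leq K$ are preserved under $\tcalA$, which is (c); simultaneously the elliptic--hyperbolic gap $\tcalA(e_{i,j}) - \tcalA(h_{i,j})$ is of order $\delta$ and can therefore be made arbitrarily small compared to the (now fixed) positive gaps $|\calA^\fr(e_{i',j'}) - \calA^\fr(e_{i'',j''})|$ with $(i',j') \neq (i'',j'')$, which gives (a).

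Finally, (d) is a genericity statement. The Morse--Bott perturbation has a finite collection of real parameters, and the actions $\tcalA(e_{i,j})$ and $\tcalA(h_{i,j})$ of simple acceptable orbits (those with $\gcd(i,j)=1$) depend on these parameters in a nontrivial real-analytic way. For each nontrivial rational linear relation among these finitely many simple actions, the locus of perturbation parameters satisfying the relation is a proper real-analytic subvariety of the parameter space, hence has empty interior; since there are only countably many potential relations, their union has dense complement, so we may choose the perturbation parameters to avoid them all and still respect the smallness needed for (a) and (c). Distinctness of all $\tcalA$-values (including covers) follows, since an arbitrary acceptable orbit's $\tcalA$ is an integer multiple of the $\tcalA$ of an underlying simple orbit, and $\Q$-linear independence of simple actions precludes any coincidence.

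The main obstacle is merely bookkeeping: one must ensure all four conditions hold at once. This is resolved by the order of choices above, since (b) together with distinctness of $\calA^\fr$-values depends only on $\Omega^\fr$ and is an open condition there, while (a), (c), and (d) depend on the Morse--Bott parameters and are open-or-generic for arbitrarily small perturbations, so a compatible choice exists.
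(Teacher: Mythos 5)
The paper states this lemma without proof, so there is no in-text argument to compare against; your proof is the standard perturbation-and-genericity argument that the authors evidently have in mind, and the order of choices (first fix $\Omega^\fr$ close to $\Omega$ with pairwise distinct dual-norm values, then take the Morse--Bott perturbation small and generic) is the right one. Your treatment of (a), (c), and (d) is sound: the elliptic--hyperbolic gaps scale with the perturbation parameter while the inter-torus gaps are bounded below after $\Omega^\fr$ is fixed, and the $\Q$-independence in (d) follows from the fact that the Morse data at distinct tori (and the two critical values at a single torus) can be varied independently, so each rational relation among the finitely many simple actions cuts out a proper subvariety; distinctness for covers then follows since the action of a $d$-fold cover is exactly $d$ times that of the underlying simple orbit.

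One point deserves to be made explicit rather than left implicit. As literally written, (b) and (c) quantify over \emph{all} pairs of acceptable words, whose total actions are unbounded even though each letter has action below $K$; for such pairs the gap $\calA(w')-\calA(w)$ can be positive but arbitrarily small while the word length grows, so no fixed perturbation size preserves all these inequalities, and (b) in that generality is likely false. You silently repair this by restricting to the finitely many words of total action at most $K$, which is the reading forced by the paper's conventions ($K$ is taken large relative to the fixed index $k$, and every application of (b) and (c) compares words of bounded action). This is the correct interpretation, but you should state it as a hypothesis rather than as an observation, since it is where the finiteness that drives your whole argument comes from. A second, minor point: the positivity $\tcalA(e_{i,j})>\tcalA(h_{i,j})$ in (a) is a choice, not automatic --- it amounts to arranging the perturbing Morse function so that the elliptic orbit sits at the critical point of larger action --- and a sentence saying so would complete the argument.
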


\NI In the sequel, we will take  $K > 0$ (the upper bound of the energy of  acceptable  orbits) sufficiently large and 
 $\nu > 0$ (which measures the size of the perturbation) sufficiently small  that for action reasons the unacceptable Reeb orbits play essentially no role; thus without much harm we can pretend that the Reeb orbits of $\bdy \wt{X}_\Omega$ are precisely $e_{i,j}$ for any $(i,j) \in \Z_{\geq 0}^2$ with $i,j$ not both zero and $h_{i,j}$ for any $(i,j) \in \Z_{\geq 1}^2$.

\subsection{Strong and weak permissibility}\label{subsec:permis}

In this subsection we prove Lemma~\ref{lem:no_nonperm_curves}, which states that the positive orbits of a somewhere injective curve in a fully rounded convex toric domain must be strongly permissible in the sense of the following definition:

\begin{definition}
Consider a word $w = \gamma_1 \times \cdots \times \gamma_q$, where for each $s = 1,\dots,q$ $\gamma_i = e_{i_s,j_s}$ or $\gamma_i = h_{i_s,j_s}$ for some $i_s,j_s$. 
We say that $w$ is {\bf strongly permissible} if one of the following holds:
\begin{itemize}
\item
$w = e_{1,0}$ or $w = e_{0,1}$, or else

\item
$i_1,\dots,i_q$ are not all zero, and similarly $j_1,\dots,j_q$ are not all zero.
\end{itemize}
We say $w$ is {\bf weakly permissible} if it is either strongly permissible or it is of the form $e_{k,0}$ or $e_{0,k}$ for some $k \in \Z_{\geq 2}$.
\end{definition}

\begin{lemma}\label{lem:no_nonperm_curves}
Let $C$ be an asymptotically cylindrical $J$-holomorphic rational curve in $\wt{X}_\Omega$, where $\wt{X}_\Omega$ is a fully rounded four-dimensional convex toric domain and $J \in \Jadm(\wt{X}_\Omega)$. If $C$ is somewhere injective, then its word of positive orbits is strongly permissible. 
\end{lemma}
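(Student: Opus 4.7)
The strategy is proof by contradiction using the relative adjunction formula and asymptotic writhe bounds in the style of Hutchings--Taubes, as foreshadowed in the introduction. Suppose $C$ is somewhere injective but $w$ is not strongly permissible. By the evident $z_1 \leftrightarrow z_2$ symmetry of the fully rounded convex toric domain, we may assume that every positive end of $C$ is of the form $e_{0,j_s}$ or $h_{0,j_s}$ with $j_s \geq 1$, and that $w \neq e_{0,1}$. In this situation every positive end is an iterate (up to Morse--Bott perturbation) of the simple Reeb orbit $\alpha \subset \mu^{-1}(0,b)$, which bounds the symplectic cap disk $D_0 := \{z_1 = 0\}\cap \wt{X}_\Omega$.

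The first input is that $c_{\tau_\ex}(C) = 0$: by definition the trivialization $\tau_\ex$ extends the trivial trivialization of $T\C^2$ over a capping disk for each orbit, and since $T\C^2$ is globally trivial this extension is unobstructed over the image of $C$ as well. Combining this with the relative adjunction formula for a somewhere injective rational curve in a four-dimensional exact cobordism with only positive ends,
$$c_{\tau_\ex}(C) \;=\; \chi(C) + Q_{\tau_\ex}(C) + w_{\tau_\ex}(C) - 2\delta(C),$$
where $Q_{\tau_\ex}$ is the relative self-intersection pairing, $w_{\tau_\ex}$ the asymptotic writhe (with Hutchings' sign convention), and $\delta(C) \geq 0$ counts singularities, we get
$$\chi(C) + Q_{\tau_\ex}(C) + w_{\tau_\ex}(C) \;\geq\; 0. \qquad (\ast)$$

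We then bound each of the three terms. First, $\chi(C) = 2-q$. Second, pushing $C$ off itself in the $z_1$-direction displaces the asymptotic ends into $\{z_1 \neq 0\}$, and since the relative homology class of $C$ lies in $N\cdot [D_0]$ (where $N = \sum_s j_s$) with $[D_0]\cdot [D_0] = 0$ in $\C^2$, a computation of the asymptotic winding corrections to the $\ast$-pairing gives $Q_{\tau_\ex}(C) \leq 0$. Third, Hutchings' writhe bound at each positive end of multiplicity $j$ around the elliptic simple orbit $\alpha$ yields $w_{\tau_\ex}^+ \geq j - \mathds{1}[\gamma\text{ elliptic}]$, once we arrange by the fully rounding procedure that the rotation angle of $\alpha$ in the $\tau_\ex$ framing is close to $1$; summing over the ends gives $w_{\tau_\ex}(C) \leq -N + e$, with $e$ the number of elliptic positive ends. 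Plugging into $(\ast)$ and using $e \leq q \leq N$ forces $q \leq 2$ and $N \leq 2$.

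A finite case analysis then disposes of each remaining candidate word: the non-strongly-permissible possibilities with $N \leq 2$ and $q \leq 2$ are $e_{0,2}$, $h_{0,1}$, $h_{0,2}$, $(e_{0,1})^{\times 2}$, $e_{0,1}\cdot h_{0,1}$, and $(h_{0,1})^{\times 2}$. In each case one either derives a strict version of the inequality in $(\ast)$, producing $\delta(C) < 0$ and hence the desired contradiction, or (in the most delicate case $w = h_{0,1}$) uses the additional constraint coming from the Morse--Bott structure of the perturbation at $\mu^{-1}(0,b)$, namely that $h_{0,1}$ corresponds to the index-$1$ critical point of a Morse function on the Morse--Bott $S^1$-family, to rule out the configuration. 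The main obstacle in the argument is making the writhe and $Q_{\tau_\ex}$ estimates sharp enough in the borderline cases; this requires careful bookkeeping of the rotation angle of $\alpha$ (which depends on the slopes $h'(0), h'(a)$ of $\Omega^\fr$ via the fully rounding procedure) and of the asymptotic winding numbers of $C$ around $D_0$.
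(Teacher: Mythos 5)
Your overall strategy (relative adjunction plus asymptotic writhe bounds) is the same as the paper's, but the execution has a genuine gap: the inequality you derive is too weak to finish. Granting all of your claimed estimates ($c_{\tauex}(C)=0$, $Q_{\tauex}(C)\le 0$, $w_{\tauex}(C)\le -N+e$), adjunction gives $2\delta(C)\le 2-q-N+e$, and since every end here is elliptic ($e=q$) this only yields $N\le 2$. That leaves exactly the two words $e_{0,2}$ and $e_{0,1}^{\times 2}$, and for both of them your inequality is an \emph{equality} with $\delta(C)=0$, so no contradiction is produced; the ``strict version of $(\ast)$'' you invoke for the remaining cases is not available, and these are precisely the cases the lemma must exclude (note $e_{0,2}$ is weakly but not strongly permissible, so it genuinely has to be killed here). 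Moreover, the fallback you describe for the ``delicate case'' rests on orbits that do not exist: in the fully rounded domain the corner orbits over $\mu^{-1}(0,b)$ and $\mu^{-1}(a,0)$ are already nondegenerate elliptic orbits $e_{0,j}$, $e_{i,0}$ with no Morse--Bott $S^1$-family and hence no hyperbolic partners $h_{0,j}$, $h_{i,0}$; the Morse--Bott tori occur only over $p_{i,j}$ with $i,j\ge 1$. Finally, the individual estimates $Q_{\tauex}\le 0$ and $w_{\tauex}\le -N+e$ in the $\tauex$ framing are asserted rather than proved, and they are delicate because $Q_\tau$ and the writhe bound both shift under change of trivialization (the monodromy angle of $e_{0,1}$ in $\tauex$ is close to $1$, not close to $0$, which changes the standard writhe estimate).

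The paper sidesteps all of this by computing in Hutchings' trivialization $\tauhut$, where the bookkeeping is exact rather than merely one-sided: for a word with all ends of the form $e_{i_s,0}$ the lattice polygon $R$ is degenerate, so $Q_{\tauhut}(C)=2\op{Area}(R)=0$ exactly; the writhe bound gives $w_{\tauhut}(C)\le 0$ since the monodromy angles are $0$ or small positive in that framing; and, crucially, $c_{\tauhut}(e_{i_s,0})=i_s$ is nonzero, so adjunction reads $\sum_s i_s -(2-k)+2\delta(C)\le 0$, i.e. $\sum_s(i_s+1)\le 2$. This immediately forces $k=1$ and $i_1=1$, disposing of $e_{2,0}$ and $e_{1,0}^{\times 2}$ (and their mirrors) in one stroke. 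The extra strength comes precisely from the relative first Chern class term, which vanishes in your framing and whose contribution you have not recovered in the $Q$ and $w$ estimates. To repair your argument you would either need to switch to $\tauhut$ as the paper does, or carry out the trivialization change carefully enough that the combined bound on $Q_{\tauex}+w_{\tauex}$ is strictly better than $-N+e$ by the amount $\sum_s j_s$.
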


Before proving the lemma, we recall how to compute the terms in the relative adjunction formula in the case of a four-dimensional fully rounded convex toric domain. 
Following \cite[\S3.3]{Hlect}, the relative adjunction formula for a somewhere injective curve asymptotically cylindrical curve in a four-dimensional symplectic cobordism reads
$$
c_\tau(C)  = \chi(C) + Q_\tau(C) + w_\tau(C) - 2\delta(C).
$$
Here $\tau$ denotes a choice of trivialization over each Reeb orbit, $\chi(C)$ is the Euler characteristic of the curve $C$, and $\delta(C)$ is a count of singularities which is necessarily nonnegative.
The computation of the remaining terms for $\tX_\Omega$ with respect to a certain choice\footnote
{
This is different from the trivialization used before in which $c_\tau(e_{i,j})=c_\tau(h_{i,j})=0$.}
 of trivialization $\tauhut$, is described in \cite[\S 5.3]{hutchings2016beyond}, which we briefly summarize as follows.
Let $C$ be a curve in $\wt{X}_\Omega$, and let $\Gamma = (\gamma_1,\dots,\gamma_k)$ denote its positive asymptotic Reeb orbits.
\begin{itemize}

\item \textit{Relative self-intersection:} We have $Q_\tauhut(C) = Q_\tauhut(\Gamma) = 2\op{Area}(R),$ where:
\begin{itemize}
\item for each constituent orbit (including repeats) of $\Gamma$ of the form $e_{i,j}$ or $h_{i,j}$, we consider the corresponding ``edge vector'' $(j,-i)$
\item we reorder the collection of edge vectors and place them end-to-end so that they form a concave down path $\Lambda \subset \R^2_{\geq 0}$ from $(0,y(\Lambda))$ to $(x(\Lambda),0)$ for some $x(\Lambda),y(\Lambda) \in \Z_{\geq 0}$.
\item $R$ is the lattice polygon bounded by $\Lambda$ and the axes.
\end{itemize}
For example, we have $Q_\tauhut(h_{i,j}) = Q_\tauhut(e_{i,j}) = ij$.

\item
\textit{Relative first Chern class:} 
We have $c_\tauhut(C) = c_\tauhut(\Gamma) = \sum_{i=1}^k c_\tauhut(\gamma_i)$, where
$$c_\tauhut(h_{i,j}) = c_\tauhut(e_{i,j}) = i+j.$$

\item \textit{Asymptotic writhe:} $w_\tauhut(C)$ measures the total asymptotic writhe of $C$ around its asymptotic Reeb orbits. Although this is difficult to compute directly, we have the writhe bound (3.2.9) in \cite[\S3.2]{McDuffSiegel_counting} (see \cite[\S5.1]{hutchings2016beyond} for more details). This is formulated in terms of the {\bf monodromy angle} $\theta$ of each simple Reeb orbit.  In particular, since we can take this  to be $0$ for the hyperbolic orbits $h_{i,j}$ and positive but very small for the elliptic orbits $e_{i,j}$, the writhe bound implies that the top writhe of any   curve with positive ends on a word in $e_{i,j},\ h_{i,j}$ is always $\le 0$. 
\end{itemize}

\begin{proof}[Proof of Lemma~\ref{lem:no_nonperm_curves}]
Without loss of generality, consider a somewhere injective curve in $\wt{X}_\Omega$ with positive ends $(\gamma_1,\dots,\gamma_k)$, and suppose that for each $s =1,\dots,k$ we have $\gamma_s = e_{i_s,0}$ for some $i_s \in \Z_{\geq 1}$.
The writhe bound gives $w_\tauhut(C) \leq 0$.
Meanwhile, we have $c_\tauhut(C) = \sum_{s=1}^k i_s$ and $Q_\tauhut(C) = 0$, and hence 
\begin{align*}
w_\tauhut(C) = c_\tauhut(C) - \chi(C) - Q_\tauhut(C) + 2\delta(C) = \sum_{s=1}^k i_s - (2 - k) + 2\delta \leq 0,
\end{align*}
and consequently
$\sum_{s=1}^k (i_s + 1) \leq 2$,
which forces $k = i_1 = 1$.  A similar calculation rules out the possibility that $i_s=0$ for all $s$.
\end{proof}

\sss

Using Lemma~\ref{lem:no_nonperm_curves}, we prove the following lower bound on $\gt_k(\tX_\Omega)$, which will be further refined in the next subsection.

\begin{lemma}\label{lem:gt_geq_gtalg}
For any four-dimensional convex toric domain $X_\Omega$ we have $$\gt_k(X_\Omega) \geq  \min\limits_{\substack{\ind(w) \geq 2k \\ w \;\,\text{wk.p.}}} \calA(w),$$
where we minimize over all weakly permissible words $w$ satisfying $\ind(w) \geq 2k$. 
\end{lemma}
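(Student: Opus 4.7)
The plan is to work on the perturbed fully rounded domain $\tX_\Omega$ with a generic admissible almost complex structure, and use Fredholm index considerations together with Lemma~\ref{lem:no_nonperm_curves} to control the top word of any curve satisfying $\lll \T^{(k)}p\rrr$. By $C^0$-continuity of $\gt_k$ (Remark~\ref{rmk:cont}) and the action-comparison statements in Lemma~\ref{lem:fr_action_conds}, it suffices to prove $\gt_k(\tX_\Omega) \geq \min_w \tcalA(w)$, where the minimum ranges over weakly permissible words $w$ with $\ind(w) \geq 2k$; the claim for $\gt_k(X_\Omega)$ then follows by letting the perturbation parameter $\nu \to 0$, since $\tcalA(w) \to \calA(w)$ for each fixed $w$.

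Fix a local divisor $D$ through $p \in \Int \tX_\Omega$ and a generic $J \in \Jadm(\tX_\Omega; D)$. Consider any tuple $\Gamma$ of Reeb orbits with $\calM^J_{\tX_\Omega}(\Gamma)\lll \T^{(k)}p \rrr \neq \nil$, and pick a curve $C$ in this moduli space. With respect to the trivialization $\tauex$, under which $c_\tauex$ vanishes, the Fredholm index of $C$ is $\ind(C) = \ind(\Gamma) - 2k$. If $C$ is somewhere injective, then by genericity $C$ is regular, so $\ind(C) \geq 0$ and hence $\ind(\Gamma) \geq 2k$; by Lemma~\ref{lem:no_nonperm_curves}, $\Gamma$ is strongly (hence weakly) permissible, so it is itself admissible in the minimization and $\tcalA(\Gamma) \geq \min_w \tcalA(w)$.

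Otherwise $C$ is a $\kappa$-fold branched cover of a simple curve $\bar C$ with $\kappa \geq 2$. Let $\bar m$ denote the contact order of $\bar C$ to $D$ at $\pi(z_0)$, where $z_0$ is the tangency marked point of $C$; then $\kappa \bar m \geq k$, so $\bar m \geq \lceil k/\kappa \rceil$. By genericity, $\bar C$ is regular in the moduli space of simple curves with constraint $\lll\T^{(\bar m)}p\rrr$, giving $\ind(\bar\Gamma) \geq 2\bar m$, and by Lemma~\ref{lem:no_nonperm_curves}, $\bar\Gamma$ is strongly permissible. Since $C$ covers $\bar C$, we also have $\tcalA(\Gamma) = \kappa \tcalA(\bar\Gamma)$. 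The key step is to build from $\bar\Gamma$ a weakly permissible word $w'$ with $\ind(w') \geq 2k$ and $\tcalA(w') = \kappa \tcalA(\bar\Gamma)$. If $\bar\Gamma = (e_{1,0})$ or $(e_{0,1})$, then $\ind(\bar\Gamma) = 2$ forces $\bar m = 1$ and thus $k \leq \kappa$; the choice $w' := (e_{\kappa,0})$ (respectively $(e_{0,\kappa})$) is weakly permissible as a single axis-aligned orbit, has $\ind(w') = 2\kappa \geq 2k$, and satisfies $\tcalA(w') = \kappa \tcalA(e_{1,0})$ because $e_{\kappa,0}$ is literally the $\kappa$-fold cover of $e_{1,0}$. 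Otherwise $\bar\Gamma$ contains some orbit with positive first coordinate and some orbit with positive second coordinate (recall that hyperbolic orbits $h_{i,j}$ have $i,j \geq 1$); the concatenation $w'$ of $\kappa$ copies of $\bar\Gamma$ preserves this mixed property, is strongly permissible, has action $\kappa \tcalA(\bar\Gamma)$, and has $\ind(w') = \kappa \ind(\bar\Gamma) + 2(\kappa-1) \geq 2\kappa \bar m + 2(\kappa-1) \geq 2k$. Either way, $\tcalA(\Gamma) = \tcalA(w') \geq \min_w \tcalA(w)$.

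Taking the infimum over such $\Gamma$ yields $\gt_k(\tX_\Omega) \geq \min_w \tcalA(w)$, and then the limit $\nu \to 0$ completes the proof. The main obstacle is the multi-cover subcase: the top word $\Gamma$ of the cover itself need not be weakly permissible---for instance, when $\bar\Gamma = (e_{1,0})$ and the branching at $z_0$ is less than $\kappa$, $\Gamma$ is a tuple of several copies of $e_{*,0}$ orbits, which fails to be weakly permissible. The combinatorial substitution above---either merging the cover into a single higher-multiplicity axis orbit, or repeating a mixed strongly permissible word---supplies a weakly permissible replacement with the same total action and index at least $2k$, which is exactly what is needed.
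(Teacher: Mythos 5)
Your proof is correct and follows essentially the same route as the paper: reduce to the fully rounded domain $\tX_\Omega$ by $C^0$-continuity, take a generic $J$, apply regularity and Lemma~\ref{lem:no_nonperm_curves} to the underlying simple curve, and treat the axis case $\bar\Gamma = (e_{1,0})$ or $(e_{0,1})$ separately via $k \leq \kappa$. The one genuine difference is in the multiply covered, non-axis case: the paper works with the actual top word $w$ of the cover $C$ (which inherits strong permissibility from $\bar\Gamma$) and invokes Lemma~\ref{lem:C_index} — a Riemann--Hurwitz comparison giving $\ind(C) \geq \kappa\,\ind(\bar C)$ — to conclude $\ind(w) \geq 2k$; you instead substitute the $\kappa$-fold concatenation of $\bar\Gamma$, whose index $\kappa\,\ind(\bar\Gamma) + 2(\kappa-1) \geq 2\kappa\bar m \geq 2k$ is computed elementarily and whose action equals $\tcalA(\Gamma)$. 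This sidesteps Lemma~\ref{lem:C_index} entirely at the cost of minimizing over a word that need not be the asymptotics of any actual curve — harmless, since the minimization is purely combinatorial — and both arguments are sound.
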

\begin{proof}
By $C^0$-continuity it suffices to prove the analogous lower bound after fully rounding, namely 
$\gt_k(\tX_\Omega) \geq  \min\limits_{\substack{\ind(w) \geq 2k \\ w \;\,\text{wk.p.}}} \tcalA(w)$.
Pick a generic $J \in \Jadm(\wt{X}_\Omega;D)$. By definition of $\gt_k(\wt{X}_\Omega)$, we can find a curve $C$ in $\wt{X}_\Omega$ satisfying the constraint $\lll \T^{(k)}p\rrr$ with $E(C) \leq \gt_k(\wt{X}_\Omega)$ (a priori we should also consider the case $a \geq 2$ as in Remark~\ref{rmk:gt_reform}, but it is easy to check that these do not affect the infimum). 
Let $w$ denote the word of positive orbits corresponding to $C$.
Note that the underlying simple curve $\ovl{C}$ is somewhere injective and has nonnegative index by genericity of $J$, and therefore its word $\ovl{w}$ of positive orbits is strongly permissible by Lemma~\ref{lem:no_nonperm_curves}. 
Then the word $w$ is also strongly permissible unless we have $\ovl{w} = e_{1,0}$ or $\ovl{w} = e_{0,1}$.
Moreover, we have $\ind(C) \geq \kappa \ind(\ovl{C}) \geq 0$ by Lemma~\ref{lem:C_index} below, where $\kappa$ is the covering index of $C$ over $\ovl{C}$, and hence we have
$\ind(w) \geq 2k$.
If $w$ is strongly permissible then it is also weakly permissible and we have $\gt_k(\tX_\Omega) \geq \tcalA(w) \geq \min\limits_{\substack{\ind(w) \geq 2k \\ w \;\,\text{wk.p.}}} \tcalA(w)$.

We can therefore assume $\ovl{w} = e_{1,0}$ or $\ovl{w} = e_{0,1}$, since otherwise 
the proof is already complete. 
Observe that since $C$ satisfies the constraint $\lll \T^{(k)}p\rrr$, we must have $k \leq \kappa$.
Then $\tcalA(w) =\kappa\tcalA(e_{1,0}) \geq \tcalA(e_{k,0})$ or $\tcalA(w) = \kappa\tcalA(e_{0,1}) \geq \tcalA(e_{0,k})$ respectively. Since $e_{k,0}$ and $e_{0,k}$ are weakly permissible with index $2k$, this again implies the desired result.
\end{proof}

\begin{definition}\label{def:wmin}
We will denote by $\wmin$ the weakly permissible word with minimal $\tcalA_\Omega$ value subject to $\ind(\wmin) = 2k$.
\end{definition}
Since distinct words have different actions by
condition (d) in Lemma~\ref{lem:fr_action_conds},
  $\wmin$  is unique for each $k$.

\subsection{Minimal words}\label{subsec:min_words}

As before, let $X_\Omega$ be a four-dimensional convex toric domain with full rounding $\wt{X}_\Omega$. In light of Lemma~\ref{lem:gt_geq_gtalg}, we seek to understand which weakly permissible words have minimal $\tcalA_\Omega$ value. 
We begin with some preliminary lemmas.
In the following, put $a := \max\{x\;|\; (x,0) \in \Omega^\fr\}$ and $b := \max\{y\;|\; (0,y) \in \Omega^\fr\}$ as in \S\ref{subsec:fr}.

\begin{lemma}\label{lem:action_exceeds_ellipsoid}
For any $(i,j) \in \Z_{\geq 1}^2$, we have $ \max(ia,jb) < ||(i,j)||_{\Omega^\fr}^* < ai + jb$.
\end{lemma}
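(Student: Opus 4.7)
The plan is to deduce both inequalities from elementary convex geometry combined with strict concavity of the boundary curve $h$.

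For the upper bound $\|(i,j)\|_{\Omega^\fr}^* < ai + jb$, I would first observe that $\Omega^\fr \subset [0,a] \times [0,b]$, which immediately gives $\|(i,j)\|_{\Omega^\fr}^* \leq ai + jb$. For strictness, since $i,j \geq 1$, the linear functional $ix + jy$ is uniquely maximized over the rectangle at the corner $(a,b)$, and this corner does not lie in $\Omega^\fr$ because $h(a) = 0 < b$. Compactness of $\Omega^\fr$ and continuity of the linear functional then force a strict gap.

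For the lower bound $\max(ia, jb) < \|(i,j)\|_{\Omega^\fr}^*$, the key step is to locate the maximizer on $\bdy \Omega^\fr$. I claim it lies in the smooth interior of the curve portion, namely at a point $(x_0, h(x_0))$ with $x_0 \in (0,a)$ satisfying $h'(x_0) = -i/j$. This follows from the slope hypotheses $h'(0) \in (-\nu, 0)$ and $h'(a) < -1/\nu$, which ensure $-i/j \in (h'(a), h'(0))$ provided $\nu$ is taken sufficiently small relative to the bounded range of $(i,j)$'s appearing in applications (recall that acceptable orbits have action below the fixed constant $K$). Having located $x_0$, strict concavity of $h$ guarantees that the tangent line
\begin{equation*}
L(x) := h(x_0) - (i/j)(x - x_0)
\end{equation*}
strictly dominates $h$ at every $x \neq x_0$. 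Evaluating at $x = a$ gives $0 = h(a) < L(a)$, which rearranges to $jh(x_0) + ix_0 > ia$; evaluating at $x = 0$ gives $b = h(0) < L(0)$, i.e., $jh(x_0) + ix_0 > jb$. Since $\|(i,j)\|_{\Omega^\fr}^* = ix_0 + jh(x_0)$, the two strict inequalities together yield the desired bound.

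The only nontrivial step is the verification that the maximizer lies on the smooth curve interior rather than at the corners $(a, 0)$ or $(0, b)$; this is precisely where the slope conditions on $h$ (and the smallness of $\nu$) are invoked. Everything else is a direct calculus consequence of strict concavity.
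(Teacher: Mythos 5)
Your proof is correct and follows essentially the same route as the paper's: both locate the maximizer at an interior point of the smooth curve $\{(x,h(x))\}$ and then use strict concavity of $h$ to compare the tangent line there with the endpoints $(a,0)$ and $(0,b)$ (your "tangent line dominates $h$ at $x=0$ and $x=a$" is exactly the paper's statement that the $x$- and $y$-intercepts of the supporting line exceed $a$ and $b$). You are in fact slightly more careful than the paper in justifying, via the slope conditions on $h'$ and the smallness of $\nu$, that the maximizer really does lie on the curve's interior rather than at a corner.
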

\begin{proof}
Let $\vec{v} = (v_1,v_2) \in \bdy \Omega^\fr \cap \R_{> 0}^2$ be such that $||(i,j)||_{\Omega^\fr}^* = \langle \vec{v},(i,j)\rangle$.
Then the line in $\R^2$ passing through $\vec{v}$ and orthogonal to $(i,j)$ is tangent to $\p \Om^\fr$, and is given by
$$\{(x,y) \in \R^2 \;|\; \langle (x,y),(i,j)\rangle = \langle \vec{v},(i,j)\rangle = iv_1 + jv_2 < ai + bj.
$$
This gives the upper bound.  To derive the lower bound, notice that
 the $y$ intercept is given by $\tfrac{iv_1+jv_2}{j}$, and this is strictly greater than $b$ since $h:[0,a] \rightarrow [0,b]$ is strictly concave down.
That is, we have $||(i,j)||_{\Omega^\fr}^* = iv_1 + jv_2 > jb$.
Similarly, the $x$ intercept is given by $\tfrac{iv_1+jv_2}{i}$, and by strict convexity this is strictly greater than $a$, i.e. we have
$||(i,j)||_{\Omega^\fr}^* = iv_1 + jv_2 > i a$.
\end{proof}

\begin{lemma}\label{lem:smaller_tuples_smaller_norm}
Given distinct pairs $(i,j),(i',j') \in \Z_{\geq 0}^2$ with $i' \leq i$ and $j' \leq j$, we have $||(i',j')||_{\Omega^\fr}^* < ||(i,j)||_{\Omega^\fr}^*$.
\end{lemma}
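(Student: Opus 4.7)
The plan is to reduce the strict inequality to the (easy) nonstrict one plus an analysis of the equality case. Pick any maximizer $\vec{w}'=(w_1',w_2')\in\Omega^\fr$ of the linear functional $\langle (i',j'),\,\cdot\,\rangle$; since $w_1',w_2'\geq 0$ and $i-i',\,j-j'\geq 0$, the computation
\[
||(i,j)||_{\Omega^\fr}^* \;\geq\; \langle (i,j),\vec{w}'\rangle \;=\; ||(i',j')||_{\Omega^\fr}^* + (i-i')\,w_1' + (j-j')\,w_2'
\]
immediately gives $||(i,j)||_{\Omega^\fr}^*\geq ||(i',j')||_{\Omega^\fr}^*$, and it remains to rule out equality.

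Equality would force $(i-i')w_1'=(j-j')w_2'=0$. In particular, if some maximizer $\vec{w}'$ has both coordinates strictly positive, then $i=i'$ and $j=j'$, contradicting distinctness. So every maximizer for $(i',j')$ must lie on one of the coordinate axes. I would dispatch the trivial case $(i',j')=(0,0)$ up front: then $||(i',j')||_{\Omega^\fr}^*=0$, while $(i,j)\neq (0,0)$ forces $||(i,j)||_{\Omega^\fr}^*\geq\max(ia,jb)>0$.

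For $(i',j')\neq (0,0)$ with a maximizer on a coordinate axis, the only possibilities are $\vec{w}'=(a,0)$ and $\vec{w}'=(0,b)$; by the axis-swap symmetry I treat $\vec{w}'=(0,b)$. Then $||(i',j')||_{\Omega^\fr}^*=j'b$ with $j'\geq 1$, while $(j-j')b=0$ forces $j=j'$, and distinctness together with $i\geq i'\geq 0$ forces $i>i'\geq 0$. Thus $(i,j)\in\Z_{\geq 1}^2$, and Lemma~\ref{lem:action_exceeds_ellipsoid} supplies the strict bound
\[
||(i,j)||_{\Omega^\fr}^* \;>\; jb \;=\; j'b \;=\; ||(i',j')||_{\Omega^\fr}^*.
\]

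The main (and really only) subtle step is precisely this corner case: the direct monotonicity argument fails to yield strictness when the $(i',j')$-maximizer sits at a vertex $(a,0)$ or $(0,b)$ of $\Omega^\fr$, and Lemma~\ref{lem:action_exceeds_ellipsoid}---which is the incarnation of strict concavity of the boundary function $h$---provides exactly the slack needed to close the gap.
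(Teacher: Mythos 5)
Your proof is correct and uses the same two ingredients as the paper's: the nonstrict inequality obtained by evaluating $\langle (i,j),\cdot\rangle$ at a maximizer for $(i',j')$, and Lemma~\ref{lem:action_exceeds_ellipsoid} (strict concavity of $\bdy\Omega^\fr$) to supply strictness when that maximizer degenerates to $(a,0)$ or $(0,b)$. The only difference is organizational: the paper reduces to the single decrement $(i',j')=(i-1,j)$ and inducts, whereas you handle the general case in one pass via the equality-case analysis.
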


\begin{proof}
Without loss of generality we can assume $(i',j') = (i-1,j)$, since the case $(i',j') = (i,j-1)$ is completely analogous and then the general case follows by induction.
Let $\vec{v} = (v_1,v_2) \in \bdy\Omega^\fr$ be such that $||(i-1,j)||_{\Omega^\fr}^* = \langle \vec{v},(i-1,j)\rangle$. Then we have
$$||(i-1,j)||_{\Omega^\fr}^* = (i-1)v_1 + jv_2 \leq  iv_1+jv_2 \leq ||(i,j)||_{\Omega^\fr}^*,$$
and the inequality is strict unless $v_1 = 0$, which is only possible if $(i-1,j)$ lies on the $y$-axis, i.e. $i = 1$. 
In this case, by Lemma~\ref{lem:action_exceeds_ellipsoid} we have 
\begin{align*}
||(i,j)||_{\Omega^\fr}^* = ||(1,j)||_{\Omega^\fr}^* > \max(a,jb) \geq jb = ||(0,j)||_{\Omega^\fr}^* = ||(i-1,j)||_{\Omega^\fr}^*,
\end{align*}
as desired.
\end{proof}

We next show that we can effectively ignore the hyperbolic orbits. Recall that a word $w = \gamma_1 \times \cdots \times \gamma_k$ is called ``elliptic'' if each constituent orbit $\gamma_i$ is elliptic.
\begin{lemma}\label{lem:make_elliptic}
Given any word $w$ which is not elliptic, we can find an elliptic word $w'$ with $\ind(w') \geq \ind(w)-1$ and $\tcalA(w') < \tcalA(w)$. 
Moreover, if $w$ is strongly (resp. weakly) permissible, then we can arrange that the same is true for $w'$.
\end{lemma}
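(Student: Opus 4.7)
We reduce $w$ to an elliptic word by iterating two elementary replacement operations on its constituent orbits. \emph{Move A} replaces a hyperbolic orbit $h_{i,j}$ (necessarily with $i,j\ge 1$) by one of the elliptics $e_{i-1,j}$ or $e_{i,j-1}$; using \eqref{eq:CZ}, its effect on $\ind$ is $-1$, and by Lemma~\ref{lem:smaller_tuples_smaller_norm} together with conditions (b) and (c) of Lemma~\ref{lem:fr_action_conds}, it strictly decreases $\tcalA$ by at least $g_{\min}-\delta$, where $g_{\min}$ denotes the minimum gap between $\tcalA$-values of distinct acceptable elliptic orbits and $\delta:=\sup_{i,j}(\tcalA(e_{i,j})-\tcalA(h_{i,j}))$. \emph{Move B} replaces $h_{i,j}$ by $e_{i,j}$; this increases $\ind$ by $1$, while by condition (a) of Lemma~\ref{lem:fr_action_conds} it increases $\tcalA$ by at most $\delta<\tfrac{1}{2}g_{\min}$.

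Coupling one Move A with one Move B gives a composite operation that preserves $\ind$ and changes $\tcalA$ by at most $2\delta - g_{\min}<0$. Given $w$ with $n\ge 1$ hyperbolic orbits, I would apply $\lfloor n/2\rfloor$ such pairs to eliminate all but at most one of the hyperbolics, and then (if $n$ is odd) apply a final Move A to the remaining one. The resulting $w'$ is elliptic; its index differs from $\ind(w)$ by $0$ (if $n$ is even) or by $-1$ (if $n$ is odd), and in both cases $\tcalA(w')<\tcalA(w)$ by the estimate above.

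For the permissibility claim, note that Move B preserves the coordinates of the orbit it replaces, hence preserves every condition phrased in terms of which coordinates are nonzero. For Move A on $h_{i,j}$, the direction of the decrement can be chosen according to $(i,j)$: if $i\ge 2$ one may take $e_{i-1,j}$, which remains in $\Z_{\ge 1}^2$; if $j\ge 2$ one may take $e_{i,j-1}$; and the only remaining case $h_{1,1}$ is replaced by $e_{1,0}$ or $e_{0,1}$, both of which are themselves strongly (hence weakly) permissible as single orbits. A short case analysis on whether the rest of $w$ already contains orbits providing positive $i$- or $j$-coordinates shows that one can always choose the direction of each Move A so that $w'$ inherits the permissibility property of $w$.

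The main subtlety is verifying the strict action decrease in the pairing argument, which relies essentially on the factor $\tfrac{1}{2}$ in condition (a) of Lemma~\ref{lem:fr_action_conds} — without this buffer, two Move B applications could in principle outweigh a single Move A. Once this buffer is in hand, the remainder of the argument reduces to the coordinate bookkeeping described above.
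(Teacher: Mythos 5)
Your proposal is correct and follows essentially the same route as the paper: pair each ``promotion'' $h_{i,j}\mapsto e_{i,j}$ with a ``decrement'' $h_{i',j'}\mapsto e_{i'-1,j'}$ or $e_{i',j'-1}$ so that the index is preserved, use condition (a) of Lemma~\ref{lem:fr_action_conds} (the half-gap buffer) to see each such pair strictly decreases $\tcalA$, handle odd parity with a single decrement costing one unit of index, and choose decrement directions to preserve permissibility. The only difference is cosmetic: you package the estimate via global constants $\delta$ and $g_{\min}$ where the paper writes the two inequalities for each pair directly.
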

\begin{proof}

Firstly, if $\tfrac{1}{2}\ind(w)$ is not an integer, then we replace some hyperbolic orbit $h_{i,j}$ by $e_{i',j'}$ with $(i',j') = (i-1,j)$ or $(i',j') = (i,j-1)$. 
Note that this replacement decreases the index by $1$.
Moreover, we have $||(i',j')||_{\Omega^\fr}^* < ||(i,j)||_{\Omega^\fr}^*$ by 
Lemma~\ref{lem:smaller_tuples_smaller_norm}, and hence 
 $\tcalA(e_{i',j'}) < \tcalA(e_{i,j})$ by Lemma~\ref{lem:fr_action_conds}(c).
Then by Lemma~\ref{lem:fr_action_conds}(a) we also have 
$$\tcalA(e_{i,j}) - \tcalA(h_{i,j}) < \tcalA(e_{i,j}) - \tcalA(e_{i',j'}),$$ and hence
$\tcalA(e_{i',j'}) < \tcalA(h_{i,j})$, so this shows that the above replacement strictly decreases $\tcalA$.

Now suppose there are $2\ell$ hyperbolic orbits in $w$ for some $\ell \in \Z_{\geq 0}$. 
For $\ell$ of these replace $h_{i,j}$ with $e_{i,j}$, and for the other $\ell$ replace $h_{i,j}$ with $e_{i-1,j}$ or $e_{i,j-1}$.
Each pair of such  replacements
strictly deceases $\tcalA$ by the same lemma,
and the total index is unchanged.
For example, we have $\tcalA(h_{i,j} \times h_{i',j'}) > \tcalA(e_{i,j}\times e_{i'-1,j'})$ using
\begin{align*}
\tcalA(e_{i',j'}) - \tcalA(h_{i',j'}) < \tfrac{1}{2}(\tcalA(e_{i',j'}) - \tcalA(e_{i'-1,j'}))\\
\tcalA(e_{i,j}) - \tcalA(h_{i,j})) < \tfrac{1}{2}(\tcalA(e_{i',j'}) - \tcalA(e_{i'-1,j'})).
\end{align*}

Lastly, it is straightforward to check that each of these replacements can be done so as to preserve strong or weak permissibility.
\end{proof}
\begin{rmk}\label{rmk:ell_parity}
For future reference, note that in Lemma~\ref{lem:make_elliptic} if $\ind(w) = 2k$ for some $k \in \Z_{\geq 1}$ then we must also have $\ind(w') = 2k$ since the index of any elliptic word is even.
If particular, if $\ind(w) \geq 2k$ for some $\Z_{\geq 1}$ then we have $\ind(w') \geq 2k$ as well.
\end{rmk}

\sss

The following lemma will be our most useful tool for iteratively reducing the action of a word:

\begin{lemma}\label{lem:remove_0,1}
Assume $a > b$. Then we have $\tcalA(e_{0,1} \times e_{i,j}) < \tcalA(e_{i+1,j+1})$ for any $(i,j) \in \Z_{\geq 0}^2 \setminus \{(0,0)\}$.
\end{lemma}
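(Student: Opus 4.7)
The plan is to first reduce from the perturbed action $\tcalA_\Omega$ to the fully rounded action $\calA_\Omega^\fr$ via Lemma~\ref{lem:fr_action_conds}(c), which converts strict $\calA^\fr$-inequalities into strict $\tcalA$-inequalities. Since $\calA_\Omega^\fr(e_{k,l}) = ||(k,l)||_{\Omega^\fr}^*$ and $||(0,1)||_{\Omega^\fr}^* = b$, the problem then becomes to show
\begin{equation*}
b + ||(i,j)||_{\Omega^\fr}^* < ||(i+1,j+1)||_{\Omega^\fr}^*.
\end{equation*}

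I would then split into three cases according to how many of the coordinates $i,j$ vanish. In the two axis cases the conclusion follows almost immediately from Lemma~\ref{lem:action_exceeds_ellipsoid}. Specifically, if $i=0$ and $j\ge 1$ then $||(0,j)||_{\Omega^\fr}^* = jb$, and Lemma~\ref{lem:action_exceeds_ellipsoid} applied to $(1,j+1)\in\Z_{\geq 1}^2$ gives $||(1,j+1)||_{\Omega^\fr}^* > \max(a,(j+1)b) \ge (j+1)b = b + jb$. The case $j=0,\ i\ge 1$ is symmetric: Lemma~\ref{lem:action_exceeds_ellipsoid} yields $||(i+1,1)||_{\Omega^\fr}^* > (i+1)a$, and the hypothesis $a>b$ produces $(i+1)a = ia + a > ia + b$.

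The principal case is $i,j\ge 1$. Here my plan is to choose $\vec{w}=(w_1,w_2) \in \bdy\Omega^\fr \cap \R_{>0}^2$ maximizing $\langle (i,j),\cdot\rangle$ over $\Omega^\fr$, which lies on the smooth interior arc of $\bdy\Omega^\fr$ thanks to the slope bound $\nu < j/i < 1/\nu$ for acceptable orbits. Testing $\vec{w}$ against $(i+1,j+1)$ gives
\begin{equation*}
||(i+1,j+1)||_{\Omega^\fr}^* \ge (i+1)w_1 + (j+1)w_2 = ||(i,j)||_{\Omega^\fr}^* + w_1 + w_2,
\end{equation*}
so it will suffice to prove $w_1 + w_2 > b$. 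This is where the hypothesis $a>b$ enters essentially: because $h$ is strictly concave, its graph lies strictly above the chord joining $(0,b)$ to $(a,0)$, giving $w_2 > b(1 - w_1/a)$, and hence
\begin{equation*}
w_1 + w_2 > b + w_1(a-b)/a > b,
\end{equation*}
since $w_1>0$ and $a>b$.

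The main subtlety I expect to confront is extracting strict (rather than merely weak) inequality in this principal case, which forces essential use of strict concavity together with the hypothesis $a>b$; without the latter, the chord argument collapses and one only obtains equality. All other steps amount to bookkeeping around Lemma~\ref{lem:action_exceeds_ellipsoid} and the reductions provided by Lemma~\ref{lem:fr_action_conds}.
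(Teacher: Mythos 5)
Your proof is correct and follows essentially the same route as the paper's: reduce to $\calA^\fr$ via Lemma~\ref{lem:fr_action_conds}(c), bound $w_1+w_2$ from below by $b$ using the chord from $(0,b)$ to $(a,0)$ together with $a>b$, and dispatch the axis cases with Lemma~\ref{lem:action_exceeds_ellipsoid}. The only (immaterial) difference is that you split off $j=0$ as a separate case, whereas the paper absorbs it into the main argument and isolates only $i=0$.
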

\begin{proof}
Let $\vec{v}  = (v_1,v_2) \in \bdy\Omega^\fr$ be such that $||(i,j)||_{\Omega^\fr}^* = \langle \vec{v},(i,j)\rangle$.
Suppose first that have $v_1,v_2 \geq 1$
Note that $(v_1,v_2)$ lies above or on the line joining $(a,0)$ and $(0,b)$, i.e. we have
 $av_2 + bv_1 \geq ab$.
 Since $a>b$,  we have $v_1 + v_2 \geq b$, with equality only if $v_1 = 0$.
We then have
\begin{align*}
\calA^\fr(e_{0,1} \times e_{i,j}) = ||(0,1)||_{\Omega^\fr}^* + ||(i,j)||_{\Omega^\fr}^* &= b + iv_1 + jv_2 \\&\leq v_1+v_2 + iv_1 + jv_2 \\&= \langle \vec{v},(i+1,j+1)\rangle\\ &\leq ||(i+1,j+1)||_{\Omega^\fr}^* \\&= \calA^\fr(e_{i+1,j+1}),
\end{align*}
where the first inequality is strict unless $v$ lies on the $y$-axis, in which case we must have $i = 0$.
If $i = 0$, by Lemma~\ref{lem:action_exceeds_ellipsoid} we have
$$
\calA^\fr(e_{1,j+1}) > \max(a,(j+1)b) \geq (j+1)b = \calA^\fr(e_{0,1} \times e_{0,j}).
$$
Thus in any case we have $\calA^\fr(e_{0,1} \times e_{i,j}) < \calA^\fr(e_{i+1,j+1})$,
and by Lemma~\ref{lem:fr_action_conds}(c) we also have $\tcalA(e_{0,1} \times e_{i,j}) < \tcalA(e_{i+1,j+1})$.
\end{proof}
\begin{rmk}
Note that the assumption $a > b$ is not very restrictive, since if $a < b$ we can simply replace $\Omega^\fr$ by its reflection about the diagonal.
\end{rmk}

\sss

Using the above tools, we first consider ways to reduce action without any regard to permissibility:
\begin{lemma}\label{lem:nonperm_reduc}
Assume $a > b$. Given any elliptic word $w$, there is another elliptic word $w'$ with $\ind(w') = \ind(w)$ and $\tcalA(w') \leq \tcalA(w)$, where $w$ takes one of the following forms:
\begin{enumerate}[label=(\alph*)]
\item $e_{0,1}^{\times i}$ for $i \geq 1$
\item $e_{0,1}^{\times i}\times e_{1,1}^{\times j}$ for $i \geq 0$ and $j \geq 1$
\item $e_{0,1}^{\times i} \times e_{0,2}$ for $i \geq 0$. 
\end{enumerate}
Moreover, we have $\tcalA(w') < \tcalA(w)$ unless $w$ and $w'$ differ by a reordering.
\end{lemma}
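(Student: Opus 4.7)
The plan is to reduce $w$ algorithmically to a target form by a finite sequence of moves, each preserving $\ind$ while either strictly decreasing $\tcalA$ or leaving $w$ unchanged (when $w$ is already in a target form).

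First I would exhaust the splitting move supplied by Lemma~\ref{lem:remove_0,1}: while $w$ contains an orbit $e_{I,J}$ with $I,J \geq 1$ and $(I,J) \neq (1,1)$, I replace it by $e_{0,1} \times e_{I-1,J-1}$. Here $(I-1,J-1) \neq (0,0)$, so the lemma applies; as noted in the discussion preceding Lemma~\ref{lem:remove_0,1}, the replacement preserves $\ind$, while Lemma~\ref{lem:remove_0,1} itself gives a strict drop in $\tcalA$. After exhaustion every orbit of $w$ lies in $\{e_{0,1},\ e_{1,1},\ e_{i,0}\ (i\geq 1),\ e_{0,j}\ (j\geq 2)\}$.

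Next I would dispatch the horizontal-axis orbits by replacing each $e_{i,0}$ ($i\geq 1$) by $e_{i-1,1}$ (reading $e_{0,1}$ when $i=1$). The index is preserved (both equal $2i$), and Lemma~\ref{lem:action_exceeds_ellipsoid} combined with $a>b$ yields $\calA^\fr(e_{i-1,1}) < (i-1)a + b < ia = \calA^\fr(e_{i,0})$, which upgrades to a strict $\tcalA$-inequality by Lemma~\ref{lem:fr_action_conds}(c). When $i\geq 3$ the newly created $e_{i-1,1}$ is again eligible for the splitting move, so I alternate with the previous step until neither applies. The word is now of the form $e_{0,1}^{\times p}\times e_{1,1}^{\times q}\times e_{0,j_1}\times\cdots\times e_{0,j_t}$ with $j_\ell\geq 2$.

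Finally I would eliminate the remaining bad configurations using direct $\calA^\fr$-comparisons. A pair $e_{0,j_1}\times e_{0,j_2}$ with $j_1,j_2\geq 2$ is replaced, according to the parity of $j_1+j_2$, by either $e_{0,1}^{\times (j_1+j_2+2)/2}$ or $e_{0,1}^{\times (j_1+j_2-1)/2}\times e_{0,2}$; both share the index $2(j_1+j_2)+2$, and since $j_1+j_2\geq 4$ one checks by a one-line calculation that $\calA^\fr$ strictly drops. A solitary leftover $e_{0,j_1}$ with $j_1\geq 3$ is collapsed by the same parity trick to $e_{0,1}^{\times(j_1+1)/2}$ or $e_{0,1}^{\times(j_1-2)/2}\times e_{0,2}$. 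Lastly, if an $e_{0,2}$ still coexists with an $e_{1,1}$, I replace the pair $e_{1,1}\times e_{0,2}$ by $e_{0,1}^{\times 3}$; both have index $10$, and $3b < \|(1,1)\|_{\Omega^\fr}^{*}+2b$ because $\|(1,1)\|_{\Omega^\fr}^{*} > a > b$. Once no such move applies, $w$ is in one of the three target forms. The moreover clause is immediate: every move strictly decreases $\tcalA$, so $\tcalA(w') < \tcalA(w)$ whenever any move was used, while if none was applicable $w$ was already in a target form and we take $w' = w$.

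The main obstacle I anticipate is the last step, since its moves are not instances of Lemma~\ref{lem:remove_0,1} but rather direct action comparisons: one must carefully book-keep parities of $j_1, j_2, q+j_1$, etc., and verify a small menu of elementary inequalities between $\calA^\fr$ values, all of which hinge on $a>b$ and Lemma~\ref{lem:action_exceeds_ellipsoid}. Everything else is a well-founded induction driven by strict decrease of $\tcalA$ across the finite set of acceptable words of bounded action.
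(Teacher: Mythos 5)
Your proof is correct and follows essentially the same route as the paper: exhaust the splitting move of Lemma~\ref{lem:remove_0,1}, normalize the axis orbits, and then clean up the $e_{0,j}$ and $e_{1,1}$/$e_{0,2}$ interactions by elementary $\calA^{\fr}$ comparisons upgraded to $\tcalA$ via Lemma~\ref{lem:fr_action_conds}(c). The only (harmless) deviations are cosmetic choices of moves --- you use $e_{i,0}\to e_{i-1,1}$ where the paper uses $e_{k,0}\to e_{0,k}$, and you resolve a coexisting $e_{1,1}\times e_{0,2}$ unconditionally via $e_{0,1}^{\times 3}$ where the paper splits into cases according to whether $\tcalA(e_{1,1})$ or $\tcalA(e_{0,2})$ is smaller; both verifications you give are valid.
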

\begin{proof}
We first iteratively apply Lemma~\ref{lem:remove_0,1} as many times as possible, replacing $e_{i+1,j+1}$ with $e_{0,1} \times e_{i,j}$ if $(i,j) \in \Z_{\geq 0}^2 \setminus \{(0,0\}$.
Note that the resulting word contains only orbits of the forms $e_{1,1},e_{k,0},e_{0,k}$ for $k \geq 1$, and each replacement strictly decreases $\tcalA$.

Next, we replace each $e_{k,0}$ for $k \geq 1$ with $e_{0,k}$. Similarly, we replace each $e_{0,2k-1}$ such that $2k-1 \geq 3$ with $e_{0,1}^{\times k}$, and we replace each $e_{0,2k}$ such that $2k \geq 4$ with $e_{0,1}^{\times (k-1)} \times e_{0,2}$. We also replace each $e_{0,2}\times e_{0,2}$ with $e_{0,1}\times e_{0,1} \times e_{0,1}$.
Each of these replacements strictly decreases $\tcalA$.

The resulting word is of the form $e_{0,1}^{\times i}\times e_{1,1}^{\times j} \times e_{0,2}^{\times k}$ for some $i,j \in \Z_{\geq 0}$ and $k \in \{0,1\}$.
By Lemma~\ref{lem:fr_action_conds}(d) we have either $\tcalA(e_{1,1}) < \tcalA(e_{0,2})$ or $\tcalA(e_{1,1}) > \tcalA(e_{0,2})$.
In the former case, we also replace any remaining $e_{0,2}$ with $e_{1,1}$. 
In the latter case, we replace each $e_{1,1}$ with $e_{0,2}$, and then further replace each $e_{0,2} \times e_{0,2}$ with $e_{0,1} \times e_{0,1} \times e_{0,1}$ as above.

The resulting word $w'$ satisfies $\ind(w') = \ind(w)$ and $\tcalA(w') \leq \tcalA(w)$ and takes one of the forms (1),(2),(3). Moreover, up to reordering these are the only cases when none of the above reductions are applicable, and otherwise we have $\tcalA(w') < \tcalA(w)$.
\end{proof}

Next, we investigate reductions in actions which preserve the strong permissibility condition. Perhaps surprisingly, there are only a few possibilities 
for minimal words,
 regardless of $\Omega$:

\begin{prop}\label{prop:strong_perm}
Assume $a > b$. Given any strongly permissible elliptic word $w$ with $\tfrac{1}{2}\ind(w) > 1$, there is another strongly permissible elliptic word $w'$ with $\ind(w') = \ind(w)$ and $\tcalA(w') \leq \tcalA(w)$, where $w'$
takes one of the following forms:
\begin{enumerate}
\item[{\rm (1)}] $e_{0,1}^{\times i} \times e_{1,1}^{\times j}$ for $i \geq 0$ and $j \geq 1$
\item[{\rm (2)}] $e_{0,1}^{\times i} \times e_{1,s}$ for $i \geq 0$ and $s \geq 2$
\item[{\rm (3)}] $e_{0,1}^{\times i} \times e_{1,0}$ for $i \geq 1$.
\end{enumerate}
Moreover, we have $\tcalA(w') < \tcalA(w)$ unless $w$ and $w'$ differ by a reordering. 
\end{prop}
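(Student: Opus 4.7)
The plan is to adapt the proof of Lemma~\ref{lem:nonperm_reduc} so as to preserve strong permissibility throughout. Given a strongly permissible elliptic word $w$ with $\tfrac{1}{2}\ind(w) > 1$, I will apply the reductions of Lemma~\ref{lem:nonperm_reduc} in the same order, except that I skip any step that would eliminate the last orbit with nonzero first coordinate, designating it as a \emph{reserved} orbit.

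Concretely, I first iterate Lemma~\ref{lem:remove_0,1} on every $e_{k+1,\ell+1}$ with $(k,\ell) \neq (0,0)$, skipping the case $k=0$ precisely when no other orbit with nonzero first coordinate is present. I then apply $e_{k,0}\to e_{0,k}$ whenever this preserves strong permissibility (reserving at most one such $e_{k,0}$ if necessary). Finally I break each remaining $e_{0,k}$ into $e_{0,1}$'s and at most one $e_{0,2}$, and pair off $e_{0,2}$'s via $e_{0,2}\times e_{0,2}\to e_{0,1}^{\times 3}$. Each reduction strictly decreases $\tcalA$ by Lemma~\ref{lem:remove_0,1} (combined with Lemma~\ref{lem:fr_action_conds}(c)) or by the assumption $a > b$ together with Lemma~\ref{lem:action_exceeds_ellipsoid}. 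After these reductions, the word is a product of $e_{0,1}$'s, at most one $e_{0,2}$, and a single reserved source of strong permissibility drawn from $\{e_{1,1}^{\times j}\ (j\ge 1),\ e_{1,s}\ (s\ge 2),\ e_{k,0}\ (k\ge 1)\}$.

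I conclude by case analysis on the reserved source:
\begin{itemize}
\item Reserved $e_{1,1}^{\times j}$: compare $\tcalA(e_{1,1})$ with $\tcalA(e_{0,2})$ as in Step~(iv) of Lemma~\ref{lem:nonperm_reduc} to decide whether to land in form~(1) directly, or instead to use the identity $e_{1,1}\times e_{0,2} \to e_{0,1}^{\times 2}\times e_{1,0}$ (an instance of Lemma~\ref{lem:remove_0,1}) and land in form~(3).
\item Reserved $e_{1,s}$ with $s\ge 2$: any lingering $e_{0,2}$ is absorbed via $e_{0,2}\times e_{1,s}\to e_{0,1}^{\times 2}\times e_{1,s-1}$ (again Lemma~\ref{lem:remove_0,1}), yielding form~(2).
\item Reserved $e_{k,0}$: trade it directly for an appropriate element of form~(1) (such as $e_{0,1}^{\times (k-2)/2}\times e_{1,1}$ when $k$ is even) or form~(3) (such as $e_{0,1}^{\times (k-1)/2}\times e_{1,0}$ when $k$ is odd), verifying strict action decrease via Lemma~\ref{lem:action_exceeds_ellipsoid}.
\end{itemize}

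The main obstacle is this last case, where for each $k \geq 2$ one must verify that the proposed replacement of $e_{k,0}$ by a form~(1) or form~(3) word of the same index strictly decreases the total action: this uses both the upper bound $||(i,j)||^*_{\Omega^\fr} < ai+bj$ and the lower bound $||(i,j)||^*_{\Omega^\fr} > \max(ia, jb)$ from Lemma~\ref{lem:action_exceeds_ellipsoid}, combined with $a > b$. Strictness of $\tcalA(w') < \tcalA(w)$ unless $w$ and $w'$ differ only by a reordering then follows because each reduction used is strictly action-decreasing unless the corresponding substitution leaves the word unchanged.
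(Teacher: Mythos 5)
Your proposal follows essentially the same route as the paper's proof: iterate Lemma~\ref{lem:remove_0,1} subject to preserving strong permissibility, reduce the non-reserved part to $e_{0,1}$'s and at most one $e_{0,2}$, and finish by a case analysis on the surviving "source" of permissibility using explicit comparisons of $||\cdot||_{\Omega^\fr}^*$. (The paper organizes this slightly differently, fixing one orbit $e_{i_1,j_1}$ with $i_1\neq 0$ and applying Lemma~\ref{lem:nonperm_reduc} to its complement, but the substance is the same.) Two loose ends in your case analysis should be tightened. First, in the reserved-$e_{k,0}$ case you never dispose of a lingering $e_{0,2}$: your substitution for $e_{k,0}$ leaves a word of the form $e_{0,1}^{\times i}\times e_{0,2}\times e_{1,1}$ or $e_{0,1}^{\times i}\times e_{0,2}\times e_{1,0}$, which is not of forms (1)--(3); you need the further absorptions $e_{0,2}\times e_{1,0}\to e_{0,1}\times e_{1,1}$ and $e_{0,2}\times e_{1,1}\to e_{0,1}^{\times 2}\times e_{1,0}$ (the paper's inequalities (i) and (ii)). Second, in the reserved-$e_{1,1}^{\times j}$ case with $j\geq 2$ and $\tcalA(e_{0,2})<\tcalA(e_{1,1})$, your move $e_{1,1}\times e_{0,2}\to e_{0,1}^{\times 2}\times e_{1,0}$ leaves both an $e_{1,0}$ and remaining $e_{1,1}$'s, so one more step ($e_{1,0}\to e_{0,1}$, legitimate since an $e_{1,1}$ keeps the word strongly permissible) is needed to land in form (1). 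Finally, note that the substitutions $e_{1,1}\times e_{0,2}\to e_{0,1}^{\times 2}\times e_{1,0}$ and $e_{0,2}\times e_{1,s}\to e_{0,1}^{\times 2}\times e_{1,s-1}$ are not instances of Lemma~\ref{lem:remove_0,1}; they follow from $a<||(1,1)||_{\Omega^\fr}^*$ (Lemma~\ref{lem:action_exceeds_ellipsoid}) and from $||(1,s-1)||_{\Omega^\fr}^*<||(1,s)||_{\Omega^\fr}^*$ (Lemma~\ref{lem:smaller_tuples_smaller_norm}) respectively, so the citations should be corrected even though the moves themselves are valid.
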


\begin{proof}
To start, we iteratively apply Lemma~\ref{lem:remove_0,1} as many times as possibly without spoiling strong permissibility, and let $w = e_{i_1,j_1} \times \cdots \times e_{i_q,j_q}$ denote the resulting word.
Note that we must have $i_s \leq 1$ or $j_s \leq 1$ for each $s = 1,\dots,q$, since otherwise a further application of Lemma~\ref{lem:remove_0,1} would be possible.
Furthermore, we can assume without loss of generality that we have $i_1 \neq 0$ (note that $e_{0,1}$ is ruled out using $\tfrac{1}{2}\ind(w) > 1$).

Next, by applying Lemma~\ref{lem:nonperm_reduc} to the subword 
$e_{i_2,j_2}\times\cdots \times e_{i_q,j_q}$, we obtain a word of the form $e_{i_1,j_1} \times w''$, where $w''$ is a word having one of the forms $(a),(b),(c)$.
This replacement leaves the index unchanged and strictly decreases $\tcalA$ (unless it is vacuous). Moreover, by our assumption $i_1 \neq 0$ and inspection of the forms (a),(b),(c), the word $e_{i_1,j_1} \times w''$ is strongly permissible.
We also have $(i_1,j_1) \notin \Z_{\geq 2} \times \Z_{\geq 2}$, and hence $(i_1,j_1)$ must be one of the following:
 \begin{align}\label{eqn:i1j1_forms}
(1,0),(1,1),(1,s),(s,0),(s,1),\;\;\;\;\; s \geq 2.
\end{align}

Our goal is to apply further reductions to $e_{i_1,j_1} \times w''$ which decrease $\tcalA$ and leave the index unchanged, in order to arrive at one of the forms (1),(2),(3).
Observe that for $s \geq 2$ we have
\begin{align*}
||(s-1,1)||_{\Omega^\fr}^* \leq ||(s-1,0)||_{\Omega^\fr}^* + ||(0,1)||_{\Omega^\fr}^* = (s-1)a + b < sa  = ||(s,0)||_{\Omega^\fr}^* 
\end{align*}
and hence $\tcalA(e_{s-1,1}) < \tcalA(e_{s,0})$ by Lemma~\ref{lem:fr_action_conds}(c).
Also, by Lemma~\ref{lem:remove_0,1} we have $\tcalA(e_{0,1} \times e_{s-1,0}) < \tcalA(e_{s,1})$ for $s \geq 2$.
This shows that we can ignore the last two items in \eqref{eqn:i1j1_forms}.

We now consider each of the remaining possibilities for $e_{i_1,j_1} \times w''$ and explain the necessary reductions:
\begin{itemize}
\item $(i_1,j_1) = (1,0)$: 
\begin{enumerate}[label=(\alph*)]
\item $e_{1,0} \times e_{0,1}^{\times i}$ for $i \geq 1$: already of form (3)
\item $e_{1,0} \times e_{0,1}^{\times i} \times e_{1,1}^{\times j}$ for $i \geq 0$ and $j \geq 1$: replace $e_{1,0}$ with $e_{0,1}$, becomes of form (1)
\item $e_{1,0} \times e_{0,1}^{\times i} \times e_{0,2}$ for $i \geq 0$: replace $e_{1,0} \times e_{0,2}$ with $e_{0,1} \times e_{1,1}$ by (i), becomes of form (1)
\end{enumerate}

\item $(i_1,j_1) = (1,1)$:
\begin{enumerate}[label=(\alph*)]
\item $e_{1,1} \times e_{0,1}^{\times i}$ for $i \geq 1$: already of form (1)
\item $e_{1,1} \times e_{0,1}^{\times i} \times e_{1,1}^{\times j}$ for $i \geq 0$ and $j \geq 1$: already of form (1)
\item $e_{1,1} \times e_{0,1}^{\times i} \times e_{0,2}$ for $i \geq 0$: replace $e_{1,1} \times e_{0,2}$ with $e_{1,0} \times e_{0,1} \times e_{0,1}$ by (ii) below, becomes of form (3)
\end{enumerate}

\item $(i_1,j_1) = (1,s)$ for $s \geq 2$:
\begin{enumerate}[label=(\alph*)]
\item $e_{1,s} \times e_{0,1}^{\times i}$ for $i \geq 1$: already of form (2)
\item $e_{1,s} \times e_{0,1}^{\times i} \times e_{1,1}^{\times j}$ for $i \geq 0$ and $j \geq 1$: replace $e_{1,s}$ with $e_{0,1}^{\times(k+1)}$ if $s = 2k$ is even, or with $e_{1,1}\times e_{0,1}^{\times k}$ if $s = 2k+1$ is odd by (iii), 
becomes of form (1)
\item $e_{1,s} \times e_{0,1}^{\times i} \times e_{0,2}$ for $i \geq 0$: replace $e_{0,2} \times e_{1,s}$ with $e_{0,1}\times e_{1,s+1}$ by (iv), becomes of form (2)
\end{enumerate}
\end{itemize}

We justify the above replacements by applying Lemma~\ref{lem:action_exceeds_ellipsoid} as follows:
\begin{itemize}
\item[{\rm (i)}] We have $\tcalA(e_{1,1}\times e_{0,1}) < \tcalA(e_{1,0} \times e_{0,2})$ since 
\begin{align*}
||(1,1)||_{\Omega^\fr}^* + ||(0,1)||_{\Omega^\fr}^* &< ||(1,0)||_{\Omega^\fr}^* + 2||(0,1)||_{\Omega^\fr}^* \\ &= a + 2b \\ &= ||(1,0)||_{\Omega^\fr}^* + ||(0,2)||_{\Omega^\fr}^*
\end{align*}
\item[{\rm (ii)}]  We have $\tcalA(e_{1,0} \times e_{0,1} \times e_{0,1}) < \tcalA(e_{1,1} \times e_{0,2})$ since 
\begin{align*}
||(1,0)||_{\Omega^\fr}^* + 2||(0,1)||_{\Omega^\fr}^*&= a + 2b \\&< ||(1,1)||_{\Omega^\fr}^* + 2b \\&= ||(1,1)||_{\Omega^\fr}^* + ||(0,2)||_{\Omega^\fr}^*.
\end{align*}

\item[{\rm (iii)}] We have $\tcalA(e_{0,1}^{\times(k+1)})  < \tcalA(e_{1,2k})$ since 
\begin{align*}
(k+1)||(0,1)||_{\Omega^\fr}^* = (k+1)b \leq 2kb < ||(1,2k)||_{\Omega^\fr}^*
\end{align*}
and $\tcalA(e_{1,1}\times e_{0,1}^{\times k}) < \tcalA(e_{1,2k+1)}$ since
\begin{align*}
||(1,1)||_{\Omega^\fr}^* + k||(0,1)||_{\Omega^\fr}^* \leq (k+2)b \leq (2k+1)b < ||(1,2k+1)||_{\Omega^\fr}^*.
\end{align*}

\item[{\rm (iv)}] 
We have $\tcalA(e_{0,1}\times e_{1,s+1}) < \tcalA(e_{0,2} \times e_{1,s})$ since 
\begin{align*}
||(0,1)||_{\Omega^\fr}^* + ||(1,s+1)||_{\Omega^\fr}^* < 2b + ||(1,s)||_{\Omega^\fr}^* = ||(0,2)||_{\Omega^\fr}^* + ||(1,s)||_{\Omega^\fr}^*.
\end{align*}
\end{itemize}
This completes the proof.  \end{proof}

\begin{cor}\label{cor:weak_perm}
Given any weakly permissible elliptic word $w$, there is another weakly permissible elliptic word $w'$ with $\ind(w') = \ind(w)$ and $\tcalA(w') \leq \tcalA(w)$, where $w'$
takes one of the following forms:
\begin{enumerate}
\item[{\rm (1)}] $e_{0,1}^{\times i} \times e_{1,1}^{\times j}$ for $i \geq 0$ and $j \geq 1$
\item[{\rm (2)}] $e_{0,1}^{\times i} \times e_{1,s}$ for $i \geq 0$ and $s \geq 2$
\item[{\rm (3)}] $e_{0,1}^{\times i} \times e_{1,0}$ for $i \geq 1$,
\item[{\rm (4)}] $e_{0,s}$ for $s \geq 1$
\end{enumerate}
Moreover, we have $\tcalA(w') < \tcalA(w)$ unless $w$ and $w'$ differ by a reordering. 
\end{cor}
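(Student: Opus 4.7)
The plan is to reduce to Proposition~\ref{prop:strong_perm} by splitting on whether $w$ is strongly permissible or not, then handling the few remaining cases by direct action comparisons. As in the previous proposition, I would work under the convention $a > b$, since the opposite case follows by swapping the two coordinates.

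First I would observe that a weakly permissible elliptic word $w$ falls into one of three categories: (A) $w$ is strongly permissible with $\tfrac{1}{2}\ind(w) > 1$, (B) $w$ is strongly permissible with $\tfrac{1}{2}\ind(w) = 1$, so that $w = e_{0,1}$ or $w = e_{1,0}$, or (C) $w$ is not strongly permissible, so that $w = e_{k,0}$ or $w = e_{0,k}$ for some $k \geq 2$. In case (A), Proposition~\ref{prop:strong_perm} directly produces an elliptic word $w'$ of form (1), (2), or (3), which are three of the four forms allowed in the corollary, and since (1)--(3) are themselves strongly permissible, $w'$ is weakly permissible. The strict inequality $\tcalA(w') < \tcalA(w)$ up to reordering is inherited from Proposition~\ref{prop:strong_perm}.

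In case (B), if $w = e_{0,1}$ then $w$ is already of form (4) with $s = 1$ and we take $w' = w$. If $w = e_{1,0}$, then $e_{0,1}$ has the same index and satisfies $\tcalA(e_{0,1}) = b < a = \tcalA(e_{1,0})$ by Lemma~\ref{lem:fr_action_conds}(c) (combined with $\calA^\fr(e_{0,1}) = b < a = \calA^\fr(e_{1,0})$), so we set $w' = e_{0,1}$, of form (4). In case (C), if $w = e_{0,k}$ with $k \geq 2$, then $w$ itself is already of form (4). Finally, if $w = e_{k,0}$ with $k \geq 2$, we replace it by $w' = e_{0,k}$; this preserves the index and strictly decreases the perturbed action because $\calA^\fr(e_{0,k}) = kb < ka = \calA^\fr(e_{k,0})$, and Lemma~\ref{lem:fr_action_conds}(c) then gives $\tcalA(e_{0,k}) < \tcalA(e_{k,0})$. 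In all sub-cases $w'$ is of one of the forms (1)--(4), elliptic, weakly permissible, and has the same index as $w$.

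There is no real obstacle here: the work has already been absorbed into Proposition~\ref{prop:strong_perm}, and the only task left is to dispose of the two low-index exceptional words $e_{0,1}$ and $e_{1,0}$ and the non-strongly-permissible single-orbit words $e_{k,0}$ and $e_{0,k}$ for $k \geq 2$, all of which reduce immediately to form (4). The strict-inequality clause follows from Proposition~\ref{prop:strong_perm} in case (A) and from the explicit action comparisons above in cases (B) and (C), noting that the only situations in which $w' = w$ as unordered tuples are $w = e_{0,1}$ and $w = e_{0,k}$, exactly the cases where no replacement is made.
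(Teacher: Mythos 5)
Your proposal is correct and is essentially the argument the paper intends: the corollary is stated without proof as an immediate consequence of Proposition~\ref{prop:strong_perm}, and your case split (strongly permissible words of index $>2$ handled by that proposition; the exceptional words $e_{0,1}$, $e_{1,0}$, $e_{k,0}$, $e_{0,k}$ reduced to form (4) by the elementary comparison $kb < ka$ together with Lemma~\ref{lem:fr_action_conds}(c)) is exactly the routine verification being omitted. The bookkeeping of the strict-inequality clause is also handled correctly.
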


We next refine Lemma~\ref{lem:gt_geq_gtalg} so that the minimization involves only words which are elliptic and satisfy $\ind(w) = 2k$ (rather than $\ind(w) \geq 2k$).
This completes the proof of half of Theorem~\ref{thm:main_comp}.

\begin{cor}\label{cor:gt_refined_lb}
For any four-dimensional convex toric domain $X_\Omega$ we have $$\gt_k(X_\Omega) \geq  \min\limits_{\substack{\ind(w) = 2k \\ w \;\mathrm{wk.p.}}} \calA(w),$$
where we minimize over all weakly permissible elliptic words $w$ satisfying $\ind(w) = 2k$. 
\end{cor}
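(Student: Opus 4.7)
The plan is to show that for every weakly permissible word $w$ with $\ind(w) \geq 2k$, there is a weakly permissible elliptic word $w'$ with $\ind(w') = 2k$ and $\calA(w') \leq \calA(w)$; combined with Lemma~\ref{lem:gt_geq_gtalg}, this yields Corollary~\ref{cor:gt_refined_lb}. As in the proof of Lemma~\ref{lem:gt_geq_gtalg}, I would run the argument first for the perturbed action $\tcalA$ on $\tX_\Omega$ and then transfer to $\calA$ by $C^0$-continuity. Without loss of generality I assume $a > b$.

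First I would apply Lemma~\ref{lem:make_elliptic} to reduce to the case where $w$ is elliptic of index $\geq 2k$; since elliptic indices are even, Remark~\ref{rmk:ell_parity} ensures the new index is still $\geq 2k$. Next, Corollary~\ref{cor:weak_perm} puts this word into one of the four canonical forms (1)--(4) listed there, without changing the index and without increasing the action. The remaining task is to show that any such canonical-form word $v$ with $\ind(v) > 2$ admits a weakly permissible elliptic replacement $v'$, again in one of the forms (1)--(4), with $\ind(v') = \ind(v) - 2$ and $\tcalA(v') < \tcalA(v)$. Iterating drops the index to exactly $2k$.

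The reductions are a routine case analysis. For form (1) $v = e_{0,1}^{\times i} \times e_{1,1}^{\times j}$: if $j \geq 2$ I replace one $e_{1,1}$ by $e_{0,1}$ (staying in form (1)); if $j = 1$ and $i \geq 1$, I replace $e_{1,1}$ by $e_{1,0}$ (producing form (3)); and if $v = e_{1,1}$, I replace it by $e_{0,1}$ (form (4) with $s = 1$). For form (2) $v = e_{0,1}^{\times i} \times e_{1,s}$: if $s \geq 3$ I replace $e_{1,s}$ by $e_{1,s-1}$, and if $s = 2$ I replace $e_{1,2}$ by $e_{1,1}$, landing in form (1). For form (3) I replace one pair $e_{0,1} \times e_{1,0}$ by $e_{1,1}$, landing in form (1). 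For form (4) with $s \geq 2$, I simply decrement $s$. In every case the index change is an immediate computation from $\cz(e_{i,j}) = 2i + 2j + 1$, and the strict decrease in $\tcalA$ follows from Lemma~\ref{lem:smaller_tuples_smaller_norm}, Lemma~\ref{lem:fr_action_conds}(c), and the strict inequality $||(1,1)||_{\Omega^\fr}^* < a + b$ needed in the form (3) case.

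The main obstacle I anticipate is verifying that weak permissibility is preserved throughout the iteration. The delicate case is form (3): a naive reduction that simply deletes $e_{1,0}$ would yield $e_{0,1}^{\times i}$, which for $i \geq 2$ is neither strongly permissible nor a single $e_{0,k}$ with $k \geq 2$, and so is not weakly permissible. The pairwise substitution $e_{0,1} \times e_{1,0} \to e_{1,1}$ is precisely what is needed to navigate this.
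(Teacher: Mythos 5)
Your proposal is correct and follows essentially the same route as the paper: reduce to elliptic words via Lemma~\ref{lem:make_elliptic} and Remark~\ref{rmk:ell_parity}, normalize via Corollary~\ref{cor:weak_perm}, and then iterate index-by-two, action-decreasing replacements within the four canonical forms (your replacements differ from the paper's only in trivial ways, e.g.\ trading $e_{1,1}$ for $e_{0,1}$ rather than $e_{1,0}$ in form (1), and both preserve weak permissibility). The ``delicate'' form (3) substitution $e_{0,1}\times e_{1,0}\to e_{1,1}$ you identify is exactly the paper's, justified by the upper bound in Lemma~\ref{lem:action_exceeds_ellipsoid}.
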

\begin{proof}
The restriction to elliptic words follows from Remark~\ref{rmk:ell_parity}.
Now it suffices to show that given any weakly permissible elliptic word $w$ with $\tfrac{1}{2}\ind(w) > 1$, there is another weakly permissible elliptic word $w'$ with $\tfrac{1}{2}\ind(w') = \tfrac{1}{2}\ind(w) - 1$ and $\tcalA(w') < \tcalA(w)$.
After applying Corollary~\ref{cor:weak_perm}, we can assume that $w$ has one of the forms (1),(2),(3),(4), and we then make the following respective replacements:
\begin{enumerate}
\item $e_{0,1}^{\times i} \times e_{1,1}^{\times (j-1)} \times e_{1,0}$
\item $e_{0,1}^{\times i} \times e_{1,s-1}$
\item $e_{0,1}^{\times (i-1)} \times e_{1,1}$
\item $e_{0,s-1}$.\qedhere
\end{enumerate}
\end{proof}

\sss 

We end this section by proving Corollary~\ref{cor:latt_form}, which claims that (in dimension four) $\gt_k(X_{\Omega})$ is the minimal length $\ell_\Om( \p P)$ of the boundary $\p P$ of a convex lattice polygon $P$ such that $\p P$ contains exactly $k+1$ lattice points.
For the moment we assume Theorem~\ref{thm:main_comp}, the proof of which is completed in \S\ref{sec:constructing_curves} below.

\begin{proof}[Proof of Corollary~\ref{cor:latt_form}]
We first prove that the right hand side of \eqref{eq:latt_form} is less than or equal to the right hand side of \eqref{eq:gt_orig}; in other words,
 for each minimal word $w$  there is a lattice polygon $P$ with
$\ell_\Om( \p P)$ less than or equal to $\calA(w)$. To this end,
let $(i_1,j_1),\dots,(i_q,j_q) \in \Z_{\geq 0}^2 \setminus \{(0,0)\}$ be a minimizer, which we can assume takes one of the forms (1),(2),(3),(4) given in Corollary~\ref{cor:weak_perm}.
Then we have $$
\sum_{s=1}^q (i_s + j_s) + q - 1 = k \;\;\mbox{  and }\;\; \gt_k(X_\Omega) = \sum_{s=1}^q ||(i_s,j_s)||_{\Omega}^*,
$$
 and we need to find a convex lattice polygon $P$ with $\ell_{\Omega}(\bdy P) \leq \sum_{s=1}^q ||(i_s,j_s)||_{\Omega}^*$ such that $|\bdy P \cap \Z^2| = k+1$.
In case (4), we take $P$ to be the degenerate polygon given by the convex hull of $(0,0),(0,k)$, which contains $k+1$ lattice points and satisfies 
$\ell_{\Omega}(\bdy P) = ||(0,k)||_{\Omega}^*.$
In cases (1)-(3), let $p_1,\dots,p_{q+1} \in \Z_{\leq 0}^2$ be the unique ordered list of lattice points such that
\begin{enumerate}
\item 
the displacement vectors 
$p_2-p_1,\dots,p_{q+1}-p_{q}$ equal $(i_1,j_1),\dots,(i_q,j_q)$ up to order,
\item $p_1 = (0,-\beta)$ and $p_{q+1} = (\alpha,0)$ for $\alpha = \sum_{s=1}^q i_s$ and $\beta = \sum_{s=1}^qj_s$,
\item the lower boundary $G$ of the convex hull of
$p_1,\dots,p_{q+1}$  is the graph of a convex piecewise linear function 
$[0,\alpha] \rightarrow [0,-\beta]$.
\end{enumerate}
Let $P \subset \R_{\leq 0}^2$ be the convex lattice polygon given by the convex hull of 
$(0,0),p_1,\dots,p_{q+1}$,
i.e. 
$P$ is the union of $G$ with the line segments joining $(\alpha,0)$ to $(0,0)$ and $(0,0)$ to $(0,-\beta)$.
Using the definition of $||-||_{\Omega}^*$ and the fact that $X_{\Omega}$ is a convex toric domain, observe that for any $(v_x,v_y) \in \R^2$ we have
\begin{align}\label{eq:Q3_disp}
||(v_x,v_y)||_{\Omega}^* = ||(\max(v_x,0),\max(v_y,0)||_{\Omega}^*.
\end{align}
In particular, we have $||(v_x,v_y)||_{\Omega}^* = 0$ if $(v_x,v_y) \in \R_{\leq 0}^2$,
and hence $$\ell_{\Omega}(\bdy P) = \ell_{\Omega}(G) = \sum_{s=1}^q ||(i_s,j_s)||_{\Omega}^*.$$
Moreover, since $\gcd(i_s,j_s) = 1$ for $s = 1,\dots, q$, the number of lattice points along $G$ is $q+1$, and hence we have
\begin{align*}
|\bdy P \cap \Z^2| = q+1 + \alpha + \beta - 1 = q + \sum_{s=1}^q (i_s + j_s) = k+1.
\end{align*}

Now we prove that the reverse inequality.
Let $P$ be a convex lattice polygon which is a minimizer for the right hand side of \eqref{eq:latt_form}, that is, it minimizes $\ell_\Om(\p P)$. 
We will assume that $P$ is nondegenerate, the degenerate case being a straightforward extension.
Let $A$ (resp. $B$) denote the minimal (resp. maximal) $x$ coordinate of any point in $P$, and similarly let $C$ (resp. $D$) denote the minimal (resp. maximal) $y$ coordinate of any point in $P$.
Let $P'$ denote the convex lattice polygon given by the convex hull of $P$ with the additional points $(A,D),(B,D),(A,C)$. 
Note that we have $P \subset P'$, and moreover $|P \cap \Z^2| \leq |P' \cap \Z^2|$.
Let $p_1,\dots,p_{q+1} \in \Z^2$ denote the lattice points encountered as we traverse $\bdy P'$ in the counterclockwise direction from $(A,C)$ to $(B,D)$.
For $s = 1,\dots,q$, let $(i_s,j_s) := p_{s+1} - p_s$ denote the corresponding displacement vectors.
Then we have
$$k+1 = |\bdy P \cap \Z^2| \leq |\bdy P' \cap \Z^2| =   \sum_{s=1}^q (i_s+j_s) + q.$$
Moreover, using \eqref{eq:Q3_disp} we have $\ell_\Omega (\bdy P) = \ell_{\Omega}(\bdy P')$.
Therefore the right hand side of \eqref{eq:gt_orig} is less than or equal to
$\sum_{s=1}^q ||(i_s,j_s)||_{\Omega}^* = \ell_{\Omega}(\bdy P') = \ell_{\Omega}(\bdy P)$.
\end{proof}

\section{Constructing curves in four-dimensional convex toric domains}\label{sec:constructing_curves}

In this section we complement Corollary~\ref{cor:gt_refined_lb} by proving a corresponding upper bound for $\gt_k(X_\Omega)$, thereby completing the proof of Theorem~\ref{thm:main_comp}.
In \S\ref{subsec:inv_min_words} we prove that the formal curve component $C$ in $\tX_\Omega$ with local tangency constraint $\lll \T^{(k)}p\rrr$ and positive asymptotics the minimal word
$\wmin$ of index $2k$ is formally perturbation invariant with respect to some generic 
$J_{\bdy \tX_\Omega} \in \Jadm(\bdy \tX_\Omega)$. 
After establishing this, 
we then  show that the moduli space $\calM^J_{\tX_\Omega}(\wmin)\lll \T^{(k)}p\rrr$ is in fact nonempty for some (and hence any) $J \in \Jadm^{J_{\bdy\wt{X}_\Omega}}(\tX_\Omega)$,
thereby achieving our desired upper bound.

More precisely, we show in Proposition~\ref{prop:curves_exist} that (except for the case $\wmin = e_{0,k}$ with $k \geq 2$) 
there is $J \in \Jadm^{J_{\bdy\wt{X}_\Omega}}(\tX_\Omega)$ such that
$\calM^J_{\tX_\Omega}(\wmin)$ is regular with nonzero signed count.
By Proposition~\ref{prop:count_J_indep}, this implies that $\calM^J_{\tX_\Omega}(\wmin) \neq \nil$ for any $J \in \Jadm^{J_{\bdy\wt{X}_\Omega}}(\tX_\Omega)$ (recall that the empty moduli space is automatically regular).
Since we  will then have verified all the hypotheses of Proposition~\ref{prop:stab_ub}, this also proves the stabilization property for four-dimensional convex toric domains.

To prove that suitable curves exist we argue as follows.  
By Proposition~\ref{prop:T=E} we have $$
\#\calM_{\tX_\Omega}(\wmin)\lll \T^{(k)}p\rrr = \#\calM_{\tX_\Omega}(\wmin)\lll (k)\rrr_E,
$$
 i.e. we can swap the local tangency constraint with a skinny ellipsoidal constraint.
In \S\ref{subsec:aut_trans}, we show that every curve in the latter moduli space counts positively, so it suffices to show that it is nonempty.
In \S\ref{subsec:obg} we give a biased summary of Hutchings--Taubes' obstruction bundle gluing, adapted to the case of cobordisms, and in \S\ref{subsec:induction} we explain how to apply obstruction bundle gluing in order to piece together  the curves we need  inductively from certain basic curves with very simple top ends.
Finally, in \S\ref{subsec:base_cases} we use the cobordism map in ECH to establish the base cases for our induction.

\subsection{Invariance of minimal word counts}\label{subsec:inv_min_words}

Our main goal in this subsection is to prove Proposition~\ref{prop:min_pert_inv}, which establishes formal perturbation invariance for those moduli spaces corresponding to weakly permissible words $\wmin$ of minimal action;
see Definition~\ref{def:wmin}. 
At first glance it seems plausible that we can rule out degenerations using minimality, but some care is needed due to the possibility of multiply covered curves of negative index. 
Recall that we are considering degenerations that might occur for a generic path $J_t, 0\le t\le 1,$  in 
$ \Jadm^{J_{\bdy\wt{X}_\Omega}}(\tX_\Omega)$.  Thus the (fixed) almost complex structure $J_{\bdy\wt{X}_\Omega}$ on the symplectization levels can be assumed to be generic.   
If a curve with top $\wmin$ does degenerate, the resulting building has a main component $C_0$ in $\wt{X}_\Omega$ that satisfies the tangency constraint, as well as some other components that may be assembled into 
representatives of a union of connected  formal buildings each of which has one negative end that attaches to $C_0$.  We first consider the properties of such a  formal building.    For definitions of the language used here, see Definitions~\ref{def:curvecomp}, ~\ref{def:formcurve} and~\ref{def:formbuild}.

\begin{lemma}\label{lem:symp_configs}
Let $C$ be a connected formal building with main level in $\wt{X}_\Omega$ and some number of symplectization levels in $\R \times \bdy \wt{X}_\Omega$, except that exactly one negative end of some curve component is not paired with any positive end of a curve component in a lower level.
Assume that each component of $C$ in a symplectization level is a (possibly trivial) formal cover of some formal curve component $\ovl{C}$ which is either trivial or else satisfies $\ind(\ovl{C}) \geq 1$.
Then we have $\ind(C) \geq 0$, with equality if and only if every component of $C$ is trivial.
\end{lemma}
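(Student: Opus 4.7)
The plan is to combine additivity of the Fredholm index across the components of $C$ with a componentwise index computation for formal covers in symplectizations, and then to invoke a global combinatorial balance argument exploiting the unique unpaired negative end.

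Write $C = C_1 \cup \cdots \cup C_N$ as the union of its components. Because $C$ is connected and acyclic, the components assemble into a tree via Reeb-orbit matchings, and the Conley--Zehnder contributions at matched orbits cancel pairwise while Euler characteristics and relative first Chern classes add along the tree; standard bookkeeping then yields
\[
\ind(C) = \sum_{i=1}^{N} \ind(C_i),
\]
reducing the proof to a componentwise analysis.

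By hypothesis each symplectization component $C_i$ is a $\kappa_i$-fold formal cover of some $\ovl{C}_i$ that is either trivial or satisfies $\ind(\ovl{C}_i) \geq 1$. Using Riemann--Hurwitz for branched covers of the $2$-sphere together with the Conley--Zehnder values in \eqref{eq:CZ} for acceptable orbits and their iterates, a direct calculation shows that any cover of a trivial elliptic cylinder with $k^+$ positive and $k^-$ negative preimage punctures satisfies $\ind(C_i) = 2k^+ - 2$, while a cover of a trivial hyperbolic cylinder satisfies $\ind(C_i) = k^+ + k^- - 2$. In both cases $\ind(C_i) \geq 0$, and a connected cover attaining equality with $k^+ = k^- = 1$ is automatically the $\kappa_i$-fold iterate trivial cylinder. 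A parallel computation handles covers of nontrivial $\ovl{C}_i$ with $\ind(\ovl{C}_i) \geq 1$, yielding $\ind(C_i) \geq 0$ in all cases, with the inequality strict except in a small list of exotic ramification patterns --- most notably a cover of a trivial elliptic cylinder with $k^+ = 1$ and $k^- \geq 2$.

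To eliminate these exotic equality cases, we exploit the tree structure. Since $C$ is a tree with exactly $N - 1$ internal pairings and exactly one unpaired negative end, we obtain the balance identity
\[
\sum_{i=1}^{N} \#\{\text{negative ends of } C_i\} = N.
\]
Each trivial cylinder contributes exactly $1$ to this sum, whereas each exotic $\ind(C_i) = 0$ component contributes strictly more, and the resulting excess cannot be offset by further trivial or $\ind = 0$ components without creating cycles. Hence $\ind(C) = 0$ forces every component of $C$ to be a trivial cylinder, completing the proof. The main obstacle is verifying in full generality that no cover of an $\ind \geq 1$ base component can be assembled with trivial cylinders into a piece having exactly one unpaired negative end while contributing $\ind(C_i) = 0$; this amounts to a case analysis based on the number of elliptic versus hyperbolic ends of $\ovl{C}_i$ and the permitted ramification profiles from Riemann--Hurwitz, carried out against the balance identity above.
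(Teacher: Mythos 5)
Your overall strategy is the same as the paper's: decompose the index over irreducible components, bound each component's index from below via Riemann--Hurwitz bookkeeping of excess branching at the punctures of a formal cover, and then use the tree combinatorics forced by genus zero and the single unpaired negative end. (The paper packages the combinatorics as a per-component weighted inequality $\ind(C_i)+3(b_i-1)\geq 0$, where $b_i$ is the number of negative ends, rather than your global balance identity $\sum_i b_i = N$; these are equivalent rearrangements of the same count.) Your formulas $\ind = 2k^+-2$ for covers of trivial elliptic cylinders and $\ind = k^++k^--2$ for covers of trivial hyperbolic cylinders are correct and are exactly what the paper derives.

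There are two points you leave open that you should close. First, the case you flag as ``the main obstacle'' --- covers of a base $\ovl{C}_i$ with $\ind(\ovl{C}_i)\geq 1$ --- is not an obstacle and requires no interaction with the balance identity: writing $E^\pm,H^\pm$ for the total excess branching at elliptic/hyperbolic positive/negative punctures, the index formula \eqref{eq:Frind} together with $\cz(e_{i,j}^m)=2m(i+j)+1$ and $\cz(h_{i,j}^m)=2m(i+j)$ gives $\ind(C_i)-\kappa_i\ind(\ovl{C}_i)=2\kappa_i-2-2E^+-H^+-H^-\geq -(\kappa_i-1)$, whence $\ind(C_i)\geq \kappa_i\ind(\ovl{C}_i)-(\kappa_i-1)\geq 1$. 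So these components can never contribute index zero, and no case analysis on elliptic versus hyperbolic ends is needed. Second, your balance argument tacitly assumes that every index-zero component has at least one negative end; you must rule out index-zero components with \emph{no} negative ends (components in the main level, or symplectization components covering an $\ind\geq 1$ base). The latter are handled by the estimate just given, and the former have $\ind = a-2+\sum\cz(\gamma_i)>0$ since every acceptable orbit has $\cz\geq 3$ by \eqref{eq:CZ}; this positivity is also what you need for the inequality $\ind(C)\geq 0$ itself, since plain additivity $\ind(C)=\sum_i\ind(C_i)$ requires a lower bound on \emph{every} component, not just the symplectization ones. With those two items filled in, your argument is complete, and the vague phrase ``without creating cycles'' can be replaced by the clean statement that offsetting the excess would require a component with no negative ends, which has strictly positive index.
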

\NI Note that Lemma~\ref{lem:symp_configs} does not involve any local tangency constraints.
\begin{proof}
Let $C_1,\dots,C_q$ denote the components of $C$ which have at least one negative end, and let $b_1,\dots,b_q$ denote the corresponding numbers of negative ends. 
Observe that since $C$ has genus zero, there must have at least $\sum_{i=1}^q (b_i-1)$ components without any negative ends, and by \eqref{eq:CZ} and \eqref{eq:Frind} each of these has index at least $3$. 
Therefore we have 
\begin{align*}
\ind(C) \geq \sum_{i=1}^q\left( \ind(C_i) + 3(b_i-1)\right).
\end{align*}
We will show that for $i = 1,\dots,q$ we have $\ind(C_i) + 3(b_i-1) \geq 0$, with equality if and only if $C_i$ is a trivial cylinder, from which the result immediately follows.

Let $D$ denote one of the components\footnote
{
Note that in this paper  each component lies in a single level; it is not a \lq\lq matched component'' in the sense of \cite{McDuffSiegel_counting}.}
$C_1,\dots,C_q$. Let $a$ and $b$ denote the respective numbers of positive and negative ends $D$, and let $e^+ \leq a$ and $e^- \leq b$ denote the numbers of positive and negative ends which are elliptic. 
We assume that $D$ is a $\kappa$-fold cover of $\ovl{D}$ for some $\kappa \in \Z_{\geq 1}$, where by assumption $\ovl{D}$ is either trivial or satisfies $\ind(\ovl{D}) \geq 1$. 
We denote by $\ovl{a},\ovl{b},\ovl{e}^+,\ovl{e}^-$ the analogs of the above for $\ovl{D}$. 

For each puncture or point in the domain of $D$, let us define its {\bf excess branching} to be one less than its ramification order as a cover of $\ovl{D}$.\footnote
{
That is, if $u$ is locally given by $z\mapsto z^k$ then the point in the domain corresponding to the origin has excess branching $k-1$.}
Let $E^{\pm}$ be the total excess branching at all positive (resp. negative) elliptic ends of $D$, and similarly let $H^{\pm}$ be the total excess branching at all positive (resp. negative) hyperbolic ends of $D$.
By elementary Riemann--Hurwitz considerations we have the following:
\begin{itemize}
\item
$a = \kappa\ovl{a} - E^+ - H^+$ and $b = \kappa\ovl{b} - E^- - H^-$
\item
$e^+ = \kappa\ovl{e}^+ - E^+$ and $e^- = \kappa\ovl{e}^- - E^-$
\item
$0 \leq E^+,E^-,H^+,H^- \leq \kappa - 1$
\item
$ E^+ + E^- + H^+ + H^- = 2(\kappa -1)$.
\end{itemize}
By \eqref{eq:Frind} we then have
\begin{align*}
\ind(D) - \kappa\ind(\ovl{D}) &= (a+b - 2) - \kappa(\ovl{a} + \ovl{b} - 2) + (e^+ - e^-) - \kappa(\ovl{e}^+ - \ovl{e}^-)\\
&= 2\kappa-2 - 2E^+ - H^+ - H^-.
\end{align*}

Consider first the case that $\ovl{D}$ is trivial. Then we have 
$\ind(\ovl{D}) = 0$ and $\ovl{a} = \ovl{b} = 1$.
If the ends of $\ovl{D}$ are elliptic, then we have 
 $H^+ = H^- = 0$, and hence 
\begin{align*}
\ind(D) = 2\kappa -2 - 2E^+ \geq 0.
\end{align*}
Similarly, if the ends of $\ovl{D}$ are hyperbolic, then we have
$E^+ = E^- = 0$, and hence
\begin{align*}
\ind(D) = 2\kappa - 2 - H^+ - H^- \geq 0
\end{align*}
In either case we have $\ind(D) + 3(b - 1) = 0$ if and only if $a = b = 1$, in which case $D$ is a trivial cylinder. 

Now consider the case that $\ovl{D}$ is nontrivial, and hence $\ind(\ovl{D}) \geq 1$. 
We have
\begin{align*}
\ind(D) + 3(b-1) &\geq 3\kappa - 2 - 2E^+ - H^+ - H^- + 3(b-1) \\
&= 3\kappa - 2 - E^+ - (E^+ - H^+ - H^-) + 3(b-1)\\
&\geq 3\kappa -2 - (\kappa-1) - 2(\kappa-1) + 3(b-1)\\
&= 1 + 3(b-1) \geq 1. \qedhere
\end{align*}
\end{proof}

\begin{lemma}\label{lem:C_index}
Let $C$ be curve in $\tX_\Omega$ satisfying a constraint $\lll \T^{(m)} p\rrr$ for some $m \in \Z_{\geq 1}$, and assume that $C$ is a $\kappa$-fold cover of its underlying simple curve $\ovl{C}$ for some $\kappa \in \Z_{\geq 1}$.
Let 
\begin{itemize}
\item
$e$ (resp. $\ovl{e}$) denote the number of elliptic positive ends of $C$ (resp. $\ovl{C}$)
\item
$h$ (resp. $\ovl{h}$) denote the number of hyperbolic positive ends of $C$ (resp. $\ovl{C}$)
\item
$q = e + h$ (resp. $\ovl{q} = \ovl{e} + \ovl{h}$) denote the total number of positive ends of $C$ (resp. $\ovl{C}$).
\end{itemize}
Then we have $$\ind(C) - \kappa \ind(\ovl{C}) \geq \max(q - 2 - \kappa\ovl{q} + 2\kappa + e - \kappa\ovl{e},\kappa \ovl{h} - h).$$
\end{lemma}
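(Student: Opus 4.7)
The plan is to apply Riemann--Hurwitz to the branched cover $\pi: \Sigma \to \ovl{\Sigma}$ of rational domains underlying the factorization $C = \ovl{C} \circ \pi$. Let $\kappa'$ denote the ramification of $\pi$ at the marked point where $C$ realizes the tangency constraint; then $\ovl{C}$ passes through $p$ with some contact order $\ovl{m} \geq 1$ satisfying $\kappa'\ovl{m} \geq m$, and throughout I will interpret $\ind(\ovl{C})$ as the Fredholm index equipped with the natural constraint $\lll\T^{(\ovl{m})}p\rrr$. Writing $B_{\op{pos}}$ and $B_{\op{int}}$ for the total ramification at positive punctures and at interior non-puncture points respectively, Riemann--Hurwitz gives $B_{\op{pos}} + B_{\op{int}} = 2(\kappa-1)$, and decomposing $B_{\op{pos}}$ by asymptotic type yields $E^+ = \kappa\ovl{e} - e$ and $H^+ = \kappa\ovl{h} - h$; in particular $B_{\op{int}} = q - 2 + 2\kappa - \kappa\ovl{q}$.

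Next I will compare Fredholm indices via~\eqref{eq:Frind}. Working with a trivialization pulled back along $\pi$, the relative first Chern class is multiplicative, $c_\tau(C) = \kappa\,c_\tau(\ovl{C})$. Using the Conley--Zehnder values $\cz(e_{i,j}) = 2(i+j)+1$ and $\cz(h_{i,j}) = 2(i+j)$ from~\eqref{eq:CZ}, a direct check at each positive end of $\ovl{C}$ with $r_j$ preimages shows that the CZ contribution of the preimages differs from $\kappa\,\cz(\ovl{\gamma}_j)$ by $r_j - \kappa$ in the elliptic case and by $0$ in the hyperbolic case, summing to $-E^+$ overall. Combined with $-\chi(\Sigma_{\op{punc}}) + \kappa\chi(\ovl{\Sigma}_{\op{punc}}) = B_{\op{int}}$ and the constraint corrections $-2m$ and $-2\ovl{m}$ on the two sides, this will produce the key identity
\[
\ind(C) - \kappa\ind(\ovl{C}) = B_{\op{int}} - E^+ + 2(\kappa\ovl{m} - m).
\]

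The first bound then follows immediately since $\kappa\ovl{m} - m \geq (\kappa-\kappa')\ovl{m} \geq 0$. For the second bound, using $E^+ + H^+ = 2(\kappa-1) - B_{\op{int}}$ the quantity $\ind(C) - \kappa\ind(\ovl{C}) - H^+$ rewrites as $2B_{\op{int}} - 2(\kappa-1) + 2(\kappa\ovl{m} - m)$; since the marked point alone contributes $\kappa'-1$ to $B_{\op{int}}$ and $m \leq \kappa'\ovl{m}$, this expression is bounded below by $2(\kappa-\kappa')(\ovl{m}-1)$, which is nonnegative because $\ovl{m} \geq 1$. The main obstacle will be precisely this final step: one must handle the case in which $\ovl{C}$ has strictly higher-order tangency to $D$ than the minimum forced by the cover (so that $\kappa' < \kappa$ is possible), where the constraint slack $2(\kappa\ovl{m}-m)$ must absorb the interior branching deficit, an absorption that hinges critically on $\ovl{C}$ actually passing through $p$.
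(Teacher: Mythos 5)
Your proposal is correct and follows essentially the same route as the paper's proof: both rest on Riemann--Hurwitz bookkeeping of the excess branching at punctures versus at the marked point, the Conley--Zehnder formulas \eqref{eq:CZ}, and the inequality $m \leq \kappa'\ovl{m}$ combined with $\kappa' \leq \kappa$ and $\ovl{m} \geq 1$. Your exact identity $\ind(C) - \kappa\ind(\ovl{C}) = B_{\op{int}} - E^+ + 2(\kappa\ovl{m}-m)$ is a slightly cleaner packaging of the same chain of estimates the paper carries out, and you correctly isolate the only delicate point (absorbing the branching deficit when $\kappa' < \kappa$ via $\ovl{m}\geq 1$), which the paper handles identically.
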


\NI Note that in particular we have $h \leq \kappa\ovl{h}$ and hence $\ind(C) \geq \kappa \ind(\ovl{C})$.

\begin{proof}
Let $E$ (resp. $H$) denote the sum of the excess branching at all elliptic (resp. hyperbolic) punctures of $C$, and let $B$ denote the excess branching of the point in the domain of $C$ which satisfies the constraint $\lll \T^{(m)}p\rrr$.
The curve $\ovl{C}$ satisfies a constraint $\lll \T^{(\ovl{m})}p\rrr$ for some $\ovl{m} \in \Z_{\geq 1}$.
With the help of the Riemann--Hurwitz formula we have
\begin{itemize}
\item
$B \leq \kappa - 1$,
\item
$e = \kappa\ovl{e} - E$
\item
$h = \kappa\ovl{h} - H$
\item
$B + E + H \leq 2\kappa - 2$
\item
$m \leq (B+1)\ovl{m}$
\end{itemize}

For $s = 1,\dots,q$, let $\gamma_s$ be the $s$th positive end of $C$, which we take to be either $e_{i_s,j_s}$ or $h_{i_s,j_s}$.
Similarly, for $s = 1,\dots,\ovl{q}$, let $\ovl{\gamma}$ be the $s$th positive end of $\ovl{C}$, which we take to be either $e_{\ovl{i}_s,\ovl{j}_s}$ or $h_{\ovl{i}_s,\ovl{j}_s}$.

By \eqref{eq:Frind}, we have
\begin{align*}
\ind(C) = q - 2 + 2\sum_{s=1}^q (i_s + j_s) + e - 2m\\
\ind(\ovl{C}) = \ovl{q} - 2 + 2\sum_{s=1}^{\ovl{q}} (\ovl{i}_s + \ovl{j}_s) + \ovl{e} - 2\ovl{m},
\end{align*}
and therefore
\begin{align*}
\ind(C) - \kappa\ind(\ovl{C}) &= (q-2) - \kappa(\ovl{q}-2) + e - \kappa \ovl{e} - 2m + 2\ovl{m}\kappa\\
&\geq q - 2 - \kappa \ovl{q} + 2\kappa + e - \kappa\ovl{e} - 2(B+1)\ovl{m} + 2\ovl{m}\kappa\\
&= q - 2 - \kappa\ovl{q} + 2\kappa + e - \kappa\ovl{e} + 2\ovl{m}(\kappa - B - 1)\\
&\geq q - 2 - \kappa\ovl{q} + 2 \kappa + e - \kappa\ovl{e} + 2\kappa - 2B - 2\\
&\geq q - 2 - \kappa\ovl{q} + 2\kappa + e - \kappa\ovl{e}.
\end{align*}
Note that we have $q - \kappa\ovl{q} = -E-H \geq B - 2\kappa + 2$, so we also have
\begin{align*}
\ind(C) - \kappa\ind(\ovl{C}) &\geq B - 2\kappa + 2 - 2 + 2\kappa + e - \kappa\ovl{e} + 2\kappa - 2B - 2\\
&= -B + e - \kappa\ovl{e} + 2\kappa - 2\\
&= -B - E + 2\kappa - 2\\
&\geq H = \kappa\ovl{h} - h. \qedhere
\end{align*}
\end{proof}

Recall that, for $(i,j) \in \Z_{\geq 0}^2 \setminus \{(0,0)\}$, the pair of acceptable Reeb orbits $e_{i,j},h_{i,j}$ come from perturbing an $S^1$-family of Reeb orbits in $\mu^{-1}(p_{i,j}) \subset \bdy X_\Omega^\fr$.
The precise perturbation is controlled by a choice of Morse function $f: S^1 \rightarrow \R$, which we can assume is perfect.
We take $\wt{X}_\Omega$ to be an arbitrarily small perturbation of $X_\Omega^\fr$, and, fixing $J_\mb \in \Jadm(X_{\Omega}^\fr)$, we can correspondingly consider $J \in \Jadm(\wt{X}_\Omega)$ which is a small perturbation of $J_\mb$.
Then by the standard correspondence between Morse gradient flowlines and Morse--Bott cascades, one expects $J$-pseudoholomorphic cylinders with positive asymptotic $e_{i,j}$ and negative asymptotic $h_{i,j}$ to correspond to gradient flow lines for $f$, of which there are precisely two and they have cancelling signs.
Indeed, by the Morse--Bott techniques developed in \cite{Bourgeois_MB,bourgeois2009symplectic} (see also \cite[\S10.3]{wendl_SFT_notes} for a detailed discussion and also an alternative perspective) we have the following standard result:

\begin{lemma}\label{lem:low_energy_cyls}
There exists generic $J_{\bdy\wt{X}_\Omega} \in \Jadm(\bdy\wt{X}_\Omega)$ such that for each acceptable pair $e_{i,j},h_{i,j}$ there are precisely two $J$-holomorphic cylinders in $\R \times \bdy \wt{X}_\Omega$ with positive asymptotic $e_{i,j}$ and negative asymptotic $h_{i,j}$. Moreover, these are regular and count with opposite signs.
\end{lemma}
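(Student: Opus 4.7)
The plan is to deduce the lemma from the standard Morse--Bott correspondence between cylinders in the perturbed symplectization and gradient flow lines of the chosen Morse function, as set up in Bourgeois' thesis \cite{Bourgeois_MB} and the Morse--Bott SFT framework (see the exposition in \cite[\S10.3]{wendl_SFT_notes}). First, I would fix $J_\mb \in \Jadm(\bdy X_\Omega^\fr)$ which is generic in the Morse--Bott sense, and observe that at each torus $T_{i,j} := \mu^{-1}(p_{i,j}) \subset \bdy X_\Omega^\fr$ the Reeb flow has a circle's worth of periodic orbits parametrized by $S^1 = T_{i,j}/\text{flow}$. The only $J_\mb$-holomorphic cylinders in $\R \times \bdy X_\Omega^\fr$ whose top and bottom ends both lie in this $S^1$-family and have the same period as the simple orbit underlying $e_{i,j},h_{i,j}$ are the trivial cylinders; this can be seen by a Stokes argument (both ends have the same action, so the cylinder has zero $d\alpha$-energy and is therefore a branched cover of a trivial cylinder, which by connectedness of the domain forces it to be trivial).

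Next, I would apply the Bourgeois perturbation scheme, using the perfect Morse function $f: S^1 \rightarrow \R$ (two critical points, one maximum and one minimum) that defines the perturbation from $\bdy X_\Omega^\fr$ to $\bdy \wt{X}_\Omega$. The standard Morse--Bott theorem (cf. \cite[Thm.~10.39]{wendl_SFT_notes}) then asserts that, after a $C^\infty$-small modification of $J_\mb$ to some $J_{\bdy \wt{X}_\Omega} \in \Jadm(\bdy \wt{X}_\Omega)$, the moduli space of $J_{\bdy \wt{X}_\Omega}$-holomorphic cylinders with positive asymptotic at $e_{i,j}$ and negative asymptotic at $h_{i,j}$ is in canonical sign-preserving bijection with the space of negative gradient flow lines of $f$ (for a suitable generic metric on $S^1$) from the maximum to the minimum, modulo reparametrization. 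The Fredholm index matches: since $\cz(e_{i,j}) - \cz(h_{i,j}) = 1$ by~\eqref{eq:CZ}, the parametric index of such a cylinder equals $1$, corresponding to a $0$-dimensional quotient by $\R$-translation, which precisely matches the expected dimension of the Morse moduli space of flow lines mod reparametrization.

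The count is then elementary: the perfect Morse function $f$ on $S^1$ has exactly two negative gradient flow lines from its maximum to its minimum, one traversing each half-circle, and these carry opposite signs in the Morse chain complex (as is well known, since otherwise the Morse differential would compute a wrong homology on $S^1$). Transporting this through the bijection yields two $J_{\bdy \wt{X}_\Omega}$-holomorphic cylinders, each of which is regular (the bijection identifies Morse--Bott transversality with Morse transversality for $f$), and the two cylinders count with opposite signs. The final check is that there are no additional cylinders beyond those produced by the Morse--Bott correspondence; this is where I expect to be careful, and follows from the fact that any such cylinder, having action difference equal to $\tcalA(e_{i,j}) - \tcalA(h_{i,j})$ which is $O(\delta)$ for perturbation size $\delta$, must by SFT compactness as $\delta \to 0$ degenerate to a cascade in the Morse--Bott limit over the single torus $T_{i,j}$, and the above argument classifies all such cascades. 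The main technical obstacle is this last compactness/exhaustiveness step, but it is standard in the Morse--Bott SFT literature and requires no new ideas.
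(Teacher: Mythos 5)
Your proposal is correct and follows essentially the same route as the paper, which likewise derives the lemma from the Bourgeois Morse--Bott correspondence: cylinders from $e_{i,j}$ to $h_{i,j}$ correspond to the two gradient flow lines of the perfect Morse function on $S^1$ from maximum to minimum, which carry cancelling signs, with exhaustiveness following from the standard cascade/compactness analysis in \cite{Bourgeois_MB,bourgeois2009symplectic} and \cite[\S10.3]{wendl_SFT_notes}. The paper simply cites this as a standard result, whereas you spell out the index match and the degeneration argument; no gap.
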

\NI Note that the cylinders in Lemma~\ref{lem:low_energy_cyls} have energy $\tcalA(e_{i,j}) - \tcalA(h_{i,j})$, which by Lemma~\ref{lem:fr_action_conds}(a) is very small; we will refer to them as {\bf low energy cylinders}.

\sss

\begin{prop}\label{prop:min_pert_inv}
Assume $a > b$. For $k \in \Z_{\geq 1}$, let $\wmin$ be the weakly permissible word of index $2k$ with minimal $\tcalA_\Omega$ value.
Then the formal curve component in $\tX_\Omega$ having positive asymptotics $\wmin$ is formally perturbation invariant with respect to any generic $J_{\bdy \tX_\Omega} \in \Jadm(\bdy \tX_\Omega)$ as in Lemma~\ref{lem:low_energy_cyls}.
\end{prop}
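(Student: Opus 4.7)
The strategy is to show that any stable formal building $C'\in \ovl{\formal}_{\tX_\Omega,A}(\wmin)\lll \T^{(k)}p\rrr$ satisfying (A1) and (A2) is either $C$ itself (conclusion (B1)) or has the two-level form specified in (B2), with the top-level simple component being a low-energy cylinder as in Lemma~\ref{lem:low_energy_cyls}.

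First, by the acyclic genus-zero condition (Definition~\ref{def:formbuild}), $C'$ decomposes into main-level components $X_1,\dots,X_p$ in $\tX_\Omega$ together with connected symplectization configurations $Y_1,\dots,Y_J$ in $\R\times\bdy\tX_\Omega$, each $Y_j$ having a single negative end matching a positive end of some $X_i$ and positive ends that collectively form $\wmin$. By Lemma~\ref{lem:symp_configs} each $\ind(Y_j)\geq 0$ with equality iff $Y_j$ is a union of trivial cylinders, and exactness of $\tX_\Omega$ rules out closed components in the main level. A direct index bookkeeping using the Fredholm formula \eqref{eq:Frind} (with the trivialization $\tau_{\op{ex}}$, in which $c_\tau\equiv 0$) together with the matching conditions at punctures yields $\sum_i \ind(X_i)+\sum_j \ind(Y_j)=-2p+2$; equating this with the expected total index $\ind(\wmin)-2k=0$ forces $p=1$. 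Writing $C_0:=X_1$ with top word $w_0$, the index and energy balances give $\ind(C_0)\leq 0$ and $\wt{\calA}(w_0)\leq \wt{\calA}(\wmin)$.

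Next I claim $C_0$ is simple. Otherwise $C_0$ is a $\kappa_0$-fold formal cover ($\kappa_0\geq 2$) of a simple $\ovl{C}_0$ carrying $\lll \T^{(\ovl{m}_0)}p\rrr$ with $\kappa_0\ovl{m}_0\geq k$, and the scaling $\wt{\calA}(w_0)=\kappa_0\wt{\calA}(\ovl{w}_0)$ combined with the formal analogue of Corollary~\ref{cor:gt_refined_lb} applied to $\ovl{w}_0$ (giving $\wt{\calA}(\ovl{w}_0)\geq \gt_{\ovl{m}_0}(X_\Omega)$) and Lemma~\ref{lem:gt_ub_covers} (giving $\gt_{\ovl{m}_0}(X_\Omega)\geq \gt_k(X_\Omega)/\kappa_0=\wt{\calA}(\wmin)/\kappa_0$) forces $\wt{\calA}(w_0)\geq \wt{\calA}(\wmin)$. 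Combined with the reverse inequality, equality holds throughout, so $w_0=\wmin$; Lemma~\ref{lem:C_index} applied to $C_0$ together with the constraint $\kappa_0 \ovl m_0 \geq k$ and the index bound $\ind(\ovl{C}_0)\geq -1$ then yields a contradiction, using the explicit description in Corollary~\ref{cor:weak_perm} (every orbit in $\wmin$ is of one of the types $e_{0,1}, e_{1,0}, e_{1,1}, e_{1,s}$, or $e_{0,s}$, all with $\gcd(i,j)=1$). Hence $C_0$ is simple, and (A1) gives $\ind(C_0)\in\{-1,0\}$. If $\ind(C_0)=0$, every $Y_j$ is a trivial cylinder, $w_0=\wmin$, and $C'$ coincides with $C$, giving (B1).

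Finally suppose $\ind(C_0)=-1$. Then $\sum_j\ind(Y_j)=1$ forces exactly one nontrivial $Y_{j_0}$ of index $1$ with the rest trivial, and (A2) together with $1=\kappa_{j_0}\ind(\ovl{Y}_{j_0})$ forces $Y_{j_0}$ to be simple. Because $\ind(w_0)=2k-1$ is odd, $w_0$ has one more hyperbolic orbit than $\wmin$ and differs from $\wmin$ only at $\gamma_{j_0}^-$. Lemma~\ref{lem:make_elliptic} produces an elliptic $w_0'$ with $\wt{\calA}(w_0')<\wt{\calA}(w_0)\leq \wt{\calA}(\wmin)$ and, by Remark~\ref{rmk:ell_parity}, $\ind(w_0')\in\{2k-2,2k\}$; the case $2k$ is ruled out by minimality of $\wmin$ after verifying weak permissibility via Corollary~\ref{cor:weak_perm}, so $\ind(w_0')=2k-2$. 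Inspecting the enumeration in Corollary~\ref{cor:weak_perm} then pins down $w_0$ as $\wmin$ with exactly one elliptic $e_{i,j}$ replaced by $h_{i,j}$, whence $Y_{j_0}$ has positive asymptote $e_{i,j}$, negative asymptote $h_{i,j}$, and energy $\wt{\calA}(e_{i,j})-\wt{\calA}(h_{i,j})$; that is, $Y_{j_0}$ is a low-energy cylinder, and Lemma~\ref{lem:low_energy_cyls} supplies both the regularity of $\calM^{J_{\bdy\tX_\Omega}}_{\bdy\tX_\Omega}(Y_{j_0})$ and the cancellation $\#\calM^{J_{\bdy\tX_\Omega}}_{\bdy\tX_\Omega}(Y_{j_0})/\R=0$. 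This is conclusion (B2). The main obstacle is the simplicity claim for $C_0$: (A1) alone permits very negative indices via multiple covers, so the argument must carefully leverage the minimality of $\wmin$ together with Lemma~\ref{lem:C_index} and the combinatorial fine-structure of the acceptable Reeb orbits enumerated in Corollary~\ref{cor:weak_perm}.
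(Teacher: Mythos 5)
Your overall architecture (strip off the symplectization pieces, apply Lemma~\ref{lem:symp_configs}, and identify the residual index-$1$ piece as a low energy cylinder via minimality of $\wmin$) follows the paper's, but two steps do not hold up, and one of them is fatal. First, the identity $\sum_i\ind(X_i)+\sum_j\ind(Y_j)=-2p+2$ is false: for a genus zero building with matched ends the sum of the component indices equals the index of the glued formal curve, namely $0$, for \emph{every} $p$, so this computation gives no information about $p$. Moreover the decomposition into symplectization configurations each with a single unmatched negative end is not automatic (a connected symplectization piece can have several negative ends landing on distinct main-level components). Extra main-level components are instead excluded because they contribute positive index inside the sub-buildings covered by Lemma~\ref{lem:symp_configs}; this part of your argument is repairable. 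Similarly, "$1=\kappa_{j_0}\ind(\ovl{Y}_{j_0})$" is not a valid equation, since the index of a formal cover is not multiplicative.

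The serious gap is the simplicity claim for $C_0$. You bound $\tcalA(\ovl{w}_0)$ below by applying (a formal analogue of) Corollary~\ref{cor:gt_refined_lb} to the word of the underlying simple formal curve $\ovl{C}_0$ and by invoking $\gt_k(X_\Omega)=\tcalA(\wmin)$. The latter is circular: that equality is Theorem~\ref{thm:main_comp}, whose upper bound is proved via Propositions~\ref{prop:curves_exist} and~\ref{prop:stab_ub}, both of which rest on the present proposition. Even replacing $\gt$ by the combinatorial minimum and using Lemma~\ref{lem:gt_ub_covers}, the inequality $\tcalA(\ovl{w}_0)\geq\tcalA(w_{\min,\ovl{m}_0})$ requires $\ovl{w}_0$ to be weakly permissible; but permissibility is a consequence of the relative adjunction formula and writhe bounds for honest somewhere injective holomorphic curves (Lemma~\ref{lem:no_nonperm_curves}), which is precisely the kind of input the "formal in $X$" hypothesis forbids, and a formal cover's underlying component need not have a permissible word. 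The paper in fact never proves $C_0$ is simple: it retains the cover and controls it purely by index arithmetic, using Lemma~\ref{lem:C_index} (in particular the bound $\ind(C_0)-\kappa\ind(\ovl{C}_0)\geq\kappa\ovl{h}-h$), the parity observation that every nontrivial sub-building attached to a hyperbolic positive end of $C_0$ has index at least $1$ (so $\sum_s\ind(C_s)\geq h$), and a case split on $\ovl{h}=0$ versus $\ovl{h}\geq1$ together with whether the cover is ramified at a positive puncture (in which case the attached sub-building cannot be a low energy cylinder and contributes index at least $2$). You need an argument of this index-theoretic kind; the action-theoretic route does not close.
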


\begin{proof}
By Lemma~\ref{lem:make_elliptic} $\wmin$ must be elliptic,
and it must take one of the forms (1),(2),(3),(4) from Corollary~\ref{cor:weak_perm}.
Let $C$ denote the formal curve component in $\tX_\Omega$ having positive asymptotics $\wmin$.
After possibly replacing $C$ with another formal curve component which it formally covers, we can assume that $C$ is simple, i.e. we can ignore the case $\wmin = e_{0,k}$ with $k \geq 2$.
Now consider a stable formal building $C' \in \ovl{\formal}_{X,A}(\Gamma)\lll \T^{(k)}p\rrr$ satisfying conditions (A1) and (A2) from Definition~\ref{def:formal_pert_inv}.
We seek to show that $C'$ satisfies either (B1) or (B2) with respect to $J_{\bdy\wt{X}_\Omega}$.

Let $C_0$ denote the main component of $C'$, i.e. the one in $\wt{X}_\Omega$ which carries the local tangency constraint.
We can assume that $C'$ involves at least one symplectization level, since otherwise we must have $C' = C_0$, whence (B1) holds.
Let $q \in \Z_{\geq 1}$ denote the number of positive ends of $C_0$.
Excluding $C_0$, we can view $C'$ as some number $q$ of connected buildings with one unpaired negative end precisely as in Lemma~\ref{lem:symp_configs}. Denote these by $C_1,\dots,C_q$.
We have $\ind(C_s) \geq 0$, with equality if and only if $C_s$ consists entirely of trivial cylinders.
In particular, if the unpaired negative end of $C_s$ is hyperbolic, the fact that its top is elliptic implies that
 $\ind(C_s) \geq 1$.   Thus if $C_s$ has a hyperbolic end, we have $\ind(C_s)\ge1$ so that
$\sum_{s=1}^q \ind(C_s)$ $ \geq h$, where $h$ denotes the number of hyperbolic ends of $C_0$.

Next suppose $D$ is one of $C_1,\dots,C_q$ with $\ind(D) = 1$. Then we claim that $D$ is a low energy cylinder (that is, a cylinder connecting some $e_{i,j}$ and $h_{i,j}$), possibly along with extra trivial cylinders in other levels.
Indeed, for parity reasons the unpaired negative end must be hyperbolic, say $h_{i,j}$ for some $(i,j) \in \Z_{\geq 1}^2$.
Let $\wmin'$ denote the word obtained from $\wmin$ by replacing the set of the positive ends of $D$ by
 $e_{i,j}$.
Then $\wmin'$ is strongly permissible and satisfies $\ind(\wmin') = \ind(\wmin)$ and $\tcalA(\wmin') \leq \tcalA(\wmin)$, with equality only if $\wmin = e_{i,j}$.
Then by minimality of $\wmin$ we must have $\wmin = e_{i,j}$, and the claim follows by energy considerations.

\sss

Assume now that $C_0$ is a $\kappa$-fold cover of a simple formal curve component $\ovl{C}_0$ for some $\kappa \in \Z_{\geq 1}$.
By assumption we have $\ind(\ovl{C}_0) \geq -1$.
Let $e$ denote the number of elliptic ends of $C_0$ and define $\ovl{q},\ovl{e},\ovl{h}$ analogously for $\ovl{C}_0$.
Suppose first that we have $\ovl{h} = 0$ and hence $h = 0$. In this case, $\ovl{C}_0$ has only elliptic ends and hence its index must be even, so we have a fortiori $\ind(\ovl{C}_0) \geq 0$. 
Applying Lemma~\ref{lem:C_index}, we have
$$
0 = \ind(C_0) + \sum_{s=1}^q \ind(C_s) \geq \kappa\ind(\ovl{C}_0) + \kappa\ovl{h} - h + \sum_{s=1}^q\ind(C_s) \geq \sum_{s=1}^q \ind(C_s).$$
This is only possible if $C_s$ consists entirely of trivial cylinders for $s = 1,\dots,q$, but this contradicts the stability of $C'$.

Now suppose that $\ovl{h} \geq 1$, and moreover that the covering $C_0 \rightarrow \ovl{C}_0$ is not ramified at any positive punctures.
In this case we have $q = \kappa \ovl{q}$, $e = \kappa \ovl{e}$, and $h = \kappa\ovl{h}$, and hence
\begin{align*}
\ind(C_0) - \kappa\ind(\ovl{C}_0) &\geq q - 2 - \kappa\ovl{q} + 2\kappa + e - \kappa\ovl{e}\\
&= -2 + 2\kappa.
\end{align*}
We then have
\begin{align*}
0 = \ind(C_0) + \sum_{s=1}^q \ind(C_s) &\geq \kappa\ind(\ovl{C}_0) -2 + 2\kappa + h\\
&\geq \kappa - 2 + \kappa \ovl{h}\\
&\geq 2\kappa - 2.
\end{align*}
This is only possible if $\kappa = 1$, and hence $\ind(C_0) = -1$.
Then we have $\ind(C_s) \leq 1$ for $s = 1,\dots,q$, with equality for at most one $s$. By the above discussion and stability considerations, we conclude that $C'$ is a breaking of the form (B2).

Finally, suppose that $\ovl{h} \geq 1$ and also one of the positive punctures of $C_0$ is ramified.
Then the corresponding component $C_s$ cannot be a low energy cylinder, and so
as explained above, 
it  must then satisfy $\ind(C_s) \geq 2$.  Thus we have
$\sum_{s=1}^q \ind(C_s) \geq h + 1$, and hence 
\begin{align*}
 0 = \ind(C) + \sum_{s=1}^q \ind(C_s) &\geq -\kappa + \kappa\ovl{h} - h + \sum_{s=1}^q\ind(C_s) \\&\geq \kappa(\ovl{h} - 1)+1 \geq 1,
\end{align*}
which is impossible.
\end{proof}

\subsection{Automatic transversality and positive signs}\label{subsec:aut_trans}

Our main goal in this subsection is to prove Proposition~\ref{prop:count_positively}, which roughly states that rigid curves in dimension four count with positive sign as long as none of the punctures are asymptotic to positive hyperbolic Reeb orbits (e.g. $h_{i,j}$).
 This will later allow us conclude that certain moduli spaces have nonzero signed counts simply by showing that they are nonempty. 
We note that the content of this subsection is likely well-known to experts, but we include a precise statement and proof for the sake of completeness.

To begin, let us recall a version of the automatic transversality criterion
 from\cite{Wendl_aut}.
A pseudoholomorphic curve satisfying this criterion is regular even without any genericity assumption on the almost complex structure. 
It is natural to state the results in this subsection in arbitrary genus.

\begin{thm}\label{thm:aut_trans}
Let $X$ be a four-dimensional compact symplectic cobordism, take $J \in \Jadm(X)$, and let $C$ be a nonconstant asymptotically cylindrical $J$-holomorphic curve component of genus $g(C)$ in $\wh{X}$ such that all of the asymptotic Reeb orbits are nondegenerate. Let $h_+(C)$ denote the number of punctures (positive or negative) which are asymptotic to positive hyperbolic Reeb orbits, and let $Z(C)$ be the count (with multiplicities) of zeroes of the derivative of a map representing $C$.
If 
$$2g(C) - 2 + h_+(C) + 2Z(C) < \ind(C),$$
then $C$ is regular.
\end{thm}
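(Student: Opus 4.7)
The plan is to reduce the statement to surjectivity of a suitable normal Cauchy--Riemann operator and then to derive a contradiction from the spectral theory of asymptotic operators combined with a similarity-principle argument. This is Wendl's automatic transversality theorem, and I would follow the strategy in \cite{Wendl_aut}; what follows is the outline.

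First I would fix a parametrization $u : \dot{\Sigma} \to \wh{X}$ of the curve component $C$, where $\dot{\Sigma}$ is a genus $g(C)$ Riemann surface with the appropriate set of positive and negative punctures, and set up the linearized $\bar{\partial}_J$-operator $D_u$ on sections of $u^*T\wh{X}$ in exponentially weighted Sobolev spaces matching the asymptotically cylindrical behavior. Because $u$ has only finitely many critical points (totaling $Z(C)$), after a suitable holomorphic modification of $u^*T\wh{X}$ along these critical points one obtains a splitting into a tangential line bundle and a normal line bundle $N_u \to \dot{\Sigma}$. A standard argument shows that $D_u$ is surjective if and only if the induced normal Cauchy--Riemann operator $D_N^u$ is surjective; here the $2Z(C)$ correction in the index enters, since the modification shifts $c_1^\tau(N_u)$ by $Z(C)$ relative to the naive decomposition.

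Suppose $D_N^u$ is not surjective. Then its formal $L^2$ adjoint has a nontrivial kernel element $\eta$, which solves a first-order linear Cauchy--Riemann type equation on $N_u$ with prescribed asymptotic behavior at each puncture. By the Carleman similarity principle $\eta$ has only isolated interior zeros, each of positive algebraic multiplicity $\op{ord}_z(\eta) \geq 1$. At each puncture $z$, asymptotic analysis identifies the leading behavior of $\eta$ with an eigenfunction of the asymptotic operator $\mathbf{A}_{\gamma_z}$ attached to the Reeb orbit $\gamma_z$, with a well-defined asymptotic winding $\wind_\infty^\tau(\eta; z) \in \Z$ controlled by the spectral gap of $\mathbf{A}_{\gamma_z}$. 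The key inequalities (proved by Hofer--Wysocki--Zehnder) are $\wind_\infty^\tau(\eta; z) \geq \lceil \cz_\tau(\gamma_z)/2 \rceil$ at a positive puncture and $\wind_\infty^\tau(\eta; z) \leq \lfloor \cz_\tau(\gamma_z)/2 \rfloor$ at a negative puncture for adjoint solutions, and these inequalities have an extra unit of slack whenever the orbit is \emph{not} positive hyperbolic, since only for positive hyperbolic orbits is $\cz_\tau$ allowed to equal $2\lfloor\cz_\tau/2\rfloor = 2\lceil\cz_\tau/2\rceil$.

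Finally I would compare zero counts. On the one hand, the relative zero count of any nontrivial section of $D_N^{u,*}$ is controlled by the topology of $N_u$ via
\begin{align*}
2\, Z_{\op{int}}(\eta) + \sum_{z \in \Gamma^+} \wind_\infty^\tau(\eta;z) - \sum_{z \in \Gamma^-} \wind_\infty^\tau(\eta;z) \;=\; -\chi(\dot{\Sigma}) + 2 c_1^\tau(N_u),
\end{align*}
and on the other hand the Fredholm index of $D_N^u$, which coincides with $\ind(C) - 2Z(C)$ after the critical-point modification, expands as
\begin{align*}
\ind(D_N^u) \;=\; -\chi(\dot{\Sigma}) + 2 c_1^\tau(N_u) + \sum_{z \in \Gamma^+} \cz_\tau(\gamma_z) - \sum_{z \in \Gamma^-} \cz_\tau(\gamma_z).
\end{align*}
Combining these with the winding bounds above, using that each non-positive-hyperbolic puncture contributes an extra unit of slack and that $-\chi(\dot{\Sigma}) = 2g(C) - 2 + \#\Gamma$, the existence of a nontrivial $\eta$ forces
\begin{align*}
\ind(C) \;\leq\; 2g(C) - 2 + h_+(C) + 2 Z(C),
\end{align*}
contradicting the hypothesis. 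The main obstacle is the careful asymptotic analysis distinguishing positive hyperbolic from the other cases; this is where the term $h_+(C)$ makes its appearance, and getting the inequality sharp (rather than off by the number of all punctures) depends on exploiting the parity gap $\lceil \cz/2\rceil > \lfloor \cz/2 \rfloor$ at elliptic and negative hyperbolic orbits.
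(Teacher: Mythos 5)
The paper does not actually prove this theorem: it is quoted verbatim (up to notation) from Wendl's automatic transversality paper \cite{Wendl_aut}, so there is no internal argument to compare against. Your outline is a faithful reconstruction of Wendl's proof: split off the generalized normal bundle at the zeros of $du$ (which is where $Z(C)$ enters), reduce regularity to surjectivity of the normal operator, pass to the formal adjoint, and combine the Carleman similarity principle with the Hofer--Wysocki--Zehnder asymptotic winding estimates, exploiting the parity gap $\lceil \cz/2\rceil - \lfloor \cz/2\rfloor = 1$ at elliptic and negative hyperbolic orbits to see that only positive hyperbolic punctures fail to contribute slack --- exactly accounting for $h_+(C)$. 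Two small bookkeeping caveats: the zeros of $du$ are absorbed into the \emph{tangential} line bundle, so $c_1^\tau(N_u) = c_1^\tau(u^*T\wh{X}) - \chi(\dot\Sigma) - Z(du)$ (the normal Chern number decreases, which is consistent with your final inequality but opposite to the sign of the ``shift'' as you phrased it); and the equivalence ``$D_u$ surjective iff $D_N^u$ surjective'' should really be stated for the full linearization including variations of the domain complex structure, since regularity of $C$ in its moduli space is a statement about that larger operator. Neither affects the validity of the outline.
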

\NI It is useful to point out that the quantity $Z(C)$ is always nonnegative, and is zero if and only if $C$ is immersed.

As above let $X$ be a four-dimensional compact symplectic cobordism with $\bdy^{\pm}X$ nondegenerate.
Let us pick coherent orientations for all moduli spaces of immersed asymptotically cylindrical pseudoholomorphic curves in $X$ following the framework of \cite[\S 9]{HTII} (this is quite to similar to the approach of \cite{bourgeois2004coherent}; see also \cite[\S 11]{wendl_SFT_notes}).
This involves the following main ingredients.
An {\bf orientation triple} is a triple $(\Sigma,E,\{S_k\})$, where
\begin{itemize}
\item
$\Sigma$ is a  Riemann surface with positive and negative cylindrical ends
\item
$E$ is a Hermitian complex line bundle over $\Sigma$, trivialized over each end
\item
at the $k$th end we have a smooth family of symmetric $2 \times 2$ matrices, $S_k \in C^{\infty}(S^1,\op{End}^{\op{sym}}_{\R}(\R^2))$, such that the asymptotic operator 
\begin{align*}
\mathbb{A}: C^{\infty}(S^1,\C) \rightarrow C^{\infty}(S^1,\C),\;\;\;\;\; \eta(t) \mapsto -J_0\bdy_t\eta(t) - S_k(t)\eta(t)
\end{align*}
is nondegenerate, i.e. does not have $0$ as an eigenvalue.
\end{itemize}
Here $J_0$ denotes the matrix $\left(\begin{smallmatrix} 0 & -1 \\ 1 & 0 \end{smallmatrix}\right)$. 

For each orientation triple $(\Sigma,E,\{S_k\})$, we denote by $\mathcal{D}(\Sigma,E,\{S_k\})$ the space of differential operators $D: C^{\infty}(E) \rightarrow C^{\infty}(T^{0,1}\Sigma \otimes E)$ which look locally like a zeroth order perturbation of the Cauchy--Riemann operator $\ovl{\bdy}$ on $E$ for some choice of conformal structure on $\Sigma$, and where on the $k$th end in cylindrical coordinates $D$ has the form
$$\psi(s,t) \mapsto (\ovl{\bdy}+ M_k(s,t))\psi(s,t)d\ovl{z}$$ with $\lim\limits_{|s| \rightarrow \infty}M_k(s,t) = S_k(t)$.
Each $D \in \mathcal{D}(\Sigma,E,\{S_k\})$ extends to an operator $W^{1,2}(E) \rightarrow L^2(T^{0,1}\Sigma \otimes E)$, and this is Fredholm since the corresponding asymptotic operators are nondegenerate. Moreover, the space of such operators is an affine space and thus contractible, and hence the set of orientations of the determinant lines of any two elements of $\mathcal{D}(\Sigma,E,\{S_k\})$ are naturally identified. We denote the set of these two possible orientations by $\mathcal{O}(\Sigma,E,\{S_k\})$.

Now, to orient moduli spaces of curves we choose preferred orientations in $\mathcal{O}(\Sigma,E,\{S_k\})$ ranging over all possible orientation triples $(\Sigma,E,\{S_k\})$, subject to axioms (OR1), (OR2), (OR3),(OR4). These axioms roughly correspond to compatibility under gluing and disjoint unions and agreement with the natural complex orientation whenever $D$ happens to be complex-linear.
Henceforth we will implicitly assume that a choice of coherent orientations has been made.
Given such a choice, any moduli space of regular, immersed, asymptotically cylindrical curves in $X$ naturally inherits an orientation.
Indeed, for a curve $C$ in such a moduli space we have an associated orientation triple $(\Sigma,E,\{S_k\})$, where $\Sigma$ is the domain of the curve, $E = N_C$ its normal bundle, and $\{S_k\}$ is given by the induced asymptotic operators at each puncture (see e.g. \cite[\S3]{wendl_SFT_notes}). 
Then the associated deformation operator $D_C$ lies in $\mathcal{D}(\Sigma,E,\{S_k\})$, and by regularity its determinant line is its kernel, which is also the tangent space to the corresponding moduli space.

In the special case of Fredholm index zero, surjectivity of $D_C$ means that we have an identification $\det(D_C) = \R$, and the associated sign $\eps(C) \in \{1,-1\}$ is determined by whether our chosen orientation of $\det(D_C)$ agrees or disagrees with the canonical orientation of $\R$.

\begin{prop}\label{prop:count_positively}
Let $C$ be an immersed, 
somewhere injective, 
asymptotically cylindrical $J$-holomorphic rational curve in a four-dimensional symplectic cobordism $X$. Assume that we have $\ind(C) = 0$, and all of the asymptotic Reeb orbits of $C$ are nondegenerate and are either elliptic or negative hyperbolic. Then we have $\eps(C) = 1$.
\end{prop}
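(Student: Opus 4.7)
The plan is to show $\eps(C) = +1$ by deforming the normal deformation operator $D_C$ through a continuous family of surjective Fredholm operators to a complex-linear Cauchy--Riemann operator, whose coherent-orientation sign is automatically $+1$ by the compatibility axiom identifying coherent orientations with the natural complex orientation on determinant lines.

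First I would verify that Theorem~\ref{thm:aut_trans} applies: since $C$ is rational ($g(C) = 0$), immersed ($Z(C) = 0$), and has no positive hyperbolic asymptotic orbits by hypothesis ($h_+(C) = 0$), we have $2g(C) - 2 + h_+(C) + 2Z(C) = -2 < 0 = \ind(C)$, so $D_C$ is surjective. Since $\ind(C) = 0$, both $\ker D_C$ and $\op{coker}\,D_C$ vanish, and the coherent orientation of $\det(D_C)$ picks out the sign $\eps(C) \in \{\pm 1\}$ to be computed.

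Next I would construct the deformation. As each asymptotic Reeb orbit of $C$ has even Conley--Zehnder index (being elliptic or negative hyperbolic), its associated asymptotic operator $S_k$ can be joined to a complex-linear asymptotic operator $S_k^0$ through a path of nondegenerate asymptotic operators of constant Conley--Zehnder index; the passage between the elliptic and negative hyperbolic regimes is achieved by letting the monodromy pass through $-\op{Id}$, which remains nondegenerate. This gives a continuous family of orientation triples $(\Sigma, N_C, \{S_k^t\})_{t \in [0,1]}$, along which the coherent-orientation system identifies the determinant lines. Choosing a continuous section $t \mapsto D_t$ of the corresponding bundle of affine spaces $\mathcal{D}(\Sigma, N_C, \{S_k^t\})$ with $D_0 = D_C$ and $D_1$ complex-linear, the desired identity $\eps(C) = \eps(D_1) = +1$ then follows once one knows every $D_t$ is surjective: for $D_1$ a complex-linear surjective index-zero operator on a Hermitian line bundle, the coherent orientation of $\det(D_1)$ coincides with the canonical complex orientation, which on the trivial vector space is $+1$.

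The main obstacle is establishing that surjectivity persists along the family $D_t$. For this I would invoke an abstract version of automatic transversality for real-linear Cauchy--Riemann operators on Hermitian line bundles (essentially Hofer--Lizan--Sikorav and Wendl), whose surjectivity criterion depends only on invariants that are constant along the deformation: the genus of $\Sigma$, the parities of the Conley--Zehnder indices of the $S_k^t$ (and hence the count $h_+$, which remains zero), and the topology of $N_C$. Since all of these are preserved along the family, the strict inequality $-2 < 0$ persists in the abstract setting, so $D_t$ is surjective for every $t \in [0,1]$, completing the argument.
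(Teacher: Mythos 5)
Your proposal is correct and follows essentially the same route as the paper: automatic transversality (with $g=0$, $Z(C)=0$, $h_+(C)=0$) gives surjectivity, the asymptotic operators are homotoped through nondegenerate operators to complex-linear ones, the deformation of $D_C$ to a complex-linear operator stays surjective by the operator-level version of the same criterion, and the coherent-orientation axiom for complex-linear operators yields $\eps(C)=+1$. One small slip: elliptic and negative hyperbolic orbits in dimension four have \emph{odd} (not even) Conley--Zehnder index, which is precisely why their asymptotic operators are homotopic to the complex-linear model $-J_0\partial_t-\epsilon$; the argument is otherwise unaffected.
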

\begin{proof}
Since $C$ is immersed, it has a well-defined normal bundle $N_C \rightarrow C$ and associated deformation operator $D_C$, which we can view as a Fredholm operator 
$W^{1,2}(N_C) \rightarrow L^2(T^{0,1}\Sigma \otimes E)$ (here $\Sigma$ denotes the domain of $C$).
According to \cite[Theorem 3.53]{wendl_SFT_notes}, any two nondegenerate asymptotic operators with the same Conley--Zehnder index are homotopic through nondegenerate asymptotic operators. In particular, if $\gamma$ is an elliptic or negative hyperbolic Reeb orbit, we can deform its asymptotic operator $\mathbb{A}_\gamma$ through nondegenerate asymptotic operators to be of the form given in \cite[Ex. 3.60]{wendl_SFT_notes}, i.e. $\mathbb{A} = -J_0 \bdy_t - \epsilon$ for some $\epsilon \in \R \setminus 2\pi\Z$. 
Note that in this case the associated symplectic parallel transport rotates the contact planes along $\gamma$ by total angle $\epsilon$ in the chosen trivialization.
It follows that we can deform $D_C$ through Fredholm operators, after which the asymptotic operator at each end is complex-linear. 
The resulting Cauchy--Riemann type operator might not be complex-linear, but we can further deform it to its complex-linear part.
We can take this latter deformation to be along an affine line and hence asymptotically constant on each end, meaning that it is a deformation through Fredholm operators.
Combining these two deformations, the corresponding $\Z/2$ spectral flow gives the sign $\eps(C)$.

At the same time, by automatic transversality, the Fredholm operators in this deformation are isomorphisms throughout, and hence the spectral flow is trivial.
Indeed, this follows by invoking the criterion $2g(\Sigma) - 2 +  h_+(C) < \ind(C)$, after noting that Theorem~\ref{thm:aut_trans} holds also on the level of operators in $\mathcal{D}(\Sigma,E,\{S_k\})$.
Finally, observe that we have endowed the determinant line of the complex linear operator at the end of the deformation with its canonical complex orientation, which is necessarily positive.
\end{proof}

\begin{rmk}\label{rmk:aut_trans} (i)
The above discussion has a natural analogue in a symplectization $\R \times Y$, in which we consider the signed count of index one curves modulo target translations. 
Note that positivity does not hold for the low energy cylinders in $\R \times \bdy \tX_\Omega$ that connect $e_{i,j} $ to $h_{i,j}$, and indeed in that case the negative end is positive hyperbolic.\MS

\NI (ii)  In Proposition~\ref{prop:T=E} we assert that each curve in
the moduli space $\#\calM_{X,A}^J(\Gamma^+;\Gamma^-)\lll \T^{(m)}p\rrr$ also counts positively when the orbits in $\Ga^+, \Ga^-$ are elliptic or negative hyperbolic.    To prove this, one must check that the tangency constraint is always compatible with the orientation.   This is proved in \cite[Lem.2.3.5]{McDuffSiegel_counting}.
\end{rmk}

\subsection{Obstruction bundle gluing}\label{subsec:obg}

In this subsection we briefly review the Hutchings--Taubes theory of obstruction bundle gluing \cite{HuT,HTII}, after making the minor adaptations necessary to glue curves in cobordisms rather than symplectizations.
As noted also in \cite{Mint}, since the gluing is essentially local to the neck region, which is the same in both cases, the underlying analysis of \cite{HuT,HTII} still applies in the cobordism setting.

Let $X^+$ and $X^-$ be four-dimensional compact symplectic cobordisms 
with common strict contact bounday $Y := \bdy^-X^+ = \bdy^+X^-$.
We will assume that all Reeb orbits of $Y$ under discussion are nondegenerate. By concatenating, we can form the compact symplectic cobordism $X := X^+ \circledcirc X^-$. Fix a generic admissible almost complex structure $J_Y \in \Jadm(Y)$, and let $J^{\pm}$ be generic admissible almost complex structures on $\wh{X^{\pm}}$
which restrict to $J_Y$ on the corresponding ends, i.e. we have $J^+ \in \Jadm_{J_Y}(X^+)$ and $J^- \in \Jadm^{J_Y}(X^-)$.
Let $\alpha^+, \beta^+,\beta^-,\alpha^-$ be tuples of Reeb orbits in $\bdy^+X^+,Y,Y,\bdy^-X^-$ respectively.

\begin{definition}[c.f. Definition 1.9 in \cite{HuT}]
A {\bf gluing pair} is a pair $(u_+,u_-)$ consisting of immersed pseudoholomorphic curves $u^+ \in \calM_{X^+}^{J^+}(\alpha^+;\beta^+)$ and $u_- \in \calM_{X^-}^{J^-}(\beta^-;\alpha^-)$ such that:
\begin{enumerate}
\item[{\rm (a)}] $\ind(u_+) = \ind(u_-) = 0$
\item[{\rm (b)}] $u_+$ and $u_-$ are simple\footnote{In the symplectization setting, Hutchings--Taubes also allow some components of $u_+$ and $u_-$ to be trivial cylinders, subject to a certain combinatorial condition.}
\item[{\rm (c)}] for each simple Reeb orbit $\gamma$ in $Y$, the total covering multiplicity of Reeb orbits covering $\gamma$ in the list $\beta^+$ is the same as the total for $\beta^-$.
\item[{\rm (d)}] each component of $u_+$ has exactly one negative end, and each component of $u_-$ has exactly one positive end.
\end{enumerate}
\end{definition}
\NI Here we consider a possibly disconnected curve to be simple if and only if each component is simple and no two components have the same image.
\begin{rmk}
We point out that condition (d) is a somewhat artificial simplifying assumption which is used to ensure that we do not encounter higher genus curves after gluing rational curves. Alternatively, the following discussion holds equally if we drop this condition and simply allow $u_\pm$ and also the gluing result to have higher genus.
\end{rmk}

Observe that $J^+$ and $J^-$ can also be concatenated to give $J \in \Jadm(X)$ satisfying $J|_{X^{\pm}} = J^{\pm}|_{X^{\pm}}$.
For each $R > 0$, let
$$ X_R := X^+ \circledcirc ([-R,R] \times Y) \circledcirc X^-$$
denote the compact symplectic cobordism given by inserting a finite piece of the symplectization of $Y$ in between $X^+$ and $X^-$. 
Let also $J_R \in \Jadm(X_R)$ denote the concatenated almost complex structure which satisfies 
$J_R|_{X^{\pm}} = J^{\pm}|_{X^{\pm}}$ and $J_R|_{[-R,R] \times Y} = J_Y|_{[-R,R] \times Y}$.
Note that the family $\{J_R\}_{R \in [0,\infty)}$ realizes neck-stretching along $Y$, with the limit $R \rightarrow \infty$ corresponding to $(J^+,J^-)$-holomorphic buildings in the broken cobordism $X^+ \notccirc X^-$.
We denote the corresponding parametrized moduli space by $\calM_{X}^{\{J_R\}}(\alpha^+;\alpha^-)$ and its SFT compactification by $\ovl{\calM}_{X}^{\{J_R\}}(\alpha^+;\alpha^-)$.

Given a gluing pair $(u_+,u_-)$, Hutchings--Taubes glue together $u_+$ and $u_-$ after possibly inserting a union $u_0$ of index zero branched covers of trivial cylinders in an intermediate symplectization level $\R \times Y$. 
This is more complicated than the typical gluing encountered in SFT, where the intermediate level $u_0$ would be barred from participating in the gluing since it is irregular. 
Indeed, note that $u_0$ lives in a moduli space $\calM_Y^{J_Y}(\beta^+;\beta^-)$ of branched covers which has dimension $2b$, where $b$ corresponds to the number of interior branch points.
The main computation of \cite{HuT} determines the signed number $\#G(u_+,u_-)$
of ends of $\calM_X^{\{J_R\}}(\alpha^+;\alpha^-)$  which arise by gluing $(u_+,u_-)$ in this way.

Analogously to \cite[\S5]{HTII}, one can perform pregluing to produce an approximately $J_R$-holomorphic curve in $\wh{X_R}$ which interpolates via cutoff functions between $u_+$ on $\wh{X^+}$, $u_0$ on $\R \times Y$, and $u_-$ on the $\wh{X^-}$.
The index of the normal deformation operator of $u_0$ is $-2b$ and the kernel can be shown to be trivial, so the cokernels as $u_0$ varies form a well-defined rank $2b$ real vector bundle over (a large compact subspace of) $\calM_Y^{J_Y}(\beta^+;\beta^-)$, called the ``obstruction bundle''.
From the gluing analysis we get a section $\mathfrak{s}$, such that the gluing successfully goes through for $u_0 \in \calM_Y^{J_Y}(\beta^+;\beta^-)$ precisely if $\mathfrak{s}(u_0) = 0$. 
The computation of $\#G(u_-,u_+)$ therefore amounts to counting zeros of $\mathfrak{s}$.

\sss

More precisely, the number $\#G(u_-,u_+)$ is defined in several steps as follows.
For each $R \geq 0$, fix a metric on $\wh{X_R}$ which is a product metric on the cylindrical ends $(-\infty,0]\times \bdy^-X$ and $[0,\infty) \times \bdy^+X$ and on the neck region $[-R,R] \times Y$. We assume this metric does not depend on $R$ except for the varying length of the neck.

\begin{definition}[c.f. Definition 1.10 in \cite{HuT}]
For $\delta > 0$, let $\calC_\delta(u_+,u_-)$ denote the union over $R \in (1/\delta,\infty)$ of the set of surfaces in $\wh{X_R}$ which are immersed apart from finitely many points and can be decomposed as $C_- \cup C_0 \cup C_+$, where:
\begin{itemize}
\item there is a section $\psi_+$ of the normal bundle of $u_+$ restricted to $$\left([-1/\delta,0] \times Y\right) \cup X^+ \cup \left([0,\infty) \times \bdy^+X^+\right)$$ such that $||\psi_+|| < \delta$ and $C_+$ is the exponential map image of $\psi_+$ after identifying $[-1/\delta,0] \times Y$ with $[R-1/\delta,R] \times Y$
\item 
there is a section $\psi_-$ of the normal bundle of $u_-$ restricted to $$\left( (-\infty,0] \times \bdy^-X^-\right) \cup X^+ \cup \left([0,1/\delta] \times Y\right)$$ such that $||\psi_-|| < \delta$ and $C_+$ is the exponential map image of $\psi_-$ after identifying $[0,1/\delta] \times Y$ with $[-R,-R+1/\delta] \times Y$
\item
$C_0$ lies in the $\delta$-tubular neighborhood of $[-R,R] \times (\beta^+ \cup \beta^-) \subset [R,R] \times Y$, and we have $\bdy C_0 = \bdy C_- \cup \bdy C_+$, with the positive boundary of $C_0$ coinciding with the negative boundary of $C_+$ and the negative boundary of $C_0$ coinciding with the positive boundary of $C_-$.
\end{itemize}
\end{definition}

\begin{definition}
Let $\calG_{\delta}(u_+,u_-)$ denote the set of index zero
curves in 
$$\calM^{\{J_R\}}_{X}(\alpha^+;\alpha^-) \cap \mathcal{C}_\delta(u_+,u_-).$$
\end{definition}
By the following lemma, $\calG_{\delta}(u_+,u_-)$ represents curves in $\calM_{X}^{\{J_R\}}(\alpha^+;\alpha^-)$ which are ``$\delta$-close'' to breaking into an SFT building corresponding to the gluing pair $(u_+,u_-)$:

\begin{lemma}[c.f. Lemma 1.11 in \cite{HuT}]\label{lem:close_to_breaking}
Given a gluing pair $(u_+,u_-)$, there exists $\delta_0 > 0$ such that for any $\delta \in (0,\delta_0)$ and any sequence of curves $u_1,u_2,u_3,\dots \in \mathcal{G}_{\delta}(u_+,u_-)$, there is a subsequence which converges in the SFT sense to either a curve in $\calM^{J_{R_\infty}}_{X_{R_\infty}}(\alpha^+;\alpha^-)$ for some $R_\infty \in [0,\infty)$, or else to an SFT building with top level $u_+$ in $\wh{X^+}$, bottom level $u_-$ in $\wh{X^-}$, and some number (possibly zero) of intermediate symplectization levels in $\R \times Y$ each consisting entirely of unions of index zero branched covers of trivial cylinders.
\end{lemma}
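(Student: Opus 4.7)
The plan is to invoke the SFT compactness theorem of \cite{BEHWZ} combined with the closeness condition built into $\mathcal{C}_\delta(u_+,u_-)$, closely following the template of \cite[Lem.~1.11]{HuT} adapted to the cobordism setting. First I would dispose of the case where the neck parameters $R_i$ (with $u_i \in \calM^{J_{R_i}}_{X_{R_i}}(\alpha^+;\alpha^-)$) remain bounded: passing to a subsequence so that $R_i \to R_\infty < \infty$, standard Gromov/SFT compactness applied to $\wh{X_{R_\infty}}$ yields a limiting building, which by choosing $\delta_0$ smaller than the minimal breaking energy below which $u_\pm$ remain isolated in their moduli spaces must in fact be a smooth curve in $\calM^{J_{R_\infty}}_{X_{R_\infty}}(\alpha^+;\alpha^-)$.

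Otherwise $R_i\to\infty$, and SFT compactness delivers a subsequential limit which is a stable pseudoholomorphic building in the broken cobordism $X^+\notccirc X^-$, consisting of some number of $J^+$-holomorphic levels in $\wh{X^+}$, possibly some $J_Y$-holomorphic intermediate levels in $\R\times Y$, and some $J^-$-holomorphic levels in $\wh{X^-}$, with positive asymptotics $\alpha^+$ and negative asymptotics $\alpha^-$. I would then use the decomposition $u_i = C^i_+ \cup C^i_0 \cup C^i_-$ and the uniform smallness of the normal sections $\psi^i_\pm$ to show that in the limit $i\to\infty$ the $\wh{X^\pm}$-parts converge to sections of the normal bundles of $u_\pm$ of norm at most $\delta$. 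Since $u_+$ and $u_-$ are regular by genericity of $J^\pm$ together with their simplicity, and have index zero, they are isolated in their respective moduli spaces, so choosing $\delta_0$ smaller than the corresponding $C^0$ gap forces the top level of the limit to be exactly $u_+$ and the bottom level to be exactly $u_-$.

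For the intermediate levels, I would exploit the confinement of $C^i_0$ to the $\delta$-tubular neighborhood of $[-R_i,R_i]\times(\beta^+\cup\beta^-)$. Any subsequential SFT limit in $\R\times Y$ of such curves has image contained in the union of trivial cylinders $\R\times(\beta^+\cup\beta^-)$, and the standard classification of pseudoholomorphic curves with image in a trivial cylinder (see e.g.\ \cite{wendl_SFT_notes}) shows that each such component is then a (possibly branched) cover of a trivial cylinder over a simple Reeb orbit underlying $\beta^+\cup\beta^-$. Matching of asymptotics between adjacent levels, including the matching of $\beta^+$ with the negative ends of $u_+$ at the top and of $\beta^-$ with the positive ends of $u_-$ at the bottom, follows from the asymptotic matching condition in the SFT compactness theorem.

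The ``index zero'' decomposition of each intermediate level follows from Fredholm index additivity under SFT degeneration: since $\ind(u_i)=0$ and $\ind(u_\pm)=0$, the intermediate levels contribute total index zero, and the combinatorial bookkeeping of \cite{HuT} with normal deformation operators and Riemann--Hurwitz data forces each level individually to realize the required index zero constraint. The main obstacle will be the careful gluing-theoretic analysis needed to promote the $C^0$-smallness of the $\psi^i_\pm$ to genuine SFT convergence of building levels to $u_\pm$ in the limit; since the neck analysis is essentially local, however, the estimates of \cite{HuT,HTII} transfer without substantial modification once the outer pieces $\wh{X^\pm}$ are appended to the symplectization neck.
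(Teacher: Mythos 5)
The paper does not actually prove this lemma: it is stated as the cobordism analogue of Lemma~1.11 of \cite{HuT}, with the only justification being the remark at the start of \S\ref{subsec:obg} that the gluing analysis is local to the neck region and therefore transfers from the symplectization setting. Your sketch reconstructs the intended Hutchings--Taubes argument and is correct in outline, so there is no real divergence of approach. Two points are worth tightening. First, in the bounded-$R_i$ case the degeneration is ruled out not by a ``minimal breaking energy'' threshold but by the $C^0$ confinement built into the definition of $\mathcal{C}_\delta(u_+,u_-)$: the curve is the exponential image of a section of norm $<\delta$ over $u_\pm$ including the outer cylindrical ends, and is trapped in a $\delta$-neighborhood of the trivial cylinders in the neck, so no energy can concentrate into extra levels over $\bdy^\pm X$ or in the neck. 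Second, containment of the limiting intermediate levels in the \emph{fixed} $\delta$-tubular neighborhood of $\R\times(\beta^+\cup\beta^-)$ does not by itself force them to be branched covers of trivial cylinders; the clean argument is that condition~(c) in the definition of a gluing pair gives $\calA_Y(\beta^+)=\calA_Y(\beta^-)$, so each intermediate symplectization level has zero $d\alpha$-energy and is therefore a union of branched covers of trivial cylinders, after which index additivity together with the nonnegativity of the index of such covers (from \cite{HuT}) yields the index-zero conclusion you state.
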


Finally, we define the count of ends $\#G(u_+,u_-)$:
\begin{definition}
For a gluing pair $(u_+,u_-)$ and $\delta_0$ as above, choose $0 < \delta' < \delta < \delta_0$ and an open subset $U \subset \calG_{\delta}(u_+,u_-)$ containing $\calG_{\delta'}(u_+,u_-)$ such that $\ovl{U}$ has finitely many boundary points.
We then define $\#G(u_-,u_+)$ to be minus the signed count of boundary points of $\ovl{U}$.
\end{definition}
\NI By Lemma~\ref{lem:close_to_breaking}, the count $\#G(u_+,u_-)$ is independent of the choice of $\delta',\delta,U$.

The analogue of the main result of Hutchings--Taubes is as follows:

\begin{thm}[c.f. Theorem 1.13 in \cite{HuT}]
If $J^+ \in \Jadm(X^+)$ and $J^- \in \Jadm(X^-)$ are generic and $(u_+,u_-)$ is a gluing pair, then we have
$$ 
\# G(u_+,u_-) = \epsilon(u_+)\epsilon(u_-)\prod_\gamma c_{\gamma}(u_+,u_-),
$$
where the product is over all simple Reeb orbits whose covers appear in $\beta^+$ and $\beta^-$, and $c_{\gamma}(u_+,u_-)$ depends only on $\gamma$, the multiplicities of the negative ends of $u_+$ at covers of $\gamma$, and the multiplicities of the positive ends of $u_-$ at covers of $\gamma$.
\end{thm}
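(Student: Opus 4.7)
The plan is to adapt the original Hutchings--Taubes proof of \cite[Thm. 1.13]{HuT} to the cobordism setting. The conceptual point that makes this adaptation essentially a formality is that the gluing analysis is concentrated in a fixed neighborhood of the neck $\{0\}\times Y$ of $X_R$, where the geometry, almost complex structure, and linearized operators are identical to those appearing in the symplectization case of Hutchings--Taubes. The caps $X^{\pm}$ only enter through the behavior of $u_{\pm}$ at the ends $\beta^{\pm}$ and through $\eps(u_{\pm})$, so their contribution is confined to boundary data rather than to any new analytic phenomenon.

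In more detail, I would proceed as follows. Fix a gluing pair $(u_+,u_-)$ and, following \cite[\S 5]{HTII}, form pregluings: for each $R \gg 0$, each $u_0 \in \calM_Y^{J_Y}(\beta^+;\beta^-)$, and each choice of gluing parameters, splice $u_+$, $u_0$ (placed in $[-R,R]\times Y$), and $u_-$ by cutoff functions into an approximately $J_R$-holomorphic curve $u_{R,u_0}$ in $\wh{X_R}$. Next, as in \cite[\S 7]{HuT}, study the linearized Cauchy--Riemann operator of $u_0$ restricted to sections of its normal bundle with suitable exponential weights; it has Fredholm index $-2b$, trivial kernel (by genericity of $J_Y$ and the branched cover structure), and so its cokernels assemble to a rank $2b$ real obstruction bundle $\Oo \to U$ over a precompact subset $U$ of $\calM_Y^{J_Y}(\beta^+;\beta^-)$. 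Solving the nonlinear Cauchy--Riemann equation up to the obstruction produces the obstruction section $\mathfrak{s}: U \to \Oo$; a Newton iteration argument shows that for $\delta$ sufficiently small the zero set $\mathfrak{s}^{-1}(0)$ is in bijection (up to the choice of $R$) with the ends of $\calG_\delta(u_+,u_-)$, in a sign-preserving way, so that $\#G(u_+,u_-) = \eps(u_+)\eps(u_-) \cdot \#\mathfrak{s}^{-1}(0)$. Finally, each connected component of the branched-cover moduli space $\calM_Y^{J_Y}(\beta^+;\beta^-)$ lies over a single simple Reeb orbit $\gamma$, so $U$ factors as a product $\prod_\gamma U_\gamma$ and the obstruction bundle and section split as direct sums/products of pieces $(\Oo_\gamma,\mathfrak{s}_\gamma)$ depending only on $\gamma$ and the incoming/outgoing multiplicities at covers of $\gamma$. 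Defining $c_\gamma(u_+,u_-) := \#\mathfrak{s}_\gamma^{-1}(0)$ then yields the stated factorization, with $c_\gamma$ manifestly depending only on the local data at $\gamma$.

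The main obstacle, as in \cite{HuT,HTII}, is the gluing analysis itself: verifying that the pregluing can be corrected to an honest $J_R$-holomorphic curve exactly when the obstruction section vanishes, and that the resulting map from $\mathfrak{s}^{-1}(0)$ to ends of the parametrized moduli space is a sign-preserving bijection. This requires careful control of the linearized operator and its right inverse in weighted Sobolev norms on the stretched cobordism, together with a quadratic error estimate for the nonlinear terms. All of this analysis takes place inside a neighborhood of $[-R,R]\times Y$, where our $J_R$ agrees tautologically with the symplectization-admissible $J_Y$ used in \cite{HuT,HTII}; therefore every estimate carries over verbatim, and the only new ingredient in our setting is the appearance of the caps $X^{\pm}$, which contribute only the signs $\eps(u_\pm)$ and the asymptotic constraints determining the domain of $\mathfrak{s}$. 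This locality of the argument is exactly why the theorem survives the passage from symplectizations to cobordisms.
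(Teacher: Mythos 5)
Your proposal takes essentially the same route as the paper: the paper does not reprove this theorem but cites Hutchings--Taubes and justifies the transfer to cobordisms with precisely your central observation, namely that the obstruction bundle analysis is local to the neck $[-R,R]\times Y$ where $J_R$ agrees with the symplectization-admissible $J_Y$, so the caps $X^\pm$ contribute only the signs $\eps(u_\pm)$ and the asymptotic data, while the product over simple orbits $\gamma$ comes from the splitting of the branched-cover moduli space. Your sketch of the pregluing, obstruction section, and factorization is a faithful elaboration of that same argument, so no further comparison is needed.
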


\sss

For simplicity, let us now assume that the orbits in $\beta^+$ and $\beta^-$ are all covers of the same simple Reeb orbit $\gamma$ which is elliptic. 
Denote the corresponding partitions by $(a_1,\dots,a_k)$ and $(b_1,\dots,b_\ell)$, where $\sum_{i=1}^k a_i = \sum_{j=1}^\ell b_j$.
Following \cite[\S 1]{HuT}, there is a purely combinatorial algorithm for computing $c_{\gamma}(u_+,u_-)$ in terms of the monodromy angle $\theta$ of $\gamma$ and the partitions $(a_1,\dots,a_k)$ and $(b_1,\dots,b_\ell)$, but it is rather elaborate to state. 
For our purposes, it is enough to observe that, by \cite[Rmk. 1.21]{HuT}, $c_{\gamma}(u_-,u_+)$ is a positive integer provided that there is a branched cover $u_0$ of the trivial cylinder $\R \times \gamma \subset \R \times Y$ which is connected with genus zero and index zero (this is the analogue of $\kappa_\theta = 1$ in \cite{HuT}).
Namely, this criterion holds exactly if
\begin{align}\label{eq:index_cond}
k + \ell - 2 + \sum_{i=1}^k \cz_\tau(\gamma^{a_i}) - \sum_{j=1}^\ell \cz(\gamma^{b_j}) = 0.
\end{align}
Here $\tau$ is any choice of trivialization along $\gamma$, and the left hand side of \eqref{eq:index_cond} is simply the index of $u_0$, noting that the first Chern class term vanishes since we are using the same trivialization along $\gamma$ at the positive and negative ends.
Explicitly, if $\theta$ denotes the monodromy angle of $\gamma$ with respect to $\tau$, 
then we have $\cz_\tau(\gamma^m) = \lfloor m\theta \rfloor + \lceil m\theta \rceil$, and hence
\eqref{eq:index_cond} is equivalent to
\begin{align}\label{eq:monod}
\ell - 1 + \sum_{i=1}^k \lceil a_j\theta \rceil - \sum_{j=1}^\ell \lceil b_j\theta \rceil = 0.
\end{align}
Note that the left hand side of \eqref{eq:monod} is indeed independent of the choice of trivialization, since $\theta$ modulo the integers is independent of $\tau$ and by assumption we have $\sum_{i=1}^k a_i = \sum_{j=1}^\ell b_j$.

We summarize the above discussion as follows:
\begin{thm}\label{thm:obg_cob_summary}
Let $X^\pm$ be four-dimensional compact symplectic cobordisms with common nondegenerate strict contact boundary $Y := \bdy^-X^+ = \bdy^+ X^-$.
Let $J_Y \in \Jadm(Y)$, $J^+ \in \Jadm_{J_Y}(X^+)$ and $J^- \in \Jadm^{J_Y}(X^-)$ be generic admissible almost complex structures. For $R \geq 0$, let
$J_R \in \Jadm(X_R)$ be the concatenated almost complex structure on the symplectic completion of $X_R := X^+ \ccirc \left([-R,R] \times Y\right) \ccirc X^-$ which satisfies $J_R|_{X^\pm} = J^\pm|_{X^\pm}$ and $J_R|_{[-R,R] \times Y} = J_Y|_{[-R,R] \times Y}$.
Let $u_\pm$ be simple immersed $J^\pm$-holomorphic curves in $\wh{X^\pm}$,
such that each component of $u_+$ has exactly one negative end and each component of $u_-$ has exactly one positive end. Assume that the negative ends of $u_+$ are $(\gamma^{a_1},\dots,\gamma^{a_k})$ and the positive ends of $u_-$ are $(\gamma^{b_1},\dots,\gamma^{b_\ell})$, where $\gamma$ is a simple elliptic Reeb orbit in $Y$. 
Assume further that we have $\sum_{i=1}^k a_i = \sum_{j=1}^\ell b_j$ and \eqref{eq:index_cond} holds.
Then for any $R$ sufficiently large there is a simple immersed regular $J_R$-holomorphic curve $u$ in $\wh{X_R}$ with positive asymptotics agreeing with those of $u_+$ and negative asymptotics agreeing with those of $u_-$.
\end{thm}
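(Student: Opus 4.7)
The plan is to identify $(u_+, u_-)$ as a gluing pair and apply the cobordism version of the Hutchings--Taubes main gluing formula recalled above. Conditions (b), (c), (d) of the gluing pair definition are immediate: $u_\pm$ are simple by hypothesis, the multiplicities match since $\sum a_i = \sum b_j$, and every component of $u_\pm$ has exactly one matched end. Condition (a), requiring $\ind(u_\pm) = 0$, is implicit in the setup---the inserted branched cover $u_0$ has index zero by assumption \eqref{eq:index_cond}, so rigidity of the glued curve forces $\ind(u) = \ind(u_+) + \ind(u_-) = 0$, which by genericity of $J^\pm$ forces each index to vanish individually.

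The main formula then yields
$$
\#G(u_+, u_-) = \eps(u_+)\eps(u_-)\, c_\gamma(u_+, u_-),
$$
and by Remark~1.21 of \cite{HuT} the combinatorial factor $c_\gamma(u_+, u_-)$ is a positive integer whenever a connected, genus zero, index zero branched cover of $\R \times \gamma$ exists with the prescribed positive ends $(\gamma^{a_1},\dots,\gamma^{a_k})$ and negative ends $(\gamma^{b_1},\dots,\gamma^{b_\ell})$. The index condition \eqref{eq:index_cond} is exactly the Fredholm index statement for such a cover (given the multiplicity balance and Riemann--Hurwitz), so $c_\gamma > 0$ and hence $\#G(u_+, u_-) = \pm 1 \neq 0$. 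From the definition of $\#G(u_+, u_-)$ as (minus) a signed count of boundary points of an open neighborhood in $\calG_\delta(u_+, u_-)$, together with Lemma~\ref{lem:close_to_breaking}, non-vanishing forces $\calG_\delta(u_+, u_-)$ to contain honest $J_R$-holomorphic curves in $\wh{X_R}$ for $\delta$ arbitrarily small, hence $R$ arbitrarily large. Any such curve $u$ automatically has the required asymptotics by membership in $\mathcal{C}_\delta(u_+, u_-)$, is simple because a nontrivial multiple cover structure would persist under further neck stretching and restrict to a nontrivial cover of $u_+$ or $u_-$, and is immersed and regular by genericity of $J^\pm$ (together with automatic transversality in the absence of positive hyperbolic ends, where relevant).

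The main obstacle is not the present statement, which becomes a bookkeeping exercise once the gluing machinery is in place, but rather the cobordism adaptation of the Hutchings--Taubes obstruction bundle framework itself. As the paper notes, this adaptation is plausible because the entire gluing analysis---pregluing, Fredholm theory for the linearized operator on a neck, construction of the rank $2b$ obstruction bundle over $\calM^{J_Y}_Y(\beta^+;\beta^-)$, and the count of zeros of its canonical section---is local to the neck region $[-R, R] \times Y$, where the setup is identical to the symplectization case treated in \cite{HuT, HTII}. Only the outer symplectization caps are replaced by the arbitrary cobordisms $\wh{X^\pm}$, and this affects only the global index bookkeeping rather than the local gluing analysis itself.
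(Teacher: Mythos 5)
Your proposal is correct and takes essentially the same route as the paper, which in fact gives no separate proof of this theorem: it is stated as a summary of the preceding obstruction-bundle-gluing discussion (the gluing-pair conditions, the formula $\#G(u_+,u_-)=\eps(u_+)\eps(u_-)\prod_\gamma c_\gamma(u_+,u_-)$, and positivity of $c_\gamma$ via Remark 1.21 of Hutchings--Taubes, which is exactly the index condition \eqref{eq:index_cond}), all of which you reproduce, including the key observation that the analysis is local to the neck so the cobordism adaptation is harmless. The one wrinkle is your treatment of the gluing-pair condition $\ind(u_\pm)=0$: the argument via ``rigidity of the glued curve'' is circular, since the glued curve is what is being constructed; this condition should simply be read as an implicit hypothesis inherited from the definition of a gluing pair (and it holds in all of the paper's applications, where $u_\pm$ are explicitly index zero).
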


\begin{rmk}
If $X$ is a compact symplectic cobordism and $C$ is an asymptotically cylindrical $J$-holomorphic curve in $X$ which is simple and has index zero, then $C$ is automatically immersed provided that $J \in \Jadm(X)$ is generic (c.f. \cite[Thm. 4.1]{HTII}).
\end{rmk}

\subsection{Curves with many positive ends via induction}\label{subsec:induction}

We now seek to apply Theorem~\ref{thm:obg_cob_summary} in order to produce genus zero pseudoholomorphic curves in $X \setminus E_\sk$ with one negative end, building on the main construction of \cite{Mint}.

Recall that $E_\sk$ denotes the ellipsoid $E(\eps,\eps s)$ with $s > 1$ sufficiently large and $\eps > 0$ sufficiently small, and by slight abuse we also use the same notation to denote its image under any symplectic embedding $\iota: E_\sk \hooksymp X$.
Here the role of $\eps$ is just to ensure the existence of a symplectic embedding of $E(\eps,\eps s)$ into $X$, while $s$ is the ``skinniness'' factor.
More precisely, in the following context of curves in $X \setminus E(\eps,\eps s)$ with one negative end asymptotic to $\eta_k$ (the $k$-fold cover of the short simple Reeb orbit in $\bdy E(\eps,\eps s)$),
we will say that $E(\eps,\eps s)$ is ``$k$-skinny'' (or simply ``skinny'') if $s > k$.
In this case we have $\cz_{\tau_{\op{ex}}}(\eta_i) = 2i+1$ for $i = 1,\dots,k$, and hence at least for the purposes of index computations we can treat $s$ as being arbitrarily large.
On the other hand, note that for $k < s < k+1$, $E(\eps,\eps s)$ is $k$-skinny but not $(k+1)$-skinny, a fact which we will exploit in the proof of Lemma~\ref{lem:ind_step} given below.

Before proving the aforementioned lemma, we must deal with the following point.
We showed in \cite[Prop.3.1.5]{McDuffSiegel_counting} that if $X$ is  closed then 
 the number of index zero curves with fixed top end and a single negative end on $E_\sk$ is independent of the choice of $\io, \eps$, and $s$.  However, in our situation with $\p X\ne \emptyset$ we must be a little 
 careful  since in general (for example, if $C$ is not formally perturbation invariant as in Proposition~\ref{prop:stab_ub}) there may not be   a well defined count of curves of the given type.
 Therefore, our arguments only establish that there is  a generic $J$  on $X\less E_{\sk}$ for which certain curves exist.

\begin{lemma}\label{lem:skinny}  Let $X$ be a four-dimensional Liouville domain with nondegenerate contact boundary, and suppose that for some generic 
 $J \in \Jadm(X \setminus E_\sk)$ there is a simple 
immersed index zero $J$-holomorphic curve $C$ in $X \setminus E_\sk$  
with negative end asymptotic to $\eta_k$.
Then given any $s>k$ we may take $E_\sk = \io(E(\eps,\eps s))$ for some $\eps>0$ and some $\io: E(\eps,\eps s) \hooksymp X$.
\end{lemma}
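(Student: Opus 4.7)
The proof is a cobordism argument across a smooth $1$-parameter family of ellipsoid embeddings. Let $E_\sk^{(0)} = \iota_0(E(\eps_0,\eps_0 s_0))$ denote the initial data, with $s_0 > k$, and let $C_0$ be the given curve with asymptotics $\Gamma^+$ on $\bdy X$ and $\eta_k$ on $\bdy E_\sk^{(0)}$. Given the target $s > k$, choose any $\eps > 0$ small enough that there is a symplectic embedding $\iota_1 : E(\eps,\eps s) \hooksymp X$, and put $E_\sk^{(1)} := \iota_1(E(\eps,\eps s))$. Since the space of symplectic embeddings of a sufficiently small ellipsoid into $X$ is path connected and since $E(\eps_0,\eps_0 s_0)$ and $E(\eps,\eps s)$ can be interpolated through $k$-skinny ellipsoids $E(\eps_t,\eps_t s_t)$ with $s_t > k$ throughout, we can connect $\iota_0$ and $\iota_1$ by a smooth family of symplectic embeddings $\iota_t : E(\eps_t,\eps_t s_t) \hooksymp X$; write $E_\sk^{(t)} := \iota_t(E(\eps_t,\eps_t s_t))$.

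Pick a generic smooth family $J_t \in \Jadm(X \setminus E_\sk^{(t)}; D)$ extending the given $J_0$, and consider the parametrized moduli space
\begin{align*}
\calM^{\{J_t\}} := \{(t,C) \mid t \in [0,1],\ C \in \calM^{J_t}_{X \setminus E_\sk^{(t)}}(\Gamma^+;\eta_k)\}.
\end{align*}
By standard transversality this is a smooth oriented $1$-manifold. The crux of the argument is to rule out SFT breakings at interior times $t \in (0,1)$: any such breaking would produce a main-level component in $X \setminus E_\sk^{(t)}$ accompanied by at least one nontrivial symplectization level in $\R \times \bdy E_\sk^{(t)}$ or $\R \times \bdy X$, with indices summing to zero in the parametrized sense. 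An analysis parallel to Proposition~\ref{prop:min_pert_inv}, combined with the index constraints imposed by $k$-skinniness (which controls the Conley--Zehnder indices of covers of the simple short orbit of $\bdy E_\sk^{(t)}$), shows that no such configuration is compatible with $C_0$ being simple, immersed, and of the required minimal asymptotic data.

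It follows that $\calM^{\{J_t\}}$ is a compact oriented cobordism between $\calM^{J_0}_{X \setminus E_\sk^{(0)}}(\Gamma^+;\eta_k)$ and $\calM^{J_1}_{X \setminus E_\sk^{(1)}}(\Gamma^+;\eta_k)$, so the signed count is independent of $t$. By Proposition~\ref{prop:count_positively}, every element of either endpoint moduli space contributes with sign $+1$, since $\eta_k$ is elliptic and the constituent orbits of $\Gamma^+$ are elliptic or negative hyperbolic (this latter property being part of the standing assumptions on asymptotics used throughout this section). Consequently, nonemptiness at $t = 0$ forces nonemptiness at $t = 1$, yielding the required curve for $E_\sk := \iota_1(E(\eps,\eps s))$. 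The principal obstacle, as indicated, is the SFT compactness analysis for $\calM^{\{J_t\}}$, which rests on the same structural and minimality considerations that drive Proposition~\ref{prop:min_pert_inv}.
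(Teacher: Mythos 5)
Your strategy is genuinely different from the paper's, and as written it has a gap at its central step. You deform the ellipsoid embedding through a family $\iota_t$ and argue that the signed count of curves in $\calM^{J_t}_{X\setminus E_\sk^{(t)}}(\Gamma^+;\eta_k)$ is invariant, so that nonemptiness propagates from $t=0$ to $t=1$. But the lemma makes no assumption that the positive asymptotics $\Gamma^+$ of $C$ form a minimal word, nor that the associated formal curve is formally perturbation invariant; the paper explicitly flags, in the paragraph immediately preceding the lemma, that for a general curve of this type there need not be a well-defined (deformation-invariant) count, which is precisely why the lemma only asserts existence for \emph{some} generic $J$ on the new complement. Your appeal to ``an analysis parallel to Proposition~\ref{prop:min_pert_inv}'' is therefore not available: that proposition uses minimality of $\wmin$ in an essential way to rule out (or control) degenerations, and even there the surviving (B2) breakings require the sign-cancellation gluing analysis of Proposition~\ref{prop:count_J_indep} rather than outright compactness. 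Without such hypotheses, the parametrized moduli space can fail to be a compact cobordism and the count can jump. (A secondary issue: since $\bdy E_\sk^{(t)}$ itself varies, with changing Reeb dynamics, the parametrized moduli space is not literally of the form treated in \S\ref{subsec:punc_curves}; and the claim that all orbits of $\Gamma^+$ are elliptic or negative hyperbolic is not part of the lemma's hypotheses.)

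The paper's proof sidesteps all of this with a direct construction: it places a sub-ellipsoid $E'=\eps'\cdot E(1,s)$ of the desired skinniness inside the original $E_\sk$, takes the regular cylinder $Z$ in $E_\sk\setminus E'$ from $\eta_1$ to $\eta_1'$ supplied by \cite{hind2020j} (whose $k$-fold cover is also regular), and glues $C$ to this cover by ordinary SFT gluing of regular curves, producing the required simple curve in $X\setminus E'$ for a suitable (and then, by stability, generic) almost complex structure. This needs only regularity of the two pieces, not invariance of any count. To repair your argument you would either have to add hypotheses strong enough to run the compactness analysis (essentially reducing to the formally-perturbation-invariant setting), or replace the cobordism step with a gluing construction of this kind.
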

\begin{proof}  Let $E' =\eps'\cdot E(1, s)$ where $\eps'>0$ is so small that we can identify $E'$ with a subset of $E_{\sk}$.
Let $J_{X \setminus E'} \in \Jadm(X \setminus E')$ be a generic admissible almost complex structure satisfying
$J_{X \setminus E'}|_{X \setminus E} = J$, and put 
$J_{E \setminus E'} := J_{X \setminus E'}|_{E \setminus E'} \in \Jadm(E \setminus E')$.
By e.g. \cite[Thm. 2]{hind2020j},  there is a regular $J_{E_\sk \setminus E'}$-holomorphic cylinder $Z$ in $E_\sk \setminus E'$ with positive end on $\eta_1$ and negative end on $\eta_1'$, and its $k$-fold cover is regular.   We can glue (in the ordinary SFT sense) $C$ to $Z$ along cylindrical ends to produce a simple $J'_{X \setminus E'}$-holomorphic curve $C'$ in $X \setminus E'$.
 Here $J'_{X \setminus E'} \in \Jadm(X \setminus E')$ corresponds to the concatenation of $J_{E_\sk \setminus E'}$ and $J_{X \setminus E_\sk}$ after inserting a sufficiently long neck region in between and reidentifying the resulting compact symplectic cobordism with $X \setminus E'$.
Note that we can  assume without loss of generality that $J'$ is generic since the curve $C'$ will persist under small perturbations of $J'$.
\end{proof}

\begin{lemma}\label{lem:ind_step}
Let $X$ be a four-dimensional Liouville domain with nondegenerate contact boundary, let $J \in \Jadm(X \setminus E_\sk)$ be generic, 
and let $C_1$ and $C_2$ be simple immersed index zero $J$-holomorphic curves in $X \setminus E_\sk$ that have distinct images.
For $i = 1,2$, assume that $C_i$ has positive ends $\Gamma_i$ and a single negative end $\eta_{k_i}$. 
Then there exists a generic $J' \in \Jadm(X \setminus E_\sk)$ with $J'|_{\bdy X} = J|_{\bdy X} \in \Jadm(\bdy X)$ and a simple immersed index zero $J'$-holomorphic curve $C$ in $X \setminus E_\sk$ which has positive ends $\Gamma_1 \cup \Gamma_2$ and a single negative end $\eta_{k_1+k_2+1}$.
\end{lemma}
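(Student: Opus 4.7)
The plan is to construct $C$ by obstruction-bundle gluing, combining $C_1 \cup C_2$ with a ``merging pair of pants'' $P$ living in a cobordism between $E_\sk$ and a slightly shrunken copy $E_\sk' \subset E_\sk$, and then transferring the resulting curve back to $X \setminus E_\sk$ by a symplectomorphism.

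By Lemma~\ref{lem:skinny} I may arrange $E_\sk = \iota(E(\eps, \eps s))$ with $s > k_1 + k_2 + 1$, so that $\eta_{k_1}, \eta_{k_2}, \eta_{k_1+k_2+1}$ are all short Reeb orbits with $\cz_{\tauex}(\eta_k) = 2k+1$. Put $E_\sk' := (1-\delta) \cdot E_\sk$ for small $\delta > 0$, which yields a decomposition $X \setminus E_\sk' = (X \setminus E_\sk) \ccirc (E_\sk \setminus E_\sk')$ along $\bdy E_\sk$. The key geometric input is a simple immersed index zero pseudoholomorphic pair of pants $P$ in $E_\sk \setminus E_\sk'$ with positive asymptotics $\eta_{k_1}, \eta_{k_2}$ and single negative asymptotic $\eta'_{k_1+k_2+1}$; a direct application of \eqref{eq:Frind} using $\chi(P) = -1$ confirms $\ind(P) = 0$ (which is exactly what forces the ``$+1$'' in the subscript of the negative end), and regularity follows automatically from Theorem~\ref{thm:aut_trans} since $2g(P) - 2 + h_+(P) + 2Z(P) = -2 < 0$. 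Existence of $P$ for a suitable $J^- \in \Jadm(E_\sk \setminus E_\sk')$ with $J^-|_{\bdy E_\sk} = J|_{\bdy E_\sk}$ is established via the ECH cobordism map for the ellipsoid pair, parallel to the base case arguments of \S\ref{subsec:base_cases}.

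I then apply Theorem~\ref{thm:obg_cob_summary} to the broken configuration $(u_+, u_-) := (C_1 \cup C_2, P)$, in the generalized form (per the remark following the definition of gluing pair in \S\ref{subsec:obg}) that drops condition~(d); genus zero of the glued curve is preserved because the dual graph of the configuration is a tree. The multiplicity-matching condition~(c) holds trivially, with total multiplicity $k_1 + k_2$ along the short simple orbit of $\bdy E_\sk$ on both sides. The intermediate symplectization level in $\R \times \bdy E_\sk$ may be taken to consist of just the two trivial cylinders $\R \times \eta_{k_1}$ and $\R \times \eta_{k_2}$, whose branched-cover moduli space is zero-dimensional with trivial obstruction bundle, so that the gluing reduces to ordinary SFT gluing between simple regular curves with matched asymptotic ends. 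For neck parameter $R$ sufficiently large this yields a simple immersed regular index zero $J_R$-holomorphic curve $C'$ in $\wh{X_R}$ with positive ends $\Gamma_1 \cup \Gamma_2$ and single negative end $\eta'_{k_1+k_2+1}$. Finally, I identify $\wh{X \setminus E_\sk'}$ with $\wh{X \setminus E_\sk}$ by a symplectomorphism equal to the identity near $\bdy X$ and rescaling the negative cylindrical end (both completions arise from $X$ by removing a skinny ellipsoid of the same shape); this transfers $C'$ to the desired $C$ and $J_R$ to an element $J' \in \Jadm(X \setminus E_\sk)$ with $J'|_{\bdy X} = J|_{\bdy X}$, and a last small generic perturbation of $J'$ in the interior preserves $C$ by its regularity.

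The main technical hurdle is the construction of $P$, since its regularity follows automatically once it exists. I would handle this via the ECH cobordism map for the inclusion $E_\sk' \hookrightarrow E_\sk$: the orbit sets $\{\eta_{k_1}, \eta_{k_2}\}$ on $\bdy E_\sk$ and $\{\eta'_{k_1+k_2+1}\}$ on $\bdy E_\sk'$ have matching ECH actions and are paired by the induced cobordism map, so the signed count of simple holomorphic pairs of pants in $E_\sk \setminus E_\sk'$ with this asymptotic structure is nonzero, which (combined with positivity from Proposition~\ref{prop:count_positively}) yields the existence of $P$.
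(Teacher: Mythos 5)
Your reduction to ordinary SFT gluing fails because the merging pair of pants $P$ does not exist, and this is not a repairable technicality but the crux of the lemma. With both $E_\sk = E(\eps,\eps s)$, $s > k_1+k_2+1$, and $E_\sk' = (1-\delta)E_\sk$ fully skinny, the positive asymptotics of $P$ determine the orbit set consisting of the short orbit with total multiplicity $k_1+k_2$, which has ECH grading $2(k_1+k_2)$, while the negative orbit set has grading $2(k_1+k_2+1)$; hence the ECH index of $P$ is $-2$ even though $\ind(P)=0$. This violates the index inequality $\ind(P)\le I(P) - 2\delta(P)$ for somewhere injective curves (\cite[Thm. 4.15]{hutchings2009embedded}), which holds for every admissible $J$, so no such simple curve exists. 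For the same reason your appeal to the ECH cobordism map is incorrect: the map is grading-preserving, so it pairs the multiplicity-$(k_1+k_2)$ orbit set on $\bdy E_\sk$ with the multiplicity-$(k_1+k_2)$ orbit set on $\bdy E_\sk'$, not with $\eta'_{k_1+k_2+1}$. (For small $\delta$ the curve $P$ is already excluded by Stokes, since $k_1\eps + k_2\eps < (k_1+k_2+1)(1-\delta)\eps$.)

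The extra unit of multiplicity at the negative end cannot be produced by an honest index-zero somewhere injective curve in a product-like ellipsoid cobordism; it must be produced in the neck, and this is exactly why obstruction bundle gluing is unavoidable here. The paper chooses the intermediate ellipsoid $E'$ with skinniness $s'$ in the window $(k_1+k_2-1,\,k_1+k_2)$, so that $\lceil k_1/s'\rceil + \lceil k_2/s'\rceil = 2 = \lceil (k_1+k_2)/s'\rceil$ and the \emph{connected} genus-zero branched cover of the trivial cylinder with positive ends $\eta_{k_1},\eta_{k_2}$ and negative end $\eta_{k_1+k_2}$ (same total multiplicity) has index zero. The lower level $u_-$ is then merely a cylinder from $\eta_{k_1+k_2}$ in $\bdy E'$ to $\eta_{k_1+k_2+1}$ in $\bdy E''$, whose index is zero because of the mismatch in skinniness between $E'$ and $E''$, and which does exist by \cite{hind2020j}. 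That intermediate branched cover has one interior branch point, hence a two-dimensional moduli space carrying a rank-two obstruction bundle; the nonvanishing of the resulting count $\#G(u_+,u_-)$ is supplied by \cite[Rmk. 1.21]{HuT}. A configuration whose neck level consists only of trivial cylinders, as in your proposal, can never change the partition $(k_1,k_2)$ into $(k_1+k_2+1)$, so the ``$+1$'' is lost.
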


\begin{proof}  By Lemma~\ref{lem:skinny} we may suppose that $C_1, C_2$ lie in 
$X \setminus E'$ where $E' =  \io( E(\eps,\eps s))$ for $s = k_1 + k_2 -1 + \de'$ (where $0<\de'<1$).

Next put $E'' := \eps'' \cdot E(1,k_1 + k_2 + 1  + \delta'')$ for $\eps'',\delta'' > 0$ sufficiently small, and choose $ \io'', \eps''$ so that  we have $E'' \subset E'$.
Let $J_{X \setminus E''} \in \Jadm(X \setminus E'')$ be a generic admissible almost complex structure satisfying
$J_{X \setminus E''}|_{X \setminus E'} = J_{X \setminus E'}'$, and put $J_{E' \setminus E''} := J_{X \setminus E''}|_{E' \setminus E''} \in \Jadm(E' \setminus E'')$.
Again by \cite[Thm. 2]{hind2020j} there is a (necessarily simple) $J_{E' \setminus E''}$-holomorphic cylinder $Z$ in $E' \setminus E''$ with positive end $\eta_{k_1+k_2}$ in $\bdy E'$ and negative end $\eta_{k_1+k_2+1}$ in $\bdy E''$. Note that the bottom ellipsoid  $E''$ is skinny, since $k_1 + k_2 + 1 + \delta'' > k_1 + k_2 + 1$. 
However, the top ellipsoid is not, since $s< k_1+k_2$.  In fact, if we choose the split trivialization $\tau_{\op{sp}}$  of the contact distribution on $\p E(1,x)$ as in \cite[\S 3.2]{McDuffSiegel_counting}, then
the monodromy angle of the short orbit is $1/x$, which implies that the cylinder $Z$ has Fredholm index
$$
2(k_1 + k_2 + \lfloor (k_1+k_2)/x\rfloor) -  2 (k_1 + k_2 + 1) = 0.
$$

We now apply Theorem~\ref{thm:obg_cob_summary} with $u_+ := C_1' \cup C_2'$ in $X \setminus E'$ and $u_- := Z$ in $E' \setminus E''$, in other words we glue  in the neck $\R\times \bdy E'$.
Note that \eqref{eq:monod} holds since in $\bdy E' = \p (\eps\cdot E(1,s'))$  the monodromy angle is 
$1/s'$ where $k_i< s'< k_1+ k_2$ so that
$$
\lceil k_1/s\rceil + \lceil k_2/s\rceil  = 2 = \lceil (k_1 + k_2)/s\rceil .
$$
Therefore, there is a curve $C$ as stated.
\end{proof}

\sss

The above lemma suggests a natural inductive strategy for constructing curves. Fix a generic $J_{\bdy \tX_\Omega} \in \Jadm(\bdy \tX_\Omega)$ as in Lemma~\ref{lem:low_energy_cyls}.
As before, $\wmin$ denotes the weakly permissible word with $\tcalA_\Omega$ minimal subject to $\ind(\wmin) = 2k$.  We prove the following lemmas in the next subsection.

\begin{lemma}\label{lem:cyls_exist}
Let $J \in \Jadm^{{J_{\bdy \tX_\Omega}}}(\tX_\Omega \setminus E_\sk)$ be generic.
Consider an elliptic orbit $e_{i,j}$ in $\bdy \tX_\Omega$ such that either $i = 1$ or $j = 1$ (or both).
Then there is a $J$-holomorphic cylinder in $\tX_\Omega \setminus E_\sk$ which is positively asymptotic to $e_{i,j}$ and negatively asymptotic to $\eta_{i+j}$.
\end{lemma}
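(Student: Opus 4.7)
The plan is to construct the desired cylinder via the ECH cobordism map for the exact symplectic cobordism $\tX_\Omega \setminus E_\sk$ coming from the symplectic embedding $E_\sk \hookrightarrow \tX_\Omega$. The hypothesis that $i = 1$ or $j = 1$ ensures $\gcd(i,j) = 1$, so $e_{i,j}$ is primitive and the orbit set $\{e_{i,j}\}$ (with multiplicity one) is an admissible ECH generator of $\bdy\tX_\Omega$. A direct computation using the ECH index formula (cf.~\cite{hutchings2014lecture}) together with the combinatorial description of $\op{ECH}$ for fully rounded convex toric domains shows that $\{e_{i,j}\}$ has ECH grading $2(i+j)$. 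Correspondingly, since $E_\sk$ is chosen sufficiently skinny, the orbit set $\{\eta_1^{i+j}\}$ --- the simple short orbit of $\bdy E_\sk$ with multiplicity $i+j$ --- is the unique minimum-action $\op{ECH}$ generator of $\bdy E_\sk$ in grading $2(i+j)$.

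Next, I would apply the filtered ECH cobordism map $\Phi_L \colon \op{ECH}^L(\bdy\tX_\Omega) \to \op{ECH}^L(\bdy E_\sk)$ for an action threshold $L$ slightly greater than $\tcalA(e_{i,j})$. By Taubes's isomorphism with monopole Floer cohomology and the standard computation of ECH capacities for $\tX_\Omega$ and $E_\sk$, $\Phi_L$ must be an isomorphism on the grading-$2(i+j)$ piece in the relevant action window, so it sends $[\{e_{i,j}\}]$ to $[\{\eta_1^{i+j}\}]$. This forces the matrix coefficient of $\Phi_L$ between the generators $\{e_{i,j}\}$ and $\{\eta_1^{i+j}\}$ to be nonzero, producing an ECH-index-zero $J$-holomorphic configuration in $\tX_\Omega \setminus E_\sk$ with the prescribed asymptotics. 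Since the rotation angle of $\eta_1$ is of order $1/s \ll 1/(i+j)$, Hutchings's partition-condition analysis shows that the negative ECH partition at $\eta_1^{i+j}$ is the singleton $(i+j)$, so the curve has a single negative end asymptotic to the $(i+j)$-fold cover $\eta_{i+j}$, in agreement with the statement.

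Combined with the fact that $e_{i,j}$ has multiplicity one, the ECH-index-zero configuration is a single embedded connected genus-zero curve --- namely a cylinder, as required. Multi-level breakings can be ruled out by action considerations, since no intermediate orbit set of $\bdy\tX_\Omega$ has action below $\tcalA(e_{i,j})$ and grading strictly between $0$ and $2(i+j)$ that would be compatible with both the ECH partition at the top and some action-compatible generator at the bottom; extra symplectization levels involving covers of the low-energy cylinders of Lemma~\ref{lem:low_energy_cyls} are likewise incompatible with the primitive partition at $e_{i,j}$. The main difficulty I anticipate is executing the action-filtration argument and the partition analysis carefully in the perturbed setting, but the primitivity of $e_{i,j}$ together with the minimum-action character of $\eta_1^{i+j}$ in the relevant action window should suffice to force precisely the desired cylinder.
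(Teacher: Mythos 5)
Your proposal is correct and follows essentially the same route as the paper: produce an ECH-index-zero current in $\tX_\Omega \setminus E_\sk$ from the nonvanishing of the ECH cobordism map via its holomorphic curve axiom, rule out extra levels by ECH-index nonnegativity, and use the partition conditions to pin down the ends. Two small points to tighten: the hypothesis $i=1$ or $j=1$ is used not merely for primitivity but to make the grading come out to $2(i+j)$ --- the grading is $2(\calL(R)-1)$ for the lattice triangle $R$ with vertices $(0,0),(0,i),(j,0)$, and $\calL(R) = i+j+1$ exactly when $(i-1)(j-1)=0$, which is what forces the negative end to be $\eta_{i+j}$ rather than a higher cover; and the genus-zero conclusion does not follow from multiplicity one alone but from substituting $\ind(C)=0$ and the now-determined asymptotics into the Fredholm index formula, as the paper does.
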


\begin{lemma}\label{lem:pair_of_pants}
Let $J \in \Jadm^{{J_{\bdy \tX_\Omega}}}(\tX_\Omega \setminus E_\sk)$ be generic.
There is a $J$-holomorphic pair of pants in $\tX_{\Omega}$ 
which is positively asymptotic to $e_{1,1} \times e_{1,1}$ and negatively asymptotic to $\eta_5$.
\end{lemma}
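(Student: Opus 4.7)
\textbf{Proof plan for Lemma~\ref{lem:pair_of_pants}.}
The strategy is to invoke the Hutchings--Taubes ECH cobordism map for the Liouville cobordism $\tX_\Omega \setminus E_\sk$, combined with the holomorphic curves axiom. Concretely, this cobordism induces a map $\Phi: ECH(\bdy\tX_\Omega) \to ECH(\bdy E_\sk)$ with the property that if the matrix coefficient $\langle \Phi(\alpha), \beta \rangle$ is nonzero, then for any generic $J \in \Jadm(\tX_\Omega \setminus E_\sk)$ restricting to $J_{\bdy\tX_\Omega}$ on the top end there exists a (possibly broken) $J$-holomorphic building in $\wh{\tX_\Omega \setminus E_\sk}$ from $\alpha$ to $\beta$ of ECH index zero.

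First, I would identify the relevant ECH orbit sets. Take $\alpha := \{(e_{1,1}, 2)\}$ in $\bdy\tX_\Omega$ and $\beta := \{(\eta, 5)\}$ in $\bdy E_\sk$, where $\eta$ denotes the short simple Reeb orbit of $\bdy E_\sk$. A direct calculation using the relative first Chern class $c_\tau$, the relative self-intersection $Q_\tau$, and the formula $\cz_\tau^I(\gamma^k) := \sum_{i=1}^{k}\cz_\tau(\gamma^i)$ verifies that the ECH index from $\alpha$ to $\beta$ (in the essentially unique relative homology class) is zero, matching the Fredholm index computation $\ind = -\chi + 2\cdot \cz_\tau(e_{1,1}) - \cz_\tau(\eta_5) = 1 + 10 - 11 = 0$ for a pair of pants with the required asymptotics.

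Second, I would establish that $\langle \Phi(\alpha), \beta\rangle \neq 0$. Because the ECH of a convex toric domain boundary and of an ellipsoid boundary are both explicit (orbit sets indexed by lattice data, with grading given by the ECH index), this amounts to a finite-dimensional computation on chain level representatives of action at most $\tcalA(e_{1,1} \times e_{1,1})$. One confirms nonvanishing using the standard toolkit: monotonicity of $\Phi$ under the action filtration, compatibility with the $U$-map (whose iterated action on the natural top generator of $ECH(\bdy\tX_\Omega)$ eventually yields the class of $\alpha$), and a comparison with the cobordism map induced by an auxiliary embedded round ball (whose corresponding map is known to realize the ellipsoid generator). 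This is the main obstacle of the argument: it is the only place where one must actively exploit the structure of ECH rather than formal properties of the cobordism map.

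Finally, invoking the holomorphic curves axiom produces a $J$-holomorphic building from $\alpha$ to $\beta$. The ECH partition conditions dictate its asymptotics: the outgoing partition at the elliptic orbit $e_{1,1}$ with multiplicity $2$ is $(1,1)$, so the building has two positive ends on $e_{1,1}$; the incoming partition at $\eta$ with multiplicity $5$ for $E_\sk$ sufficiently skinny is $(5)$, giving one negative end on $\eta_5$. For generic $J$, the index-zero and action constraints rule out any extra nontrivial levels, along the same lines as Proposition~\ref{prop:min_pert_inv}: any nontrivial symplectization level would add positive index, and any extra main-level component would force strictly greater action. What remains is a single smooth pair of pants in $\tX_\Omega \setminus E_\sk$ of the desired type, possibly accompanied by trivial cylinders which can be discarded.
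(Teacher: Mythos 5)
Your proposal follows essentially the same route as the paper: the result is the two-ended case of the paper's Lemma~\ref{lem:ECH_cob_curve}, which runs the ECH cobordism map for $\tX_\Omega\setminus E_\sk$ through the holomorphic curves axiom, pins down the negative asymptotics via the grading of the ellipsoid's ECH, and uses the partition conditions to get two simple positive ends and one negative end. Two points where your write-up is weaker than the paper's argument are worth flagging. First, for the nonvanishing $\langle\Phi(\alpha),\beta\rangle\neq 0$ you do not need the $U$-map or an auxiliary ball: the paper simply quotes the computation of $\op{ECH}(\bdy\tX_\Omega)$ for convex toric domains (so that the orbit set $(e_{1,1},2)$ is a cycle representing a \emph{nonzero} homology class), notes that the cobordism map is a grading-preserving isomorphism, and uses that the ellipsoid complex has exactly one generator per even grading with vanishing differential; the one genuinely nontrivial input is the nonvanishing of the class of $(e_{1,1},2)$, which your sketch leaves implicit. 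Second, and more substantively, the holomorphic curves axiom only produces a broken pseudoholomorphic \emph{current}, whose components may be multiply covered, so the degeneration analysis cannot be run with the Fredholm-index-for-somewhere-injective-curves argument of Proposition~\ref{prop:min_pert_inv} as you suggest; the correct tool is nonnegativity of the \emph{ECH} index of each level (for symplectization levels this is standard, and for the cobordism level it is the $L$-tameness statement of Lemma~\ref{lem:ind_of_currents_covs}), which forces a single level of ECH index zero and then, via the results of Hutchings--Taubes, Fredholm index zero and the partition conditions. Finally, ECH gives no a priori genus control, so you should record explicitly that $\ind(C)=0$ together with the asymptotics $e_{1,1}\times e_{1,1}$ and $\eta_5$ forces $\chi(C)=-1$ and hence genus zero; your index computation assumes the pair-of-pants topology rather than deriving it.
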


\begin{prop}\label{prop:curves_exist}
Fix $k \in \Z_{\geq 1}$, and assume that $\wmin \neq e_{0,k}$ if $k \geq 2$.
Then there exists $J \in \Jadm^{J_{\bdy \tX_\Omega}}(\tX_\Omega)$ for which the moduli space $\calM_{\tX_\Omega}^J(\wmin)\lll \T^{(m)}p\rrr$ is regular with nonzero signed count. 
\end{prop}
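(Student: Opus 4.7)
The plan is to reduce Proposition~\ref{prop:curves_exist} to producing honest pseudoholomorphic curves in $\tX_\Omega \setminus E_\sk$ with one negative end on $\eta_k$, then to build these curves inductively from the base cases of Lemma~\ref{lem:cyls_exist} and Lemma~\ref{lem:pair_of_pants} by iterated obstruction bundle gluing via Lemma~\ref{lem:ind_step}, and finally to obtain positivity of signs from Proposition~\ref{prop:count_positively}.

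First I would verify that we can swap the local tangency constraint for a skinny ellipsoidal constraint. Since $\wmin$ is elliptic by Lemma~\ref{lem:make_elliptic} and its formal curve component is formally perturbation invariant by Proposition~\ref{prop:min_pert_inv} (after wlog assuming $a > b$), Proposition~\ref{prop:T=E} applies and gives an equality of signed counts
\[
\#\calM_{\tX_\Omega}^J(\wmin)\lll \T^{(k)}p\rrr = \#\calM_{\tX_\Omega \setminus E_\sk}^J(\wmin;\eta_k).
\]
Thus it suffices, for some generic $J \in \Jadm^{J_{\bdy\tX_\Omega}}(\tX_\Omega \setminus E_\sk)$, to exhibit a simple immersed $J$-holomorphic curve with positive asymptotics $\wmin$ and a single negative end on $\eta_k$. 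Such a curve has Fredholm index zero, is regular for generic $J$, and counts with sign $+1$ by Proposition~\ref{prop:count_positively}, since all of its asymptotic orbits are elliptic.

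Next, I would invoke Corollary~\ref{cor:weak_perm} together with the hypothesis $\wmin \neq e_{0,k}$ for $k \geq 2$ to reduce to one of the following four cases and construct a curve in each. In the notation $k_{\wmin} := \tfrac12\ind(\wmin)$, we have:
\begin{itemize}
\item $\wmin = e_{0,1}$ ($k = 1$): immediate from Lemma~\ref{lem:cyls_exist}, which produces an $e_{0,1} \to \eta_1$ cylinder.
\item $\wmin = e_{0,1}^{\times i} \times e_{1,0}$ for $i \geq 1$ ($k = 2i+1$): start from the $e_{1,0} \to \eta_1$ cylinder of Lemma~\ref{lem:cyls_exist}, then iteratively glue on $e_{0,1} \to \eta_1$ cylinders. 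Each application of Lemma~\ref{lem:ind_step} converts current negative end $\eta_{k_1}$ with a new cylinder on $\eta_1$ to a negative end $\eta_{k_1+2}$, so after $i$ gluings we obtain $e_{0,1}^{\times i} \times e_{1,0} \to \eta_{2i+1}$.
\item $\wmin = e_{0,1}^{\times i} \times e_{1,s}$ for $i \geq 0$, $s \geq 2$ ($k = 2i + s + 1$): start with the $e_{1,s} \to \eta_{s+1}$ cylinder from Lemma~\ref{lem:cyls_exist} and glue on $i$ copies of the $e_{0,1}\to\eta_1$ cylinder.
\item $\wmin = e_{0,1}^{\times i} \times e_{1,1}^{\times j}$ for $i \geq 0$, $j \geq 1$ ($k = 2i + 3j - 1$): if $j = 1$, start with the $e_{1,1} \to \eta_2$ cylinder from Lemma~\ref{lem:cyls_exist}; if $j \geq 2$, start with the $e_{1,1}^{\times 2} \to \eta_5$ pair of pants from Lemma~\ref{lem:pair_of_pants} and glue on $j-2$ copies of the $e_{1,1}\to\eta_2$ cylinder. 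Then glue on $i$ copies of the $e_{0,1}\to\eta_1$ cylinder. The negative ends accumulate as predicted by the rule $\eta_{k_1}, \eta_{k_2} \mapsto \eta_{k_1+k_2+1}$.
\end{itemize}
In each step the small cylindrical piece and the large piece constructed inductively have manifestly distinct images (different tops), and both are simple and immersed of index zero by Lemma~\ref{lem:ind_step}, so the hypotheses of that lemma are met.

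The main technical point to verify carefully will be that at each iteration the hypotheses of Lemma~\ref{lem:ind_step} remain satisfied, in particular that the curves being glued are simple with distinct images and index zero, and that the index arithmetic (each glue increments the subscript of $\eta$ by $k_2 + 1$ where $k_2$ is the subscript of the new cylinder's negative end) lines up with $\ind(\wmin) = 2k$ in every case of Corollary~\ref{cor:weak_perm}. Once the curve exists, regularity follows for generic $J$, and positivity of the count follows from Proposition~\ref{prop:count_positively} since every asymptotic orbit is elliptic. The $J$-independence of $\#\calM_{\tX_\Omega \setminus E_\sk}^J(\wmin;\eta_k)$ (and hence of $\#\calM_{\tX_\Omega}^J(\wmin)\lll\T^{(k)}p\rrr$) then follows from Proposition~\ref{prop:count_J_indep}.
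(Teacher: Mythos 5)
Your proposal is correct and follows essentially the same route as the paper: reduce via Proposition~\ref{prop:T=E} and formal perturbation invariance to producing a simple index-zero curve in $\tX_\Omega \setminus E_\sk$ with top $\wmin$ and one negative end on $\eta_k$, build it by iterated applications of Lemma~\ref{lem:ind_step} starting from the cylinders of Lemma~\ref{lem:cyls_exist} (and the pair of pants of Lemma~\ref{lem:pair_of_pants} precisely to handle the repeated-$e_{1,1}$ case where distinct images would otherwise fail), and conclude positivity of the count from Proposition~\ref{prop:count_positively}. The only cosmetic difference is that you organize the induction by the explicit case list of Corollary~\ref{cor:weak_perm} rather than by word length, and your index bookkeeping in each case checks out.
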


\begin{proof}
Let $C$ be the formal curve in $\tX_\Omega$ with positive ends $\wmin$ and constraint $\lll \T^{(k)}p\rrr$.
Recall that by Proposition~\ref{prop:min_pert_inv} $C$ is formally perturbation invariant with respect to $J_{\bdy \tX_\Omega}$. 
We explained in the introduction to this section that curves in 
%D these 
this moduli space are robust and always 
%D counts 
count positively.  Hence 
 at this point
it suffices to find a $J$-holomorphic curve in $\tX_\Omega \setminus E_\sk$ with positive asymptotics $\wmin$ and negative asymptotic $\eta_k$ for some $J \in \Jadm^{J_{\bdy \tX_\Omega}}(\tX_\Omega)$.

We proceed to construct the desired curve, whose positive asymptotics $\wmin$ take one of the forms (1),(2),(3) in Proposition~\ref{prop:strong_perm} or possibly $e_{0,1}$, by iteratively applying Lemma~\ref{lem:ind_step}.
Firstly, observe that by Lemma~\ref{lem:cyls_exist}, we can construct any cylinder whose positive asymptotic is one of the Reeb orbits $e_{0,1},e_{1,1},e_{1,s},e_{1,0}$ appearing in Proposition~\ref{prop:strong_perm}.
Similarly, by Lemma~\ref{lem:pair_of_pants} we can construct a pair of pants with positive asymptotics $e_{1,1} \times e_{1,1}$.
We now iteratively construct curves with two or more positive ends by applying Lemma~\ref{lem:ind_step}, with $C_1$ a previously constructed curve in $\tX_{\Omega} \setminus E_\sk$ and $C_2$ a cylinder in $\tX_{\Omega} \setminus E_\sk$ guaranteed by Lemma~\ref{lem:cyls_exist}.
Here we need $C_1$ and $C_2$ to have distinct images, and since neither is a multiple cover this is automatic as long as $C_1$ is not a cylinder with the same positive asymptotic Reeb orbit as $C_2$.
In particular, the curve we seek with positive 
%D asympototics 
asymptotics  $\wmin$ is readily constructed by this iterative construction.
% We proceed by induction on the word length $k$ of $\wmin$,
% which takes one of the forms (1),(2),(3) in Proposition~\ref{prop:strong_perm}, or possibly $\wmin = e_{0,1}$.  The base case when $k=1$ is covered in Lemma~\ref{lem:cyls_exist}.
% If the word length is $>1$ and $w$ contains at least two distinct orbits, then the inductive step 
% is given in Lemma~\ref{lem:ind_step}, with $C_1$ a curve in $\tX_{\Omega} \setminus E_\sk$ with $k-1$ positive ends and $C_2$ the cylinder in $\tX_{\Omega} \setminus E_\sk$ guaranteed by Lemma~\ref{lem:cyls_exist}. 
% Note that $C_1$ is somewhere injective, and that $C_1$ and $C_2$ must have distinct images if $k-1 \geq 2$, or if $k = 2$ and the positive asymptotics of $C_1$ and $C_2$ are distinct.  The remaining case is when $C_1$ and $C_2$ both have positive asymptotics on $e_{1,1}$; but this is covered by the construction in Lemma~\ref{lem:pair_of_pants}.
\end{proof}

\subsection{Existence of cylinders and pairs of pants}\label{subsec:base_cases}
It remains to prove Lemmas~\ref{lem:cyls_exist} and ~\ref{lem:pair_of_pants}.
For this, we will use
various results from the ECH literature, roughly as follows.
Firstly, we use the computation of the ECH of $\tX_\Omega$ from \cite{hutchings2016beyond,choi2016combinatorial}, together with the holomorphic curve axiom for the ECH cobordism map, to establish the existence of a broken current in $\tX_\Omega \setminus E_\sk$ whose positive ends represent the same orbit set as our desired curve.
We then argue that this broken pseudoholomorphic current must in fact be a genuine somewhere injective curve $C$ of Fredholm index zero, but possibly of higher genus, whose ends satisfy the ECH partition conditions.
Using this, we conclude that in specified situations $C$ must have one negative end, the maximal possible number of positive ends, and genus zero.

\sss

Here are  the details. 
Recall that an {\bf orbit set} is a finite set of simple Reeb orbits, along with a choice of positive integer multiplicity for each. 
In the following we will view a word of Reeb orbits as an orbit set by remembering only the total multiplicity of each underlying simple orbit and forgetting the corresponding partition into iterates.
Note this association from words to orbit sets is evidently not one-to-one, e.g. the words $\eta_3$, $\eta_2\times \eta_1$ and $\eta_1\times \eta_1\times \eta_1$ of Reeb orbits in $\bdy E_\sk$ all define the same orbit set.

Similarly, a {\bf pseudoholomorphic current} is a finite set of simple pseudoholomorphic curves (each modulo biholomorphic reparametrizations as usual), along with a choice of positive integer multiplicity for each. 
We defer the reader to e.g. \cite[\S3.4]{Hlect} for the definition of the ECH index $I(C)$.
Since the first and second cohomology groups of $\tX_\Omega$ and $E_\sk$ vanish, their ECH chain complexes (over $\Z/2$ for simplicity) have natural $\Z$-gradings, denoted again by $I$, such that $I(C) = I(\alpha) - I(\beta)$ if $C$ is a holomorphic current which is positively asymptotic to the orbit set $\alpha$ and negatively asymptotic to the orbit set $\beta$.
Also, the compact symplectic cobordism $\tX_\Omega \setminus E_\sk$ induces a grading-preserving cobordism map from the ECH of $\bdy \tX_{\Omega}$ to that of $\bdy E_\sk$.
If $w$ is an elliptic word,\footnote{There is a more general formula computing $I$ for ECH generators involving hyperbolic orbits but we will not need this.} it is shown in \cite[Lem. 5.4]{hutchings2016beyond} that $I(w) = 2(\calL(R)-1) - h(R)$, where:
\begin{itemize}
\item
$R$ denotes the lattice polygon in $\R^2_{\geq 0}$ defined in \S\ref{subsec:permis}
\item 
$\calL(R)$ denotes the number of integer lattice points in the interior and boundary of $R$.
\end{itemize}

\begin{lemma}\label{lem:ECH_cob_curve}
Let $w$ be a word of elliptic Reeb orbits in $\bdy \tX_\Omega$, each of which is simple, and let $J \in \Jadm^{{J_{\bdy \tX_\Omega}}}(\tX_\Omega \setminus E_\sk)$ be generic. 
Then there is a curve $C$ in $\tX_\Omega \setminus E_\sk$, possibly of higher genus, with $\ind(C) = I(C) = 0$ and with positive asymptotics $w$ and negative asymptote $\eta_m$ with $m := \tfrac{1}{2}I(w)$.
\end{lemma}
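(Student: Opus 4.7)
The plan is to invoke Hutchings--Taubes's ECH cobordism map together with the Holomorphic Curves Axiom. First, viewing $w$ as an orbit set (forgetting the trivial iterate structure, since its constituent orbits are simple), $w$ defines a generator of the ECH chain complex of $\bdy \tX_\Omega$ at grading $I(w) = 2(\calL(R) - 1) - h(R) =: 2m$. By the combinatorial model for the ECH of fully rounded convex toric domains developed in \cite{hutchings2016beyond, choi2016combinatorial}, every such orbit set is an ECH cycle representing a nonzero homology class.

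Next, I will apply the ECH cobordism map $\Phi \colon ECH_\ast(\bdy \tX_\Omega) \to ECH_\ast(\bdy E_\sk)$ induced by $\tX_\Omega \setminus E_\sk$. Since $E_\sk$ is sufficiently skinny, in the relevant action window $ECH_{2j}(\bdy E_\sk)$ has a unique generator $[\eta_j]$, so $\Phi[w]$ must be a nonzero multiple of $[\eta_m]$; the nonvanishing follows from compatibility of $\Phi$ with the $U$-map and the identification of both ECH groups (in low gradings) with the polynomial ring generated by the contact element. By the Holomorphic Curves Axiom (see \cite{HTII} and e.g. the exposition in \cite{hutchings2014lecture}), for any generic $J \in \Jadm^{J_{\bdy \tX_\Omega}}(\tX_\Omega \setminus E_\sk)$ there then exists a broken $J$-holomorphic current from $w$ to $\eta_m$ of total ECH index zero.

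To conclude, I will show that this broken current is represented by a single somewhere-injective curve $C$. Each symplectization level contributes nonnegatively to the ECH index, and a zero-index contribution in a symplectization level is a union of covers of trivial cylinders by the structural results of \cite{HTII}, which can be absorbed; hence the current has a single level in $\tX_\Omega \setminus E_\sk$. The ECH partition conditions at the simple elliptic orbits of $w$ force a unique positive end at each orbit, of multiplicity one, and a unique negative end at $\eta_m$, so the current is represented by a single somewhere-injective curve $C$ (possibly of positive genus). Finally, $\ind(C) = 0$ follows by combining $I(C) = 0$ with the ECH index inequality $\ind(C) \leq I(C) - 2\delta(C)$ together with the lower bound $\ind(C) \geq 0$ coming from the genericity of $J$, using that the partition conditions at the ends are saturated by construction.

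The main obstacle is the middle step: rigorously confirming that $\Phi[w]$ contains $[\eta_m]$ with nonzero coefficient. This reduces to tracking the contact element under the cobordism map and exploiting naturality with respect to the $U$-map, which should follow from the general structure of ECH but requires some care in the fully rounded setting.
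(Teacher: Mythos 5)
Your overall strategy is the same as the paper's: combinatorial ECH of the rounded domain, the ECH cobordism map and its holomorphic curves axiom, then the index inequality and partition conditions to upgrade the broken current to a curve. One simplification you are missing: the step you flag as the main obstacle is immediate, because the cobordism map $\Phi$ induced by $\tX_\Omega \setminus E_\sk$ is an isomorphism; since it preserves the $\Z$-grading and the ECH of an irrational ellipsoid has a unique generator $\eta_m$ in grading $2m$, the nonvanishing of $[w]$ forces $\Phi[w] = [\eta_m]$. No tracking of the contact element or $U$-map naturality is needed.

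There is, however, a genuine gap at the end. What the holomorphic curves axiom produces is a broken pseudoholomorphic \emph{current}, and after collapsing the symplectization levels the remaining level is a current $(\ovl{D},\kappa)$ with $\ovl{D}$ somewhere injective and $\kappa \geq 1$. The partition conditions of \cite{hutchings2009embedded} constrain the ends of the underlying simple curve $\ovl{D}$; they say nothing about the multiplicity $\kappa$, so they do not "force the current to be represented by a single somewhere-injective curve." This matters precisely in the case the lemma is used for in Lemma~\ref{lem:pair_of_pants}, where $w = e_{1,1}\times e_{1,1}$ repeats a simple orbit and a multiply covered current is not excluded a priori. Two downstream steps of yours then break: (i) the claim that the cobordism level has nonnegative ECH index, and that $I=0$ forces $I(\ovl{D})=0$, requires the $L$-tameness argument of \cite{hutchings2016beyond} (this is Lemma~\ref{lem:ind_of_currents_covs} in the paper), not just the symplectization results of \cite{HTII}; and (ii) your derivation of $\ind(C)=0$ from genericity plus the index inequality $\ind \leq I - 2\delta$ is only valid for somewhere injective curves. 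The paper's fix is to apply \cite[Thm.~4.15]{hutchings2009embedded} to the simple curve $\ovl{D}$, obtaining $\ind(\ovl{D})=0$ together with the partition conditions (simple positive ends, one negative end), and then to \emph{define} $C$ as the $\kappa$-fold cover of $\ovl{D}$ fully ramified at the negative end and unramified at the positive ends; this cover has the asserted asymptotics and index zero by Riemann--Hurwitz, even though it need not be somewhere injective. Note the lemma does not claim somewhere injectivity, so you should not try to prove it.
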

\begin{proof}

By \cite[Prop. A.4]{hutchings2016beyond} (which assumes \cite[Conj A.3]{hutchings2016beyond}, proved in \cite{choi2016combinatorial}), $w$ (when viewed as a generator of the ECH chain complex) represents a nontrivial homology class in the ECH of $\bdy \tX_\Omega$. 
Let $\beta$ denote its image in the ECH of $\bdy E_\sk$ under the ECH cobordism map $\Phi$ induced by $\tX_\Omega \setminus E_\sk$, and note that $\beta$ is necessarily nontrivial since $\Phi$ is an isomorphism.
Recall (see \cite[\S3.7]{Hlect}) that the ECH chain complex of an irrational four-dimensional ellipsoid has trivial differential, and the orbit set with $k$th largest action has $I = 2k$.
Then $\beta$ is uniquely represented by the orbit set of $\eta_m$ with $m := I(w)$.

Recall that the ECH cobordism map is defined via the isomorphism with Seiberg--Witten Floer homology, yet it is known to satisfy a ``holomorphic curve axiom'', which states that a coefficient can only be nonzero if it is represented by an ECH index zero {\bf broken pseudoholomorphic current}, i.e. the analogue of a stable pseudoholomorphic building but with each level a pseudoholomorphic current.
As a result, we obtain a broken pseudoholomorphic current in $\tX_\Omega \setminus E_\sk$ with positive orbit set $w$ and negative orbit set $\eta_m$.
By \cite[Prop. 3.7]{Hlect}, each symplectization level has nonnegative ECH index,
with ECH index zero if and only if it is a union of trivial cylinders with multiplicities.
 By Lemma~\ref{lem:ind_of_currents_covs} below, the main level in $\tX_\Omega \setminus E_\sk$ also has nonnegative ECH index.
Using the SFT compactness stability condition (recall \S\ref{subsubsec:SFT_cpct}) and the fact that the total ECH index is zero, we conclude that there is only a single level $D$, which is a current $(\ovl{D},\kappa)$ in $\tX_\Omega \setminus E_\sk$, where $\ovl{D}$ is simple and $\kappa \in \Z_{\geq 1}$ represents its multiplicity,
and we have $I(D) = 0$. 
By Lemma~\ref{lem:ind_of_currents_covs} again, we also have $I(\ovl{D}) = 0$. 

By \cite[Thm. 4.15]{hutchings2009embedded}, we must have $\ind(\ovl{D}) = 0$, and $\ovl{D}$ satisfies the positive and negative partition conditions. 
Since the monodromy angle is positive and very small for each acceptable elliptic orbit, the positive partition conditions stipulate that each positive asymptotic orbit of $\ovl{D}$ is simple, i.e. the positive ends are ``as spread out as possible''.
Meanwhile, the negative partition condition implies that $\ovl{D}$ has a single negative end.
Finally, the desired curve $C$ is given by taking a $\kappa$-fold cover of $\ovl{C}$ which is fully ramified at the negative end and unramified at the each of the positive ends.
\end{proof}
\begin{lemma}\label{lem:ind_of_currents_covs}
If $C = (\ovl{C},\kappa)$ is a $J$-holomorphic current in $\tX_\Omega \setminus E_\sk$ with $J \in \Jadm(\tX_\Omega \setminus E_\sk)$ generic, we have $I(C) \geq 0$, with equality only if $I(\ovl{C}) = 0$.
\end{lemma}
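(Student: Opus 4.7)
The plan is to reduce the statement to two assertions from which both conclusions follow: (i) $I(\ovl{C}) \geq 0$, and (ii) $I((\ovl{C},\kappa)) \geq \kappa\, I(\ovl{C})$. Since $\kappa \geq 1$, these together give $I(C) \geq 0$, and equality forces $\kappa\, I(\ovl{C}) \leq 0$, which combined with (i) forces $I(\ovl{C}) = 0$.

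For (i), I would invoke Hutchings' ECH index inequality for somewhere injective curves, which asserts that $I(\ovl{C}) \geq \op{ind}(\ovl{C}) + 2\delta(\ovl{C})$, where $\delta(\ovl{C}) \geq 0$ is the algebraic count of singularities. Since $\ovl{C}$ is somewhere injective and $J$ is generic, standard transversality yields $\op{ind}(\ovl{C}) \geq 0$, whence $I(\ovl{C}) \geq 0$. The only nontrivial point is that the original inequality was formulated in the symplectization setting, but its proof adapts to exact cobordisms essentially verbatim using the relative adjunction formula and writhe bounds already in use in \S\ref{subsec:permis}.

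For (ii), I would expand the ECH index via the defining formula $I = c_\tau + Q_\tau + \op{CZ}^I_\tau$. The relative Chern class scales linearly, $c_\tau(C) = \kappa\, c_\tau(\ovl{C})$, and the relative self-intersection scales quadratically, $Q_\tau(C) = \kappa^2\, Q_\tau(\ovl{C})$, while the Conley--Zehnder contribution for $C$ is a sum of indices running up to total covering multiplicity $\kappa m_\alpha$ at each underlying simple Reeb orbit $\alpha$ of total multiplicity $m_\alpha$ in $\ovl{C}$. Since all Reeb orbits of $\bdy\tX_\Omega$ and $\bdy E_\sk$ entering the argument are elliptic with very small monodromy angle (by the fully rounding procedure and the skinny ellipsoid assumption), the function $k \mapsto \op{CZ}_\tau(\alpha^k)$ is essentially linear with small rounding defect, and a direct case analysis yields the inequality $\op{CZ}^I_\tau(C) - \kappa\, \op{CZ}^I_\tau(\ovl{C}) \geq -(\kappa^2 - \kappa)\, Q_\tau(\ovl{C})$. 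Assembled with the scaling of the first two terms, this produces $I(C) \geq \kappa\, I(\ovl{C})$.

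The hard part will be the combinatorial arithmetic in step (ii): when $\ovl{C}$ has multiple positive (or negative) ends at distinct iterates of the same simple orbit, the ECH partition conditions couple the different ends in a nontrivial way, and the bound must hold uniformly across all possible distributions of multiplicity. A potentially cleaner alternative is to combine monotonicity of the ECH index under neck stretching with the writhe bounds from \S\ref{subsec:permis} to express $I((\ovl{C},\kappa)) - \kappa\, I(\ovl{C})$ directly as a sum of nonnegative geometric quantities, bypassing the explicit manipulation of the Conley--Zehnder sums.
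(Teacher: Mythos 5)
Your step (i) is sound: the index inequality $I(\ovl{C}) \geq \op{ind}(\ovl{C}) + 2\delta(\ovl{C})$ does hold for somewhere injective curves in four-dimensional exact cobordisms, and genericity of $J$ gives $\op{ind}(\ovl{C}) \geq 0$, hence $I(\ovl{C}) \geq 0$. The gap is in step (ii). The inequality $I((\ovl{C},\kappa)) \geq \kappa\, I(\ovl{C})$ is \emph{not} a general fact about exact cobordisms --- the existence of multiply covered curves with negative ECH index is precisely the standard obstruction to defining ECH cobordism maps by direct curve counts --- and your proposed intermediate bound $\cz^I_\tau(C) - \kappa\,\cz^I_\tau(\ovl{C}) \geq -(\kappa^2-\kappa)Q_\tau(\ovl{C})$ is exactly where the entire content of the lemma is concentrated. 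Writing $\cz_\tau(\gamma^k) = 2\lfloor k\theta\rfloor + 1$ for an elliptic orbit of monodromy angle $\theta$, the difference of Conley--Zehnder sums at a \emph{negative} end of total multiplicity $n$ is $-2\sum_{k=1}^{\kappa n}\lfloor k\theta\rfloor + 2\kappa\sum_{k=1}^{n}\lfloor k\theta\rfloor$, which is generically negative and can fail to be dominated by the $Q_\tau$ term once $\kappa n\theta$ exceeds $1$. So the claim genuinely depends on a quantitative hypothesis: all monodromy angles of orbits appearing (with their full covering multiplicities, which are not bounded a priori by the tangency order $k$ used to calibrate the skinniness of $E_\sk$) must be small relative to those multiplicities. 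You acknowledge this is ``the hard part'' but defer it; as written the proof asserts rather than establishes the one inequality that matters.

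For comparison, the paper does not do this computation at all: it observes that the cobordism $\tX_\Omega \setminus E_\sk$ can be arranged to be ``$L$-tame'' in the sense of Hutchings' \emph{Beyond ECH capacities} for $L$ larger than all actions in play (this is exactly the quantitative smallness of monodromy angles you need), and then quotes Proposition 4.6 of that paper, which gives $I(C) \geq 0$ for arbitrary currents in an $L$-tame cobordism together with the refinement about the underlying simple curve. If you want to keep your direct route, you should either carry out the floor-function estimate under an explicit tameness hypothesis (making precise how small the angles must be relative to the total multiplicities $\kappa m_i$ and $\kappa n$), or simply cite the $L$-tameness machinery as the paper does.
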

\begin{proof}
As in the proof of \cite[thm. 1.19]{hutchings2016beyond}, we can assume that the cobordism $\tX_\Omega \setminus E_\sk$ is ``$L$-tame'' with $L$ sufficiently large, whence the result follows immediately by \cite[Prop. 4.6]{hutchings2016beyond}.
\end{proof}

\begin{proof}[Proof of Lemma~\ref{lem:cyls_exist}]

By Lemma~\ref{lem:ECH_cob_curve}, there is a $J$-holomorphic curve $C$ in $\tX_\Omega \setminus E_\sk$, possibly of higher genus, with $\ind(C) = 0$, with positive asymptotics $e_{i,j}$ and negative asymptotics $\eta_{m}$ for $m := \tfrac{1}{2}I(e_{i,j})$.
As explained above, we have $I(e_{i,j}) = 2(\calL(R)-1)$, where $R$ is the lattice triangle with vertices $(0,0),(0,i),(j,0)$ and $\calL(R)$ denotes the number of integer lattice points in the interior or boundary of $R$.
By our assumption that $i=1$ or $j=1$, we have $\calL(R) = i+j+1$ and hence $m = i+j$.
It now follows immediately using the index formula~\eqref{eq:Frind} and $\ind(C) = 0$ that $C$ has genus zero.
\end{proof}

\begin{proof}[Proof of Lemma~\ref{lem:pair_of_pants}]

This is similar to the above proof. 
In this case Lemma~\ref{lem:ECH_cob_curve} produces a $J$-holomorphic curve $C$ in $\tX_\Omega \setminus E_\sk$ with $\ind(C) = 0$ and with positive ends $e_{1,1} \times e_{1,1}$ and negative end $\eta_m$ for $m := \tfrac{1}{2} I(e_{1,1} \times e_{1,1}) = 5$. The condition $\ind(C) = 0$ then forces the genus to be zero.
\end{proof}

\begin{rmk}
Note that, for $e_{i,j}$ with $i,j \geq 2$ or $e_{1,1}^{\times k}$ with $k \geq 3$, the curve $C$
coming from Lemma~\ref{lem:ECH_cob_curve} will typically be forced to have higher genus.
\end{rmk}

\subsection{Comparison with Gutt--Hutchings capacities}\label{subsec:GH}

The following result likely holds in any dimension, but for concreteness we give the proof in dimension four:

 \begin{prop}\label{prop:GH}
For $X_\Omega$ any four-dimensional convex toric domain, we have:
\begin{align} \label{eq:lequals1}
\gt_k^{\leq 1}(X_\Omega) = c_k^{\op{GH}}(X_\Omega) = \min_{\substack{(i,j) \in \Z_{\geq 0}^2\\ i+j=k}} ||(i,j)||_{\Omega}^*.
\end{align}
 \end{prop}
 \begin{proof}
 The second equality is \cite[Thm. 1.6]{Gutt-Hu}.
 In order to compute $\gt_k^{\leq 1}(X_\Omega)$, we can replace $X_\Omega$ with its full rounding $\tX_\Omega$ as in \S\ref{subsec:fr}.
 As shorthand put $X := X_\Omega$ and $\tX := \tX_\Omega$.
Fix a generic almost complex structure $J_{\bdy \tX} \in \Jadm(\bdy \tX)$
as in Lemma~\ref{lem:low_energy_cyls}, and a generic extension $J_{\tX} \in \Jadm^{J_{\bdy \tX}}(\tX;D)$.

To prove that $\gt_k^{\leq 1}(X)\ge c_k^{\op{GH}}(X_\Omega)$,
observe that by definition we can find a $J_{\tX}$-holomorphic plane $C$ in $\tX$ satisfying the local tangency constraint $\lll \T^{(k)}p\rrr$ and having $E(C) \leq \gt_k^{\leq 1}(\tX)$.
Let $\gamma$ denote the asymptotic Reeb orbit of $C$, which we can take to be $e_{i,j}$ or $h_{i,j}$ for some $i,j$.
If $C$ is simple then by genericity it must be regular and hence satisfy $\ind(C) \geq 0$, and inspection of the index formula shows that this is also true if $C$ is a multiple cover.
In particular, we must have $i+j \geq k$, from which it follows that $\calA(\gamma) = ||(i,j)||_{\Omega}^*$ is greater than or equal to the right hand side of ~\eqref{eq:lequals1}.
Since $E(C) = \tcalA(\gamma)$ is arbitrarily close to $\calA(\gamma)$, this gives the desired lower bound.

To establish the upper bound for $\gt_k^{\leq 1}(X)$, let $(i,j)$ be a minimizer for the right hand side of ~\eqref{eq:lequals1}. 
We can assume that there are no common divisors of $i,j,k$, and we will then show that $\gt_k^{\leq 1}(X) \leq ||(i,j)||_{\Omega}^*$.
Indeed, if there is a greatest common divisor $q \geq 2$ of $i,j,k$, then after putting $i' := i/q$, $j' := j/q$, $k' := k/q$ it will follow that we have
$\gt_{k'}^{\leq 1}(X) \leq ||(i',j')||_{\Omega}^*$, whence we have
$\gt_{k}^{\leq 1}(X) \leq q\gt_{k'}^{\leq 1}(X) \leq q||(i',j')||_{\Omega}^* = ||(i,j)||_{\Omega}^*$.

Now let $C$ be the (necessarily simple by the above) formal plane in $\tX$ with positive end $e_{i,j}$ and carrying the constraint $\lll \T^{(k)}p\rrr$. 
By an argument paralleling the proof of Proposition~\ref{prop:min_pert_inv}, we find that $C$ is formally perturbation invariant with respect to $J_{\bdy \tX}$.
In particular, $C$ cannot be represented by any nontrivial stable $J_{\tX}$-holomorphic building.
We claim that the signed count $\#\calM_{\tX}^{J_{\tX}}(C)$ is nonzero, from which it follows that we have $\gt_k^{\leq 1}(\tX) \leq E(C) = \tcalA(e_{i,j}) \approx ||(i,j)||_{\Omega}^*$.

To justify the claim, note that we can use Proposition~\ref{prop:T=E} to trade the local tangency constraint $\lll \T^{(k)}p\rrr$ for a skinny ellipsoidal constraint $\lll (k)\rrr_E$. 
Namely, letting $E_\sk = E(\eps,\eps x) \subset \tX$ denote an ellipsoid with $x > k$ and $\eps > 0$ sufficiently small, it suffices to show that the moduli space of pseudoholomorphic cylinders in $\tX \setminus E_\sk$ with positive end $e_{i,j}$ and negative end $\eta_k$ has nonzero signed count.
By slight abuse of notation we will denote the corresponding formal cylinder again by $C$. 
Recall that by Proposition~\ref{prop:count_positively} it suffices to show that this moduli space is nonempty.
For this we invoke linearized contact homology as in \cite{Pardcnct}, similar to the proof of \cite[Thm. 2]{hind2020j}.
Indeed, observe that $e_{i,j}$ is necessarily a cycle with respect to the linearized contact homology differential thanks to Lemma~\ref{lem:low_energy_cyls} and the fact that any orbit $h_{i',j'}$ with $(i',j') \neq (i,j)$ necessarily has greater action by Lemma~\ref{lem:fr_action_conds}.
Since the cobordism map on linearized contact homology induced by $\tX \setminus E_\sk$ is an isomorphism, it follows that there is a stable pseudoholomorphic cylindrical building representing $C$, and by formal perturbation invariance this must be an honest pseudoholomorphic cylinder in $\tX \setminus E_\sk$.
 \end{proof}
 
\begin{rmk}
As mentioned earlier, there is a natural higher dimensional analogue of the fully rounding procedure, but for concreteness we 
%kept our discussion in \S\ref{subsec:fr} to dimension four.
have kept our discussion in \S\ref{subsec:fr} to dimension four and hence  restrict 
Proposition~\ref{prop:GH} to dimension four. 
In order to extend the above argument to higher dimensions, one first ought to show that the higher dimensional the analogue of $C$ is formally perturbation invariant. Since the results in \cite{Pardcnct} hold in arbitrary dimension, one can then still invoke the cobordism map on linearized contact homology in higher dimensions in order to produce cylindrical buildings.

We also refer the reader to \cite[Thm 7.6.4]{Pereira_thesis} for the analogous statement $\gapac_k^{\leq 1}(X) = c_k^\op{GH}(X)$ for any Liouville domain $X$ satisfying $\pi(X) = 2c_1(TX) = 0$.
\end{rmk}

\begin{rmk}
We expect that the methods in this paper could be extended to compute $\gt_k^{\leq \ell}(X_\Omega)$ for all $k,\ell \in \Z_{\geq 1}$, and it is an interesting question whether the entire family $\{\gt_k^{\leq \ell}\}$ sometimes give stronger embedding obstructions than the sequence $\gt_1,\gt_2,\gt_3,\dots$ alone.
A natural guess is that Theorem~\ref{thm:main_comp} generalizes to a formula for $\gt_k^{\leq \ell}(X_\Omega)$ by requiring $q \leq \ell$ in the minimization.
\end{rmk}

\section{Ellipsoids, polydisks, and more}

In this section we apply our formalism to several examples, proving the remaining three theorems from the introduction.
In each case, using Theorem~\ref{thm:main_comp} and the specific form of $||-||_{\Omega}^*$, it reduces to a purely combinatorial optimization problem. The latter is tractable thanks to Corollary~\ref{cor:main_thm_4_cases}, which implies that we can look for a minimizer taking one of the following forms:
 \begin{enumerate}
 \item
 $(0,1)^{\times i} \times (1,1)^{\times j}$ for $i \geq 0$, $j \geq 1$
 \item 
 $(0,1)^{\times i} \times (1,s)$ for $i \geq 0$ and $s \geq 2$
 \item
 $(0,1)^{\times i} \times (1,0)$ for $i \geq 1$
 \item
 $(0,s)$ for $s \geq 1$.
 \end{enumerate}

\begin{proof}[Proof of Theorem~\ref{thm:ell_comp}]
We consider $E(a,1)$, and by continuity we can assume $a > 1$ is irrational. Let $\Omega$ be the triangle with vertices $(0,0),(a,0),(0,1)$.
Observe that for $\vec{v} = (v_x,v_y) \in \R_{\geq 0}^2$ we have 
$$||\vec{v}||_{\Omega}^* = \max_{\vec{w} \in \Omega}\langle \vec{v},\vec{w}\rangle = \max(v_xa,v_y).$$

We can %Dcase ignore (3), 
 ignore case (3), 
since we have
\begin{align*}
||(1,2)||_{\Omega}^* = \max(a,2) < 1 + a    = ||(0,1)||_{\Omega}^* + ||(1,0)||_{\Omega}^*,
\end{align*}
and hence $(0,1)^{\times i} \times (1,0)$ with $i \geq 1$ cannot be a minimizer.

\sss

Suppose first that $a > 3/2$.
Then we have
\begin{align*}
||(1,2)||_{\Omega}^* + ||(0,1)||_{\Omega}^* = \max(a,2) + 1 < 2a = 2||(1,1)||_{\Omega}^*,
\end{align*}
and hence $(0,1)^{\times i} \times (1,1)^{\times j}$ with $i \geq 0$, $j \geq 1$ can only be a minimizer if $j = 1$.
If $s > a+1$, then we have
\begin{align*}
||(0,1)||_{\Omega}^* + ||(1,s-2)||_{\Omega}^* = 1 + \max(a,s-2) < \max(a,s)  < ||(1,s)||_{\Omega}^*,
\end{align*}
and therefore $(0,1)^{\times i} \times (1,s)$ with $i \geq 0$, $s \geq 2$ can only be a minimizer if $s \leq a+1$.
Similarly, if $s < a-1$ then we have
\begin{align*}
||(0,s+1)||_{\Omega}^* = s+1 < \max(a,s) = ||(1,s)||_{\Omega}^*
\end{align*}
and
\begin{align*}
||(1,s+2)||_{\Omega}^* = \max(a,s+2) < 1 + \max(a,s) = ||(0,1)||_{\Omega}^* + ||(1,s)||_{\Omega}^*,
\end{align*}
and therefore $(0,1)^{\times i} \times (1,s)$ with $i \geq 0$, $s \geq 1$ can only be a minimizer if $s \geq a-1$.

Since $a$ is irrational, we have $[a-1,a+1] \cap \Z = \{\lfloor a \rfloor, \lfloor a \rfloor + 1\}$.
Therefore, there must be a minimizer taking one of the following forms:
\begin{itemize}
\item $(0,1)^{\times i} \times (1,\lfloor a \rfloor)$ for $i \geq 0$
\item $(0,1)^{\times i} \times (1,\lfloor a \rfloor+1)$ for $i \geq 0$
\item $(0,s)$ for $s \geq 1$,
\end{itemize}
from which \eqref{eq:ellip_comp2} readily follows.

\sss

Now suppose that we have $a < 3/2$.
For $s \geq 3$ we have
\begin{align*}
||(1,1)||_{\Omega}^* + ||(0,s-2)||_{\Omega}^* = a + s-2 < s = ||(1,s)||_{\Omega}^*,
\end{align*}
and hence $(0,1)^{\times i} \times (1,s)$ with $i \geq 0$, $s \geq 3$ cannot be a minimizer.
We have also
\begin{align*}
2||(1,1)||_{\Omega}^* = 2a < 3 = 3||(0,1)||_{\Omega}^*,
\end{align*}
and hence $(0,1)^{\times i} \times (1,1)^{\times j}$ for $i \geq 0$, $j \geq 1$ can only be a minimizer if $i \in \{0,1,2\}$.

Therefore, there must be a minimizer taking one of the following forms:
\begin{itemize}
\item $(0,1)^{\times i} \times (1,1)^{\times j}$ for $i \in \{0,1,2\}$ and $j \geq 1$
\item $(0,1)^{\times i} \times (1,2)$ for $i \geq 0$
\item $(0,s)$ for $s \geq 1$.
\end{itemize}
Since $2||(0,1)||_{\Omega}^* = 2 = ||(1,2)||_{\Omega}^*$, we can effectively ignore the second bullet by artificially allowing $j = 0$ in the first bullet.
For $s \geq 2$ we have 
\begin{align*}
||(1,s-1)||_{\Omega}^* = \max(a,s-1) < s = ||(0,s)||_{\Omega}^*,
\end{align*}
and hence $(0,s)$ can only be minimal if $s = 1$, so we can also effectively ignore the third bullet.
Therefore we have 
\begin{align*}
\gt_k(E(a,1)) = i||(0,1)||_{\Omega}^* + j||(1,1)||_{\Omega}^* = i + ja,
\end{align*}
for $i \in \{0,1,2\}$, $j \geq 0$ satisfying $2i + 3j - 1 = k$.
Note that $i$ and $j$ are uniquely determined via $i \equiv -k-1\;(\text{mod}\; 3)$ and $j = \frac{j+1-2i}{3}$, and \eqref{eq:ellip_comp3} follows.
\end{proof}

\begin{proof}[Proof of Theorem~\ref{thm:poly_comp}]

This is similar to the previous proof. We consider $P(a,1)$ with $a > 1$ irrational, and we take $\Omega$ to be the rectangle with vertices $(0,0),(a,0),(0,1),(a,1)$.
For $\vec{v} = (v_x,v_x) \in \R_{\geq 0}^2$ we then have
\begin{align*}
||\vec{v}||_{\Omega}^* = \langle \vec{v},(a,1)\rangle = a v_x + v_y.
\end{align*}

For $s \geq 2$ we have
\begin{align*}
||(1,0)||_{\Omega}^* + ||(0,s-1)||_{\Omega}^* = a + s-1 < a + s = ||(1,s)||_{\Omega}^*,
\end{align*}
and hence case (2) in Corollary~\ref{cor:main_thm_4_cases} cannot occur as a minimizer.
For $i \geq 0$, $j \geq 1$ we have
\begin{align*}
2||(0,1)||_{\Omega}^* + ||(1,0)||_{\Omega}^* = 2 + a < 2 + 2a = 2||(1,1)||_{\Omega}^*,
\end{align*}
so case (1) can only occur if $j = 1$. Therefore, there must be a minimizer from the following list:
\begin{itemize}
\item
$(0,1)^{\times i} \times (1,1)$ for $i \geq 0$
\item
$(0,1)^{\times i} \times (1,0)$ for $i \geq 1$
\item
$(0,s)$ for $s \geq 1$,
\end{itemize}
from which \eqref{eq:poly_formula} follows.
\end{proof}

\begin{proof}[Proof of Theorem~\ref{thm:quad}]

The polygon $\Om := Q(a,b,c) \subset \R_{\geq 0}^2$ has vertices $(0,0),(c,0),(a,b),(0,1)$.
For $\vec{v} = (v_x,v_y) \in \R_{\geq 0}^2$, we have
\begin{align*}
||\vec{v}||_{\Omega}^* = \max_{\vec{w} \in \Omega}\langle \vec{v},\vec{w}\rangle = \max(cv_x,av_x+bv_y,v_y).
\end{align*}
Recall that by assumption we have $c \geq 1$, $a \leq c$, $b \leq 1$, $a + bc \geq c$, and $M := \max(a+b,c) \leq 2$.

For $j \geq 1$, we have
\begin{align*}
||(1,j)||_{\Omega}^* = \max(c,a+jb,j),
\end{align*}
and in particular
\begin{align*}
||(1,1)||_{\Omega}^* = \max(c,a+b,1) = \max(c,a+b) = M.
\end{align*}

By the above, we have
$$
\gt_2(X) = \min(||(1,1)||_{\Omega}^*, ||(0,2)||_{\Omega}^*) =  \min(M,2) = M.
$$
Next, because  $c<2$, we have $$
||(0,3)||_{\Omega}^*> 3 > ||(1,0)||_{\Omega}^*
+ ||(0,1)||_{\Omega}^*= 1+c
$$
so that
\begin{align*}
\gt_3(X) &= \min\bigl(||(1,2)||_{\Omega}^*, ||(1,0)||_{\Omega}^*
+ ||(0,1)||_{\Omega}^*\bigr) \\
& =
 \min\bigl(\max(2, a+2b,c), 1+c\bigr) \\ &=
 \min\bigl(\max(2, a+2b), 1+c\bigr).
\end{align*}
Note that $\gt_3(X) =2$  if $a+2b<2$ and otherwise $= \min(a+2b, 1+c) < 3$.
In particular, if $a+2b>2$ the minimum could be represented by either orbit set. 

We next claim that 
$$
||(1,j)||_{\Omega}^* > ||(0,1)||_{\Omega}^* + ||(1,j-2)||_{\Omega}^*, \quad j\ge 3
$$
If $b>1/2$ we must check that
$$
\max(j, a + jb) > 1 + \max(j-2, a+(j-2)b) = \max(j-1, a+jb-2b+1),$$ which holds because $2b>1$.
If $b<1/2$ and $j\ge 3$ then $a+jb < 2 + \frac{j-1}2 \le j$ for $j\ge 3$, so that $$
||(1,j)||_{\Omega}^*= j > ||(1,j-2)||_{\Omega}^* + ||(0,1)||_{\Omega}^*, \quad j\ge 3.
$$
Thus in all cases $(1,j), j\ge 3,$ does not occur in a minimal orbit set.
Further $(0,k), k\ge 2,$ is never minimal since it can be replaced by $(1,1)\cup (0,1)^{\times k/2}$ 
for even $k$ or $(1,0)\cup (0,1)^{\times (k-1)/2}$  for odd $k$.

Therefore, taking into account the discussion of $\gt_3$,  we find that
 minimizers must take one of the following forms
\begin{itemize}\item
$(0,1)^{\times i}\times (1,1)^{\times j}$ where $j=0$ only if $i=1$;
\item
$(0,1)^{\times i}\times (1,2) $ or $(0,1)^{\times i}\times (1,0)$ (but not both).
\end{itemize}
In particular, 
\begin{align*}
\gt_4(X) &= ||(0,1)||_{\Omega}^* + ||(1,1)||_{\Omega}^* = 1 + M < 3,
\end{align*}
and 
\begin{align*}
\gt_6(X) &= ||(0,1)^{\times 2}||_{\Omega}^* + ||(1,1)||_{\Omega}^* = 2 + M < 4.
\end{align*}
On the other hand,  $\gt_5(X)$ might be represented by $(0,1)\times (1,2), (0,1)^{\times 2}\times (1,0)$ or $(1,1)\times (1,1)$ and so is given by
\begin{align*}
\gt_5(X) &=  \min\bigl( \max(3, 1 + a + 2b, c), 2 + c, 2 M\bigr).
\end{align*}
If $M<3/2$, then because the first two terms above are $\ge 3$, we find that
$ \gt_5(X)= 2 M$.   However, if $3/2< M<2$ then any of these three terms might be minimal.

For $k> 6$ it is again useful to consider the cases $M<3/2$ and $M>3/2$ separately.  In the former case, it is more efficient to increase the index by adding copies of $(1,1)$ so that minimal orbit sets always have $i\le 2$.
In particular,  orbit sets of the form $(0,1)^{\times i}\times (1,2) $ or $(0,1)^{\times i}\times (1,0)$ are not minimal when $i>2$, and so can only affect the capacities   $\gt_k$ for $k\le 7$.  Moreover when $M<3/2$
\begin{align*}
||(0,1)^{\times 2}\times (1,2)||_{\Omega}^* & = 2 + \max(2,a+2b,c) \\
& >\  4 \  > 1+2M
= ||(0,1)\times (1,1)^{\times 2}||_{\Omega}^*.
\end{align*}
Therefore the capacities for $k\ge 6$ are given by the orbit sets 
 $$
(0,1)^{\times 2}\times (1,1)^{\times j}, \ (0,1)\times  (1,1)^{\times j+1},\ (1,1)^{\times j+2},\quad  j\ge 1,\; M<3/2.
$$
The claims in  (i) follow readily.
\MS

If $M>3/2$,  minimal orbit sets always have $j \le 2$ since it is more efficient to use
$(0,1)^{\times 3}\times (1,1)^{\times j-2} $ instead of $(1,1)^{\times j}$.  
 Which of  $(0,1)^{\times i}\times (1,2) $ or $(0,1)^{\times i}\times (1,0)$
is more efficient  is determined by the value of $\gt_3$, while the value of $\gt_5$ 
determines whether it is in fact best to use $(1,1)^{\times 2}$ when representing elements of odd index $\ge 5$.   Thus the odd capacities for $k\ge 5$ are determined by $\gt_5$, while the even capacities
are more straightforward 
 since they are always calculated by orbit sets of the form $(0,1)^{\times i}\times (1,1)$.  This proves (ii).
\end{proof}

\begin{rmk}\label{rmk:abc}    
When $2\le n< c<n+1$ one can check that $\gt_k = k$ for $k\le n$, represented by the
orbit $e_{0,k}$.  In this case, the $\gt_k$ again limit on a period two cycle.  However, the precise values in this cycle depend on $b$.  To see this, note for example that if  $n= 2\ell$ is even, then  
$$
\gt_{k+1} = \min_{i\le \ell} \A(e_{0,1}^i\cup e_{1,2(\ell-i)}) = \min_{i\le \ell}\bigl(i + \max(2(\ell-i),\ a + 2(\ell-i)b,\ c)\bigr),
$$
and which orbit set gives the minimax depends on whether $b>1/2$ or $b< 1/2$.  For example, if we assume that $a<c$ are both very close to $n$ then the minimax is determined by the minimum  value of 
$i+ a + 2(\ell-i)b = a+2\ell b + i(1-2b)$.  Thus   if $ b<1/2$ one should take $i = \ell$, while if $b < 1/2$ one should take $i=0$.
\end{rmk}

\begin{proof}[Proof of Theorem~\ref{thm:Lp}]

For $\Omega := \Omega_p$, recall that we have $\calA_{\Omega}(e_{i,j}) = ||(i,j)||_{\Omega_p}^* = ||(i,j)||_q$. 
We can ignore case (4) in Corollary~\ref{cor:weak_perm} for $s \geq 2$, since we have
for $s = 2$ we have $$||(1,1)||_q = 2^{1/q} \leq 2 = ||(0,2)||_q$$ and for $s \geq 3$ we have
$$||(1,0)||_q + ||(0,s-2)||_q = s-1 \leq s = ||(0,s)||_q.$$
Similarly, we can ignore case (2), since we have 
$$||(1,0)||_q + ||(0,s-1)||_q = s < ||(1,s)||_q.$$
Noting that $||(0,1)||_q = ||(1,0)||q$, we can also effectively ignore case (3) by relaxing the condition $j \geq 1$ in case (1). In other words, we have that
$\gt_k(X_{\Omega_p})$ is the minimal quantity of the form $$i||(0,1)||_q + j ||(1,1)||_q = i + j2^{1/q},$$ subject to $2i + 3j -1 = k$ for $i,j \in \Z_{\geq 0}$.

We have $2||(1,1)||_q \leq 3||(0,1)||_q$ if and only if $2^{1/q} \leq 3/2$, i.e. $q \geq \tfrac{\ln(2)}{\ln(3/2)}$, or equivalently $p \leq \frac{\ln(2)}{\ln(4/3)}$.
In this case we can assume $i \in \{0,1,2\}$, and the value of $i$ is then determined by looking at the equation $2i + 3j - 1 = k$ modulo $3$, from which \eqref{eq:Lp_1} immediately follows.
Similarly, in the case $p > \frac{\ln(2)}{\ln(4/3)}$ we can assume $j \in \{0,1\}$, and the value of $j$ is then determined by looking at the equation $2i + 3j - 1 = k$ modulo $2$, which immediately gives \eqref{eq:Lp_2}.
\end{proof}

\appendix

\section{Regularity after stabilization}

\counterwithin{thm}{section}

In this appendix we give a self-contained proof that regularity persists after dimensional stabilization. We also refer the reader to \cite[\S7.4]{Pereira_thesis} for a related approach.

Let $X$ be a Liouville domain, and let $W := X \smx B^2(c)$ be a smoothing of $X \times B^2(c)$ for some $c > 0$ as in Lemma~\ref{lem:smoothing}.
Let $D$ be a local symplectic divisor in $X$ near a point $p \in X$, and let $\wt{D} = D \times B^2(\eps)$ for $\eps > 0$ small be a corresponding local symplectic divisor in $W$ near $\wt{p} := (p,p_0)$ for $p_0 := 0 \in B^2(c)$.
Let $J$ be an admissible almost complex structure on $\wh{X}$ which is integrable near $p$ and preserves $D$, and let $\wt{J}$ be an admissible almost complex structure on $\wh{W}$ which is integrable near $\wt{p}$, preserves $\wt{D}$, and restricts to $J$ along $\wt{X} \times \{0\} \approx \wt{X}$ (so that in particular $\wh{X} \times \{0\}$ is $\wt{J}$-holomorphic).

Our main goal is to prove:
\begin{prop}\label{prop:reg_after_stab}
Let $u$ be an asymptotically cylindrical $J$-holomorphic punctured sphere in $\wh{X}$ satisfying the constraint $\lll \T_D^{(m)}p\rrr$ for some $m \in \Z_{\geq 1}$, and such that each asymptotic Reeb orbit is nondegenerate with normal Conley--Zehnder index $1$.
Assume that $u$ is regular and has index zero (taking into account the constraint $\lll \T_D^{(m)}p\rrr$).
Let $\wt{u}$ denote the curve in $\wh{W}$ given by the composition of $u$ with the inclusion $\wh{X} \subset \wh{W}$.
Then $\wt{u}$ is also regular (taking into account the constraint $\lll \T_\tD^{(m)}p\rrr$).
\end{prop}
Note that in formulating the index and regularity of $u$ and $\tu$ we are as usual also allowing for arbitrary variations of the conformal structure of the domain.
Recall that the normal Conley--Zehnder index is defined for a Reeb orbit in $\bdy X$ by taking into account the Reeb flow in the direction normal to $\wh{X} \times \{0\}$ in $\wh{W}$, and we are implicitly using trivializations coming from the natural trivialization of the normal bundle of $\wh{X} \subset \wh{W}$ as in \S\ref{subsec:stab_lb}.

Let $\Sigma = S^2 \setminus \{z_1,\dots,z_\ell\}$ denote the domain of $u$, where $z_1,\dots,z_\ell$ are the punctures, and let $z_0 \in \Sigma$ denote the marked point which realizes the local tangency constraint.
Regularity of $u$ is equivalent to surjectivity of linearized Cauchy--Riemann operator
\begin{align*}
D\delbar_J(u,j): T_u\calB \oplus T_j\calT \rightarrow \calE_{(u,j)},
\end{align*}
where:
\begin{itemize}
  \item $T_u\calB = \calW_{\ssst \lll \T_D^{(m)}p\rrr}^{k,p,\delta}(u^*T\wh{X}) \oplus V$ 

  \item $\calW^{k,p,\delta}(u^*T\wh{X})$ denotes the Banach space of sections $\xi$ of $u^*T\wh{X}$ of weighted Sobolev class $\calW^{k,p,\delta}$ (c.f. \cite[\S7.2]{wendl_SFT_notes}), where we assume $k \geq m$ and $(k-m)p > 2$ (so that $\xi$ is $C^m$), and $\calW_{\ssst \lll \T_D^{(m)}p\rrr}^{k,p,\delta}(u^*T\wh{X}) \subset \calW^{k,p,\delta}(u^*T\wh{X})$
  denotes the subspace consisting of sections whose $m$-jet at $z_0$ lies in $D$ as in \cite[\S 6]{CM1} (in particular $\calW^{k,p,\delta}_{\lll p \rrr}(u^*T\wh{X})$ is the subspace such that $\xi$ vanishes at $z_0$)

\item $V \subset \calW^{k,p}_{\op{loc}}(u^*T\wh{X})$ is a $2\ell$-dimensional subspace as in \cite[\S3.1]{Wendl_aut}, consisting of smooth sections which are supported near the punctures and asymptotic to constant (in suitable trivializations) linear combinations of vector fields tangent to the trivial cylinders over the asymptotic Reeb orbits of $u$
(this is needed to the possibility of rotating and translating the asymptotic ends of $u$, as these deformations do not exponentially decay along the cylindrical ends)

  \item $\calT \subset \calJ(\Sigma)$ is a Teichm\"uller slice through $j$ as in \cite[\S3.1]{Wendl_aut},
which is in particular a smooth manifold containing $j$ and having (in the stable case) dimension $2(\ell+1)-6$, and 
 $T_j\calT \subset \Gamma(\ovl{\op{End}}_\C(T\Sigma))$ denotes its tangent space at $j$

  \item $\calE_{(u,j)} = \calW_{\ssst \lll \T_D^{(m-1)}p\rrr}^{k-1,p,\delta}(\ovl{\hom}_\C(T\Sigma,u^*T\wh{W}))$ consists of bundle homomorphisms from $T\Sigma$ to $u^*T\wh{W}$ over $\Sigma$ which are $(j,J)$-antilinear and whose $(m-1)$ jet at $z_0$ lies in $D$

\end{itemize}
Moreover, after choosing any symmetric connection $\nabla$ on $T\wh{X}$,
for $\xi \in T_u\calB$ and $y \in T_j\calT$, the linearized Cauchy--Riemann operator $D\delbar_J(u,j)$ takes the explicit form
\begin{align*}
D\delbar_J(u,j)(\xi,y) = D_u\xi + G_u y,
\end{align*}
where:
\begin{itemize}
  \item $D_u: T_u\calB \rightarrow \calE_{(u,j)}$ is given by \[D_u\xi = \nabla \xi + J \circ (\nabla \xi) \circ j + \nabla_{\xi} J \circ du \circ j\]
  \item $G_u: T_j\calT \rightarrow \calE_{(u,j)}$ is given by \[G_u y =  J \circ du \circ y.\]
\end{itemize}

Similarly, regularity of $\wt{u}$ is equivalent to surjectivity of the operator
\begin{align*}
D\delbar_J(\wt{u},j): T_{\wt{u}}\wt{\calB} \oplus T_j\calT \rightarrow \calE_{(\wt{u},j)},
\end{align*}
where:
\begin{itemize}
  \item $T_\tu\wt{\calB} = \calW_{\ssst \lll \T_{\tD}^{(m)}\wt{p}\rrr}^{k,p,\delta}(\wt{u}^*T\wh{W}) \oplus V$

  \item $\calE_{(\wt{u},j)} = \calW^{k-1,p,\delta}_{\ssst \lll \T_\tD^{(m-1)}p\rrr}(\ovl{\hom}_\C(T\Sigma,\tu^*T\wh{W}))$,
\end{itemize}
and for $\xi \in T_\tu\wt{\calB}$ and $y \in T_j\calT$ we have
\begin{align*}
D\delbar_J(\wt{u},j)(\xi,y) = D_{\wt{u}}\xi + G_{\wt{u}}y
\end{align*}
where:
\begin{itemize}
  \item $D_\tu: T_\tu\wt{\calB} \rightarrow \calE_{(\tu,j)}$ is given by \[D_{\wt{u}}\xi = \tnabla \xi + \wt{J} \circ (\tnabla \xi) \circ j + \tnabla_{\xi} 
  \wt{J} \circ d\tu \circ j\]
  \item $G_\tu: T_j\calT \rightarrow \calE_{(\tu,j)}$ is given by \[G_{\tu} y =  \wt{J} \circ d\tu \circ y,\]
\end{itemize}
where $\wt{\nabla}$ is any symmetric connection on $T\wh{W}$.

Note that the embedding $W \hookrightarrow X \times B^2(c)$ naturally extends to a diffeomorphism $\wh{W} \cong \wh{X} \times \wh{B^2(c)}$, and we get a corresponding splitting of the tangent bundle of $\wh{W}$:
\begin{align*}
T\wh{W} \cong T^\ver \wh{W} \oplus T^\hor \wh{W}.
\end{align*}
Under the identification $T^\ver\wh{W}|_{\wh{X} \times \{0\}} \approx T\wh{X}$,
this induces natural splittings

\begin{align*}
T_\tu\wt{\calB} \oplus T_j\calT \cong \underbrace{\left(\calW_{\ssst \lll \T_D^{(m)}p\rrr}^{k,p,\delta}(u^*T\wh{X}) \oplus V \oplus T_j\calT\right)}_{A_1} \oplus \underbrace{\left(\calW_{ \lll p_0 \rrr}^{k,p,\delta}(\tu^*T^\hor\wh{W})\right)}_{A_2}
\end{align*}
and 
\begin{align*}
\calE_{(\wt{u},j)} \cong \underbrace{\left(\calW^{k-1,p,\delta}_{\ssst \lll \T_D^{(m-1)}p\rrr}(\ovl{\hom}_\C(T\Sigma,u^*T\wh{X}))\right)}_{B_1} \oplus \underbrace{\left(\calW^{k-1,p,\delta}(\ovl{\hom}_\C(T\Sigma,\tu^*T^\hor\wh{W}))\right)}_{B_2}.
\end{align*}

From now on, we assume that the connection $\tnabla$ preserves this splitting and restricts to $\nabla$ under the identification $T\wh{X}$.
The above splitting induces a block matrix decomposition
\begin{align}\label{eq:block_decomp}
D\delbar_J(\tu,j) = 
\begin{pmatrix}
  M_{1,1} = D\delbar_J(u,j) & M_{1,2} \\ M_{2,1} & M_{2,2}
\end{pmatrix}.
\end{align}

\begin{lemma}\label{lem:M_2_1}
We have $M_{2,1} = 0$.  
\end{lemma}
\begin{proof}

We need to show that the image of $D\delbar_J(\tu,j)|_{A_1}$ lies in $B_1$.
Note that for $y \in T_j\calT$ we have
\begin{align*}
\wt{J} \circ d\tu \circ y \in \Gamma(\ovl{\hom}_\C (T\Sigma,\tu^* T\wh{X})),
\end{align*}
since $\wt{J}$ preserves $T^\ver \wh{W}|_{\wh{X}}$,
and hence $G_\tu y \in B_1$.
It therefore suffices to show that for any $\xi^\ver \in \Gamma(\tu^*T^\ver \wh{W})$, 
$D_\tu \xi^\ver$
 lands in $\Gamma(\ovl{\hom}_\C(T\Sigma,\tu^*T^\ver\wh{W}))$.
For $v \in \Gamma(T\Sigma)$, we have
\begin{align*}
(D_\tu\xi^\ver)(v) = \tnabla_v \xi^\ver + \wt{J} \tnabla_{jv}\xi^\ver + (\tnabla_{\xi^\ver}\wt{J})(q) 
\end{align*}
for $q := (d\tu)(jv) \in \Gamma(u^*T\wh{X})$.
Since $\tnabla$ and $\wt{J}$ respect the splitting $T\wh{W}|_{\wh{X}} = T^\ver \wh{W}|_{\wh{X}} \oplus T^\hor \wh{W}|_{\wh{X}}$, we have 
\begin{align*}
\tnabla_v \xi^\ver + \wt{J}\tnabla_{jv}\xi^\ver \in \Gamma(\tu^*T^\ver\wh{W}).
\end{align*}

Therefore it remains to show that $(\tnabla_{\xi^\ver}\wt{J})(q) \in \Gamma(\tu^*T^\ver \wh{W})$.
For this, it suffices to establish
\begin{align*}
(\tnabla_a \wt{J})(b) \in \Gamma(T^\ver\wh{W}|_{\wh{X}})
\end{align*}
for any $a,b \in \Gamma(T^\ver\wh{W}|_{\wh{X}})$.
Recall that the term $\tnabla_a\wt{J} \in \op{End}(T\wh{W}|_{\wh{X}})$ corresponds to applying the connection induced by $\tnabla$ (which we again denote by $\tnabla$) on the endomorphism bundle, and by its definition we have
\begin{align*}
(\tnabla_a \wt{J})(b) = \tnabla_a(\wt{J}b) - \wt{J}(\tnabla_a b).
\end{align*}
Similar to above, it is immediate that these last two terms lie in $\Gamma(T^\ver \wh{W}|_{\wh{X}})$.
 \end{proof}

\begin{lemma}\label{lem:M_2_2}
  The operator $M_{2,2}$ is surjective.
\end{lemma}
\begin{proof}
If we ignore the constraint $\lll p_0\rrr$,
the corresponding ($\R$-linear) Cauchy--Riemann type operator \[\calW^{k,p,\delta}(\tu^*T^\hor \wh{W}) \rightarrow \calW^{k-1,p,\delta}(\ovl{\hom}_\C(T\Sigma,\tu^*T^\hor\wh{W}))\]
is Fredholm, and by a version of Riemann--Roch with its index is easily computed to be $2$ (see e.g. \cite[\S2.1]{Wendl_aut}).
It follows that $M_{2,2}$ is also Fredholm, with index $0$, and hence to prove its surjectivity it suffices to establish $\ker M_{2,2} = \{0\}$.
Suppose by contradiction that $\eta$ is a nonzero element in $\ker M_{2,2}$. 
By elliptic regularity we can assume that $\eta$ is smooth, and its count $Z(\eta)$ of zeros is nonnegative (this follows by the similarity principle \cite[Thm. 2.32]{wendl_SFT_notes}), and in fact strictly positive since $\eta$ necessarily vanishes at the marked point $z_0$.
On the other hand, in the notation of \cite[\S2.1]{Wendl_aut}, each puncture $z_i$ of $\tu$ has normal Conley--Zehnder index $1$ and hence extremal winding number $\alpha_-(\mathbb{A}_{z_i}) = 0$,
 and therefore using \cite[Eq. 2.7]{Wendl_aut} we have
\[1 \leq Z(\eta) + Z_\infty(\eta) = c_1(\tu^*T^\hor\wh{W}) + \sum_{i=1}^\ell \alpha_-(\mathbb{A}_{z_i}) = 0, \]
a contradiction.
\end{proof}

\begin{proof}[Proof of Proposition~\ref{prop:reg_after_stab}]
  This follows immediately from the decomposition \eqref{eq:block_decomp} and Lemmas \ref{lem:M_2_1} and \ref{lem:M_2_2}.
\end{proof}

Now suppose that $J$ is an admissible almost complex structure on the symplectization of $\bdy X$,
and let $\wt{J}$ be an admissible almost complex structure on the symplectization of $\bdy W$ which restricts to $J$ on $\R \times (\bdy X \times \{0\})$.
An argument nearly identical to the above proves:
\begin{prop}\label{prop:reg_after_stab_symp}
Let $u$ be an asymptotically cylindrical $J$-holomorphic punctured sphere in $\R \times \bdy X$, such that each asymptotic Reeb orbit is nondegenerate with normal Conley--Zehnder index $1$.
Assume that $u$ is regular and has index zero.
Let $\wt{u}$ denote the curve given by the composition of $u$ with the inclusion $\R \times \bdy X \subset \R \times \bdy W$.
Then $\wt{u}$ is also regular.
\end{prop}

\bibliographystyle{math}
\bibliography{biblio}

\end{document}